\newcommand\hcancel[2][black]{\setbox0=\hbox{$#2$}%
\rlap{\raisebox{.45\ht0}{\textcolor{#1}{\rule{\wd0}{0.5pt}}}}#2} 
\newcommand{\vect}[1]{\accentset{\rightharpoonup}{#1}}
\newcommand{\customlabel}[2]{#2\def\@currentlabel{#2}\label{#1}}
\def\tmindeg{\delta^{(3)}}
\def\div{\text{div}}
\setlist{itemsep=0pt, topsep=0pt}
\newtheorem{theorem}{Theorem}[section]
\newtheorem{lemma}[theorem]{Lemma}
\newtheorem{claim}[theorem]{Claim}
\newtheorem{conjecture}[theorem]{Conjecture}
\newtheorem{corollary}[theorem]{Corollary}
\newtheorem{question}[theorem]{Question}
\newtheorem{proposition}[theorem]{Proposition}
\numberwithin{subcase}{case}
\newtheorem{definition}[theorem]{Definition}
\newtheorem*{lemma*}{\textbf{Lemma~\ref{lem:coverdownlemma}}}
\numberwithin{equation}{section}
\newcommand{\eps}{\varepsilon}
\def\al#1{}
	\renewcommand{\al}[1]{\footnote{\textbf{AL: }#1}}         
\newenvironment{proofclaim}[1][Proof of claim]{\begin{proof}[#1]}{\end{proof}}
\title{Cycle decompositions in~$k$-uniform hypergraphs}
\author[A. Lo]{Allan Lo}
\address[A. Lo, S. Piga]{School of Mathematics, University of Birmingham, Edgbaston, Birmingham, B15 2TT, UK}
\email{s.a.lo@bham.ac.uk, s.piga@bham.ac.uk
}
\author[S. Piga]{Sim\'on Piga}
\author[N. Sanhueza-Matamala]{Nicol\'as Sanhueza-Matamala}
\address[N. Sanhueza-Matamala]{Departamento de Ingeniería Matemática, Facultad de Ciencias Físicas
y Matemáticas, Universidad de Concepción, Chile}
\email{nicolas@sanhueza.net}
\thanks{
The research leading to these results was supported by ANID-Chile through the FONDECYT Iniciación Nº11220269 grant (N.~Sanhueza-Matamala) and 
EPSRC, grant no. EP/V002279/1 (A.~Lo and S.~Piga) and EP/V048287/1 (A.~Lo).
There are no additional data beyond that contained within the main manuscript.}
\begin{document}

\begin{abstract}
    We show that $k$-uniform hypergraphs on $n$ vertices whose codegree is at least $(2/3 + o(1))n$ can be decomposed into tight cycles, subject to the trivial divisibility conditions.
    As a corollary, we show those graphs contain tight Euler tours as well.
    In passing, we also investigate decompositions into tight paths.
    
    In addition, we also prove an alternative condition for building absorbers for edge-decompositions of arbitrary $k$-uniform hypergraphs, which should be of independent interest.
\end{abstract}

\maketitle

\section{Introduction}
\label{sec:intro}

Given a $k$-uniform hypergraph~$H$, a \emph{decomposition of $H$} is collection of subgraphs of~$H$ such that every edge is covered exactly once.
If all these subgraphs are isomorphic copies of the same $k$-uniform graph $F$, we say $H$ has an \emph{$F$-decomposition}, and that $H$ is \emph{$F$-decomposable}.
We refer the reader to the survey of Glock, Kühn, and Osthus~\cite{GlockKuhnOsthus2021} for a recent account on extremal aspects of hypergraph decomposition problems.
Here we investigate hypergraph decompositions into tight cycles.

Given $\ell > k \geq 2$, the \emph{$k$-uniform tight cycle of length $\ell$}, denoted by $C^{\smash{(k)}}_\ell$, is the~$k$-graph whose vertices are $\{v_1, \dotsc, v_{\ell} \}$ and its edges are $\{ v_i, v_{i+1}, \dotsc, v_{i + k - 1} \}$ for all~$i \in \{1, \dotsc, \ell\}$, with the subindices understood modulo~$\ell$.
Given a vertex set $S \subseteq V(H)$, we define the \emph{degree $\deg_H(S)$ of~$S$} as the number of edges of $H$ which contain~$S$.
Given a vertex $v \in V(H)$, we define the \emph{degree of $v$} as the degree of $\{v\}$.
Given some~$0 \leq i < k$, we let~$\delta_{i}(H)$ (and $\Delta_i(H)$) be the minimum (and maximum, respectively,) value of~$\deg_H(S)$ taken over all $i$-sets of vertices~$S$.
We call $\delta_{k-1}(H)$ the \emph{minimum codegree of~$H$} and sometimes we will write just $\delta(H)$ if $k$ is clear from context.

We say that a $k$-graph $H$ is \emph{$C^{\smash{(k)}}_\ell$-divisible} if $|E(H)|$ is divisible by $\ell$ and the degree of every vertex of~$H$ is divisible by~$k$.
Clearly, being $C^{\smash{(k)}}_\ell$-divisible is a necessary condition to admit a $C^{\smash{(k)}}_\ell$-decomposition, but in general it is not a sufficient condition.
We are interested in extremal questions of the sort: which conditions on the minimum degree of large $C^{\smash{(k)}}_\ell$-divisible graphs ensure the existence of $C^{\smash{(k)}}_\ell$-decompositions?
Given~$\ell\geq k$, we define the \emph{$C_{\ell}^{\smash{(k)}}$-decomposition threshold} $\delta_{\smash{C^{(k)}_{\ell}}}$ as the least $d > 0$ such that for every $\eps > 0$, there exists $n_0$ such that any $C_{\ell}^{\smash{(k)}}$-divisible $k$-graph $H$ on $n \geq n_0$ vertices with $\delta_{k-1}(H) \geq (d + \eps)n$ admits an $C_{\ell}^{\smash{(k)}}$-decomposition.

In this paper, we are interested in~$\delta_{\smash{C^{(k)}_{\ell}}}$.
For $k = 2$, $k$-graphs are just graphs, tight cycles are just graph cycles, and minimum codegree is just minimum degree, and here much more is known about the values of~$\delta_{\smash{C^{(2)}_{\ell}}}$.
Barber, K\"uhn, Lo, and Osthus~\cite{BKLO2016} show that $\delta_{\smash{C^{(2)}_4}} = 2/3$ and for each even $\ell \geq 6$, $\delta_{\smash{C^{(2)}_{\ell}}} = 1/2$.
Taylor~\cite{Taylor2019} proved exact minimum degree conditions which yield decompositions into cycles of length $\ell$ in large graphs, for $\ell = 4$ and every even $\ell \geq 8$.
For odd values of~$\ell$, the situation is different.
Joos and Kühn~\cite{JoosKuhn2021} showed that $\delta_{\smash{C^{(2)}_\ell}} = 1/2 + c_{\ell}$, where $c_\ell$ is a sequence of non-zero numbers depending on $\ell$ only, which satisfy $c_{\ell} \rightarrow 0$ when $\ell \rightarrow \infty$.

For $k = 3$, the last two authors~\cite{PigaSanhuezaMatamala2021} showed that $\delta_{\smash{C^{(3)}_\ell}} =2/3$ for all sufficiently large~$\ell$.
In fact, they show that the constant `$2/3$' is also sharp for the more general problem of decomposing hypergraphs into tight cycles of possibly different lengths, which we describe now.

A \emph{(tight) cycle-decomposition} of a $k$-graph $H$ is an edge partition of~$H$ into tight cycles (of possibly different lengths). 
A condition which is easily seen to be necessary to admit a cycle-decomposition is that the degree of every vertex of~$H$ is divisible by~$k$.
We define the \emph{cycle-decomposition threshold}~$\delta^{(k)}_{\smash{\textrm{cycle}}}$ be the least $d > 0$ such that for every $\eps > 0$, there exists $n_0$ such that any $k$-graph~$H$ on $n \geq n_0$ vertices with~$\delta_{k-1}(H) \geq (d + \eps)n$ such that every vertex of~$H$ has degree divisible by~$k$ admits a cycle-decomposition.
Note that $\delta^{(2)}_{\smash{\textrm{cycle}}} = 0$ as every graph with even degrees admits a cycle decomposition. 
For $k = 3$, the last two authors~\cite{PigaSanhuezaMatamala2021} showed that $\delta^{(3)}_{\smash{\textrm{cycle}}} = 2/3$.
Glock, Kühn and Osthus~\cite[Conjecture 5.5]{GlockKuhnOsthus2021} posed the following conjecture for $k \geq 3$.

\begin{conjecture}[{Glock, Kühn and Osthus}] \label{conj:GKO}
For $k \ge 3$, $\delta^{(k)}_{\smash{\mathrm{cycle}}} \le (k-1)/k$. 
\end{conjecture}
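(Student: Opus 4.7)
The plan is to employ the iterative absorption framework, which has become standard in hypergraph decomposition problems since \cite{BKLO2016}. The proof would naturally split into two main ingredients: (i) an \emph{approximate decomposition}, that covers almost all edges of $H$ by pairwise edge-disjoint tight cycles; and (ii) an \emph{absorbing structure}, capable of swallowing any small $C^{\smash{(k)}}$-divisible leftover into tight cycles.

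For ingredient (i), I would aim to prove that any $k$-graph $H$ with $\delta_{k-1}(H) \ge ((k-1)/k + \eps)n$ has an approximate tight cycle decomposition: given any $\mu > 0$, for $n$ sufficiently large one can find edge-disjoint tight cycles covering all but at most $\mu n^k$ edges. The threshold $(k-1)/k$ is natural here because it matches the Rödl–Ruciński–Szemerédi codegree threshold for the existence of a perfect matching in a $k$-graph, and approximate decomposition at this threshold can typically be obtained by a Rödl–Pippenger nibble-type argument applied to an auxiliary hypergraph whose edges encode short tight cycles in $H$. A pseudo-random absorber-carving step beforehand ensures the leftover of this nibble is concentrated on a reserved sparse subgraph.

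For ingredient (ii), one reserves a carefully chosen subhypergraph $A \subseteq H$ (the \emph{absorber}) so that for every $C^{\smash{(k)}}$-divisible $k$-graph $L$ on $V(H)$ that is ``small'' and edge-disjoint from $A$, both $A$ and $A \cup L$ admit tight cycle decompositions. Standard practice builds $A$ using a \emph{vortex}, that is, a nested sequence $V(H) = U_0 \supseteq U_1 \supseteq \dotsb \supseteq U_t$ with $|U_{i+1}|/|U_i| \to 0$, together with ``transformer'' gadgets that locally swap a leftover configuration for one already decomposed by the absorber. The alternative absorber criterion advertised in the abstract presumably furnishes a flexible sufficient condition under which such transformers exist, reducing the problem to verifying a few local covering/extension properties at codegree $(k-1)/k + \eps$.

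The main obstacle is the construction of transformers inside ingredient (ii). Because tight cycles of varying lengths are allowed, there is genuine flexibility at the decomposition stage, but producing transformers that work for \emph{every} possible $C^{\smash{(k)}}$-divisible leftover requires exhibiting intricate tight cycle configurations threading prescribed edges; this relies on an extension/embedding lemma for tight paths with prescribed endpoints, which in high uniformity becomes increasingly delicate because tight paths are rigid and their link-neighbourhoods shrink relative to the ambient $k$-graph. Granted such an extension lemma at codegree $(k-1)/k + \eps$, iterating the absorption along the vortex reduces the problem to a constant-size leftover, which can be finished by a direct argument using the divisibility hypothesis.
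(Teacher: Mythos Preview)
Your high-level framework (iterative absorption: vortex, cover-down, absorber) matches the paper's. However, two points deserve correction.

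First, the paper proves the \emph{stronger} bound $\delta^{(k)}_{\mathrm{cycle}} \le 2/3$ (via Theorem~\ref{cor:main} and Theorem~\ref{theorem:main}), not $(k-1)/k$. Your rationale for $(k-1)/k$ --- that it matches the R\"odl--Ruci\'nski--Szemer\'edi perfect matching threshold --- is a red herring: perfect matchings play no r\^ole here. The paper's key observation is that the Cover-down lemma and the Absorber lemma need only $\delta^{(2)}(H) \ge \eps n$ and $\delta^{(3)}(H) \ge \eps n$ respectively, conditions implied by any codegree above $1/2$ and $2/3$. The bound $2/3$ arises solely from the absorber side; the approximate decomposition (via the Joos--K\"uhn fractional result plus nibble) works far below $(k-1)/k$.

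Second, and more seriously, your treatment of the absorber is where the real content lies, and your proposal does not supply it. Saying ``transformers exist granted an extension lemma for tight paths with prescribed endpoints'' is not enough: such an extension lemma (Lemma~\ref{lem:findcycle}) is easy, but by itself builds transformers only between $k$-graphs that already admit \emph{tour} decompositions. An arbitrary $C^{(k)}_\ell$-divisible leftover need not have one. The paper's main technical contribution (Sections~\ref{section:gadgets}--\ref{section:tourtrail}) is a mechanism to \emph{augment} any such leftover to a tour-decomposable $k$-graph by adding edge-disjoint $C^{(k)}_\ell$-decomposable ``gadgets'': first \emph{balancer} gadgets to make a tour-trail decomposition balanced (equalising the positional statistics $p_i(v)$ and $p_{k-i}(v)$), then \emph{swapper} gadgets to replace vertices in the residual $(k-1)$-tuples one coordinate at a time until all trails cancel. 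This step is genuinely intricate for $k\ge 4$ and is absent from your sketch; without it the absorber construction does not go through.
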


Cycle decompositions are also related with generalisations of Euler tours to hypergraphs.
An \emph{Euler tour} in a $k$-graph $H$ is a sequence of (possibly repeating) vertices~$v_1 \dotsb v_{m}$ such that each $k$ cyclically consecutive vertices forms an edge of~$H$, and all edges of $H$ appear uniquely in this way. 
Similarly, we define the \emph{Euler tour threshold}~$\delta^{(k)}_{\smash{\mathrm{Euler}}}$ be the least $d > 0$ such that for every $\eps > 0$, there exists $n_0$ such that any~$k$-graph~$H$ on $n \geq n_0$ vertices with $\delta_{k-1}(H) \geq (d + \eps)n$ such that every vertex of~$H$ is divisible by~$k$ admits an Euler tour.
Chung, Diaconis, and Graham~\cite{CDG} conjectured that every large $K_n^{\smash{(k)}}$ such that every vertex has degree divisible by~$k$ admits an Euler tour.
Glock, Joos, Kühn, and Osthus~\cite{GJKO2020} confirmed this conjecture and showed the existence of Euler tours in suitable hypergraphs by using results on cycle decompositions, which in particular show $\delta^{(k)}_{\smash{\mathrm{Euler}}} < 1$ for all $k$.
For $k = 2$, it is easy to see that~$\delta^{(2)}_{\smash{\mathrm{Euler}}} = 1/2$ (as $\delta(H) \ge |V(H)|/2$ is needed to ensure that the graph~$H$ is connected), and examples show that $\delta^{(k)}_{\smash{\mathrm{Euler}}} \geq 1/2$ holds for all $k \geq 3$~\cite[Section 1.3]{GJKO2020}.
The following conjecture for all $k \geq 3$ was posed.

\begin{conjecture}[{Glock, Kühn, and Osthus~\cite{GlockKuhnOsthus2021}}] \label{conj:GKO2}
For $k \ge 3$, $\delta^{(k)}_{\smash{\mathrm{Euler}}} \le (k-1)/k$. 
\end{conjecture}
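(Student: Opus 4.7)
The plan is to deduce the conjecture from the main theorem announced in the abstract: since $\delta^{(k)}_{\smash{\mathrm{cycle}}}\le 2/3$ and $2/3 \le (k-1)/k$ for all $k\ge 3$, it suffices to upgrade a tight cycle decomposition of $H$ into a single tight Euler tour. So, starting from a $k$-graph $H$ on $n$ vertices with $\delta_{k-1}(H) \ge (2/3+\eps)n$ and all vertex degrees divisible by $k$, I would apply the cycle decomposition theorem to obtain edge-disjoint tight cycles $\mathcal{C}=\{C^{(1)},\dotsc,C^{(t)}\}$ partitioning $E(H)$, and then splice these cycles into one closed tight walk covering every edge exactly once.

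The local operation I rely on is a \emph{splice}: if two edge-disjoint tight cycles $C$ and $C'$ each contain, at some position, the same ordered $(k-1)$-tuple of consecutive vertices $(x_1,\dotsc,x_{k-1})$ (in the same orientation), then one can break both cycles at this common window and reconnect them into a single tight cycle on $E(C)\cup E(C')$. To ensure such common windows are always available, I would, before invoking the decomposition theorem, reserve a sparse random ``connector'' sub-hypergraph $R\subseteq H$ whose removal preserves the codegree hypothesis (and the divisibility up to a small correction). The decomposition is then run on $H\setminus R$, and the edges of $R$, together with short tight walks in $H\setminus R$, are used to manufacture the common $(k-1)$-windows through which the cycles are glued.

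The main obstacle is the bookkeeping attached to this splicing: each splice has to consume \emph{exactly} the edges of $R$ earmarked for it, the splices must preserve the divisibility of vertex degrees in the residual hypergraph so that the cycle decomposition theorem remains applicable to $H\setminus R$, and any leftover edges of $R$ must be absorbed without leaving dangling open walks. This is where the alternative absorber construction highlighted in the abstract is expected to enter: I would engineer within $R$ a fixer gadget capable of correcting any bounded discrepancy, so that after performing greedy splices between $C^{(i)}$ and $\bigcup_{j<i} C^{(j)}$ using $R$-mediated windows, the few remaining edges can be swallowed in a single final absorbing step. The fact that $\delta_{k-1}(H)\ge (2/3+\eps)n$ guarantees many tight walks of bounded length between any two $(k-1)$-sets is what makes the greedy window-finding and absorber construction feasible; certifying tight-walk connectivity of the residual graph at every stage is likely to be the most delicate point.
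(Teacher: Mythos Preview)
Your overall reduction is right: the paper also deduces the conjecture from the chain $\delta^{(k)}_{\smash{\mathrm{Euler}}}\le\delta^{(k)}_{\smash{\mathrm{cycle}}}\le 2/3\le(k-1)/k$, and the only substantive step is the first inequality (Lemma~\ref{lem:euler1}). The paper's proof of that step, however, inverts your order of operations in a way that dissolves all the bookkeeping you flag. \emph{Before} any cycle decomposition, it builds one sparse closed tour $\mathcal{P}\subseteq H$ that passes through \emph{every} ordered $(k-1)$-tuple of $V(H)$: enumerate these tuples as $\sigma_1,\dots,\sigma_t$ with $t=n!/(n-k+1)!$, and use the Extending Lemma (Lemma~\ref{lemma:extending}) to find edge-disjoint constant-length trails $P_i$ with $D(P_i)=\{\sigma_i,\sigma_{i+1}^{-1}\}$; their concatenation is~$\mathcal{P}$, and $\Delta_{k-1}(\mathcal{P})\le\mu n$. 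Because $\mathcal{P}$ is a tour, every vertex degree in it is divisible by~$k$, so $H'=H-\mathcal{P}$ still satisfies $\delta(H')\ge(\delta^{(k)}_{\smash{\mathrm{cycle}}}+\eps/2)n$ with all vertex degrees divisible by~$k$, and hence admits a cycle decomposition. Every cycle in that decomposition then automatically shares an ordered $(k-1)$-window with~$\mathcal{P}$, so the free splice you describe attaches each cycle to~$\mathcal{P}$ using no extra edges, leaving no residual and requiring no absorber.

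Your plan, as written, has genuine gaps at exactly the points you anticipate. Removing a random $R$ destroys the divisibility of vertex degrees in $H\setminus R$; repairing this (say via something like Corollary~\ref{corollary:degreeadjuster}) produces further stray edges that must also enter the final tour. Your connectors must then be found edge-disjointly inside the sparse~$R$, one for each of the $\Theta(n^k/\ell)$ cycles; for an Extending-Lemma-type argument to supply them you would need the multiset of chosen cycle-windows to be $\gamma$-sparse, which you have not arranged. And whatever edges of~$R$ survive the greedy splicing must themselves be incorporated into the Euler tour; the absorber machinery of the paper (Lemma~\ref{lem:absorbing}) handles constant-size $F$-divisible leftovers inside a dense host, not an arbitrary sparse leftover, so it does not plug this hole. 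The ``universal tour first'' trick bypasses all three issues at once: only $\Theta(n^{k-1})$ trails are needed (one per ordered tuple rather than one per cycle), their endpoint family is trivially sparse, and nothing is left over.
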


It was first conjectured that $\delta^{(k)}_{\smash{\mathrm{Euler}}} =1/2$ for all $k \ge 3$ in~\cite{GJKO2020}, but this was disproven by the last two authors~\cite{PigaSanhuezaMatamala2021} by showing that $\delta^{(3)}_{\smash{\mathrm{Euler}}} = 2/3$.

Our main result bounds $\delta_{\smash{C^{(k)}_{\ell}}}$ for every $k \geq 2$ and each sufficiently large $\ell$.

\begin{theorem}\label{theorem:main}
    For every~$k\geq 3$ there exists an~$\ell_0\in\mathbb{N}$ such that for every~$\ell\geq \ell_0$ it holds that~$\delta_{\smash{C_\ell^{(k)}}}\leq 2/3$.
\end{theorem}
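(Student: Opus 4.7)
The plan is to run the \emph{iterative absorption} framework specialised to tight cycles of length~$\ell$, combining three ingredients: an \emph{absorber} $A$ for the final leftover, a nibble-type \emph{near-decomposition} that handles the bulk of the edges, and the new absorber criterion announced in the abstract.

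First I would reserve a small absorbing subgraph $A\subseteq H$ with $|V(A)|=o(n)$, chosen so that for every sufficiently sparse $C_\ell^{(k)}$-divisible $k$-graph $L$ on $V(H)\setminus V(A)$ (compatible with~$A$), the union $A\cup L$ admits a $C_\ell^{(k)}$-decomposition. To build~$A$ I would invoke the paper's new absorber criterion: for each potential ``bad'' edge $e$ one exhibits a local gadget in~$H$---a \emph{transformer}---admitting two $C_\ell^{(k)}$-decompositions that differ exactly on~$e$, and then chains many such gadgets into~$A$ via short tight paths. The hypothesis $\delta_{k-1}(H)\ge(2/3+\eps)n$ is what supplies the tight-path connectors between arbitrary prescribed ordered tuples and the flexibility needed to realise transformers.

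Next, to the residual $H'=H\setminus E(A)$ I would apply a nibble or random-greedy near-decomposition theorem to obtain a partial $C_\ell^{(k)}$-decomposition covering all but $o(n^k)$ edges; divisibility of~$H$ then forces the leftover $L$ to be $C_\ell^{(k)}$-divisible, whereupon~$A$ absorbs~$L$. If a one-shot nibble argument proves insufficient I would nest this inside a vortex $V(H)=V_0\supseteq V_1\supseteq\cdots\supseteq V_t$ with $|V_{i+1}|=|V_i|/C$, iteratively covering the edges incident to $V_i\setminus V_{i+1}$ via a cover-down lemma so that only a small leftover on $V_t$ remains for~$A$ to absorb.

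The hard part will be constructing the absorber at codegree $(2/3+\eps)n$. Tight cycles are rigid and rotationally symmetric, so no bounded-size local move can alter the edge set without breaking the cycle structure; transformers must be assembled from short tight paths glued together using the codegree condition. The $2/3$ threshold is precisely where the supply of tight paths between prescribed endpoint tuples becomes rich enough for this gluing to be carried out reliably, matching the lower bound known for $k=3$ from~\cite{PigaSanhuezaMatamala2021}. A secondary subtlety is that $C_\ell^{(k)}$-divisibility imposes both an edge-count condition modulo~$\ell$ and a degree condition modulo~$k$, and both must be tracked through the iteration; the new absorber criterion is presumably engineered to handle these two conditions in a unified way, bypassing the more restrictive routes used previously.
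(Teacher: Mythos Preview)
Your high-level skeleton---vortex, absorber, cover-down---matches the paper, but two substantive gaps remain.

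First, you misplace the $2/3$ threshold. Tight-path connections between prescribed ordered $(k-1)$-tuples require only $\delta^{(2)}(H)\ge\eps n$ (Lemma~\ref{lem:findcycle}), which holds already above codegree $1/2$. What actually forces $2/3$ is the absorber: the gadget constructions (Proposition~\ref{prop:k-1}, Lemma~\ref{lem:jswitch}) need $\delta^{(3)}(H)\ge\eps n$, i.e.\ any \emph{three} $(k-1)$-sets must share a common neighbour, and by inclusion--exclusion this requires codegree above $2/3$. Your description of transformers as ``two decompositions differing on a single edge~$e$'' is also not what the paper uses; the transformer here is a subgraph $T$ such that $T\cup G$ and $T\cup G'$ are both $C_\ell^{(k)}$-decomposable for two \emph{homomorphic} $k$-graphs $G,G'$. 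The real engine, which you do not mention, is the \emph{tour-trail decomposition}: any $C_\ell^{(k)}$-divisible leftover is augmented by carefully designed \firstgadget{} and \secondgadget{} gadgets until it admits a decomposition into closed tours (Lemma~\ref{lem:tourdecom}), and tours map edge-bijectively to cycles. Without this structural idea you have no concrete route from ``$\delta^{(3)}$ is positive'' to ``absorbers exist''.

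Second, your cover-down step is underspecified. A single nibble leaves a leftover that is globally sparse but may still be dense on some $(k-1)$-sets, which blocks the next iteration. The paper's Cover-down lemma (Lemma~\ref{lem:coverdownlemma}) handles this via the Joos--K\"uhn fractional $C_\ell^{(k)}$-decomposition result combined with a weighted nibble (Lemma~\ref{lem:wellbehaved}), and then---crucially---an \emph{induction on uniformity} through tight-path decompositions: \ref{statement:coverdown-k} is derived from \hyperref[statement:pathdec-k]{($\curvearrowright_{k-1}$)}, which in turn comes from \hyperref[statement:coverdown-k]{($\circledcirc_{k-1}$)}. You would need to supply this inductive scaffolding, not just invoke ``a nibble''.
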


The case when $k=3$ already appears  in~\cite{PigaSanhuezaMatamala2021}.
For $k \geq 4$, we do not know if the constant `$2/3$' appearing in Theorem~\ref{theorem:main} is best-possible.
We discuss lower bounds in Section~\ref{section:lowerbounds}.

In order to prove Theorem~\ref{theorem:main}, we also find the decomposition threshold for tight paths. 
Given $\ell > k \geq 2$, the \emph{$k$-uniform tight path on $\ell$ vertices}, denoted by $P^{\smash{(k)}}_\ell$, is the~$k$-graph whose vertices are $\{v_1, \dotsc, v_{\ell} \}$ and its edges are $\{ v_i, v_{i+1}, \dotsc, v_{i + k - 1} \}$ for all $i \in \{1, \dotsc, \ell-k+1\}$.
A $k$-graph $H$ is \emph{$P_\ell^{\smash{(k)}}$-divisible} if $|E(H)|$ is divisible by $|E(P_\ell^{\smash{(k)}})| = \ell - k+1$.
For $\ell > k \geq 2$, we naturally define the \emph{$P_{\ell}^{\smash{(k)}}$-decomposition threshold} $\delta_{\smash{P_\ell^{(k)}}}$ as the least $d > 0$ such that for every $\eps > 0$, there exists $n_0$ such that any $P_{\ell}^{\smash{(k)}}$-divisible $k$-graph $H$ on $n \geq n_0$ vertices with $\delta_{k-1}(H) \geq (d + \eps)n$ admits an $P_{\ell}^{\smash{(k)}}$-decomposition.
We prove that $\delta_{\smash{P_\ell^{(k)}}} = 1/2$.

\begin{theorem}\label{theorem:path}
    For every~$k\geq 3$ and $ \ell \ge k+1$, $\delta_{\smash{P_\ell^{(k)}}} = 1/2$.
\end{theorem}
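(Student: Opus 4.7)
The proof of Theorem~\ref{theorem:path} has two parts: the lower bound $\delta_{P_\ell^{(k)}} \ge 1/2$ and the upper bound $\delta_{P_\ell^{(k)}} \le 1/2$. For the lower bound, the plan is a space-barrier construction. For each $d < 1/2$ I would partition $V(H) = A \cup B$ with $|A|, |B| \approx n/2$, chosen so that $|E(H)|$ is divisible by $\ell - k + 1$, and take $H = K_n^{(k)} \setminus K_A^{(k)}$ (with a small adjustment to guarantee $P_\ell^{(k)}$-divisibility if needed). This gives $\delta_{k-1}(H) = |B| \approx n/2 > dn$, while a parity/imbalance argument tracking how a tight path of length $\ell$ distributes its vertices between $A$ and $B$ shows that the edge counts on the two sides cannot simultaneously be realised by a $P_\ell^{(k)}$-decomposition for appropriate choices of $|A|$ and $|B|$ modulo $\ell - k + 1$.

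For the upper bound, the plan is to apply the iterative absorption framework using the new absorber construction developed elsewhere in the paper. Fix $\eps > 0$ and let $H$ be a $P_\ell^{(k)}$-divisible $k$-graph on $n$ vertices with $\delta_{k-1}(H) \ge (1/2 + \eps) n$. I would proceed in four steps: first, randomly reserve a sparse subhypergraph $R \subseteq H$ for absorption, with $H \setminus R$ still having codegree $\ge (1/2 + \eps/2)n$; second, use the new absorber machinery to embed in $R$ a $P_\ell^{(k)}$-absorber $A$, that is, a subgraph that admits a $P_\ell^{(k)}$-decomposition and such that $A \cup L$ also admits one for every sufficiently small leftover $L$ with $|L| \equiv 0 \pmod{\ell - k + 1}$; third, apply a R\"odl-nibble or fractional-decomposition argument to $H \setminus (R \cup A)$ to produce a near-perfect $P_\ell^{(k)}$-decomposition, leaving a controlled leftover $L$; fourth, use the absorption property of $A$ to decompose $A \cup L$ and finish.

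The main obstacle is the second step. Classical absorber constructions typically require codegree comfortably above $1/2$, so the standard machinery fails at $(1/2 + \eps)n$; the paper's new absorber construction is explicitly tailored to this regime, and for $P_\ell^{(k)}$ it amounts to exhibiting switching gadgets whose edges admit two different $P_\ell^{(k)}$-decompositions, one of which incorporates a prescribed leftover. The fact that tight paths have endpoints, and that $P_\ell^{(k)}$-divisibility carries no vertex-degree component (only $\ell - k + 1 \mid |E(H)|$), makes these gadgets easier to construct than their tight-cycle analogues used in Theorem~\ref{theorem:main}. The nibble step is comparatively routine, since at codegree $(1/2+\eps)n$ every edge of $H$ lies in many copies of $P_\ell^{(k)}$, which is enough to run the standard fractional-decomposition and concentration arguments.
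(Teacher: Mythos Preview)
Both halves of your proposal have genuine gaps.

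\textbf{Lower bound.} Your construction $H = K_n^{(k)} \setminus K_A^{(k)}$ with $|A| \approx |B| \approx n/2$ does not work: it satisfies $\delta^{(2)}(H) \ge n/2 - k$ (the minimum is attained when one $(k-1)$-set lies in $A$, and then its neighbourhood is all of $B$), so by the paper's own statement $(\curvearrowright_k)$ this $H$ \emph{does} admit a $P_\ell^{(k)}$-decomposition whenever the divisibility holds. The unspecified ``parity/imbalance argument'' therefore cannot exist. The paper instead takes the disjoint union $K^{(k)}_{\lfloor n/2 \rfloor} \cup K^{(k)}_{\lceil n/2 \rceil}$ on parts $A,B$ and deletes a few edges so that $|H|$ is divisible by $\ell-k+1$ but $|H[A]|$ is not; since any tight path lies entirely in one part, no decomposition is possible, while $\delta_{k-1}(H) \ge (1/2 - o(1))n$.

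\textbf{Upper bound.} Your four-step plan omits the iterative part of iterative absorption. After a single nibble the leftover $L$ has $\Delta_{k-1}(L) = o(n)$ but still has polynomially many edges spread across all $n$ vertices; the absorbers built by the paper's machinery (and the ones you describe) handle only leftovers of \emph{bounded} size, so step~4 cannot absorb $L$. The paper fixes this with the Vortex lemma (a nested sequence $U_0 \supseteq \cdots \supseteq U_t$ with $|U_t| = O(1)$) and repeated applications of the Cover-down lemma $(\circledcirc_k)$ to push the uncovered edges into $H[U_t]$ before absorbing. Crucially, $(\circledcirc_k)$ is itself proved by induction on $k$: its last stage decomposes the link of each vertex $v \notin U$ using a $P_k^{(k-1)}$-decomposition guaranteed by $(\curvearrowright_{k-1})$, with the base case $(\curvearrowright_2)$ supplied by the graph path-decomposition theorem of Botler, Mota, Oshiro and Wakabayashi. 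This inductive interplay between $(\circledcirc_k)$ and $(\curvearrowright_{k-1})$ is the real engine of the proof and is absent from your outline. Your instinct that the path absorber is much simpler than the cycle one is correct, however: the paper's $(G,G';P_\ell^{(k)})$-transformer is just a union of tight paths of length divisible by $\ell$, one from each $e \in G$ to $\phi(e) \in G'$, and needs only $\delta^{(2)}(H) \ge \eps n$.
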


Using the techniques we apply to prove our main result, we can give bounds on $\delta^{(k)}_{\smash{\mathrm{cycle}}}$ and $\delta^{(k)}_{\smash{\mathrm{Euler}}}$, which in particular prove Conjectures~\ref{conj:GKO} and \ref{conj:GKO2} in a strong sense.
In fact, we also prove that both thresholds are always equal.

\begin{theorem}\label{cor:main}
    For all $k \ge 3$, $1/2 \le \delta^{(k)}_{\smash{\mathrm{Euler}}} = \delta^{(k)}_{\smash{\mathrm{cycle}}} \le \inf_{\ell > k} \{\delta_{\smash{C_\ell^{(k)}}}\} \le 2/3$.
\end{theorem}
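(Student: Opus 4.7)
The statement is a chain of inequalities, some of which are immediate. The lower bound $1/2 \le \delta^{(k)}_{\mathrm{Euler}}$ is already established in~\cite[Section 1.3]{GJKO2020}, and the upper bound $\inf_{\ell>k}\delta_{C_\ell^{(k)}} \le 2/3$ is immediate from Theorem~\ref{theorem:main}, which gives $\delta_{C_\ell^{(k)}} \le 2/3$ for every sufficiently large $\ell$. The substantive work is to verify $\delta^{(k)}_{\mathrm{cycle}} \le \inf_\ell \delta_{C_\ell^{(k)}}$ and the equality $\delta^{(k)}_{\mathrm{cycle}} = \delta^{(k)}_{\mathrm{Euler}}$.

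For $\delta^{(k)}_{\mathrm{cycle}} \le \inf_\ell \delta_{C_\ell^{(k)}}$, set $d := \inf_\ell \delta_{C_\ell^{(k)}}$, fix $\eps > 0$, and pick some $\ell > k$ with $\delta_{C_\ell^{(k)}} \le d + \eps/2$. Given a sufficiently large $H$ on $n$ vertices with $\delta_{k-1}(H) \ge (d + \eps)n$ and every degree divisible by $k$, the only obstruction to a direct application of the $C_\ell^{(k)}$-decomposition threshold is that $|E(H)|$ need not be divisible by $\ell$. I would find and remove one tight cycle $C \subseteq H$ of length $\ell + r$, where $r \in \{0,\dotsc,\ell-1\}$ is chosen so that $|E(H)| - (\ell+r) \equiv 0 \pmod{\ell}$; such a constant-length tight cycle exists already under the much weaker condition $\delta_{k-1}(H) \ge (1/2+\eps)n$ by a standard greedy argument (extend a tight path one vertex at a time, then close up with codegree to spare). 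The remainder $H - C$ still has all degrees divisible by $k$ (a tight cycle contributes exactly $k$ to each of its vertex's degrees), codegree decreased by at most a constant, and $|E(H-C)|$ divisible by $\ell$, so by the choice of $\ell$ it admits a $C_\ell^{(k)}$-decomposition; combined with $C$ this yields a cycle decomposition of $H$.

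For $\delta^{(k)}_{\mathrm{cycle}} = \delta^{(k)}_{\mathrm{Euler}}$, both directions need argument. For $\delta^{(k)}_{\mathrm{cycle}} \le \delta^{(k)}_{\mathrm{Euler}}$, given an Euler tour of $H$ I would view it as an Eulerian circuit in the derived digraph whose vertices are the ordered $(k-1)$-tuples of $V(H)$ and whose arcs correspond to edges of $H$, decompose this circuit into edge-disjoint simple directed cycles by the standard peeling argument for Eulerian digraphs, and refine each such simple cycle into tight cycles of $H$ with distinct underlying vertices; the codegree hypothesis is used to resolve repeated vertices via local rerouting through short tight paths in $H$. For the harder direction $\delta^{(k)}_{\mathrm{Euler}} \le \delta^{(k)}_{\mathrm{cycle}}$ I would reserve a small absorbing subgraph $A \subseteq H$ before applying the cycle decomposition to $H - A$, and then use $A$ to merge the cycles produced by the decomposition into a single closed tight trail: two edge-disjoint tight cycles sharing a common tight path on $k-1$ vertices can be spliced into one tight cycle, and $A$ is designed to supply such a shared $(k-1)$-path for each cycle in the decomposition. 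The main obstacle is the construction of $A$: it must be small enough that $H - A$ still satisfies the codegree hypothesis, yet flexible enough to connect to every output cycle regardless of its shape. This is exactly the purpose of the ``alternative absorber condition'' announced in the abstract and developed in the body of the paper, which I would apply here with the additional constraint that the absorber itself be absorbed into the final Euler tour.
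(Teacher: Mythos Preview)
Your argument for $\delta^{(k)}_{\mathrm{cycle}} \le \inf_{\ell} \delta_{C_\ell^{(k)}}$ is correct and is exactly what the paper does in Lemma~\ref{lemma:cycledecvsinf}. The two endpoints of the chain are also fine. The issues lie in both directions of the equality $\delta^{(k)}_{\mathrm{cycle}} = \delta^{(k)}_{\mathrm{Euler}}$, where your approaches differ from the paper's and each has a gap.

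\textbf{The direction $\delta^{(k)}_{\mathrm{cycle}} \le \delta^{(k)}_{\mathrm{Euler}}$.} Your plan is to take an Euler tour of~$H$, decompose the corresponding Eulerian circuit in the derived digraph on ordered $(k-1)$-tuples into simple directed cycles, and then ``refine'' each such cycle into tight cycles via rerouting. The gap is that a simple directed cycle in the derived digraph is only a \emph{tour} in~$H$ (no repeated ordered $(k-1)$-tuple, but vertices of~$H$ may repeat), not a tight cycle, and since the Euler tour already uses every edge of~$H$ there are no edges left with which to reroute. Turning an arbitrary tour decomposition into a tight-cycle decomposition on the same edge set is exactly the hard problem that the paper's transformer machinery is built to solve; it is not a local fix. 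The paper (Lemma~\ref{lem:euler2}) does not attempt this at all. Instead it reruns the full iterative absorption framework: vortex, cover-down, and absorber. The key new observation is that any leftover $F \subseteq H[U_t]$ with $\delta(F) \ge (\delta^{(k)}_{\mathrm{Euler}}+\eps)|U_t|$ admits an Euler tour, hence an edge-bijective homomorphism to a single tight cycle, so one can build a $(F,C;C_\ell^{(k)})$-transformer directly (as in the proof of Lemma~\ref{lem:transformer}) without invoking the tour-trail gadget machinery.

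\textbf{The direction $\delta^{(k)}_{\mathrm{Euler}} \le \delta^{(k)}_{\mathrm{cycle}}$.} Your idea of reserving a small subgraph~$A$ to splice the cycles of a decomposition of $H-A$ into a single tour is in the right spirit, but you do not say how to build~$A$ so that it meets every cycle the decomposition might output, and you appeal to the absorber condition, which is not what is used here. The paper's construction (Lemma~\ref{lem:euler1}) is concrete and avoids absorbers entirely: enumerate all ordered $(k-1)$-tuples as $\sigma_1,\dots,\sigma_t$ and use the Extending Lemma (Lemma~\ref{lemma:extending}) to find a sparse collection of edge-disjoint trails $P_i$ with ends $\sigma_i$ and $\sigma_{i+1}^{-1}$, whose concatenation is a single tour~$\mathcal{P}$ passing through every ordered $(k-1)$-tuple. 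One then cycle-decomposes $H-\mathcal{P}$ and attaches each resulting cycle to~$\mathcal{P}$ at a shared $(k-1)$-tuple, which exists precisely because $\mathcal{P}$ visits all of them. You have also reversed the difficulty assessment: this is the more elementary direction in the paper, while the one you call easier is the one requiring the heavier machinery.
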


\subsection{Proof ideas}\label{subsec:ideas}
Our proof uses the `iterative absorption' framework to tackle decomposition problems in hypergraphs; see~\cite{BGKLMO2020} for a introduction.
The proof of the main result (Theorem~\ref{theorem:main}) has three ingredients: an Absorber lemma, a Vortex lemma, and a Cover-down lemma.
The Vortex lemma gives a sequence of subsets $V(H) = U_0 \supseteq U_1 \supseteq \dotsb \supseteq U_t$, and $U_t$ has size independent of $n = |V(H)|$.
The Absorber lemma gives a small subgraph~$A \subseteq H$ such that for any $C^{\smash{(k)}}_\ell$-divisible leftover~$L\subseteq U_t$, the $k$-graph~$A \cup L$ has a cycle decomposition.
This reduces the problem to the search of a cycle packing in $H' = H - A$ which only has uncovered edges in $U_t$ (those can be later `absorbed' by~$A$).
This is found using the Cover-down lemma: in the $i$th step we find a collection of edge-disjoint cycles which covers all edges in $H'[U_i] - H'[U_{i+1}]$ but only uses few edges in $H'[U_{i+1}]$, this allows the process to be iterated.

Our proof of the Cover-down lemma requires a result of Joos and Kühn on `fractional decompositions'~\cite{JoosKuhn2021}, and a detour which finds and uses (tight) path decompositions.

Most of the work is required to prove the Absorber lemma.
We follow the approach of~\cite{PigaSanhuezaMatamala2021}, where absorbers are built by first finding `tour-trail decompositions' of the leftover graphs.
These decompositions consist of edge-disjoint subgraphs, each of which forms a tour or a trail.
It turns out that it is simple to build absorbers if the leftover can be decomposed into tours.
The goal is then to modify the leftover via the addition of \emph{gadgets}, these will suitably modify a given tour-trail decomposition in steps, so that at the end no trails remain.
We also prove an alternative condition for the existence of absorbers, see Lemma~\ref{lem:absorbing}, which should be of independent interest.

In this high-level description, this is the same outline used to find cycle decompositions when $k=3$ in~\cite{PigaSanhuezaMatamala2021}, but the proof for $k > 3$ requires several non-trivial modifications.
This is specially true in the construction of the absorbers, which is way more involved than in the $k=3$ case, and can be considered the main new contribution of the paper.

\subsection{Organisation}
In Section~\ref{section:lowerbounds} we give new lower bounds for the $C^{\smash{(k)}}_\ell$-decomposition threshold, for certain values of $k$ and~$\ell$.

In Section~\ref{section:transformers}, we establish a connection between the notion of \emph{transformers} and \emph{absorbers}. 
In Section~\ref{section:mainproof} we explain the iterative absorption method, including the statements of their key lemmata. 
At the end of this section we prove Theorem~\ref{theorem:main}.

Sections~\ref{section:vortex} to~\ref{section:coverdown} are devoted to the proofs of the lemmata used in the iterative absorption.
Section~\ref{section:vortex} contains the proof of the Vortex lemma.
The proof of the Absorber lemma is the main technical part of our paper, and its proof spans Sections~\ref{section:gadgets}, \ref{section:tourtrail}, and~\ref{section:transformer}.
The proof of the Cover-down lemma appears in Section~\ref{section:coverdown}.
We prove Theorem~\ref{theorem:path} in Section~\ref{subsec:pathdecom}.

In Section~\ref{section:euleriantours} we provide the necessary lemmata for the proof of Theorem~\ref{cor:main}. 
We finish in Section~\ref{section:remarks} with remarks and questions.

\subsection{Notation}\label{section:notation}
Let $[n] = \{ 1, \dots, n\}$. 
Since isolated vertices make no difference in our context, we usually do not distinguish from a hypergraph~$H=(V(H),E(H))$ and its set of edges~$E(H)$.
For a subset $U \subseteq V(H)$, we write $H \setminus U$ to mean the subgraph of~$H$ obtained by deleting vertices in~$U$.
We write $H[U] = H \setminus (V(H) \setminus U)$. 
For a $k$-graph~$G$ (not necessarily a subgraph of~$H$), we write $H-G = (V(H), E(H) \setminus E(G) )$.
We will suppress brackets and commas to refer to $k$-tuples of vertices when they are considered as edges of a hypergraph. 
For instance, for $v_1,\dots, v_k \in V(H)$, $v_1\dotsb v_k \in H$ means that the edge~$\{v_1,\dots, v_k\}$ is in~$E(H)$.
Whenever we have a set of vertices~$\{v_1,\dots, v_m\}$ indexed by an interval~$[m]$ any operation apply to the indices is considered to be modulo~$m$. 
For a vertex set $S \subseteq V(H)$, the \emph{neighbourhood~$N_H(S)$ of~$S$}  is 
the set of vertex sets $T \subseteq V(H) \setminus S$ such that $S \cup T \in H$.
Given $U \subseteq V(H)$, define~$N_H(S, U) = N_{H[S \cup U]}(S) $.
The degrees~$\deg_H(S)$ and~$\deg_H(S, U)$ correspond to $|N_H(S)|$ and $|N_H(S, U)|$, respectively.
We suppress $H$ if it can be deduced from context.

We also use the following notation.
Given $k \geq 2$ and $r \geq 1$, and a $k$-graph $H$, define~$\delta^{(r)}(H)$ to be the minimum of~$\vert N(e_1)\cap N(e_2)\cap \dotsb \cap N(e_r)\vert$ among all possible choices of $r$ different $(k-1)$-sets of vertices~$e_1, \dotsc, e_r$.
More generally, given a set of vertices~$U \subseteq V(H)$, we also define $\delta^{(r)}(H, U)$ as the minimum of~$\vert U \cap N(e_1)\cap N(e_2)\cap \dotsb \cap N(e_r)\vert$ among all possible choices of $r$ different $(k-1)$-sets of vertices~$e_1, \dotsc, e_r$.

We will use hierarchies in our statements.
The phrase ``$a \ll b$'' means ``for every~$b > 0$, there exists $a_0 > 0$, such that for all $0 < a \leq a_0$ the following statements hold''.
We implicitly assume all constants in such hierarchies are positive, and if $1/a$ appears we assume $a$ is an integer.

Suppose that Lemma~A states that a $k$-graph~$H$ contains a subgraph~$J$. 
We write `apply Lemma~A and obtain edge-disjoint subgraphs~$J_1, \dots, J_{\ell}$' to mean that `for each~$i \in [\ell]$, we apply Lemma~A to $H- \bigcup_{j \in[i-1]} {J_j}$ to obtain~$J_{i}$'.
Note that $H- \bigcup_{j \in[i-1]} {J_j}$ will also satisfy the condition of Lemma~A, but we will not check them explicitly. 
Furthermore, suppose that we have already found a subgraph~$H'$ of~$H$ and we say that `apply Lemma~A and obtain subgraph~$J$ such that $V(J) \setminus U$ are new vertices' to mean the `we apply Lemma~A to $H- (V(H') \setminus U)$ to obtain~$J$'.

\section{Lower bounds} \label{section:lowerbounds}

Given a $k$-graph $H$, let $\mathcal{C}_{\ell}(H)$ be the family of all~$C^{\smash{(k)}}_{\ell}$ in~$H$ and~$\mathcal C_\ell(H,e)$ the family of~$\ell$-cycles containing a fixed edge~$e\in H$. 
A \emph{fractional $C^{\smash{(k)}}_{\ell}$-decomposition} of~$H$ is a function $\omega: \mathcal{C}_{\ell}(H) \rightarrow [0,1]$ such that, for every edge $e \in H$, $\sum_{\substack{C \in \mathcal{C}_{\ell}(H,e)}} \omega(C) = 1$.
We define the \emph{fractional $C^{\smash{(k)}}_{\ell}$-decomposition threshold} $\delta^*_{\smash{C^{(k)}_{\ell}}}$ be the least $d > 0$ such that, for every $\eps > 0$, there exists $n_0$ such that any $k$-graph $H$ on $n \geq n_0$ vertices with~$\delta_{k-1}(H) \geq (d + \eps)n$ admits a fractional $C^{\smash{(k)}}_{\ell}$-decomposition.

Here, we give lower bounds on the parameter~$\delta^\ast_{C^{\smash{(k)}}_\ell}$.
Joos and Kühn~\cite{JoosKuhn2021} showed that~$\delta^*_{C^{\smash{(k)}}_\ell} \geq \frac{1}{2} + \frac{1}{(k-1 + 2/k)(\ell - 1)}$ holds for each $k \geq 2$ and $\ell$ not divisible by~$k$.
We give new bounds, which remove the dependency on~$k$.

\begin{proposition} \label{proposition:newlowerbound}
Let $1 \le i < k < \ell$ with $\ell$ not divisible by~$k$ and  $r  = k / \gcd (k,\ell)$.
Let $I_{\mathrm{free}} = \{ 0 \leq i \leq k : i \not\equiv 0 \bmod r \}$, $I_{\mathrm{odd}} = \{ 0 \leq i \leq k : i \not\equiv 0 \bmod 2 \}$ and~$I_{\mathrm{even}} = \{ 0 \leq i \leq k : i \equiv 0 \bmod 2 \}$.
Then
\begin{equation}
    \delta^\ast_{C^{(k)}_\ell} \geq 
    \frac{1}{2} + \frac{1}{2^{k}(\ell - 1)} \max 
    \left\lbrace \sum_{i \in I_{\mathrm{free}} \cap I_{\mathrm{odd}} } \binom{k}{i}, \sum_{i \in I_{\mathrm{free}} \cap I_{\mathrm{even}}} \binom{k}{i} \right\rbrace 
    \ge \frac12 + \frac1{4 (\ell -1)}.
    \label{equation:masterequation}
\end{equation}
\end{proposition}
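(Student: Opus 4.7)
I propose to establish the bound by constructing, for each $\alpha$ strictly smaller than $\frac{1}{2^k(\ell-1)}\max\{M_o, M_e\}$ (with $M_p:=\sum_{i\in I_{\mathrm{free}}\cap I_p}\binom{k}{i}$), a $k$-graph $H=H(n)$ on $n$ vertices such that $\delta_{k-1}(H)\geq(1/2+\alpha-o(1))n$ and $H$ admits no fractional $C^{(k)}_\ell$-decomposition. This immediately yields $\delta^*_{C^{(k)}_\ell}\geq 1/2+\alpha$. The second (weaker) bound $\geq 1/2+1/(4(\ell-1))$ in \eqref{equation:masterequation} then follows from $M_o+M_e=2^k-\sum_{i\equiv 0\pmod r}\binom{k}{i}\geq 2^k(1-1/r)\geq 2^{k-1}$ (since $r\geq 2$), giving $\max\{M_o,M_e\}\geq 2^{k-2}$.

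The construction fixes a balanced bipartition $V=A\sqcup B$ with $|A|=|B|=n/2$; both the edge-set of $H$ and the dual weight of each edge will depend only on the \emph{type} $\tau(e):=|e\cap A|\in\{0,\ldots,k\}$. Non-existence of a fractional $C^{(k)}_\ell$-decomposition is certified via LP duality by a weight $y:E(H)\to\mathbb{R}_{\geq 0}$ with $\sum_{e\in C}y_e\geq 1$ for every tight $C^{(k)}_\ell\subseteq H$ and $\sum_e y_e<|E(H)|/\ell$. Setting $y_e=w(\tau(e))$ for a weight function $w:\{0,\ldots,k\}\to\mathbb{R}_{\geq 0}$, the dual objective asymptotes to $\binom{n}{k}\sum_{i=0}^{k}w(i)\binom{k}{i}/2^k$, i.e.\ the ``expected weight'' of an edge drawn uniformly from $\binom{V}{k}$, with type-$i$ edges appearing with density $\binom{k}{i}/2^k$ (the Bernoulli model on $A/B$). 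The problem thus reduces to a finite LP in $w$.

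The combinatorial crux is an analysis of admissible \emph{type sequences} of tight cycles: for a tight $C^{(k)}_\ell$ with cyclic vertex sequence $v_1\ldots v_\ell$ and $s_j=|\{v_j,\ldots,v_{j+k-1}\}\cap A|$, one has $s_{j+1}-s_j\in\{-1,0,+1\}$, $(s_j)$ is $\ell$-periodic, and $\sum_{j=1}^{\ell}s_j=k|V(C)\cap A|$ is a multiple of $k$, hence of $r=k/\gcd(k,\ell)$. A constant type-$s$ cycle exists iff $\ell s\equiv 0\pmod{k}$, i.e., iff $s\in I_{\mathrm{nf}}:=\{0,r,2r,\ldots\}\cap\{0,\ldots,k\}$. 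Thus the uniform constraint $w(s)\geq 1/\ell$ is forced only on $I_{\mathrm{nf}}$; on $I_{\mathrm{free}}$ we may push $w$ strictly below $1/\ell$, contributing the savings that give the bound. However, cycles oscillating between two consecutive types $s,s+1$ add constraints of the form $w(s)+w(s+1)\geq 2/\ell$, and because $s, s+1$ have different parities, only one parity-class inside $I_{\mathrm{free}}$ can be simultaneously pushed down. Optimising $w$ subject to all such constraints, and then choosing whichever of $I_{\mathrm{odd}}$ or $I_{\mathrm{even}}$ concentrates more of the $I_{\mathrm{free}}$-edge mass, gives the precise gap in \eqref{equation:masterequation}, with the $(\ell-1)^{-1}$ factor reflecting that each binding cycle still contains $\ell$ edges, all but at most one of which receive the reduced weight.

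\emph{Main obstacle.} The main difficulty is the combined design of $H$ (to guarantee $\delta_{k-1}(H)\geq(1/2+\alpha)n$ with $\alpha>0$) and the weight $w$ (to certify non-decomposability). A naïve type-based construction, defined purely by $\tau(e)\in I_0$ for some $I_0\subseteq\{0,\ldots,k\}$, tends to pin $\delta_{k-1}$ at exactly $n/2$, so a genuine gain above $n/2$ will plausibly require a small imbalance in $|A|$ and $|B|$, or additional auxiliary structure on top of the type pattern. Once the construction is in place, one still needs a careful enumeration of admissible cycle type-sequences (i.e.\ walks on $\{0,\ldots,k\}$ consistent with periodicity, the sum constraint, and $s_j\in I_0$) in order to verify feasibility of $w$ against all cycles, not just the constant and near-constant ones that determine the extremal weight. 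The parity-case split is then what converts this feasibility analysis into the precise factor $M/2^k(\ell-1)$ appearing in the bound.
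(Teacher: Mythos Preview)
Your LP-duality framework and the type/parity analysis are sound and, once the right construction is in hand, essentially equivalent to the paper's direct counting argument. But the proposal has a genuine gap at exactly the point you flag as the ``main obstacle'': you do not actually specify the host $k$-graph $H$ that achieves $\delta_{k-1}(H)\geq(1/2+\alpha)n$ with $\alpha>0$. Your two suggested fixes (imbalancing $|A|,|B|$, or unspecified ``auxiliary structure'') are not developed, and imbalancing alone does not produce the required gain. The paper's construction is this: take \emph{all} edges of one parity class, say $H_{\mathrm{odd}}=\bigcup_{i\in I_{\mathrm{odd}}}H_i$, which already has $\delta_{k-1}\approx n/2$; then add a random subgraph $H^{*}_{\mathrm{even}}\subseteq H_{\mathrm{even}}$ of density $p\approx \tfrac{2|H'_{\mathrm{odd}}|}{(\ell-1)\binom{n}{k}}$ (where $H'_{\mathrm{odd}}$ collects the ``free'' odd types). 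This boosts the codegree to roughly $(1+p)n/2$, while $|H^{*}_{\mathrm{even}}|$ is kept just below $|H'_{\mathrm{odd}}|/(\ell-1)$. The key structural fact (your constant-type observation, the paper's Proposition~\ref{prop:admissiblescycle}) is that within $H_{\mathrm{odd}}$ alone, no edge of $H'_{\mathrm{odd}}$ lies in any $C^{(k)}_\ell$, because tight paths inside $H_{\mathrm{odd}}$ are type-constant. Hence every $\ell$-cycle through $H'_{\mathrm{odd}}$ consumes at least one edge of $H^{*}_{\mathrm{even}}$, and the counting (equivalently, your dual certificate with $y_e=1$ on $H^{*}_{\mathrm{even}}$, $y_e=0$ on $H'_{\mathrm{odd}}$, $y_e=1/\ell$ elsewhere) finishes it. Without this random-sparse-addition idea, the proof does not go through.

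There is also a small error in your derivation of the second inequality. The estimate $\sum_{i\equiv 0\bmod r}\binom{k}{i}\leq 2^k/r$ is false in general (e.g.\ $k=6$, $r=3$ gives $1+20+1=22>64/3$). The paper instead uses the injection $X\mapsto X\triangle\{1\}$ from $\{X\subseteq[k]:r\mid|X|\}$ into its complement to get $\sum_{i\equiv 0\bmod r}\binom{k}{i}\leq 2^{k-1}$, whence $M_o+M_e\geq 2^{k-1}$ and $\max\{M_o,M_e\}\geq 2^{k-2}$.
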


Our constructions are based on~\cite[Proposition~3.1]{HanLoSanhuezaMatamala2021}.
Given vertex-disjoint sets $A, B$ and $0 \leq i \leq k$, we let  $H^{\smash{(k)}}_{i}(A,B)$ be the $k$-graph on $A \cup B$ such that $e \in H_i^{\smash{(k)}}(A,B)$ if and only if $|e \cap B| = i$.
We need the following observation. 
 
\begin{proposition} \label{prop:admissiblescycle}
Let $1 \le i < k < \ell$ and $d  = \gcd (k,\ell)$. 
Let $A$ and $B$ be disjoint vertex sets, and let $H_i = H^{(k)}_{\smash{i}}(A,B)$.
Then $H_i$ is $C^{\smash{(k)}}_{\ell}$-free for all $k-i \not\equiv 0 \bmod{k / d}$.
\end{proposition}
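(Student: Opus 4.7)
The plan is to argue by contradiction, encoding the $A$/$B$-membership around a putative copy of $C^{(k)}_{\ell}$ as a binary sequence on $\mathbb{Z}/\ell\mathbb{Z}$ and then exploiting the fact that each window of length $k$ must contain the same number of $A$-vertices. This will force an arithmetic constraint on $k-i$ modulo $k/d$.

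More precisely, suppose $H_i$ contains a copy of $C^{(k)}_\ell$ with vertex sequence $v_1,\dotsc,v_\ell$ (indices understood modulo $\ell$), and define $a_j = \mathbf{1}[v_j \in A]$. Since every edge $\{v_j, v_{j+1}, \dotsc, v_{j+k-1}\}$ belongs to $H_i$, it must satisfy $\sum_{s=0}^{k-1} a_{j+s} = k-i$ for all $j \in \mathbb{Z}/\ell\mathbb{Z}$. Comparing this identity for index $j$ with that for index $j+1$ immediately gives $a_{j+k}=a_j$ for every $j$. Thus the sequence $(a_j)_{j\in\mathbb{Z}/\ell\mathbb{Z}}$ is invariant under the shift by $k$, and therefore under the shift by any element of the subgroup of $\mathbb{Z}/\ell\mathbb{Z}$ generated by $k$; since this subgroup is exactly $(d)$ where $d=\gcd(k,\ell)$, the sequence is actually periodic of period (dividing) $d$.

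The final step is a counting one: let $t$ denote the number of $A$-vertices in any block of $d$ consecutive positions, which is well defined by the periodicity just established. Because $d \mid k$, every edge is the union of $r = k/d$ consecutive blocks of length $d$, each contributing exactly $t$ to its $A$-count. Therefore $k-i = rt$, which forces $r \mid (k-i)$, i.e.\ $k-i \equiv 0 \pmod{k/d}$. This contradicts the hypothesis $k - i \not\equiv 0 \pmod{k/d}$, so no copy of $C^{(k)}_\ell$ can exist in $H_i$.

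The argument is quite short; the only mildly delicate point is justifying the passage from ``shift by $k$ is a symmetry'' to ``shift by $d$ is a symmetry''. This uses the standard fact that $k$ generates the subgroup $(d) \leq \mathbb{Z}/\ell\mathbb{Z}$, which is what makes $d = \gcd(k,\ell)$ rather than $k$ itself the relevant period. Once this is clear, the rest is a direct counting identity, and no case analysis on $i$ is needed.
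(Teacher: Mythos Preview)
Your proof is correct and follows essentially the same approach as the paper: encode $A$/$B$-membership along the cycle, observe that consecutive edges force $a_{j+k}=a_j$, deduce period $d=\gcd(k,\ell)$ via the subgroup generated by $k$ in $\mathbb{Z}/\ell\mathbb{Z}$, and then count $A$-vertices in an edge as a multiple of $k/d$. The paper phrases the periodicity step slightly differently (noting that adjacent edges swap one vertex for another in the same class), but the argument is the same.
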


\begin{proof}
Suppose $\ell > k$ is such that $v_1 \dotsb v_{\ell}$ are the vertices of a copy of $C^{\smash{(k)}}_\ell$ in $H_i$.
We shall show that $k/d$ divides $k-i$.
For all $j \in [\ell]$, let $\phi_j \in \{A, B\}$ be such that $v_j \in \phi_j$ and let $\phi_{\ell+j} = \phi_j$.
Moreover, if two edges~$e_1,e_2\in E(H_i)$ satisfy~$|e_1\cap e_2|=k-1$, then the two vertices~$u\in e_1\setminus e_2$ and~$v\in e_2\setminus e_1$ belong to the same vertex-class~$A$ or~$B$.
In particular, $\phi_j = \phi_{j +k}$ for all $j \in [\ell]$.
Hence,~$\phi_{j+d} = \phi_j$ for all $ j \in [\ell]$.
Thus
\begin{align*}
	k-i = | \{v_1,\dots,v_k\} \cap A  | = | \{ j \in [k] : \phi_j = A \}| \in \{ k/d, 2k/d, \dots, k\}
\end{align*}
as required.
\end{proof}

We say a $k$-graph~$H$ on $n$ vertices admits an \emph{$\eta$-approximate $F$-decomposition} if it has a collection of edge-disjoint copies of $F$ covering all but $\eta n^k$ edges.
By a result of R\"odl, Schacht, Siggers, and Tokushige~\cite{RSST2007}, any bound on the codegree of $k$-graphs not containing $\eta$-approximate decompositions, for arbitrary small $\eta$, is essentially equivalent to bounding the corresponding numbers for fractional $C^{\smash{(k)}}_\ell$-decomposition.
Thus, we will focus on the former.

\begin{proof}[Proof of Proposition~\ref{proposition:newlowerbound}]
Let $n$ be sufficiently large.
Let $A$ and $B$ be disjoint vertex sets each of size~$\lfloor n/2 \rfloor$ and $\lceil n/2 \rceil$ respectively.
For each $0 \leq i \leq k$, let $H_i = H_i^{\smash{(k)}}(A,B)$.
Note that
\begin{align}
    \frac{|H_i|}{\binom{n}{k}} = \frac{1}{2^k} \binom{k}{i} + o(1)\,.
    \label{equation:binomial}
\end{align}

Let $d = \gcd(k, \ell)$.
Since $\ell$ is not divisible by $k$, then $r = k/d > 1$.
It holds that~$k - i \not\equiv 0 \bmod r$ if and only if $i \not\equiv 0 \bmod r$.
Let $H_{\text{odd}}\!=\!\bigcup_{i \in I_{\text{odd}}}\!H_i$ and $H_{\text{even}}\!=\! \bigcup_{i \in I_{\text{even}}}\!H_i$.
Note that~$\delta(H_{\text{odd}}), \delta(H_{\text{even}})\!\ge\! n/2 - k$.
From \eqref{equation:binomial} it follows that both $|H_{\text{odd}}|$ and $ |H_{\text{even}}|$ have size $(\frac{1}{2}+o(1)) \binom{n}{k}$.

Let $H'_{\text{odd}} = \bigcup_{i \in I_{\text{odd}} \cap I_{\text{free}} } H_i$ and $H'_{\text{even}} = \bigcup_{i \in I_{\text{even}} \cap I_{\text{free}} } H_i$.
Note that by \eqref{equation:binomial}, we have
\begin{equation}
 \frac{|H'_{\text{odd}}|}{\binom{n}{k}} 
 =
 \frac{1}{2^k} \sum_{i \in I_{\text{odd}} \cap I_{\text{free}}} \binom{k}{i} + o(1)
 \quad \text{and} \quad 
 \frac{|H'_{\text{even}}|}{\binom{n}{k}} 
 =
 \frac{1}{2^k} \sum_{i \in I_{\text{even}} \cap I_{\text{free}}} \binom{k}{i} + o(1).
 \label{equation:lowerbound-sizeshodd}
\end{equation}

Observe that given odd numbers~$i\neq j$ there is no tight path in $H_{\text{odd}}$ connecting an edge from~$H_i$ with an edge of~$H_j$. 
Therefore, Proposition~\ref{prop:admissiblescycle} yields that no edge in~$H'_{\text{odd}}$ is contained in a copy of~$C_\ell^{\smash{(k)}}$ in $H_{\text{odd}}$.
Let 
\begin{align*}
p =  \frac{2|H'_{\text{odd}}|}{(\ell-1) \binom{n}k},
\end{align*}
and suppose $\eta > 0$ is given.
Consider $H^*_{\text{even}}$ to be a sub-$k$-graph of~$H_{\text{even}}$ such that~$\delta (H^*_{\text{even}}) \ge (p-4 \eta) n/2 $ and 
\begin{align}
 |H^*_{\text{even}}| \le (p - 2.1 \eta ) |H_{\text{even}}|  < |H'_{\text{odd}}| / (\ell-1) - \eta n^k. \label{eqn:Heven}
\end{align} 
Such a sub-$k$-graph can be obtained by taking random edges from~$H_{\text{even}}$ independently with probability~$p - 3\eta $.

We claim $H=H_{\text{odd}} \cup H^*_{\text{even}}$ does not admit an~$\eta$-approximate $C_\ell^{\smash{(k)}}$-decomposition.
Since no edge of $H'_{\text{odd}}$ is contained in a copy of~$C_\ell^{\smash{(k)}}$ in~$H_{\text{odd}}$, each $C_\ell^{\smash{(k)}}$ containing an edge in~$H'_{\text{odd}}$ must contain at least one edge in~$H^*_{\text{even}}$.
Therefore, if $H$ contains an~$\eta$-approximate $C_\ell^{\smash{(k)}}$-decomposition, then we have 
\begin{align*}
(\ell-1) |H^*_{\text{even}}| \ge |H'_{\text{odd}}|-\eta |H| \ge |H'_{\text{odd}}| - \eta n^k, 
\end{align*}
contradicting~\eqref{eqn:Heven}.
Note that $\delta(H) \ge \delta(H_{\text{odd}}) + \delta(H^*_{\text{even}}) 
\ge (1+p - 4.5 \eta)n/2$.
Therefore, from \eqref{equation:lowerbound-sizeshodd}, letting $n$ tend to infinity, and $\eta$ tend to zero, we deduce that
\[ \delta^\ast_{C^{(k)}_\ell} 
\geq
\lim_{n \rightarrow \infty} \frac{1}{2}(1 + p) = \frac{1}{2} + \frac{1}{2^{k}(\ell - 1)} \sum_{i \in I_{\text{odd}} \cap I_{\text{free}}} \binom{k}{i}. \]
An analogous construction, selecting $H^\ast_{\text{odd}} \subseteq H_{\text{odd}}$ as a random set of the appropriate size with respect to $H'_{\text{even}}$, gives that
\[ \delta^\ast_{C^{(k)}_\ell} \geq \frac{1}{2} + \frac{1}{2^{k}(\ell - 1)} \sum_{i \in I_{\text{even}} \cap I_{\text{free}}} \binom{k}{i}, \]
and therefore we have
\begin{align*}
    \delta^\ast_{C^{(k)}_\ell} \geq \frac{1}{2} + \frac{1}{2^{k}(\ell - 1)} \max \left\lbrace \sum_{i \in I_{\text{odd}} \cap I_{\text{free}}} \binom{k}{i}, \sum_{i \in I_{\text{even}} \cap I_{\text{free}}} \binom{k}{i} \right\rbrace\,,
\end{align*}
which gives the first inequality of \eqref{equation:masterequation}.

To bound this last term, note that
\begin{align*}
    \sum_{i \in I_{\text{odd}} \cap I_{\text{free}}} \binom{k}{i} + \sum_{i \in I_{\text{even}} \cap I_{\text{free}}} \binom{k}{i}
    & = \sum_{1 \leq i \leq k, i \not\equiv 0 \bmod r} \binom{k}{i} \geq 2^{k-1}.
\end{align*} 
The last inequality follows since $\sum_{i \in [k] \colon i \equiv 0 \bmod r} \binom{k}{i}$ counts the number of sets of $[k]$ of size divisible by $r$,
and we recall that $r > 1$.
If $\mathcal{P}_r \subseteq \mathcal{P}([k])$ is that family, then $X \mapsto X \triangle \{1\}$ is an injection from $\mathcal{P}_r$ to $\mathcal{P}([k]) \setminus \mathcal{P}_r$, and thus $|\mathcal{P}_r| \leq |\mathcal{P}([k])|/2 = 2^{k-1}$.
We deduce that $\max \left\lbrace \sum_{i \in I_{\text{odd}} \cap I_{\text{free}}} \binom{k}{i}, \sum_{i \in I_{\text{even}} \cap I_{\text{free}}} \binom{k}{i} \right\rbrace \geq 2^{k-2}$,
which then yields
\[ \delta^\ast_{C^{(k)}_\ell} \geq \frac{1}{2} + \frac{1}{4(\ell - 1)}, \]
as desired.
\end{proof}

We can get better bounds for some choices of $k$ and $\ell$ by looking at \eqref{equation:masterequation} in detail.

\begin{corollary} \label{cor:newlowerbound}
Let $3 \leq k < \ell$ with $\ell \not\equiv 0 \bmod k$.
Then
\begin{align*}
 \delta^\ast_{C^{(k)}_{\ell}} \geq \begin{cases}
    \frac{1}{2} + \frac{1}{2(\ell - 1)} & \text{if $k/\gcd(\ell, k)$ is even}, \\
    \frac{1}{2} + \frac{1 - 2^{-k}}{2(\ell - 1)} & \text{if $\gcd(\ell, k) = 1$ and $k$ is odd}.
\end{cases} 
\end{align*}
\end{corollary}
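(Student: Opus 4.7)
The plan is simply to unpack the bound from Proposition~\ref{proposition:newlowerbound} in the two specified regimes, by computing the maximum of the two sums over $I_{\mathrm{free}} \cap I_{\mathrm{odd}}$ and $I_{\mathrm{free}} \cap I_{\mathrm{even}}$ explicitly. Recall $r = k / \gcd(k,\ell)$ and $I_{\mathrm{free}} = \{ 0 \le i \le k : i \not\equiv 0 \bmod r \}$, and note $r > 1$ since $\ell \not\equiv 0 \bmod k$.

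For the first case, where $r$ is even, every multiple of $r$ in $\{0,1,\dotsc,k\}$ is automatically even, so no odd index is excluded from $I_{\mathrm{free}}$; that is, $I_{\mathrm{free}} \cap I_{\mathrm{odd}} = I_{\mathrm{odd}}$. Thus
\[
\sum_{i \in I_{\mathrm{free}} \cap I_{\mathrm{odd}}} \binom{k}{i} = \sum_{i \text{ odd}} \binom{k}{i} = 2^{k-1},
\]
and plugging this into \eqref{equation:masterequation} yields $\delta^\ast_{C^{(k)}_\ell} \ge \tfrac12 + \tfrac{1}{2(\ell-1)}$.

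For the second case, $\gcd(\ell,k) = 1$ forces $r = k$, so $I_{\mathrm{free}} = \{1,2,\dotsc,k-1\}$; since $k$ is odd, the only odd multiple of $r$ removed is $i=k$, and the only even multiple removed is $i=0$. Hence
\[
\sum_{i \in I_{\mathrm{free}} \cap I_{\mathrm{odd}}} \binom{k}{i} = \sum_{i \text{ odd}} \binom{k}{i} - \binom{k}{k} = 2^{k-1} - 1,
\]
and by the same argument $\sum_{i \in I_{\mathrm{free}} \cap I_{\mathrm{even}}} \binom{k}{i} = 2^{k-1} - \binom{k}{0} = 2^{k-1} - 1$. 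Substituting the common value $2^{k-1} - 1$ into \eqref{equation:masterequation} gives $\delta^\ast_{C^{(k)}_\ell} \ge \tfrac12 + \tfrac{2^{k-1}-1}{2^k(\ell-1)} = \tfrac12 + \tfrac{1 - 2^{-k}}{2(\ell-1)}$.

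There is no real obstacle: the whole corollary is a bookkeeping exercise once Proposition~\ref{proposition:newlowerbound} is in hand. The only point to watch is the parity/divisibility counting, namely that when $r$ is even one retains the full half-sum $2^{k-1}$ of odd binomial coefficients, while when $r = k$ with $k$ odd one must subtract precisely one term ($\binom{k}{k}$ or $\binom{k}{0}$) from the corresponding half-sum.
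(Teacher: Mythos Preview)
Your proof is correct and follows essentially the same approach as the paper's: both arguments reduce to evaluating the sums in \eqref{equation:masterequation} in the two regimes, noting $I_{\mathrm{odd}}\cap I_{\mathrm{free}}=I_{\mathrm{odd}}$ when $r$ is even, and $I_{\mathrm{free}}=\{1,\dots,k-1\}$ when $r=k$. The only cosmetic difference is that in the second case the paper bounds the maximum via the total $\sum_{i\in I_{\mathrm{free}}}\binom{k}{i}=2^k-2$ (so the max is at least $2^{k-1}-1$), whereas you compute each half-sum exactly using that $k$ is odd; both yield the same value.
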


\begin{proof}
    Let $d = \gcd(k, \ell)$ and $k = dr$.
    If $r$ is even, then $I_{\text{odd}} \cap I_{\text{free}} = I_{\text{odd}}$.
    Therefore
    $\sum_{i \in I_{\text{odd}} \cap I_{\text{free}}} \binom{k}{i} = \sum_{i \in I_{\text{odd}}} \binom{k}{i} = 2^{k-1}$,
    so $\delta^\ast_{\smash{C^{(k)}_{\ell}}} \geq \frac{1}{2} + \frac{1}{2(\ell - 1)}$ follows from Proposition~\ref{proposition:newlowerbound}.
    
    If $d = 1$, then $r = k$,
    and therefore $I_{\text{free}} = [k-1]$.
    This implies $\sum_{i \in  I_{\text{free}}} \binom{k}{i} = 2^k - 2$ and therefore
    $\max \left\lbrace \sum_{i \in I_{\text{odd}} \cap I_{\text{free}}} \binom{k}{i}, \sum_{i \in I_{\text{even}} \cap I_{\text{free}}} \binom{k}{i} \right\rbrace \geq 2^{k-1} - 1$,
    then $\delta^\ast_{C^{\smash{(k)}}_\ell} \geq \frac{1}{2} + \frac{1 - 2^{-k}}{2(\ell - 1)}$ again follows from Proposition~\ref{proposition:newlowerbound}.
\end{proof}

Finally, we can get bounds for the non-fractional thresholds~$\delta_{C^{\smash{(k)}}_\ell}$ by modifying the~$k$-graphs we construct in the proof of Proposition~\ref{proposition:newlowerbound} in such a way that they also are $C^{\smash{(k)}}_\ell$-divisible.
By removing at most $\ell - 1$ edges it is easy to make the total number of edges divisible by~$\ell$, so the only real challenge is to make every degree divisible by~$k$.
We prove later (Corollary~\ref{corollary:degreeadjuster}) that, for each $\eps > 0$ (assuming $n$ sufficiently large), we can find $F \subseteq H$ whose number of edges is divisible by $\ell$, $\delta_{k-1}(H - F) \geq \delta_{k-1}(H) - \eps n$,
and for each $v \in V(H)$, $\deg_{H - F}(v) \equiv 0 \bmod k$.
Thus the graphs we construct in Proposition~\ref{proposition:newlowerbound} can be modified to be $C^{\smash{(k)}}_\ell$-divisible, which implies the following bounds:

\begin{corollary} \label{cor:lowerbound}
    For all $3 \leq k < \ell$ and $\ell$ not divisible by $k$,
    \begin{enumerate}[label=\upshape{(\roman*)}]
        \item $\delta_{C^{\smash{(k)}}_\ell} \geq \frac{1}{2} + \frac{1}{4(\ell - 1)}$,
        \item if $k / \gcd(\ell, k)$ is even, then $\delta_{C^{\smash{(k)}}_\ell} \geq \frac{1}{2} + \frac{1}{2(\ell - 1)}$, and
        \item if $k / \gcd(\ell, k) = 1$ and $\ell$ is odd, then $\delta_{C^{\smash{(k)}}_\ell} \geq \frac{1}{2} + \frac{1 - 2^{-k}}{2(\ell - 1)}$.
    \end{enumerate}
\end{corollary}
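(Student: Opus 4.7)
The plan is to reuse the extremal constructions from Proposition~\ref{proposition:newlowerbound} (and the sharpenings in Corollary~\ref{cor:newlowerbound}) and modify them so that they additionally become $C^{(k)}_\ell$-divisible, with only a negligible loss in minimum codegree. Since the graphs built in Proposition~\ref{proposition:newlowerbound} provably fail to admit even $\eta$-approximate $C^{(k)}_\ell$-decompositions for some fixed $\eta>0$, any sufficiently small perturbation of them will continue to admit no exact $C^{(k)}_\ell$-decomposition; thus the only real task is to meet the divisibility conditions.

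Fix $\eps>0$ and take $n$ large. For each of (i)--(iii) I would take the corresponding $k$-graph $H = H_{\text{odd}} \cup H^{\ast}_{\text{even}}$ from the proof of Proposition~\ref{proposition:newlowerbound} (or its symmetric analogue witnessing the bound from Corollary~\ref{cor:newlowerbound}) and first delete at most $\ell-1$ edges so that $|E(H)|$ is divisible by $\ell$. Next, I would invoke Corollary~\ref{corollary:degreeadjuster} with parameter $\eps/2$, obtaining a subgraph $F\subseteq H$ with $|E(F)|$ divisible by~$\ell$, $\deg_{H-F}(v)\equiv 0 \pmod k$ for every $v\in V(H)$, and $\delta_{k-1}(H-F) \geq \delta_{k-1}(H) - (\eps/2)n$. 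Writing $H' := H-F$ (after the initial trimming), we obtain a $C^{(k)}_\ell$-divisible $k$-graph with $\delta_{k-1}(H') \geq \delta_{k-1}(H) - \eps n$.

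It remains to argue that $H'$ still admits no exact $C^{(k)}_\ell$-decomposition. The removal of $F$ together with the $\ell-1$ initial edges affects only $o(n^k)$ edges, which is much smaller than the error term $\eta n^k$ appearing in the proof of Proposition~\ref{proposition:newlowerbound}. Hence, an exact $C^{(k)}_\ell$-decomposition of $H'$ would yield an $\eta$-approximate $C^{(k)}_\ell$-decomposition of the original $H$ simply by declaring the removed edges uncovered, contradicting the conclusion of that proof. Letting $\eps \to 0$ then produces the claimed lower bounds in (i)--(iii) by reading off the respective values of $p$ from Proposition~\ref{proposition:newlowerbound} and Corollary~\ref{cor:newlowerbound}.

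The main mild obstacle is bookkeeping: one must verify that the initial edge deletions do not disturb the hypotheses of Corollary~\ref{corollary:degreeadjuster}, and that choosing $|E(F)|$ divisible by $\ell$ preserves the divisibility of $|E(H')|$ by $\ell$. Both points follow from the large codegree of $H$ and the explicit divisibility condition built into Corollary~\ref{corollary:degreeadjuster}; no new ideas beyond what is already in the excerpt are needed.
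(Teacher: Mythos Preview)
Your proposal is correct and follows essentially the same approach as the paper: take the extremal $k$-graphs from Proposition~\ref{proposition:newlowerbound}, trim at most $\ell-1$ edges for the global divisibility, then apply Corollary~\ref{corollary:degreeadjuster} to fix the vertex-degrees modulo~$k$ while losing only $o(n)$ in codegree, and observe that the perturbed graph still fails to be $C^{(k)}_\ell$-decomposable because the original failed $\eta$-approximately. The paper presents exactly this argument in the paragraph preceding the corollary, and your bookkeeping remarks about the hypotheses of Corollary~\ref{corollary:degreeadjuster} (which needs $\delta^{(2)}(H)\ge \eps n$, guaranteed here since $\delta(H)>n/2$) and about preserving $|E(H')|\equiv 0\bmod \ell$ are precisely the small checks the paper leaves implicit.
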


\section{Absorbers versus transformers} \label{section:transformers}

In this section, we introduce absorbers and transformers, which are essential tools in the iterative absorption technique.
We prove that the existence of absorbers is essentially equivalent to the existence of transformers, and we work with the latter concept in the rest of the paper.
We state our results in a general fashion, that is, for~$F$-decompositions into general hypergraphs, not just cycles.

Given a~$k$-graph $F$ and~$0\leq i < k$ let~$\div_i(F)=\gcd\{\deg_F(S)\colon S\in\binom{V(F)}{i}\}$.
We say that a~$k$-graph $H$ is~\emph{$F$-divisible} if $\deg_H(S)$ is divisible by~$\div_{|S|}(S)$ for every subset~$S$ of~$V(H)$ on at most~$k-1$ vertices.
It is not hard to check that this definition in fact generalises the notions of~$C_\ell^{\smash{(k)}}$-divisible and $P_\ell^{\smash{(k)}}$-divisible introduced in Section~\ref{sec:intro} and that being~$F$-divisible is a necessary condition for the existence of an~$F$-decomposition.
As before, this condition is not sufficient in general and hence we define the \emph{$F$-decomposition threshold}~$\delta_F$ to be the least $d > 0$ such that for every $\eps > 0$, there exists $n_0$ such that any $F$-divisible $k$-graph $H$ on $n \geq n_0$ vertices with $\delta_{k-1}(H) \geq (d + \eps)n$ admits an $F$-decomposition.

Let $F$ and $G$ be $k$-graphs.
We say that a $k$-graph~$A$ is an \emph{$F$-absorber for~$G$} if both~$A$ and $A\cup G$ have $F$-decompositions and $A [V(G) ] = \emptyset$. 
Note that if there is an $F$-absorber for~$G$, then $G$ is $F$-divisible. 
The following definition describes $k$-graphs containing absorbers in a robust way. 

\begin{definition}\label{def:abs}\rm
Let $\eta: \mathbb{N} \rightarrow \mathbb{N}$ be a function.
We say that a $k$-graph~$H$ on $n$ vertices is \emph{$(F, m_G, m_W,\eta)$-absorbing} if, for all $F$-divisible subgraphs~$G$ of~$H$ with~$|V(G)| \le m_G$ and~$W \subseteq V(H)\setminus V(G)$ with~$|W| \le m_W - \eta(|V(G)|)$, $H \setminus W$ contains an $F$-absorber~$A$ for~$G$ with $|V(A)| \le \eta(|V(G)|)$.
\end{definition}

We will use so-called transformers to construct absorbers. 
The rôle of transformers is to replace~$G$ with a `homomorphic copy'~$G'$ of~$G$. 
Given $k$-graphs $G$ and~$G'$, a function $\phi: V (G) \rightarrow V (G')$ is an \emph{edge-bijective homomorphism from $G$ to~$G'$} if we have~$G' = \{ \phi(v_1) \dotsb \phi (v_k) : v_1 \dotsb v_k \in G\}$.
If such function exists, we say~$G$ and~$G'$ are~\emph{homomorphic}. 
A \emph{$(G,G';F)$-transformer} is a $k$-graph~$T$ such that~$T \cup G$ and $T \cup G'$ are $F$-decomposable and $T[V(G)] \cup T[V(G')]$ is empty.
The following definition is analogous to Definition~\ref{def:abs} but for transformers. 

\begin{definition}\label{def:trans}\rm
Let $\eta: \mathbb{N} \rightarrow \mathbb{N}$ be an increasing function with $\eta(x) \ge x$. 
We say that a $k$-graph~$H$ on $n$ vertices is \emph{$(F, m_G, m_W,\eta)$-transformable} if, for all vertex-disjoint homomorphic $F$-divisible subgraphs~$G, G'$ of~$H$ and $W \subseteq V(H) \setminus V(G \cup G')$ and $|V(G)|,|V(G')| \le m_G$ and $|W| \le m_W - \eta(|V(G)|)$, $H \setminus W$ contains a $(G,G';F)$-transformer~$T$ with $|V(T)| \le \eta(\max \{ |V(G)|,|V(G')|\})$.
\end{definition}

It is not difficult to see that if a $k$-graph $H$ is $(F, m_G, m_W,\eta)$-absorbing, then $H$ is also~$(F, m_G, m_W,2  \eta)$-transformable (see proof of Lemma~\ref{lem:absorbing}). 
In fact, the converse is true with different constants as long as there are enough copies of~$F$ in~$H$.

\begin{lemma} \label{lem:absorbing}
Let $\eta: \mathbb{N} \longrightarrow \mathbb{N}$ be an increasing function with $\eta(x) \ge x$ and~$m_W, m_G \geq0$.
Let $F$ be a $k$-graph and $u_1\dotsb u_k \in F$. 
Let $H$ be a $k$-graph such that, for any distinct~$v_1, \dots, v_k \in V(H)$ and $W \subseteq V(H) \setminus \{v_i: i \in[k]\}$ with $|W| \le m_W$, $(H \cup \{ v_1, \dotsc, v_k \}) \setminus W$ contains a copy of~$F$ with $u_i$ mapped to $v_i$ for all $i \in [k]$. 
Then if~$H$ is $(F, m_G, m_W,\eta)$-absorbing then $H$ is $(F, m_G, m_W, 2 \eta)$-transformable.
Moreover, if $H$ is $(F, m_G, m_W, \eta)$-transformable, then $H$ is $(F, m_G, m_W,\eta')$-absorbing for some increasing function~$\eta': \mathbb{N} \rightarrow \mathbb{N}$  with $\eta'(x) \ge x$.
\end{lemma}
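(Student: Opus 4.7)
The lemma splits into two directions, which I would tackle separately.

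\textbf{Direction 1: Absorbing implies transformable.} Given vertex-disjoint homomorphic $F$-divisible subgraphs $G, G' \subseteq H$ (WLOG $|V(G)| \geq |V(G')|$) and $W$ with $|W| \leq m_W - 2\eta(|V(G)|)$, my plan is to produce the transformer $T$ as the edge-disjoint union of two absorbers. First, apply the $(F,m_G,m_W,\eta)$-absorbing hypothesis to $G$ with forbidden set $W \cup V(G')$, whose size is at most $m_W - \eta(|V(G)|)$ because $|V(G')| \leq |V(G)| \leq \eta(|V(G)|)$ (using $\eta(x) \geq x$). This yields an $F$-absorber $A_G$ for $G$ avoiding $V(G')$, with $|V(A_G)| \leq \eta(|V(G)|)$. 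Second, apply the absorbing hypothesis to $G'$ with forbidden set $W \cup V(G) \cup V(A_G)$; the additional $|V(G)| + |V(A_G)| \leq 2\eta(|V(G)|)$ vertices consume the second copy of $\eta$ in the $2\eta$ budget. This yields an absorber $A_{G'}$ avoiding $V(G) \cup V(A_G)$, with $|V(A_{G'})| \leq \eta(|V(G')|)$. Setting $T := A_G \cup A_{G'}$, one verifies that $T \cup G = (A_G \cup G) \cup A_{G'}$ and $T \cup G' = A_G \cup (A_{G'} \cup G')$ are both $F$-decomposable (each is an edge-disjoint union of two $F$-decomposable parts, namely the absorber-plus-target decomposition and the absorber alone), that $T[V(G)] \cup T[V(G')] = \emptyset$ by the vertex-avoidance built into the construction, and that $|V(T)| \leq \eta(|V(G)|) + \eta(|V(G')|) \leq 2\eta(\max\{|V(G)|,|V(G')|\})$.

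\textbf{Direction 2: Transformable implies absorbing.} Given an $F$-divisible $G \subseteq H$ with $|V(G)| \leq m_G$ and $W$ of appropriate size, the plan is to construct the absorber $A$ as the union of a chain of transformers together with an $F$-decomposable sink. Concretely, on fresh vertices inside $H \setminus W$, I would first build a sequence
\[ G = G_0, G_1, \ldots, G_s \]
of pairwise vertex-disjoint $F$-divisible $k$-graphs such that each consecutive pair $(G_{i-1}, G_i)$ is homomorphic and the terminal $G_s$ is $F$-decomposable. Applying the transformability hypothesis iteratively (augmenting the forbidden set with previously chosen vertices at each step) produces edge-disjoint $(G_{i-1}, G_i; F)$-transformers $T_i$ on fresh vertex sets, each with $|V(T_i)| \leq \eta(|V(G_{i-1})|)$. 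The absorber is then
\[ A := \bigcup_{i=1}^{s} T_i \;\cup\; \bigcup_{j=1}^{s} G_j, \]
and its two required $F$-decompositions are witnessed by the edge-disjoint unions
\[ A \cup G = \bigcup_{i=1}^{s} (T_i \cup G_{i-1}) \;\cup\; G_s \quad \text{and} \quad A = \bigcup_{i=1}^{s} (T_i \cup G_i). \]
Every $T_i \cup G_{i-1}$ and $T_i \cup G_i$ is $F$-decomposable by the transformer property, and $G_s$ is $F$-decomposable by the sink property. Moreover $A[V(G)] = \emptyset$ since all $G_j$ ($j \geq 1$) and all $T_i$ use only fresh vertices.

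The main obstacle is the construction of the chain terminating at an $F$-decomposable $G_s$. My strategy leverages the extension hypothesis of the lemma: starting from $G_0 = G$, for each edge $e$ of the current graph find a copy $F_e$ of $F$ through $e$ on fresh vertices disjoint from everything chosen so far, producing a shadow $G^\circ := \bigcup_e (F_e \setminus \{e\})$ with $G \cup G^\circ$ automatically $F$-decomposable. A degree count using the definition of $\div_i(F)$ together with the $F$-divisibility of $G$ shows $G^\circ$ itself is $F$-divisible. Interleaving such shadow extensions with edge-bijective vertex identifications (which produce homomorphic neighbours of strictly simpler vertex structure), the chain can be driven to a sink $G_s$ that is a vertex-disjoint union of copies of $F$ --- and hence trivially $F$-decomposable. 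Since each step uses at most $O_{F,m_G}(1)$ fresh vertices and the number of steps is bounded by a function of $m_G$ and $F$ alone, the total vertex count $|V(A)|$ is bounded by a function $\eta'(|V(G)|)$ of the required form.
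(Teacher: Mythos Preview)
Your Direction~1 is correct and essentially identical to the paper's argument.

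Your Direction~2 has the right skeleton --- chain transformers between a sequence $G_0,\dots,G_s$ terminating in an $F$-decomposable sink, and your ``shadow'' $G^\circ$ is precisely the paper's extension operator $\nabla_{F,u_1\cdots u_k}(G)$ --- but the crucial step is not actually carried out. The difficulty is producing, \emph{inside $H$}, an $F$-divisible subgraph $G_i$ that is homomorphic to its predecessor and strictly closer to a disjoint union of copies of~$F$. Your ``edge-bijective vertex identifications'' would indeed yield an edge-bijective homomorphism $G_{i-1}\to G_i$, but there is no reason the quotient $G_i$ should be a subgraph of~$H$: the extension hypothesis of the lemma only furnishes copies of~$F$ through a prescribed edge, not arbitrary $k$-graphs. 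So as written you cannot invoke transformability between $G_{i-1}$ and $G_i$. Nor do repeated shadows help on their own, since $\nabla$ multiplies the edge count by~$|F|-1$ and never produces a homomorphic pair.

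The paper resolves this with a genuinely external ingredient you have not used: Lemma~\ref{lem:strongcolouring}, which relies on the $F$-design theorem (Theorem~\ref{thm:Fdesign}), shows that $G+qF$ and $sF+qF$ both admit edge-bijective homomorphisms to a common~$K_m^{(k)}$. One then applies $\nabla$ to all three of $G+qF$, $K_m^{(k)}$, $sF+qF$; the extension hypothesis guarantees these $\nabla$-images live in~$H$ even though $K_m^{(k)}$ itself need not. The chain has exactly two transformer links ($\nabla(G+qF)\leftrightarrow\nabla(K_m^{(k)})\leftrightarrow\nabla(sF+qF)$) and two $\nabla$-links, and the sink is $sF$. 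Without an analogue of Lemma~\ref{lem:strongcolouring} (or some other device establishing a common homomorphic target realisable in~$H$), your chain has no mechanism to terminate.
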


For $k$-graphs~$G$ and~$H$ and $q \in \mathbb{N}$, we write $G + q H$ to be the vertex-disjoint union of~$G$ and $q$ copies of~$H$. 
We now show that, by adding $q$ vertex-disjoint copies of~$F$ to~$G$, the $k$-graph $G+qF$ has an edge-bijective homomorphism to $K_m^{\smash{(k)}}$.
We will require the following theorem regarding the existence of $F$-decompositions in high codegree~$k$-graphs. 

\begin{theorem}[Glock, K\"{u}hn, Lo and Osthus~\cite{GKLO2016}] \label{thm:Fdesign}
For all $k$-graphs~$F$, there exists a constant~$c_F > 0$ such that $\delta_F \le 1- c_F$. 
\end{theorem}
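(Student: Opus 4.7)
The plan is to establish this via the iterative absorption framework, which has become the standard route for hypergraph decomposition problems. The overall strategy combines three ingredients: an approximate $F$-decomposition produced by a R\"odl-nibble-type argument, a global absorbing structure, and a vortex scheme that iteratively shrinks the uncovered leftover until it fits inside the absorber.

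First I would prove a supersaturation statement: if $c_F > 0$ is chosen small enough, then any $k$-graph $H$ on $n$ vertices with $\delta_{k-1}(H) \ge (1 - c_F)n$ contains at least $\alpha n^{|V(F)| - k}$ copies of $F$ through every edge, for some $\alpha = \alpha(F) > 0$, and this count is robust in the sense that it remains linear after forbidding any set $W$ of $o(n)$ vertices. This follows from greedy vertex-by-vertex extension along an ordering of $V(F)$, using that the codegree bound forces each successive common neighbourhood to retain a positive fraction of $V(H)\setminus W$.

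Second, I would build absorbers. By Lemma~\ref{lem:absorbing} it suffices to exhibit transformers, so given vertex-disjoint homomorphic $F$-divisible subgraphs $G, G'$ of $H$, I would construct a $(G, G'; F)$-transformer as follows: pad $G$ and $G'$ with vertex-disjoint copies of $F$ until both become edge-bijectively homomorphic to a common highly symmetric target $J$ (this is possible because $G + qF$ maps homomorphically onto any sufficiently dense $J$), and then for each edge of $J$ construct a local "switching" gadget using disjoint copies of $F$ obtained one at a time from the robust supersaturation guarantee, each gadget exchanging the role of the corresponding $G$-edge with its matched $G'$-edge. Concatenating these local gadgets yields the transformer, and Lemma~\ref{lem:absorbing} then upgrades transformability to the existence of absorbers.

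Third, I would set up a vortex $V(H) = U_0 \supseteq U_1 \supseteq \cdots \supseteq U_t$ with $|U_t|$ bounded by a function of $F$ alone, reserve a global absorber anchored in $U_t$ at the start, and then at each level apply a nibble-type approximate $F$-decomposition that leaves uncovered edges only inside $U_{i+1}$; after $t$ steps what remains fits inside $U_t$ and is absorbed. The main obstacle is the transformer step: producing gadgets that work for an \emph{arbitrary} $k$-graph $F$ while respecting every divisibility constant $\div_i(F)$ for $0 \le i < k$ simultaneously. Handling all these divisibility invariants in tandem, within the restricted environment $H\setminus W$, is what forces a careful hierarchical construction and is the reason the result is not a routine consequence of the nibble.
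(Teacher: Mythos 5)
The paper does not prove this statement at all: Theorem~\ref{thm:Fdesign} is quoted as an external result of Glock, K\"uhn, Lo and Osthus~\cite{GKLO2016} (with an alternative proof later given by Keevash), so there is no internal proof to compare against, and your task was really to reprove a deep theorem. Your outline correctly identifies the strategy of the original paper (iterative absorption: robust supersaturation, absorbers via transformers, a vortex with cover-down), but as written it contains a genuine circularity and several asserted-but-unproven steps. The circularity: you invoke Lemma~\ref{lem:absorbing} to pass from transformers to absorbers, and you use the ``pad $G$ with copies of $F$ and map $G+qF$ edge-bijectively onto a common target'' device. In this paper that device is Lemma~\ref{lem:strongcolouring}, whose proof \emph{uses} Theorem~\ref{thm:Fdesign} (one needs an $F$-decomposition of $K_m^{(k)}-G'$, i.e.\ an instance of the very theorem being proven), and the backwards direction of Lemma~\ref{lem:absorbing} in turn relies on Lemma~\ref{lem:strongcolouring}. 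Your claim that ``$G+qF$ maps homomorphically onto any sufficiently dense $J$'' is exactly a decomposition-type statement and cannot be taken for granted; in~\cite{GKLO2016} the corresponding transformers between arbitrary $F$-divisible graphs are built by a long chain of intermediate constructions, not by this shortcut.

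Two further steps are named but not actually supplied. First, the vortex iteration needs a cover-down statement: a nibble only yields a leftover of small maximum codegree spread over the whole vertex set, and confining the uncovered edges to $U_{i+1}$ while controlling how many edges inside $U_{i+1}$ are consumed is a substantial lemma in its own right (for tight cycles it occupies Section~\ref{section:coverdown} of this paper; for general $F$ it is a major component of~\cite{GKLO2016}). Second, ``handling all the divisibility invariants $\div_i(F)$ in tandem'' is precisely where the difficulty of the theorem lies; acknowledging it as an obstacle does not discharge it, since the gadget/switching constructions you describe must be shown to exist for an arbitrary $k$-graph $F$ under only a $(1-c_F)n$ codegree assumption, and no mechanism for this is given. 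So the proposal is a reasonable roadmap of the known proof, but it is not a proof: the transformer construction, the cover-down step, and the homomorphism-onto-a-common-target step are each missing, and the appeal to this paper's Lemmata~\ref{lem:absorbing} and~\ref{lem:strongcolouring} is circular.
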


A subsequent alternative proof of Theorem~\ref{thm:Fdesign} was given by Keevash~\cite{Keevash2018}.

\begin{lemma} \label{lem:strongcolouring}
Let $F$ be a $k$-graph. 
Then, for all $t \in \mathbb{N}$, there exist integers $q=q(t)$ and $m= m(t)$ such that, for any $F$-divisible $k$-graph~$G$ with $|G| = t$, there exists an edge-bijective homomorphism from~$G + q F $ to~$K_m^{\smash{(k)}}$.
\end{lemma}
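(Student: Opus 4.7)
The plan is to realise $G + qF$ as an edge-decomposition of a suitable complete $k$-graph $K_m^{(k)}$: for a carefully chosen $m = m(t)$ I will embed $G$ as a subgraph of $K_m^{(k)}$ and then invoke Theorem~\ref{thm:Fdesign} to decompose the remainder $K_m^{(k)} - G$ into copies of $F$. Both $m$ and $q := (\binom{m}{k} - t)/|E(F)|$ will depend only on $t$ (and $F$), and a decomposition of $K_m^{(k)}$ as $G$ plus $q$ copies of $F$ immediately provides the desired edge-bijective homomorphism, by mapping each copy of $F$ in $G+qF$ to its realisation in the decomposition and $G$ to its embedded copy.

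The parameter $m$ should satisfy two conditions: (i) $K_m^{(k)}$ is itself $F$-divisible, meaning that $\div_i(F)$ divides $\binom{m-i}{k-i}$ for every $0 \le i \le k-1$; and (ii) $m$ is sufficiently large in terms of $t$ and of the constant $c_F$ from Theorem~\ref{thm:Fdesign}. Condition (i) is a purely number-theoretic constraint: since $\binom{m-i}{k-i}$ is a fixed polynomial in $m$, divisibility by $\div_i(F)$ is a periodic condition on $m$, and the Chinese Remainder Theorem produces an infinite arithmetic progression of values of $m$ simultaneously satisfying all $k$ congruences, from which I pick one also meeting~(ii).

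Having fixed such an $m$, since $|V(G)| \le kt \le m$ I embed $G$ as a subgraph of $K_m^{(k)}$ via an arbitrary injection of $V(G)$ into $V(K_m^{(k)})$ and set $H := K_m^{(k)} - G$. Both $K_m^{(k)}$ and $G$ are $F$-divisible, so $H$ is too; and $\delta_{k-1}(H) \ge m - (k-1) - t \ge (1 - c_F/2)m$ once $m$ is large. Theorem~\ref{thm:Fdesign} then decomposes $H$ into exactly $q$ copies of $F$, and together with the embedding of $G$ this produces a map from $V(G + qF)$ to $V(K_m^{(k)})$ which, by construction, sends the edges of $G + qF$ bijectively onto those of $K_m^{(k)}$, as required. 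The only step requiring any care is the verification of~(i), namely the existence of infinitely many $m$ for which $K_m^{(k)}$ is $F$-divisible; beyond that, the lemma is an immediate application of Theorem~\ref{thm:Fdesign}.
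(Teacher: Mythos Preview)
Your proposal is correct and follows essentially the same route as the paper: embed $G$ into a large $F$-divisible complete $k$-graph $K_m^{(k)}$, observe that $K_m^{(k)}-G$ is $F$-divisible with high minimum codegree, and apply Theorem~\ref{thm:Fdesign} to decompose it into $q$ copies of $F$. The only difference is cosmetic: you spell out the number-theoretic reason why an arbitrarily large $F$-divisible $K_m^{(k)}$ exists, whereas the paper absorbs this into the hierarchy notation $1/m \ll 1/t, 1/k, c_F$.
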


\begin{proof}
Let $c_F>0$ be the constant given by Theorem~\ref{thm:Fdesign}.
Let $1/m \ll 1/t, 1/k,c_F$ be such that $K_m^{\smash{(k)}}$ is $F$-divisible and any $F$-divisible subgraph~$H$ of $K_m^{\smash{(k)}}$ with $\delta(H) \ge (1 -  c_F/2) m$ has an $F$-decomposition.

Let $G'$ be an isomorphic copy of~$G$ with $V(G') \subseteq V (K_m^{\smash{(k)}} )$. 
Clearly there is an edge-bijective homomorphism~$\phi$ from~$G$ to~$G'$.
Since both $G'$ and~$K_m^{\smash{(k)}}$ are $F$-divisible, so is~$H=K_m^{\smash{(k)}}-G'$.
Note that since $|G'|= t$ then $\delta(H) \ge (1 - t/m) m \ge (1 -  c_F/2) m$.
Hence $H$ has an $F$-decomposition and we fix one. 
That is, $H$ can be edge-partitioned into $q=|H| /|F|$ copies of~$F$.
We extend $\phi$ to an edge-bijective homomorphism from~$G+qF$ to~$K_m^{\smash{(k)}}$, where we map each $F$ in $G+qF$ to a distinct copy of~$F$ in the $F$-decomposition of $H = K_m^{\smash{(k)}} - G'$.
\end{proof}

We now sketch how to construct absorbers from transformers, that is, the backwards direction of the proof of Lemma~\ref{lem:absorbing}.
Let $G \subseteq H$ be an $F$-divisible $k$-graph with $t$ edges, and suppose we can find a $qF$, which is vertex-disjoint from $G$, inside $H$.
By Lemma~\ref{lem:strongcolouring}, $G+qF$ has an edge-bijective homomorphism to $K_m^{\smash{(k)}}$.
First, suppose that $H$ contains a~$K_m^{\smash{(k)}}$ vertex-disjoint from $G+qF$.
Then, by our assumption, $H$ contains a $(G+qF,K_m^{\smash{(k)}};F)$-transformer~$T$.
Note that $T_1 = T \cup qF$ is a $(G,K_m^{\smash{(k)}};F)$-transformer.
Let $s = t/|F|$.
Note that $sF$ has $t$ edges, precisely the same number of edges as $G$.
Therefore, since $m, q$ in Lemma \ref{lem:strongcolouring} depend on $t$ only, if there exists a copy of $sF + qF \subseteq H$ which is vertex-disjoint from $K_m^{\smash{(k)}}$ we can repeat the same construction as above.
In the end, we would obtain an $(sF,K_m^{\smash{(k)}};F)$-transformer~$T_2$. 
Then $ T_1 \cup K_m^{\smash{(k)}} \cup T_2$ is a~$(G +qF, s F;F)$-transformer and so $qF \cup T_1 \cup K_m^{\smash{(k)}} \cup T_2 \cup s F$ is an $F$-absorber for~$G$.

However, an obvious obstacle with this approach is that $H$ may not contain any such large clique~$K_m^{\smash{(k)}}$. 
To overcome this problem, we consider the \emph{extension operator}~$\nabla$ (which was introduced in~\cite[Definition 8.13]{GKLO2016}).
Fix an edge $u_1\dotsb u_k \in F$. 
Consider any distinct vertices $v_1, \dots, v_k \in V(H)$. 
Define $\nabla_{F, u_1 \dotsb u_k}(v_1 \dotsb v_k) $ to be a copy of~$F-u_1\dotsb u_k$ with $v_i$ playing the rôles of~$u_i$.
For a $k$-graph~$G$ on $V(G) \subseteq V(H)$, define $\nabla_{F, u_1 \dotsb u_k}(G)$ to be the union of $\bigcup_{e \in G} \nabla_{F, u_1 \dotsb u_k}(e)$, where the ordering of each edge~$e \in G$  will be clear from the context and $V( \nabla_{F, u_1 \dotsb u_k}(e) ) \setminus e$ are new vertices. 
Note that $G \cup \nabla_{F, u_1 \dotsb u_k}(G)$ is $F$-decomposable. 
The hypothesis of Lemma~\ref{lem:absorbing} implies that~$\nabla_{F, u_1 \dotsb u_k}(G)$, 
$\nabla_{F, u_1 \dotsb u_k}(K_m^{\smash{(k)}})$ and $\nabla_{F, u_1 \dotsb u_k}(s F)$
 exist. 
Furthermore, there are edge-bijective homomorphisms between $\nabla_{F, u_1 \dotsb u_k}(G)$ and 
$\nabla_{F, u_1 \dotsb u_k}(K_m^{\smash{(k)}})$, and between $\nabla_{F, u_1 \dotsb u_k}(K_m^{\smash{(k)}})$ and $\nabla_{F, u_1 \dotsb u_k}(s F)$.
We then construct transformers between them to obtain an $F$-absorber for~$G$. 

\begin{proof}[Proof of Lemma~\ref{lem:absorbing}]
First suppose that $H$ is $(F, m_G, m_W,\eta)$-absorbing.
Let $G$ and~$G'$ be vertex-disjoint $F$-divisible subgraphs of~$H$ with $|V(G)|,|V(G')| \le m_G$.
Let $W \subseteq V(H) \setminus V(G \cup G')$ with $|W| \le m_W - 2  \eta(\max\{ |V(G)|, |V(G')|\})$.
By the property of being~$(F, m_G, m_W,\eta)$-absorbing, $H \setminus (W \cup V(G'))$ contains an $F$-absorber~$A_1$ for~$G$ with $A_1 [V(G) ] = \emptyset$ and $|V(A_1)| \le \eta(|V(G')|)$.
Also, $H \setminus (W \cup V(A_1))$ contains an~$F$-absorber~$A_2$ for~$G'$ with $A_2 [ V(G') ] = \emptyset$ and $|V(A_2)| \le \eta(|V(G')|)$.
Let $T = A_1 \cup A_2$.
Note that $T \cup G$ and $T \cup G'$ have $F$-decompositions and $ T[V(G \cup G')]= \emptyset$.
Hence~$T$ is a $(G, G'; F)$-transformer. 
Moreover, 
\begin{align*}
|V(T)| = |V(A_1)| + |V(A_2)| \le \eta(|V(G)|)+\eta(|V(G')|) \le 2 \eta ( \max \{|V(G)|,|V(G')|\}).
\end{align*}
So $H$ is $(F, m_G, m_W,2\eta)$-transformable.

Now suppose that $H$ is $(F, m_G, m_W,\eta)$-transformable.
Let $q(t)$ and $m(t)$ be the functions given by Lemma~\ref{lem:strongcolouring}.
Let $\eta'$ be the function given by
\begin{align*}
\eta'(x) = 2 \eta \left(  \max_{j \in [\binom{x}{k}] \cup \{ 0\} } \left\{ |V(F)| \binom{  m ( j ) }{k} \right\} \right).
\end{align*}
We now show that $H$ is $(F, m_G, m_W,\eta')$-absorbing.

Let $G$ be an $F$-divisible subgraph of~$H$ with $|V(G)| \le m_G$ and $W \subseteq V(H)\setminus V(G)$ with  $|W| \le m_W- \eta'(m_G)$.
Let 
\begin{align*}
q_1 & =q( |G|), &
m_1 & = m(|G|), &
q_2 & = \binom{m_1}{k}/ |F|.
\end{align*}
Let $(q_1+q_2)F$ be in $H \setminus (V(G) \cup W)$, which exists by our assumption on~$H$. 
Let $G_1 = G + q_1 F$ and $G_3 =q_2F$. 
Hence, $G_1$ and~$G_3$ are vertex-disjoint and are in~$H \setminus W$. 
Let~$V' = \{v'_1, \dots, v'_{m_1}\} \subseteq V(H) \setminus (V(G_1 \cup G_3) \cup W)$. 
Consider a $G_2 = K_{m_1}^{\smash{(k)}}$ on~$V'$, which may not exist in~$H$. 
By Lemma~\ref{lem:strongcolouring}, there exists an edge-bijective homomorphism~$\phi_j$ from~$G_j$ to~$K_{m_1}^{\smash{(k)}}$ for $j \in \{1,3\}$. 
Order edges in~$K_{m_1}^{\smash{(k)}}$ into~$v'_{i_1} \dots v'_{i_k}$ such that $i_1 < \dots < i_k$. 
By~$\phi_1$ and $\phi_3$, this implies an ordering on all edges of $G_1 \cup G_3$.
Fix an edge~$u_1\dotsb u_k \in F$. 
Let 
\begin{align*}
G'_1 &= \nabla_{F,u_1\dots u_k} (G_1), &
G'_2 &= \nabla_{F,u_1\dots u_k} (K_{m_1}^{\smash{(k)}}), &
G'_3 &= \nabla_{F,u_1\dots u_k} (G_3).
\end{align*}
Let $\ell = |V(F)| \binom{m_1}{k}$.
Note that
\begin{align*}
	|V(G'_j)| \le |V(F)| |G_j| = |V(F)| \binom{m_1}{k} = \ell.
\end{align*}
By the property of~$H$, $H$ contains vertex-disjoint $G'_1, G'_2, G'_3$ such that $G'_i[ V(G_j) ] = \emptyset$ for~$i\in [3]$ and~$j\in\{1,3\}$. 
Since $H$ is $(F, m_G, m_W,\eta)$-transformable, $H \setminus (W \cup V(G_3'))$ contains a $(G'_1, G'_2; F)$-transformer~$T_1$ with $|V(T_1)| \le \eta(\ell)$.
Similarly, $H \setminus (W \cup V(G_1') \cup (T_1 \setminus V(G'_2)))$ contains a $(G'_2, G'_3; F)$-transformer~$T_2$ with $|V(T_2)| \le \eta(\ell)$.

Let $A = (G_1 - G) \cup G'_1 \cup T_1 \cup G'_2 \cup T_2 \cup G'_3 \cup G_3$. 
Recall that $(G_1 - G)$, $G_1 \cup G'_1$, $G'_3 \cup G_3$ and $G_3$ have $F$-decompositions. 
Hence 
\begin{align*}
	A \cup G &= (G_1  \cup G'_1) \cup (T_1 \cup G'_2) \cup (T_2 \cup G'_3) \cup G_3 \text{ and}\\
	A &= (G_1-G) \cup (G'_1 \cup T_1) \cup (G'_2 \cup T_2) \cup (G'_3 \cup G_3) 
\end{align*}
are $F$-decomposable.
Therefore $A$ is an $F$-absorber for~$G$. 
Note that $A[V(G)] = T_1 [V(G)] \subseteq T_1[V(G'_1)] = \emptyset$ and 
\begin{align*}
|V(A)| \le |V(T_1)| +|V(T_2)|  \le 2 \eta (\ell ) = \eta'( |V(G)|).
\end{align*}
Hence $H$ is $(F, m_G, m_W  , \eta')$-absorbing.
\end{proof}

\section{Iterative absorption and proof of the main result} \label{section:mainproof}

The method of iterative absorption is based on three main lemmata: the Vortex lemma, the Absorber lemma, and the Cover-down lemma.
We state these lemmata while explaining the general strategy, then we will use them to prove Theorem \ref{theorem:main}. 
The proof of these lemmata are in Sections~\ref{section:vortex}-\ref{section:coverdown} (Sections~\ref{section:gadgets}-\ref{section:transformer} are dedicated to the Absorber lemma).

A sequence of nested subsets $U_0 \supseteq \dotsb \supseteq U_t$ of vertices of a $k$-graph $H$ is a \emph{$(\delta, \xi, m)$-vortex for $H$} if
\begin{enumerate}[label={(V\arabic*)}]
    \item $U_0 = V(H)$,
    \item for each $i \in [t]$, $|U_{i}| = \lfloor \xi |U_{i-1}| \rfloor$,
    \item $|U_t| = m$,
    \item $\delta^{(2)}(H[U_i]) \geq \delta |U_{i}|$, for each $0 \leq i \leq t$ and
    \item $\delta^{(2)}(H[U_i], U_{i+1}) \geq \delta |U_{i+1}|$, for each $0 \leq i < t$.
\end{enumerate}
\medskip
The Vortex lemma gives us the existence of vortices with the right parameters. 

\begin{lemma}[Vortex lemma] \label{lemma:vortex}
    Let $\delta > 0$ and $1/m' \ll \xi, 1/k$.
    Let $H$ be a $k$-graph on $n \geq m'$ vertices with $\delta^{(2)}(H) \geq \delta$.
    Then $H$ has a $(\delta - \xi, \xi, m)$-vortex, for some $\lfloor \xi m' \rfloor \leq m \leq m'$.
\end{lemma}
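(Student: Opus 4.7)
The plan is to use the probabilistic method, sampling the entire nested vortex at once via a uniformly random bijection $\pi \colon V(H) \to [n]$. I would define $n_0 := n$, $n_{i+1} := \lfloor \xi n_i \rfloor$, and let $t$ be the smallest index with $n_t \leq m'$. Since $n_{t-1} > m'$ we have $n_t = \lfloor \xi n_{t-1} \rfloor \geq \lfloor \xi m' \rfloor$, so $m := n_t$ lies in the required range. Setting $U_i := \pi^{-1}([n_i])$ produces a nested chain $U_0 = V(H) \supseteq U_1 \supseteq \dotsb \supseteq U_t$, so (V1)--(V3) are automatic and (V4) at level $0$ is immediate from the hypothesis. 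Since any pair of distinct $(k-1)$-subsets of $U_{i+1}$ is also a pair in $U_i$, (V5) at level $i$ implies (V4) at level $i+1$, so it remains to verify (V5) at every level $0 \leq i \leq t - 1$.

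Fix $i$ and distinct $(k-1)$-subsets $e_1, e_2$ of $V(H)$; write $P := e_1 \cup e_2$ (with $|P| \leq 2(k-1)$) and $N := N_H(e_1) \cap N_H(e_2)$. The bad event is $B := \{P \subseteq U_i\} \cap \{|N \cap U_{i+1}| < (\delta - \xi) n_{i+1}\}$. For the random permutation, $\Pr(P \subseteq U_i) \leq (n_i/n)^{|P|}$, and conditional on $\pi(P)$ (which fixes $P \subseteq U_i$) the positions of the vertices in $N \setminus P$ are uniform on $[n] \setminus \pi(P)$, so $|N \cap U_{i+1}|$ is, up to an $O(k)$ additive error, hypergeometric with mean at least $(\delta - o(1)) n_{i+1}$. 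The multiplicative Chernoff bound for the hypergeometric distribution gives a conditional failure probability at most $\exp(-c \xi^2 n_{i+1})$ for some $c = c(\delta) > 0$, whence $\Pr(B) \leq (n_i/n)^{|P|} \exp(-c \xi^2 n_{i+1})$.

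The number of pairs of distinct $(k-1)$-subsets of $V(H)$ with $|P| = q$ is $O_k(n^q)$, so summing over all pairs bounds the failure probability at step $i$ by $O_k(n_i^{2(k-1)}) \exp(-c \xi^2 n_{i+1})$. The ratio of consecutive such terms is $\xi^{-2(k-1)} \exp(c \xi^2 n_i(1 - \xi))$, which is much larger than $1$ whenever $n_i > m'$ (hence for all $i \leq t-1$), so the sum over $i$ is controlled by the final term, giving a total of at most $O_k\!\left((m'/\xi)^{2(k-1)}\right) \exp(-c \xi^3 m')$. The hierarchy $1/m' \ll \xi, 1/k$ makes this strictly less than $1$, so some realisation of $\pi$ produces the desired vortex.

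The main technical point, and the reason for the one-shot random-permutation construction, is the multiplicative splitting of the joint probability $\Pr(P \subseteq U_i) \cdot \Pr(\,\cdot\,\mid P \subseteq U_i)$, which yields the crucial factor $(n_i/n)^{|P|}$. This factor cancels the $n^{2(k-1)}$ contribution of the union bound over pairs in $V(H)$ and keeps the final bound dependent only on $m'$. A step-by-step inductive sampling, lacking this cancellation, would force a per-step codegree loss of order $\sqrt{\log n_i / n_{i+1}}$ at each of the $\Theta_\xi(\log n)$ vortex levels, the accumulated total of which exceeds $\xi$ unless $m' \gtrsim \log n$---incompatible with the hierarchy.
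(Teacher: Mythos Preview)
Your one-shot random-permutation argument is correct and yields a valid proof. The paper, however, proceeds exactly by the step-by-step inductive sampling you dismiss in your final paragraph: it sets $n_i = \lfloor \xi n_{i-1} \rfloor$, then iteratively chooses $U_i \subseteq U_{i-1}$ uniformly at random of size $n_i$, fixing a successful outcome at each step before moving on. The per-step relative loss is taken to be $n_i^{-1/3}$, and the total loss $\sum_i n_i^{-1/3}$ is bounded by $\xi$ using only $1/m' \ll \xi$.

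Your claim that the accumulated step-by-step loss forces $m' \gtrsim \log n$ is mistaken. At step $i$ the union bound runs only over the $O(n_{i-1}^{2(k-1)})$ pairs of $(k-1)$-sets inside $U_{i-1}$, not over all of $V(H)$, so the per-step loss $\sqrt{\log n_{i-1}/n_i}$ involves $\log n_{i-1}$ rather than $\log n$. These losses form a roughly geometric series (ratio $\approx \xi^{-1/2}$) and are therefore dominated by the final term $\sqrt{\log n_{t-1}/m} = O\!\left(\sqrt{\log m'/m}\right)$, which depends only on $m'$ and $\xi$; the number of levels $\Theta_\xi(\log n)$ is irrelevant. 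Both approaches work: yours packages everything into a single Chernoff-plus-union-bound and the factor $(n_i/n)^{|P|}$ is a clean device, while the paper's iterative version is the standard one and mirrors how the vortex is later consumed level by level.
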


Using the properties of such a vortex, we will iteratively find~$C_\ell^{\smash{(k)}}$-packings covering the edges from~$H[U_i]$ in every step, without taking too many edges from the following sets~$U_{i+1}, \dots, U_t$ in the vortex.
The Cover-down lemma will provide the existence of those packings in every step. 

 \begin{lemma}[Cover-down lemma] \label{lem:coverdownlemma}
    For every~$k\geq 3$ and every~$\alpha>0$, 
    there is an~$\ell_0 \in \mathbb{N}$ such that for every~$\mu>0$ and every~$n,\ell\in\mathbb{N}$ with $\ell \ge \ell_0$ and $1/n \ll \mu, \alpha$ the following holds.
	Let $H$ be a $k$-graph on $n$ vertices, and $U \subseteq V(H)$ with $|U| = \lfloor \alpha n \rfloor$, and they satisfy
    \smallskip
    \begin{adjustwidth}{20pt}{0pt}
    	\begin{enumerate}[label={\rm(CD$_{\arabic*}$)}]
            \item $\delta^{(2)}(H) \geq 2\alpha n$,
    	    \item $\delta^{(2)}(H,U) \geq \alpha \vert U\vert$, and \label{it:CDdegree}
    	    \item $\deg_H(x)$ is divisible by $k$ for each $x \in V(H) \setminus U$.\label{it:CDdiv}
    	\end{enumerate}
    \end{adjustwidth}
    \smallskip
	Then $H$ contains  a $C_\ell^{\smash{(k)}}$-decomposable subgraph~$F\subseteq H$ such that $H - H[U] \subseteq F$
	and~$\Delta_{k-1}(F[U]) \leq \mu n$.
\end{lemma}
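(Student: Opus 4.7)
The plan is to cover every edge of $H - H[U]$ by an edge-disjoint collection of tight $\ell$-cycles, using only a bounded number of $H[U]$-edges per cycle and spreading these ``closings'' uniformly over the $(k-1)$-sets of $U$. I follow the tight-path route suggested in the proof outline: first cover most outside edges by a near-packing of $\ell$-cycles, then decompose the small remainder into tight paths of length slightly less than $\ell$, and finally close each such path into a full $\ell$-cycle using a short tight path of edges inside~$U$.

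First I would reserve a sparse random sub-$k$-graph $R \subseteq H[U]$ with $\Delta_{k-1}(R) \leq \mu n / 2$ but approximately uniform codegree on the scale of $|U|$, via (CD$_2$) and Chernoff; $R$ serves as the closing reservoir. Second, using the fractional $C^{(k)}_\ell$-decomposition result of Joos and K\"uhn together with a R\"odl nibble, I would produce an approximate $\ell$-cycle packing of $H - R$ whose uncovered edges inside $H - H[U]$ form a sparse leftover $L$. The nibble is driven by the local richness around each outside edge: by (CD$_1$), every edge of $H - H[U]$ lies in many tight $\ell$-cycles that use at most a bounded number of $H[U]$-edges, which supplies the combinatorial structure the fractional machinery requires. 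Third, with the help of a degree-adjustment step analogous to Corollary~\ref{corollary:degreeadjuster}, I can arrange, via a small $P_0 \subseteq H[U] \setminus R$, that $L \cup P_0$ is $P^{(k)}_{\ell - s}$-divisible for a fixed small~$s$, so that Theorem~\ref{theorem:path} (or its proof) yields a decomposition of $L \cup P_0$ into tight paths of length $\ell - s$. Finally, each such path is closed into a tight $\ell$-cycle by inserting a tight $s$-edge path in $R$ between its endpoints; by the quasi-uniform structure of $R$, the closings can be performed greedily and edge-disjointly, adding at most $\mu n / 2$ edges to each $(k-1)$-set of~$U$. The divisibility condition (CD$_3$) guarantees that this scheme is compatible with the requirement that the final $F$ has all vertex degrees divisible by $k$.

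The main obstacle is the gap between our hypothesis $\delta^{(2)}(H) \geq 2\alpha n$ and the codegree threshold $\sim n/2$ normally required for both the Joos--K\"uhn fractional decomposition and Theorem~\ref{theorem:path}. The way around is that we need only to cover $H - H[U]$ rather than decompose all of $H$: by (CD$_1$) and (CD$_2$), around every outside edge there are enough tight $\ell$-cycles with at most a constant number of $H[U]$-edges, and the tight-path step is applied only to the sparse leftover $L \cup P_0$, whose codegree requirements can be met by absorbing additional edges from~$R$. A secondary challenge is coordinating the divisibility adjustments ($P_0$, the reservoir, and the closing paths) so that $F$ ends up exactly $C^{(k)}_\ell$-divisible; I expect this to be handled by small local edge-swaps at the interfaces between the nibble, path decomposition, and closing phases.
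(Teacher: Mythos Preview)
Your outline has the right high-level shape (reserve, approximate pack, decompose leftover, close), but the third step---decomposing the sparse leftover into tight $k$-uniform paths via Theorem~\ref{theorem:path}---does not go through, and the actual mechanism is different.

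First, there is a circularity: in this paper Theorem~\ref{theorem:path} is \emph{derived from} the Cover-down lemma (see Section~\ref{subsec:pathdecom} and Lemma~\ref{lem:covertopath}), so you cannot invoke it here. More substantively, even ignoring logical order, Theorem~\ref{theorem:path} requires $\delta_{k-1} \ge (1/2+\eps)n$, whereas your leftover $L \cup P_0$ has $\Delta_{k-1} = O(\mu n)$; absorbing edges from the reservoir $R$ (which itself has $\Delta_{k-1}(R) \le \mu n/2$) cannot bridge this gap. Your proposed fix---that the ``codegree requirements can be met by absorbing additional edges from~$R$''---would need $\Theta(n)$ extra edges at every $(k-1)$-set, exactly what the bound $\Delta_{k-1}(F[U]) \le \mu n$ forbids. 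Note also that paths obtained from $L$ may have their ends outside $U$, so they cannot be closed inside $R \subseteq H[U]$.

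The paper sidesteps $k$-uniform path decompositions by inducting on the uniformity. The covering is staged by $|e \cap U|$: edges with at most $k-2$ vertices in $U$ are each extended to an $\ell$-cycle using reserved edges with strictly more vertices in $U$ (Lemma~\ref{lemma:extending}). The delicate case is $H_{k-1}$, the edges with exactly one vertex $v \notin U$. For each such $v$, its link in the remaining graph is a $(k-1)$-graph $F(v)$ lying entirely in $U$ with $\delta^{(2)}(F(v)) \ge \alpha' n$; the hypothesis (CD$_3$) gives $|F(v)| \equiv 0 \bmod k$. One now applies the $(k-1)$-uniform path-decomposition statement \hyperref[statement:pathdec-k]{($\curvearrowright_{k-1}$)} to $F(v)$---this only needs a $\delta^{(2)}$ lower bound, not codegree $n/2$, and its base case $k-1=2$ is Theorem~\ref{theorem:graphpathdecomposition}. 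Each $(k-1)$-uniform tight path in $F(v)$ lifts to a $k$-uniform tight path through $v$ whose ends lie in $U$, and \emph{those} can be closed inside $H[U]$. This drop in uniformity via link graphs is the idea your proposal is missing.
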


Finally, after repeated applications of the Cover-down lemma, we only need to consider the edges remaining in $H[U_t]$.
For these last edges, we apply the Absorber lemma.
This lemma says that the~$k$-graph $H$ is~$(C_\ell^{\smash{(k)}}, m, m ,\eta')$-absorbing, and therefore, it contains an absorber for \textit{any} possible~$C_\ell^{\smash{(k)}}$-divisible $k$-graph left as a remainder in~$U_t$ (which is of size~$m$). 

\begin{lemma}[Absorber lemma] \label{lem:transformer}
Let $1/n \ll \eps \ll 1/\ell, 1/k, 1/m $ with $k \ge 3$ and $\ell \ge 2(k^2 - k) + 1$.
Let $H$ be a $k$-graph on $n$ vertices with $\tmindeg(H) \geq 2 \eps n$.
Then $H$ is $(C_\ell^{\smash{(k)}}, m, m ,\eta')$-absorbing for some increasing function $\eta': \mathbb{N} \rightarrow \mathbb{N}$ satisfying $\eta'(x) \geq x$ and independent of $\eps$ and~$n$.
\end{lemma}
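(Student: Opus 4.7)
The plan is to derive Lemma~\ref{lem:transformer} from Lemma~\ref{lem:absorbing}. That lemma tells us that, provided $H$ admits enough local extensions of any $k$-tuple to a copy of $C_\ell^{(k)}$ avoiding a small forbidden set, it suffices to establish that $H$ is $(C_\ell^{(k)}, m, m, \eta)$-transformable for some increasing $\eta$. Thus the proof splits into two parts: (a) verifying the extension hypothesis of Lemma~\ref{lem:absorbing}, and (b) building transformers.

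For (a), fix an edge $u_1\dotsb u_k$ of $C_\ell^{(k)}$ and any distinct vertices $v_1,\dotsc,v_k \in V(H)$ together with $W \subseteq V(H) \setminus\{v_1,\dotsc,v_k\}$ of size at most $m_W = m$. We wish to find a copy of $C_\ell^{(k)}$ in $(H \cup \{v_1\dotsb v_k\})\setminus W$ mapping $u_i$ to $v_i$. Since $\delta^{(3)}(H) \geq 2\varepsilon n$ forces $\delta_{k-1}(H) \geq 2\varepsilon n$, we can extend the tight path $v_1\dotsb v_k$ one vertex at a time by always picking a new vertex in the common neighbourhood of the last $k-1$ chosen vertices and outside $W$. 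After roughly $\ell - 2k$ such steps, a standard closing argument, which uses $\delta^{(3)}$ to simultaneously route the last few steps back to $v_1$, completes the cycle, provided $\ell$ is at least a constant depending only on $k$.

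For (b), given vertex-disjoint homomorphic $C_\ell^{(k)}$-divisible $G, G' \subseteq H$ with $|V(G)|, |V(G')| \leq m$ and a small $W$, we construct a $(G,G';C_\ell^{(k)})$-transformer $T \subseteq H \setminus W$ following the tour-trail framework set up in Sections~\ref{section:gadgets},~\ref{section:tourtrail}, and~\ref{section:transformer}. Using the edge-bijective homomorphism $\phi\colon G \to G'$, we encode the ``difference'' between $G$ and $G'$ as an auxiliary structure whose divisibility constraints match those required of a $C_\ell^{(k)}$-decomposable object, and then decompose this auxiliary structure into a bounded collection of tight tours and tight trails. Each tight tour can be realised inside $H$ as a union of copies of $C_\ell^{(k)}$ by a connecting-and-closing argument analogous to (a), since tours correspond to closed walks that respect tight-cycle adjacency. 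Trails, being open walks with ``dangling'' endpoints, cannot be realised directly; we first embed \emph{gadgets}, short subgraphs of $H$ that either pair up two trails into a tour or absorb a trail into an existing tour, thereby eliminating all dangling endpoints. The gadgets are embedded greedily using the codegree hypothesis $\delta^{(3)}(H) \geq 2\varepsilon n$, and since the number of trails is bounded by a function of $|V(G)|$ alone, so is $|V(T)|$, which defines~$\eta$.

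The main technical obstacle is the design and embedding of gadgets when $k \geq 4$. For $k = 3$, consecutive edges of a tight cycle overlap in single pairs, so the tour-trail structure essentially reduces to that of a multigraph and gadgets amount to simple edge swaps. For $k \geq 4$, consecutive edges overlap in $(k-1)$-tuples and the adjacency structure carries a much more layered pattern; a gadget must locally adjust several edges at once while preserving $C_\ell^{(k)}$-decomposability of both $T \cup G$ and $T \cup G'$ simultaneously, and it is here that the full strength of the $\delta^{(3)}$-codegree hypothesis (as opposed to ordinary codegree) is used to find suitable common extensions for several distinct $(k-1)$-sets at once. Constructing such gadgets and verifying they interact correctly with the tour-trail decomposition is carried out in Sections~\ref{section:gadgets} and~\ref{section:transformer} and constitutes the main new contribution of the paper; once these gadgets are available, the overall assembly follows the high-level outline of the $k=3$ proof in~\cite{PigaSanhuezaMatamala2021}.
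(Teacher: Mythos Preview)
Your high-level plan matches the paper: reduce via Lemma~\ref{lem:absorbing} to showing $H$ is transformable, verify the extension hypothesis, then build transformers. Part (a) is essentially correct; the paper simply invokes Lemma~\ref{lem:findcycle} (which needs only $\delta^{(2)}$) to produce a cycle through any ordered $k$-tuple while avoiding~$W$, and the bound $\ell \geq 2(k^2-k)+1$ is actually tailored to a later step in the transformer construction, not to this extension.

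Part (b), however, misdescribes the architecture. You speak of encoding the ``difference'' between $G$ and $G'$, decomposing that into tours and trails, and then using gadgets to eliminate the trails from the transformer. The paper does not do this. The gadgets of Section~\ref{section:gadgets} are used inside Lemma~\ref{lem:tourdecom} to augment $G$ \emph{itself} by an edge-disjoint $C_\ell^{(k)}$-decomposable $J \subseteq H-G$ so that $G \cup J$ admits a tour decomposition; the ``moreover'' clause of that lemma guarantees a parallel $J'$ with $G' \cup J'$ edge-bijective homomorphic to $G \cup J$. Once both sides are tour-decomposable and homomorphic, building a $(G\cup J,\, G'\cup J';\, C_\ell^{(k)})$-transformer $T^*$ is straightforward and gadget-free: for each tour $v_1\dotsb v_t$ and its image $w_1\dotsb w_t$, one threads short tight paths $P_j$ from $v_{j+1}\dotsb v_{j+k-1}$ to $w_j\dotsb w_{j+k-2}$ and $Q_j$ back, so that each $v_j\dotsb v_{j+k-1}\cup P_j\cup Q_j$ and each $w_j\dotsb w_{j+k-1}\cup P_j\cup Q_{j+1}$ is a copy of $C_\ell^{(k)}$. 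This is precisely where $\ell \geq 2(k^2-k)+1$ is used, to ensure both $P_j$ and $Q_j$ have length at least $k^2-k$ so Lemma~\ref{lem:findcycle} applies. The final transformer is $T = J \cup J' \cup T^*$.

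So the gadgets live entirely inside Lemma~\ref{lem:tourdecom} and serve to eliminate trails from a tour-trail decomposition of $G$, not of any ``difference'' object and not during the assembly of the transformer. Your sketch conflates these two stages; as written it is unclear what structure you are decomposing into tours and trails, and ``realising a tour inside $H$'' would give a $C_\ell^{(k)}$-packing, not a transformer between $G$ and $G'$.
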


Now we are ready to prove Theorem~\ref{theorem:main}. 

\begin{proof}[Proof of Theorem~\ref{theorem:main}]
    We divide the proof into three steps: setting the vortex and absorber, covering down, and using the absorber to conclude.
    We suppose $\eps$, $\ell$, $m'$, $n_0$ are chosen according to the following hierarchy: $1/n_0 \ll 1/m' \ll \eps, 1/\ell \ll 1/k$.
    
    Let $H$ be a $C^{\smash{(k)}}_\ell$-divisible $k$-graph on $n \geq n_0$ with~$\delta_{k-1}(H)\geq (2/3+8\eps)n$ and observe that this immediately implies that~$\delta^{(3)}(H) \geq 8 \eps n$ and that~$\delta^{(2)}(H) \geq (1/3+8\eps)n$.
    To prove the lemma, it is enough to show that $H$ has a $C^{\smash{(k)}}_\ell$-decomposition.
    
    \medskip
    \noindent \emph{Step 1: Setting the vortex and absorber.}
    By Lemma~\ref{lemma:vortex}, we obtain a $(1/3+7 \eps, \eps, m)$-vortex $U_0 \supseteq \dotsb \supseteq U_t$ in $H$, for some $m$ satisfying $\lfloor \eps m' \rfloor \leq m \leq m'$.
    
    Let $\mathcal{L}$ be the set of all $C^{\smash{(k)}}_\ell$-divisible subgraphs of $H[U_t]$.
    Clearly, $|\mathcal{L}| \leq 2^{\binom{|U_t|}{k}} \leq 2^{m^k}$.
    Let $L \in \mathcal{L}$ be arbitrary.
    Clearly, $\delta^{(3)}(H - H[U_1]) \geq 7 \eps n$.
    By Lemma~\ref{lem:transformer} and the choice of constants, we deduce $H - H[U_1]$ is $(C^{\smash{(k)}}_\ell, m, m, \eta')$-absorbing for some increasing function $\eta': \mathbb{N} \rightarrow \mathbb{N}$ which satisfies $\eta'(x) \geq x$.
    Thus, $H - H[U_1]$ contains an $C^{\smash{(k)}}_\ell$-absorber~$A_L$ for~$L$, with $A_L[U_t] = \emptyset$ and $|A_L| \leq \eta'(m)$.
    We iterate this argument, finding edge-disjoint absorbers $A_{L'} \subseteq H - H[U_1]$, one for each $L' \in \mathcal{L}$.
    This indeed can be done, since all the absorbers found so far only spans at most $ k |\mathcal{L}| \eta'(m) \leq \eps n$ edges overall.
    Thus $H - H[U_1]$, after removing all edges of the already found absorbers, still satisfies $\delta^{(3)}(H - H[U_1]) \geq 6 \eps n$ and thus Lemma~\ref{lem:transformer} can still be invoked.
    
    Let $A = \bigcup_{L \in \mathcal{L}} A_L \subseteq H - H[U_1]$ be the edge-disjoint union of all absorbers.
    As argued before, $A$ contains at most $\eps n/3$ edges in total.
    By construction, $A$ is $C^{\smash{(k)}}_\ell$-decomposable, and for each $L \in \mathcal{L}$, $A \cup L$ is $C^{\smash{(k)}}_\ell$-decomposable.
    Let $H':= H - A$.
    Note that $\delta_{k-1}(H') \geq (2/3+6 \eps) n$ and $U_0 \supseteq \dotsb \supseteq U_t$ is a $(1/3+5 \eps, \eps, m)$-vortex for $H'$ (this is because we have ensured $A \subseteq H - H[U_1])$.
    Since $H$ and $A$ are $C^{\smash{(k)}}_\ell$-divisible,~$H'$ is also $C^{\smash{(k)}}_\ell$-divisible.
    
    \medskip
    \noindent \emph{Step 2: Covering down.}
    Now we want to find a $C^{\smash{(k)}}_\ell$-packing in $H'$ which covers all edges in $H' - H[U_t]$.
    For this, we proceed as follows.
    Let $U_{t+1} = \emptyset$.
    For each $0 \leq i \leq \ell$ we will find $H_i \subseteq H'[U_i]$ such that
    
    \begin{enumerate}[label={(\alph*$_{i}$)}]
        \item \label{item:mainproof-hidecompos} $H' - H_i$ has a $C^{\smash{(k)}}_\ell$-decomposition,
        \item \label{item:mainproof-hidegreei} $\delta^{(2)}(H_i) \geq (1/3+3 \eps) |U_i|$,
        \item \label{item:mainproof-hidegreei+1} $\delta^{(2)}(H_i, U_{i+1}) \geq (1/3+3 \eps) |U_{i+1}|$, and
        \item \label{item:mainproof-hi+1untouched} $H_i[U_{i+1}] = H'[U_{i+1}]$.
    \end{enumerate}
    
    For $i = 0$ this is done by setting $H_0 = H'$.
    Now, suppose that for $0 \leq i < \ell$ we have found $H_i \subseteq H'[U_i]$ satisfying \ref{item:mainproof-hidecompos}--\ref{item:mainproof-hi+1untouched},
    we construct $H_{i+1} \subseteq H'[U_{i+1}]$ satisfying \hyperref[item:mainproof-hidecompos]{\upshape{(a$_{i+1}$)}}--\hyperref[item:mainproof-hi+1untouched]{\upshape{(d$_{i+1}$)}}.
    By \ref{item:mainproof-hidecompos}, $H_i$ is $C^{\smash{(k)}}_\ell$-divisible.
    Let $H'_i = H_i - H_i[U_{i+2}]$.
    By \ref{item:mainproof-hidegreei}--\ref{item:mainproof-hidegreei+1} and~$|U_{i+2}| \leq \eps |U_{i+1}| \leq \eps^2 |U_i|$, we have
    \begin{enumerate}[label={(C\arabic*)}]
        \item $\delta^{(2)}(H'_i) \geq (1/3+ 2\eps) |U_i|$,
        \item $\delta^{(2)}(H'_i, U_{i+1}) \geq (1/3+ 2\eps) |U_{i+1}|$, and
        \item $\deg_{H'_i}(x)$ is divisible by $k$ for each $x \in U_i \setminus U_{i+1}$.
    \end{enumerate}
    Now we apply Lemma~\ref{lem:coverdownlemma} with~$1/3$,~$\ell$,~$\eps^6$,~$|U_i|$,~$H_i'$ and~$U_{i+1}$ playing the r\^oles of the parameters $\alpha$, $\ell$, $\mu$, $n$, $H$ and $U$.
    By doing so, we obtain a $C^{\smash{(k)}}_\ell$-decomposable subgraph~$F_i \subseteq H'_i$ such that $H'_i - H'_i[U_{i+1}] \subseteq F_i$ and $\Delta_{k-1}(F_i[U_{i+1}]) \leq \eps^6 |U_{i}|$.
    
    We let $H_{i+1} = H[U_{i+1}] - F_i$, and we now show that it satisfies the required properties.
    Since $H' - H_{i+1}$ is the edge-disjoint union of $H' - H_i$ and $F_i$ and both are~$C^{\smash{(k)}}_\ell$-decomposable, we deduce that \hyperref[item:mainproof-hidecompos]{\upshape{(a$_{i+1}$)}} holds.
    Note that we have $\Delta_{k-1}(F_i[U_{i+1}]) \leq \eps^6 |U_i| \leq \eps^4 |U_{i+1}| \leq \eps |U_{i+1}|$.
    From the definition of $(1/3+5 \eps, \eps, m)$-vortex for $H'$, we deduce that
    $\delta^{(2)}(H'[U_{i+1}]) \geq (1/3+5 \eps) |U_{i+1}|$ and $\delta^{(2)}(H'[U_{i+1}], U_{i+2}) \geq (1/3+5 \eps)|U_{i+2}|$.
    Using this, we are able to deduce that
    $\delta^{(2)}(H'_{i+1}) \geq \delta^{(2)}(H'[U_{i+1}]) - 2 \Delta_{k-1}(F_i[U_{i+1}]) \geq (1/3+5 \eps - 2\eps)|U_{i+1}| \geq (1/3+3\eps)|U_{i+1}|$, 
    and similarly we have  $\delta^{(2)}(H'_{i+1}, U_{i+2}) \geq (1/3+3 \eps) |U_{i+2}|$, 
    This shows that \hyperref[item:mainproof-hidegreei]{(b$_{i+1}$)} and \hyperref[item:mainproof-hidegreei+1]{(c$_{i+1}$)} hold.
    Finally, since $F_i \subseteq H'_i = H_i - H_i[U_{i+2}]$ we have that $H_{i+1}[U_{i+2}] = H_i[U_{i+2}] = H'[U_{i+2}]$, and therefore \hyperref[item:mainproof-hi+1untouched]{\upshape{(d$_{i+1}$)}} holds.
    
    At the end of this process, we have obtained $H_t \subseteq H'[U_t]$ such that $H' - H_t$ has a~$C^{\smash{(k)}}_\ell$-decomposition.
    
    \medskip
    \noindent \emph{Step 3: Finish.}
    Since $H'$ and $H' - H_t$ are~$C^{\smash{(k)}}_\ell$-divisible, we deduce $H_t \subseteq H'[U_t]$ is~$C^{\smash{(k)}}_\ell$-divisible.
    Therefore, $H_t \in \mathcal{L}$, and by construction we know that $H_t \cup A$ has a~$C^{\smash{(k)}}_\ell$-decomposition.
    Thus $H$ is the edge-disjoint union of $H_t \cup A$ and $H' - H_t$ and both of them have $C^{\smash{(k)}}_\ell$-decompositions, so we deduce $H$ has a $C^{\smash{(k)}}_\ell$-decomposition as well.
\end{proof}

\section{Vortex lemma} \label{section:vortex}

We prove Lemma~\ref{lemma:vortex} by selecting subsets at random (cf.~\cite[Lemma~3.7]{BGKLMO2020}).

\begin{proof}
    Let $n_0 = n$ and $n_i = \lfloor \xi n_{i-1} \rfloor$ for all $i \geq 1$.
    In particular, note $n_i \leq \xi^i n$.
    Let $t$ be the largest $i$ such that $n_i \geq m'$ and let $m = n_{t + 1}$.
    Note that $\lfloor \xi m' \rfloor \leq m \leq m'$.
    
    Let $\xi_0 = 0$ and, for all $i \geq 1$, define $\xi_i = \xi_{i-1} + 2 (\xi^i n)^{-1/3}$.
    Thus we have
    \[ \xi_{t + 1} = 2 n^{-1/3} \sum_{i \in [t]} (\xi^{-1/3})^i \leq 2 n^{-1/3} \sum_{i \in \mathbb{N}} (\xi^{-1/3})^i \leq \frac{2(n \xi)^{-1/3}}{1 - \xi^{-1/3}} \leq \xi, \]
    where in the last inequality we used $1/n \le 1/m' \ll \xi$.
    
    Clearly, taking $U_0 = V(H)$ is a $(\delta - \xi_0, \xi, n_0)$-vortex in $H$.
    Suppose now we have already found a $(\delta - \xi_{i-1}, \xi, n_{i-1})$-vortex $U_0 \supseteq \dotsb \supseteq U_{i-1}$ in $H$ for some $i \leq t+1$.
    In particular, $\delta^{(2)}(H[U_{i-1}]) \geq (\delta - \xi_{i-1}) |U_{i-1}|$.
    Let $U_i \subseteq U_{i-1}$ be a random subset of size $n_i$.
    By standard concentration inequalities, with positive probability we have $\delta^{(2)}(H[U_i]) \geq (\delta - \xi_{i-1} - n_i^{-1/3}) |U_{i}|$ and $\delta^{(2)}(H[U_{i-1}], U_i) \geq (\delta - \xi_{i-1} - n_i^{-1/3}) |U_{i}|$.
    Since $\xi_{i-1} + n_i^{-1/3} \leq \xi_i$, we have found a $(\delta - \xi_i, \xi, n_i)$-vortex for $H$.
    In the end, we have found a $(\delta - \xi_{t+1}, \xi, n_{t+1})$-vortex for $H$.
    Since $m = n_{t+1}$ and $\xi_{t+1} \leq \xi$, we are done.
\end{proof}

\section{Transformers I: gadgets} \label{section:gadgets}

In this and the next two sections we prove Lemma~\ref{lem:transformer}, the Absorber lemma.
Following Lemma~\ref{lem:absorbing}, it is enough to find transformers instead of absorbers. 
In this part, we introduce \emph{gadgets}, which will be building blocks of our transformers.

A \emph{$k$-uniform trail} is a sequence of (possibly repeated) vertices such that any $k$ consecutive vertices form an edge, and no edge appears more than once.
A \emph{$k$-uniform tour} is a $k$-uniform trail $v_1 \dotsb v_t$ such that $v_i = v_{t-k+1+i}$ for $i \in [k-1]$. 
Let $H$ be a $k$-graph. 
A \emph{tour-trail decomposition~$\mathcal{T}$ of~$H$} is an edge-decomposition of~$H$ into tours and trails. 
Note that every $k$-graph has a tour-trail decomposition, namely, considering each edge of~$H$ as a trail (by giving to it an arbitrary ordering).
A \emph{tour decomposition} is  a tour-trail decomposition consisting only of tours.
When it comes to the construction of absorbers, it is of great help to work with remainder subgraphs which admit tour decompositions.
Indeed, it is straightforward to find edge-bijective homomorphisms between tours and cycles.

To construct absorbers, we will prove that actually any $C^{\smash{(k)}}_\ell$-divisible $k$-graph can be augmented to a new, not-so-large, subgraph which does have such a tour decomposition.
This will be done in Section~\ref{section:tourtrail}, see Lemma~\ref{lem:tourdecom}.
In this section, we will describe certain small subgraphs which we will call \emph{gadgets}.
The augmented subgraph which we mentioned will be built as an edge-disjoint union of gadgets.

\subsection{Residual graphs} \label{sec:residualgraph}

Consider~$k\in \mathbb{N}$ to be fixed. 
Now we introduce the terminology we need to describe the gadgets.
Let $P = v_1 v_2 \dotsb v_{t}$ be a trail. 
We define the \emph{ends of~$P$} to be the ordered $(k-1)$-tuples $v_{k-1} v_{k-2} \dotsb v_1$ and $v_{t-k+2} v_{t-k+3} \dotsb v_t$. 
We denote by~$D(P)$ the multiset of ends of~$P$, (possibly counted with repetitions if both ends are the same).
Let $\mathcal{T}$ be a tour-trail decomposition on vertex set~$V$. 
We define the \emph{residual di-$(k-1)$-graph~$D(\mathcal{T})$ of~$\mathcal{T}$} to be the multiset $\bigcup_P D(P)$, where the union is taken over all trails $P$ in $\mathcal{T}$.
Thus $D(\mathcal{T})$ consists of ordered $(k-1)$-tuples of vertices in $V$, possibly counted with repetitions.

For $i \in [k-1]$ and a vertex~$v \in V$, let $p_{\mathcal{T} ,i}(v)$ be the number of ordered $(k-1)$-tuples in~$D(\mathcal{T})$ with $v$ being the $i$th vertex. 
We say that $\mathcal{T}$ is \emph{balanced} if, for all $v \in V$ and $i \in [k-1]$,
\begin{align}\label{dfn:balanced}
  p_{\mathcal{T},i}(v) = p_{\mathcal{T},k-i}(v).
\end{align}
We omit $\mathcal{T}$ from the subscript if it is known from the context. 

Observe that if $v_1 \dotsb v_{k-1}, v_{k-1} \dotsb v_{1} \in D ( \mathcal{T} )$, then there are trails $P_i, P_j \in \mathcal{T}$ that can be merged into a trail (if $i \ne j$) or tour (if $i =j$) with edge set $E(P_i \cup P_j)$. 
Thus there is another tour-trail decomposition with fewer trails than~$\mathcal{T}$, which is obtained from $\mathcal{T}$ by removing $P_i$, $P_j$ and adding the tour or trail born from joining $P_i$ and~$P_j$.
We will abuse the notation by calling the resulting tour-trail decomposition by~$\mathcal{T}$.
This merging procedure will be indicated by 
\begin{align*}
	D(\mathcal{T}) = D(\mathcal{T}) \setminus \{ v_1 \dots v_{k-1}, v_{k-1} \dots v_{1}\}.
\end{align*}
Tours in~$\mathcal{T}$ do not contribute to~$D(\mathcal{T})$.
Hence $\mathcal{T}$ is a tour decomposition if and only if $D(\mathcal{T}) = \emptyset$ (after merging procedures). 
Recall that our goal in Lemma~\ref{lem:tourdecom} is to augment a $C_{\ell}^{\smash{(k)}}$-divisible $k$-graph to a $k$-graph which has a tour decomposition.
Hence, conceptually, it may be helpful to assume that all tour-trail decompositions consist of only trails, as tours will not appear in~$D(\mathcal{T})$.

Given a vertex~$x\in V$ and a~$k$-tuple~$\mathbf{y}\!=\!y_1 \dotsb y_{k}$, for every~$i\!\in\! [k]$ we define 
\begin{align*}
    r_i(\mathbf{y}, x) &= y_1\dotsb y_{i-1}xy_{i+1}\dotsb y_{k}\,, \text{ and }
    s_i(\mathbf y) = \{y_1\dotsb y_{i-1}y_{i+1}\dotsb y_{k}\}\,.
\end{align*}
In words, $r_i(\mathbf{y},x)$ replaces the~$i$th vertex in~$\mathbf{y}$ with the vertex~$x$, whilst~$s_i$ simply skips the $i$th vertex of $\mathbf{y}$.
Moreover, we define the \emph{reverse} of~$\mathbf{y}$ as~$\mathbf{y}^{-1} = y_{k}\dotsb y_1$.
Finally, given a permutation~$\sigma$ of~$[k]$ we write~$\sigma(\mathbf{y})$ for the tuple $y_{\sigma(1)}\dotsb y_{\sigma(k)}$.

Given vertices~$x, x'\in V$ and~$(k-1)$-tuples~$\mathbf{z}=z_1\dotsb z_{k-1}$ and~$\mathbf{z'}=z_1'\dotsb z_{k-1}'$, we define the following sets of $(k-1)$-tuples:
\begin{align*}
    \vect S_i^{(k-1)}(\mathbf{z}, x, x') 
    &=\{z_1\dots z_{i-1}xz_{i+1}\dots z_{k-1},\, z_{k-1}\dots z_{i+1}x'z_{i-1}\dots z_1\}\\
    &=\{r_i(\mathbf{z},x), (r_i(\mathbf{z},x'))^{-1}\}
    \text{ and}\\
    \vect T_i^{(k-1)}(\mathbf{z},\mathbf{z}', x,x') &= \vect S_i^{(k-1)}(\mathbf{z}, x, x') \cup \vect S_i^{(k-1)}(\mathbf{z}', x', x) \\
    & = \{ r_i(\mathbf{z},x), r_i(\mathbf{z'},x'), (r_i(\mathbf{z},x'))^{-1} , (r_i(\mathbf{z'},x))^{-1}\}
    .
\end{align*}
For a~$k$-tuple~$\mathbf y$ (instead of a~$(k-1)$-tuple), it will be convenient to consider the set~$\vect S_i^{\smash{(k-1)}}(\mathbf y, x,x')$ with the same definition but omitting the last element of the tuple~$y_k$.
More precisely, given a~$k$-tuple~$\mathbf y$ we define \[ \vect S_i^{(k-1)}(\mathbf y, x,x') =\vect S_i^{(k-1)}(s_k(\mathbf y), x,x') = \{r_i(s_k(\mathbf y),x), r_i(s_k(\mathbf y),x')^{-1}\}.\]
An analogous definition holds for~$\vect T_i^{(k-1)}(\mathbf y, \mathbf y', x,x')$, where~$\mathbf y$ and~$\mathbf y'$ are~$k$-tuples.
Since~$k$ will be always clear from the context, we will omit it in the notation.

\newcommand{\firstgadget}{balancer}
\newcommand{\secondgadget}{swapper}

The two types of $C_{\ell}^{\smash{(k)}}$-decomposable $k$-graphs to be constructed will be $B_j(x,x')$ in Corollary~\ref{corjbasic} (which we call `\emph{\firstgadget{} gadgets}') and $T_j(\mathbf{y},\mathbf{y}',x,x')$ in Lemma~\ref{lem:jswitch} (which we call `\emph{\secondgadget{} gadgets}').
This last \secondgadget{} gadget has a tour-trail decomposition whose residual digraph is exactly~$\vect T_j(\mathbf y, \mathbf y', x,x')$. 
Essentially, the main properties of the gadgets are:

\begin{itemize}
    \item The rôle of \firstgadget{} gadgets $B_j(x,x')$ is to enable us to adjust $p_{j}(x) - p_{k-1-j}(x)$ without affecting other vertices in $V(H) \setminus \{x,x'\}$.
    Hence, by adding edge-disjoint copies of \firstgadget{} gadgets, the resulting tour-trail decomposition~$\mathcal{T}$ will be balanced (see Lemma~\ref{lem:balancetourdecom}).
    
    \item Suppose now $\mathcal{T}$ is balanced.
    Consider $x \in V(H)$ and $1 \leq j \leq k/2$, we can now pair the members of~$D(\mathcal{T})$ containing~$x$ into pairs $(\mathbf{y},\mathbf{y}')$ such that $x$ is the $j$th vertex in~$\mathbf{y}$ and $(k-1-j)$th vertex in~$\mathbf{y}'$. 
    This is possible, as $\mathcal{T}$ is balanced.
    The \secondgadget{} gadget $T_j(\mathbf y,\mathbf y', x,x')$ will enable us to `replace' $x$ with a new vertex $x'$ in both $\mathbf y$ and $\mathbf y'$.
    By repeated applications of this gadget, this will allow us to convert~$\mathcal{T}$ into a tour decomposition (see Lemmata~\ref{lem:balancetourdecom2} and~\ref{lem:balancetourdecom3}).
\end{itemize}

\subsection{Basic gadgets}

Our gadgets will be composed of union of edge-disjoint tight cycles~$C_{\ell}^{\smash{(k)}}$, so  they are $C_{\ell}^{\smash{(k)}}$-decomposable. 
In order to show that there exists a tour-trail decomposition~$\mathcal{T}$ with the desired~$D(\mathcal{T})$, we adopt the following convention. 
Given a tight cycle $C = v_1 \dotsb v_{\ell}$, we first consider a trail decomposition of~$C$ consisting of two trails $C- v_1\dotsb v_{k} = v_2 \dotsb v_{\ell } v_1 \dotsb v_{k-1}$ and $v_1\dotsb v_{k}$. 
Note that the latter is a single edge. 
Then we pick a permutation $\sigma$ of~$[k]$ and replace $v_1\dotsb v_{k}$ with $v_{\sigma(1)} \dotsb v_{\sigma(k)}$.
Let $\mathcal{T}$ be the resulting trail decomposition of~$C$, so 
\begin{align*}
    \mathcal{T} & = 
    \left\{
        \begin{array}{c}  
	       v_2 \dotsb v_{\ell } v_1 \dotsb v_{k-1}, \\
	v_{\sigma(1)} \dotsb v_{\sigma(k)}
\end{array}
\right\}
& \text{ and }
D(\mathcal{T}) & = 
\left\{
\begin{array}{cc}  
    v_{k} \dotsb v_2, & v_1 \dotsb v_{k-1} \\
    v_{\sigma(k-1)} \dotsb v_{\sigma(1)}, & v_{\sigma(2)} \dotsb v_{\sigma(k)}
\end{array}
\right\}.
\end{align*}
Since a gadget is a union of edge-disjoint tight cycles $C_1, \dots, C_s$, its tour-trail decomposition~$\mathcal{T}$ will be first given by $\mathcal{T}_1 \cup \dots \cup \mathcal{T}_{s}$, where each $\mathcal{T}_i$ is a trail decomposition of~$C_i$ in the form above. 
For ease of checking, we often write $D(\mathcal{T}) = D(\mathcal{T}_1) \cup \dots \cup D(\mathcal{T}_s)$ before merging some of the trails in~$\mathcal{T}$, that is, deleting pairs in $D(\mathcal{T})$ of form~$\{\mathbf y, \mathbf y^{-1}\}$.

The next lemma finds many trails of prescribed length which connect any given pair of ordered $(k-1)$-tuples.
It follows from~\cite[Lemma 2.3]{JoosKuhn2021}.
In particular, it shows that every edge is in a tight cycle. 

\begin{lemma} \label{lem:findcycle}
Let $1/n \ll \rho \ll \eps \ll 1/\ell, 1/k$ with $k \ge 3$ and $\ell \ge k^2 - k$.
Let $H$ be a $k$-graph on $n$ vertices with $\delta^{(2)}(H) \geq \eps n$.
Then, for every two $(k-1)$-tuples $\mathbf{x} = v_1 \dotsb v_{k-1}$ and~$\mathbf{y} = v_{\ell + 1} \dotsb v_{\ell + k - 1}$, $H$ contains at least $\rho n^{\ell - k + 1}$ trails $v_1 \dotsb v_{\ell + k - 1}$ from~$\mathbf{x}$ to $\mathbf{y}$ on $\ell$ edges, each of them with no repeated vertices, except possibly those already repeated in~$\mathbf{x}$ and~$\mathbf{y}$.
In particular, if $\mathbf{x}, \mathbf{y}$ are disjoint, then each of these trails is a tight path of length~$\ell$.
\end{lemma}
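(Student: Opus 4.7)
The plan is to construct the desired trails by a two-sided greedy extension followed by a short linking step, and to count the choices along the way. The argument essentially recovers Joos and Kühn's Lemma 2.3 in~\cite{JoosKuhn2021}, which could be invoked as a near-black-box.

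First, I would extend $\mathbf{x} = v_1 \dotsb v_{k-1}$ forward one vertex at a time: at step $j$, pick $v_j$ in $N(\{v_{j-k+1}, \dotsc, v_{j-1}\})$, avoiding the previously used vertices and the vertices of $\mathbf{y}$. Since $\delta^{(1)}(H) \geq \delta^{(2)}(H) \geq \eps n$ and at most $\ell + k - 1 \ll \eps n$ vertices are forbidden, at least $(\eps/2) n$ valid choices remain at every step. Symmetrically, extend $\mathbf{y}$ backward. I would stop each extension roughly $k$ positions before the two chains meet, leaving a short central bridge of size $\Theta(k)$; the hypothesis $\ell \geq k^2 - k$ guarantees enough room for this.

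Second, I fill the bridge. Here each new pick may be constrained by more than one already-fixed $(k-1)$-tuple, because the straddling edges involve vertices on both sides of the junction. The key is to pick the bridging vertices in an order and along an indexing pattern so that each pick faces at most two active $(k-1)$-set constraints at a time; then $\delta^{(2)}(H) \geq \eps n$, applied to those two distinct $(k-1)$-sets, still provides at least $(\eps/2) n$ valid choices per pick after avoiding the few used vertices. Designing this ordering is the main obstacle, and is precisely where the hypothesis $\delta^{(2)}$ (rather than merely $\delta^{(1)}$) is needed: a naive forward-greedy extension would leave a final vertex with $k$ simultaneous constraints, too many for $\delta^{(2)}$, so the extensions must be interleaved with the bridge to distribute constraints evenly.

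Finally, each of the $\ell - k + 1$ picks contributes at least $(\eps/2) n$ options, producing at least $\bigl((\eps/2) n\bigr)^{\ell - k + 1} \geq \rho n^{\ell - k + 1}$ distinct trails by the hypothesis $\rho \ll \eps \ll 1/\ell, 1/k$; distinctness is immediate since the trails differ in at least one intermediate vertex. The ``in particular'' follows because, when $\mathbf{x}$ and $\mathbf{y}$ are disjoint, the non-repetition clause forces the trail to be a tight path of length $\ell$.
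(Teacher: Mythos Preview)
The paper's own proof is a one-line citation of \cite[Lemma~2.3]{JoosKuhn2021}, so your remark that the lemma ``could be invoked as a near-black-box'' is exactly what the paper does, and on that level your approach matches.

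Your attempted self-contained sketch, however, has a genuine gap. In the two-phase scheme you describe---extend contiguously from both ends, then fill a contiguous central bridge---the \emph{last} bridge vertex to be placed always lies in $k$ edges whose other $k-1$ positions are by then all determined, so it faces $k$ distinct $(k-1)$-set constraints, not two. No reordering of the bridge alone fixes this: for $k=3$, $\ell=6$ one checks directly that every ordering of $v_3,v_4,v_5,v_6$ leaves the final vertex with three constraints. You flag this at the end (``the extensions must be interleaved with the bridge''), but that contradicts the two-phase structure you just laid out, and you do not actually supply an interleaving pattern that keeps every pick to at most two constraints. Designing such a non-contiguous placement scheme is precisely the content of the Joos--K\"uhn argument, and it is not recoverable from your sketch as written. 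If you mean to cite their lemma, do so and drop the sketch; if you mean to reprove it, the ordering needs to be made explicit.
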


We now construct a $C_{\ell}^{\smash{(k)}}$-decomposable $k$-graph, which will be a basic building block of all of the next gadgets. 

\begin{lemma}\label{lem:jbasic}
Let $1/n \ll \eps \ll 1/\ell, 1/k $ with $k \ge 3$ and $\ell \ge k^2 - k + 1$.
Let $H$ be a~$k$-graph on $n$ vertices with $\delta^{(2)}(H) \geq \eps n$.
Let $j \in [k-1]$. 
Let $x,x'\in V(H)$ be distinct vertices and $\mathbf y$ be a~$k$-tuple of~$V(H)$ such that~$Y=\{y_i : i \in [k]\setminus \{ j \} \} \in N_H(x) \cap N_H(x')$. 
Then there exists a $C^{\smash{(k)}}_\ell$-decomposable $k$-graph $G = G_j(\mathbf y, x,x')$ in~$H$ with $|G| =2 \ell$ and a tour-trail decomposition~$\mathcal{T}_j$ of~$G$ satisfying
\begin{align*}
	D(\mathcal{T}_j) = 
	    \vect S_j(\mathbf y, x,x')\cup 
	    \vect S_1(\sigma_1(\mathbf y),x',x)\cup 
	    \vect S_{j-1}(\sigma_2(\mathbf y), x,x'),
\end{align*}
where $\sigma_1 = j12\dotsb(j-1)(j+1)\dotsb k$ and $\sigma_2 = 2\dotsb k 1$.
Moreover, $G[\{x,x'\} \cup Y\}] = \{x \cup Y, x' \cup Y\}$.
\end{lemma}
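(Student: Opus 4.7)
The plan is to build $G = G_j(\mathbf{y},x,x')$ as an edge-disjoint union of two tight cycles $C_1, C_2$ of length $\ell$ each, with $C_1$ containing the edge $xY$ and $C_2$ containing $x'Y$. The remaining $\ell - k$ vertices of each cycle will be new, drawn from $V(H) \setminus (\{x,x'\}\cup \{y_1,\dots,y_k\})$; their existence is ensured by Lemma~\ref{lem:findcycle}, which produces many tight paths of length $\ell - 1$ between prescribed $(k-1)$-tuples under the codegree hypothesis $\delta^{(2)}(H)\geq \eps n$. Using disjoint sets of new vertices for the two cycles guarantees edge-disjointness, and keeping these new vertices outside $\{x,x'\}\cup Y$ ensures that the only edges of $G$ lying inside $\{x,x'\}\cup Y$ are $xY$ and $x'Y$ themselves, yielding $G[\{x,x'\}\cup Y] = \{xY, x'Y\}$.

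For the tour-trail decomposition $\mathcal{T}_j$, I would follow the paper's convention on each $C_i$: isolate a single edge from a trail of length $\ell - 1$, and apply a permutation $\sigma$ of $[k]$ to the single edge. The cyclic labelling of each cycle and the choice of $\sigma$ are dictated by the formula. Taking the single edge to be $xY$ (respectively $x'Y$), the two trail ends of $C_1$ should contribute ordered tuples corresponding to $r_j(s_k(\mathbf{y}),x)$ and a cyclically shifted ``c''-type tuple, while the permuted single edge (with $\sigma$ related to $\sigma_1$) produces a ``b''-type tuple and an extra $(k-1)$-tuple lying entirely inside $Y$. Symmetric choices on $C_2$ yield the $x'$-analogues. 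A single cross-cycle merge, in which the two residual $Y$-tuples cancel as mutual reverses, then reduces the eight pre-merge ends to the six in $\vect{S}_j(\mathbf{y},x,x')\cup \vect{S}_1(\sigma_1(\mathbf{y}),x',x)\cup \vect{S}_{j-1}(\sigma_2(\mathbf{y}),x,x')$.

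The main obstacle is the careful combinatorial bookkeeping: selecting labellings and permutations so that the six target ordered tuples appear with the precise orientations and multiplicities demanded by the formula, and so that the cross-cycle merge eliminates exactly the intended surplus ends rather than accidentally cancelling a target. The appearance of $\sigma_1$ and $\sigma_2$ in the formula reflects position-shifts inherent to the permuted single edges relative to the trail endpoints, and ensuring these align correctly for every $j \in [k-1]$ is the technical heart of the proof.
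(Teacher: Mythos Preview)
Your proposal is essentially identical to the paper's proof: the paper also builds $G$ as the edge-disjoint union of two $\ell$-cycles $C_1,C_2$ through $xY$ and $x'Y$ respectively (found via Lemma~\ref{lem:findcycle} with new internal vertices), splits each cycle into a single permuted edge plus a long trail following the stated convention, and then performs exactly one cross-cycle merge in which the two $Y$-only tuples $y_1\dotsb y_{j-1}y_{j+1}\dotsb y_k$ and its reverse cancel, reducing eight ends to the six claimed. The only thing missing from your outline is the explicit labelling and the line-by-line verification of $D(\mathcal{T}_j)$, which is pure bookkeeping exactly as you anticipated.
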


\begin{proof}
Orient $Y \cup x$ and $Y \cup x'$ into $y_{k} \dotsb y_{j+1} x y_{j-1} \dotsb y_1$ and $x'y_1 \dotsb y_{j-1} y_{j+1} \dotsb y_{k}$.
By Lemma~\ref{lem:findcycle}, there exist two tight cycles of length~$\ell$,
\begin{align*}
	C_1 & = y_{k} \dotsb y_{j+1} x y_{j-1} \dotsb y_1 v_{k+1} \dotsb v_{\ell}, \\
	C_2 & = x'y_1 \dotsb y_{j-1} y_{j+1} \dotsb y_{k}u_{k+1} \dotsb u_{\ell},
\end{align*}
where $v_{i}$ and~$u_i$ for $k+1\leq i\leq \ell$ are new distinct vertices. 
Let $G = C_1 \cup C_2$.
Define the tour-trail decomposition~$\mathcal{T}$ of $C_1 \cup C_2$ to be 
\begin{align*}
\mathcal{T}_j  
& = \left\{
\begin{array}{c}
x y_1 \dots y_{j-1} y_{j+1} \dots y_{k},\\
y_{k-1} \dots y_{j+1} x y_{j-1} \dots y_1 v_{k+1} \dots v_{\ell} y_k \dots y_{j+1} x y_{j-1} \dots y_2,\\
y_k \dots y_{j+1} x' y_{j-1} \dots y_1,\\
y_1 \dots y_{j-1} y_{j+1} \dots y_{k}u_{k+1} \dots u_{\ell}x' y_1 \dots y_{j-1} y_{j+1}\dots y_{k-1}
\end{array}
\right\}.
\end{align*}
In words, as explained before, the first trail in $\mathcal{T}_j$ consists of a single edge of $C_1$ in a different order to how it appears in $C_1$, and the second trail corresponds to $C_1$ without the previous edge; similar with $C_2$ and the third and fourth trails in $\mathcal{T}_j$.
Hence, $D(\mathcal{T}_j)$ consists of 
\begin{align*}
D(\mathcal{T}_j) 
& = \left\{
\begin{array}{cc}
y_{k-1} \dots y_{j+1} y_{j-1} \dots y_1 x,
&
\hcancel{y_1 \dots y_{j-1} y_{j+1} \dots y_{k}},
\\
y_1 \dots y_{j-1} x y_{j+1} \dots y_{k-1},
&
y_k \dots y_{j+1} x y_{j-1} \dots y_2,
\\
y_2 \dots y_{j-1} x' y_{j+1} \dots y_{k},
&
y_{k-1} \dots y_{j+1} x' y_{j-1} \dots y_1,
\\
\hcancel{y_{k} \dots y_{j+1} y_{j-1} \dots y_1},
&
x' y_1 \dots y_{j-1} y_{j+1} \dots y_{k-1}
\end{array}
\right\}\\
& = \vect S_j(\mathbf y, x,x')\cup \vect S_1(\sigma_1(\mathbf y),x',x)\cup \vect S_{j-1}(\sigma_2(\mathbf y), x,x'),
\end{align*}
where $\sigma_1 = j1\dotsb(j-1)(j+1)\dotsb k$ and $\sigma_2 = 2\dotsb k 1$.
See Figure~\ref{fig:basic}.
\end{proof}

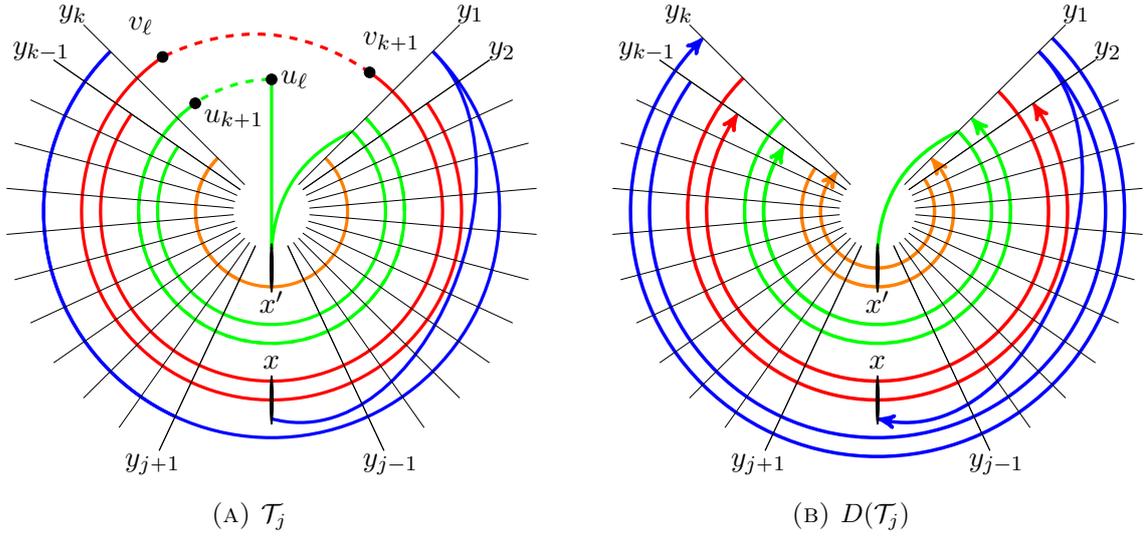
\begin{figure}[!htbp]
     \centering
     \begin{subfigure}[b]{0.45\textwidth}
         \begin{tikzpicture}[scale=1]

        \begin{scope}[line width=1.2pt]
			\draw[blue] (45:3) arc (45:-225:3);		
			\draw[blue] (-225:3) arc (-225:-115:3);		
			\draw[blue] (45:3) to [out =-45,in=-15] (0,-2.75);
			
			\draw[red] (55:2.25) arc (55:-215:2.25);		
			\draw[red] (35:2.5) arc (35:-235:2.5);		            
			\draw[red, dashed] (55:2.25) to [bend right] (125:2.5);
			
			\draw[orange] (45:1) arc (45:-225:1);		
			\draw[orange] (-225:1) arc (-225:-115:1);

		   	\draw[green, dashed] (45:1.75) arc (45:-270:1.75);
            \draw[green] (45:1.75) arc (45:-235:1.75);
			\draw[green] (90:1.75) -- (0,-.5) to [bend left] (45:1.5);
			\draw[green] (45:1.5) arc (45:-215:1.5);	

   
		\end{scope}

		\foreach \y/\x in {45/1,35/2,-65/{j-1},-115/{j+1},145/{k-1},135/{k}}{
				\draw[line width=0.1pt] (\y:3.5) -- (\y:0.5) ;
				\draw (\y:3.7) node {$y_{\x}$};
			}
				
		\foreach \y in {35,25,...,-65}{
				\draw[line width=0.1pt] (\y:3.5) -- (\y:0.5) ;
			}
		\foreach \y in {145,155,...,245}{
				\draw[line width=0.1pt] (\y:3.5) -- (\y:0.5) ;
			}

            \filldraw (0,-2.5) ellipse (0.2 mm and 3.2mm);
				\draw (-90:2) node {$x$};
				
			\filldraw (0,-0.75) ellipse (0.2 mm and 3.2mm);
				\draw (0,-1.2) node {$x'$};
					
			\filldraw (55:2.25) circle (2pt);
				\draw (55:2.75) node {$v_{k+1}$};
				
			\filldraw (125:2.5) circle (2pt);
				\draw (125:3) node {$v_{\ell}$};
			
			\filldraw (90:1.75) circle (2pt);
				\draw (0.3,1.75) node {$u_{\ell}$};
									
			\filldraw (125:1.75) circle (2pt);
				\draw (112:1.32) node {$u_{k+1}$};
        \end{tikzpicture}
         \caption{$\mathcal{T}_j$}
         \end{subfigure}
     \hfill
     \begin{subfigure}[b]{0.45\textwidth}
         \begin{tikzpicture}[scale=1]

			\begin{scope}[line width=1.3pt]
			\draw[blue,->,>=stealth'] (45:3.25) arc (45:-225:3.25);		
			
			\draw[blue] (45:3) arc (45:-215:3);			
			\draw[blue,->,>=stealth'] (45:3) to [out =-45,in=-15] (0,-2.75);		
			
			\draw[red,->,>=stealth'] (45:2.25) arc (45:-215:2.25);		
			\draw[red,<-,>=stealth'] (35:2.5) arc (35:-225:2.5);

			\draw[orange,->,>=stealth'] (35:0.75) arc (35:-225:0.75);		
    		\draw[orange,<-,>=stealth'] (45:1) arc (45:-215:1);		
			
	       	\draw[green,<-,>=stealth'] (45:1.75) arc (45:-225:1.75);		
			
			\draw[green] (0,-.5) to [bend left] (45:1.5);
			\draw[green] (45:1.5) arc (45:-215:1.5);		
			\draw[green,<-,>=stealth'] (-215:1.5) arc (-215:-115:1.5);		
			
			\end{scope}

		\foreach \y/\x in {45/1,35/2,-65/{j-1},-115/{j+1},145/{k-1},135/{k}}{
				\draw[line width=0.1pt] (\y:3.5) -- (\y:0.5) ;
				\draw (\y:3.7) node {$y_{\x}$};
			}
				
		\foreach \y in {35,25,...,-65}{
				\draw[line width=0.1pt] (\y:3.5) -- (\y:0.5) ;
			}
		\foreach \y in {145,155,...,245}{
				\draw[line width=0.1pt] (\y:3.5) -- (\y:0.5) ;
			}
				\filldraw (0,-2.5) ellipse (0.2 mm and 3.2mm);
				\draw (-90:2) node {$x$};
				
				\filldraw (0,-0.75) ellipse (0.2 mm and 3.2mm);
				\draw (0,-1.2) node {$x'$};
    				
        \end{tikzpicture}
         \caption{$D(\mathcal{T}_j)$}
         \end{subfigure}
    \caption{The tour-trail decomposition $\mathcal{T}_j$ of the basic gadget $G = G_j(\mathbf y, x,x')$ and its residual di-$(k-1)$-graph~$D(\mathcal{T}_j)$. Dotted lines represent tight paths using new vertices.}
     \label{fig:basic}
\end{figure}

In the following subsections we introduce further gadgets based on~$G_j(\mathbf {y}, x,x')$.
For those, diagrams similar to the one in Figure~\ref{fig:basic} would get more convoluted. 
Thus, we refrain from presenting such diagrams and check the main properties of the gadgets based solely on text and on a explicit list of elements in the residual di-$(k-1)$-graphs~$\mathcal D(\mathcal T)$ of the respective tour-trail decompositions~$\mathcal T$.

\subsection{Balancer gadgets}

Next, we will use Lemma~\ref{lem:jbasic} to construct a \firstgadget{} gadget~$B_j = B_j(x, x')$.
As mentioned before, the main property of $B_j(x,x')$ is to enable us to increase $p_{\mathcal{T},i}(x) - p_{\mathcal{T},k-1-i}(x)$ (and decrease $p_{\mathcal{T},1}(x) - p_{\mathcal{T},k-1}(x)$) without affecting other vertices in $V \setminus \{x,x'\}$.

\begin{corollary}[Balancer gadgets] \label{corjbasic}
Let $1/n \ll \eps \ll 1/\ell, 1/k $ with $k \ge 3$ and $\ell \ge k^2-k+1$.
Let $H$ be a $k$-graph on $n$ vertices with $\delta^{(2)}(H) \geq \eps n$.
Let $j \in [k-1] \setminus \{1\}$ and let~$x,x' \in V(H)$ be distinct. 
Then there exists a $C^{\smash{(k)}}_\ell$-decomposable $k$-graph $B_j = B_j(x, x')$  with $|B_j| = 2(j-1) \ell$ and a tour-trail decomposition~$\mathcal{T}'_j$ of~$B_j$ such that for all $i \in [k-1]$, $p_{\mathcal{T}'_j,i}(v) - p_{\mathcal{T}'_j,k-i}(v) = 0$ for all $v \in V(H) \setminus \{x,x'\}$ and 
\begin{align*}
	p_{\mathcal{T}'_j,i}(x) - p_{\mathcal{T}'_j,k-i}(x) 
	& = 
	p_{\mathcal{T}'_j,k-i}(x')  - p_{\mathcal{T}'_j,i}(x') 
	= 
	\mathbbm{1}_{i=j} - \mathbbm{1}_{i=k-j} - j (\mathbbm{1}_{i=1} - \mathbbm{1}_{i=k-1}).
		\end{align*}
Moreover, when $k$ is even and $j = k/2$, $p_{\mathcal{T}'_{k/2},k/2} (v) \equiv \mathbbm{1}_{v \in \{x,x'\}}  \bmod{2}$.
\end{corollary}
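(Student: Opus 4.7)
The plan is to construct the balancer as an edge-disjoint union of $j-1$ basic gadgets from Lemma~\ref{lem:jbasic}, one for each index $m$ from $2$ to $j$:
\[
    B_j(x,x') \;=\; \bigcup_{m=2}^{j} G_m(\mathbf{y}^{(m)}, x, x'),
\]
where each gadget's internal vertices (the $k$-tuple $\mathbf{y}^{(m)}$ together with the $2(\ell - k)$ auxiliary tight-path vertices of Lemma~\ref{lem:jbasic}) are chosen greedily so that the gadgets are pairwise vertex-disjoint outside of $\{x, x'\}$. The required $Y^{(m)} := \{y^{(m)}_i : i \ne m\} \in N_H(x) \cap N_H(x')$ exist in abundance by $\delta^{(2)}(H) \ge \eps n$ (using the condition for $e_1 = Y' \cup \{x\}$, $e_2 = Y' \cup \{x'\}$ with a chosen auxiliary $(k-2)$-set $Y'$), and only $O(k\ell) \ll n$ vertices are consumed in total across the $j - 1 \le k - 2$ gadgets. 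Setting $\mathcal{T}'_j := \bigcup_{m=2}^{j} \mathcal{T}_m$ yields a tour-trail decomposition of $B_j$ with $|B_j| = 2(j - 1)\ell$, and $B_j$ is $C^{(k)}_\ell$-decomposable since each $G_m$ is.

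The heart of the argument is the residual di-$(k-1)$-graph computation. Inspecting the six explicit residual tuples of $\mathcal{T}_m$ in the proof of Lemma~\ref{lem:jbasic} shows that, as multisets of positions, $x$ occupies $\{k-1,\, m,\, k-m+1\}$, $x'$ occupies $\{1,\, k-m,\, m-1\}$, and each $y^{(m)}_s$ with $s \ne m$ occupies a multiset invariant under the involution $t \mapsto k - t$. Summing the $x$-contributions over $m = 2, \dotsc, j$ gives
\[
    p_{\mathcal{T}'_j, i}(x) \;=\; (j-1)\mathbbm{1}_{i = k-1} \;+\; \mathbbm{1}_{2 \le i \le j} \;+\; \mathbbm{1}_{k - j + 1 \le i \le k - 1},
\]
and the telescoping identities $\mathbbm{1}_{2 \le i \le j} - \mathbbm{1}_{1 \le i \le j - 1} = \mathbbm{1}_{i = j} - \mathbbm{1}_{i = 1}$ and $\mathbbm{1}_{k - j + 1 \le i \le k - 1} - \mathbbm{1}_{k - j \le i \le k - 2} = \mathbbm{1}_{i = k - 1} - \mathbbm{1}_{i = k - j}$ combine to collapse $p_{\mathcal{T}'_j, i}(x) - p_{\mathcal{T}'_j, k - i}(x)$ to the required expression $j\mathbbm{1}_{i = k - 1} - j\mathbbm{1}_{i = 1} + \mathbbm{1}_{i = j} - \mathbbm{1}_{i = k - j}$; the analogous computation for $x'$ is symmetric and yields exactly the opposite sign.

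For any $v \notin \{x, x'\}$, vertex-disjointness ensures that $v$ lies in at most one $G_m$. Interior tight-path vertices are absent from the residual altogether, while for $v = y^{(m)}_s$ the $t \mapsto k-t$ symmetry noted above yields $p_{\mathcal{T}'_j, t}(v) = p_{\mathcal{T}'_j, k-t}(v)$ for every $t$, as required. For the moreover clause (when $k$ is even and $j = k/2$), only $G_{k/2}$ contributes to $p_{k/2}$ at $x$ and $x'$, because $k/2 \notin \{k-1, m, k-m+1\} \cup \{1, k-m, m-1\}$ for any $m < k/2$, and $k/2$ has multiplicity exactly one in each of the two multisets when $m = k/2$; hence $p_{k/2}(x) = p_{k/2}(x') = 1$. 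For each $y$-vertex, the six contributions to $p_{k/2}$ arrive as matched pairs under $t \mapsto k-t$, which at the self-symmetric value $t = k/2$ forces the multiplicity to be even. The main obstacle is the careful bookkeeping of positions in the six residual tuples at the boundary cases $s \in \{1, k\}$ and when $s$ is adjacent to $j$, but once these are catalogued the algebraic identity for $x, x'$ and the balance/parity statements for other vertices follow mechanically from telescoping and pairing.
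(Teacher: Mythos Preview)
Your proof is correct and uses essentially the same construction as the paper: $B_j$ is the edge-disjoint union of the basic gadgets $G_2,\dotsc,G_j$ from Lemma~\ref{lem:jbasic}, all anchored at $x,x'$. The only difference is presentational: the paper argues by induction on~$j$, recording the single-gadget contribution $p_{\mathcal{T}_m,i}(x)-p_{\mathcal{T}_m,k-i}(x)=(\mathbbm{1}_{i=m}-\mathbbm{1}_{i=k-m})-(\mathbbm{1}_{i=1}-\mathbbm{1}_{i=k-1})-(\mathbbm{1}_{i=m-1}-\mathbbm{1}_{i=k-m+1})$ and summing inductively, whereas you unroll this induction into an explicit telescoping sum over $m=2,\dotsc,j$.
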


\begin{proof}
We will proceed by induction on~$j$.
Let~$\mathbf y$ be a~$k$-tuple such that~$Y = \{y_i : i \in [k] \setminus \{ j \} \} \in N(x) \cap N(x')$.
By Lemma~\ref{lem:jbasic}, there is a $C^{\smash{(k)}}_\ell$-decomposable $k$-graph~$G_j\!=\! G_j(\mathbf y, x, x')$  such that 
\begin{enumerate}[label={\rm(\roman*)}]
	\item $G_j[\{x,x'\} \cup Y] = \{x \cup Y, x' \cup Y\}$,
	\item $|G_j| = 2\ell$, and
	\item there exists a tour-trail decomposition~$\mathcal{T}_j$ of~$G_j$ such that 
\begin{align*}
	D(\mathcal{T}_j)  
		= \vect S_j(\mathbf y, x,x')\cup
		\vect S_1(\sigma_1(\mathbf y),x',x)\cup
		\vect S_{j-1}(\sigma_2(\mathbf y), x,x'),
\end{align*}
where $\sigma_1 = j12\dotsb(j-1)(j+1)\dotsb k$ and $\sigma_2 = 2\dotsb k 1$.
\end{enumerate}
Note that, for all  $i \in [k-1]$ and $v \in V(H) \setminus \{x,x'\}$, we have $
p_{\mathcal{T}_j,i}(v) - p_{\mathcal{T}_j,k-i}(v) = 0
$
as each of $\vect S_j(\mathbf y,x,x')$, $\vect S_1(\sigma_1(\mathbf y),x',x)$ and $\vect S_j(\sigma_2(\mathbf y),x,x')$ contributes zero.
Moreover,
\begin{align}
p_{\mathcal{T}_j,i}(x) - p_{\mathcal{T}_j,k-i}(x) 
& = p_{\mathcal{T}_j,k-i}(x') - p_{\mathcal{T}_j,i}(x') \nonumber \\
&= \left( \mathbbm{1}_{i=j} - \mathbbm{1}_{i=k-j} \right) -  \left( \mathbbm{1}_{i=1} - \mathbbm{1}_{i=k-1} \right) - \left( \mathbbm{1}_{i=j-1} - \mathbbm{1}_{i=k-j+1} \right).
\label{eqn:B_j1}
\end{align}

For $j = 2$, we set $B_2 = G_2$ and we are done. 
For $j >2$, there exists $B_{j-1}(x,x')$ edge-disjoint from $G_j$, by our induction hypothesis. 
Let $\mathcal{T}_{j-1}'$ be the corresponding tour-trail decomposition. 
Set $B_j = G_j \cup B_{j-1}(x,x')$. 
Clearly $|B_j| = |G_j| + |B_{j-1}| =   2(j-1) \ell$. 
Note that $B_j$ is $C^{\smash{(k)}}_\ell$-decomposable and has a tour-trail decomposition~$\mathcal{T}'_j = \mathcal{T}_j \cup \mathcal{T}_{j-1}'$.
Together with~\eqref{eqn:B_j1}, we deduce that $\mathcal{T}'_j$ satisfies the desired properties. 
The moreover statement can be verified similarly. 
\end{proof}

\subsection{Swapper gadgets}
The construction of \secondgadget{} gadgets requires more steps.
We start with the following proposition. 

\begin{proposition} \label{prop:k-1}
Let $1/n \ll \eps \ll 1/\ell, 1/k $ with $k \ge 3$ and $\ell \ge k^2 - k + 1$.
Let $H$ be a~$k$-graph on $n$ vertices with $\tmindeg(H) \geq \eps n$.
Let $x,x'\in V(H)$ be distinct vertices and~$\mathbf y$ be a $(k-1)$-tuple of~$V(H)$ such that~$\{x,x'\}\cup \{y_i\colon 2\leq i\leq k-1\}$ is of size~$k$.
Then, there exists a vertex $y_k \in V(H)$, a $C^{\smash{(k)}}_{\ell}$-decomposable $k$-graph~$F_1 = F_1(\mathbf y,x,x')$ in~$H$  with $|F_1| = 3 \ell$ and a tour-trail decomposition~$\mathcal{T}_1$ such that 
\begin{align*}
	D(\mathcal{T}_1) = \begin{cases}
	\vect S_1(\mathbf y, x,x') \cup \{	x x' , x x' 	\} & \text{if $k = 3$,}\\
	\vect S_1( \mathbf y, x,x') \cup \{	x x' y_{k} \dots y_4,  	y_4 \dots y_{k} x x' 	\} & \text{if $k \ge 4$.}
	\end{cases}
\end{align*}
Moreover, $F_1[ \{x,x',y_2 , \dots,y_{k-1} \}] = \emptyset$, that is, $xx'y_2\dotsb y_{k-1}$ is not an edge of $F_1$.
\end{proposition}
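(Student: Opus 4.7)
The plan is to construct $F_1$ as the edge-disjoint union of three tight $\ell$-cycles $C_1, C_2, C_3$ inside $H$, each containing a prescribed $k$-edge, and to specify an explicit tour-trail decomposition of each whose combined residual, after cancellation of mutually-reverse pairs via the merging procedure from Section~\ref{sec:residualgraph}, equals the multiset stated.

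First, using the codegree hypothesis $\tmindeg(H) \geq \eps n$, I would pick the vertex $y_k$ in the common neighbourhood of the $(k-1)$-sets $\{x,y_2,\dots,y_{k-1}\}$, $\{x',y_2,\dots,y_{k-1}\}$, and (when $k \geq 4$) $\{x,x',y_4,\dots,y_{k-1}\}$. This is possible because each such set has neighbourhood of size at least $\eps n$ and only a bounded number of sets must be intersected. Consequently, the $k$-edges $xy_2\cdots y_k$, $x'y_2\cdots y_k$ and (for $k\geq 4$) $xx'y_4\cdots y_k$ all lie in $H$. Next, I would apply Lemma~\ref{lem:findcycle} three times to extend each prescribed edge to a tight $\ell$-cycle $C_i$, each time using internal vertices that are new and disjoint from $\{x,x',y_1,\dots,y_k\}$ and from any vertex used in a previous cycle; this is possible because $\delta^{(2)}(H)\geq\eps n$ yields polynomially many tight paths of the desired length between any pair of disjoint $(k-1)$-tuples, far exceeding the bounded set of forbidden vertices.

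The core combinatorial step is choosing the trail decompositions. Following the convention of Section~\ref{sec:residualgraph}, each $C_i$ is split into a long trail and a single-edge trail whose $k$ vertices are permuted by some~$\sigma_i$. I would order $C_1$ as $v_1\dotsb v_\ell$ with $v_1=x$, $v_j=y_j$ for $2\leq j\leq k-1$, $v_k=y_k$, so that the second end of the long trail is $v_1\dotsb v_{k-1}=xy_2\dotsb y_{k-1}$, matching the first tuple of $\vect S_1(\mathbf y,x,x')$. Analogously, $C_2$ is ordered with $x'$ replacing $x$ so that its long trail contributes $y_{k-1}\dotsb y_2 x'$ (the second tuple of $\vect S_1(\mathbf y,x,x')$). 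The cycle $C_3$ is ordered so that its long trail (or, for $k=3$, both trails) produces the surviving tuples $xx'y_k\dotsb y_4$ and $y_4\dotsb y_k xx'$ (resp.\ two copies of $xx'$). The three permutations $\sigma_1,\sigma_2,\sigma_3$ defining the short-edge trails are then chosen so that the remaining end-tuples pair up, across the three cycles, as mutual reverses and vanish under the merging procedure.

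The main obstacle is the bookkeeping: with up to twelve potential ends from the three cycles, one must verify that exactly eight of them cancel in mutually-reverse pairs while the intended four remain. The case $k=3$ is slightly different because the tail $y_4\dotsb y_k$ is empty and the third prescribed edge degenerates to $xx'y_k$; I would treat it by a separate, simpler computation paralleling the $k\geq 4$ argument. Finally, the assertion $F_1[\{x,x',y_2,\dots,y_{k-1}\}]=\emptyset$ follows because each of the three prescribed $k$-edges contains $y_k\notin\{x,x',y_2,\dots,y_{k-1}\}$, and every other edge of $F_1$ uses at least one fresh internal vertex introduced via Lemma~\ref{lem:findcycle}; in particular, $xx'y_2\dotsb y_{k-1}$ is not an edge of $F_1$.
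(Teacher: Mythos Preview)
Your approach is exactly the paper's: three tight $\ell$-cycles through prescribed edges, each split into a long trail and a reordered single-edge trail, with eight of the twelve ends cancelling in reverse pairs. However, there is an indexing slip in your third cycle. The set $\{x,x',y_4,\dots,y_{k-1}\}$ has only $k-2$ elements, not $k-1$, so you cannot take a codegree neighbourhood there, and accordingly $xx'y_4\cdots y_k$ has only $k-1$ vertices and is not a $k$-edge. The correct choice (as in the paper) is $y_k \in N(xy_2\cdots y_{k-1}) \cap N(x'y_2\cdots y_{k-1}) \cap N(xx'y_3\cdots y_{k-1})$, making the third prescribed edge $\{x,x',y_3,\dots,y_k\}$; the vertex $y_3$ then appears in the two ends that cancel against ends of the first two cycles, while the surviving ends $xx'y_k\cdots y_4$ and $y_4\cdots y_k xx'$ are exactly the long-trail ends of $C_3$.

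A second, smaller point: with your ordering of the $x'$-cycle as $x'y_2\cdots y_k\cdots$, the long trail's end is $x'y_2\cdots y_{k-1}$, the reverse of the desired $y_{k-1}\cdots y_2 x'$. In the paper the tuple $y_{k-1}\cdots y_2 x'$ arises instead as an end of the reordered single-edge trail of that cycle (the cycle itself is written starting at $y_2$, with short trail $x'y_2\cdots y_k$). Once you fix the $y_3$/$y_4$ slip and adjust the orderings accordingly, the remaining bookkeeping goes through exactly as you outlined.
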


\begin{proof}
Let $y_k \in N(x y_2 \dotsb y_{k-1} ) \cap N( x' y_2 \dotsb y_{k-1} ) \cap N(x x' y_3 \dotsb  y_{k-1})$, which exists by our assumption (here for $k =3$ we consider $y_3 \dotsb y_{2}$ to be empty).
By Lemma~\ref{lem:findcycle}, there exist three tight cycles of length~$\ell$
\begin{align*}
C_{1} & = y_2 \dotsb y_{k} x' u_{k+1} \dotsb u_{\ell}, \\
C_{2} & = x y_2 \dotsb y_{k} v_{k+1} \dotsb v_{\ell} \text{, and} \\
C_{3} & = y_3 \dotsb y_{k} x'  x w_{k+1} \dotsb w_{\ell},
\end{align*}
where $u_i, v_{i}, w_i$ are all distinct new vertices. 
Let $T_1 = C_1 \cup C_2 \cup C_3$.
Consider the trail decomposition~$\mathcal{T}_1$ of $T_1$ such that 
\begin{align*}
\mathcal{T}_1 = 
\left\{
\begin{array}{c}  
	y_3 \dotsb y_{k} x' u_{k+1} \dotsb u_{\ell} y_2 \dotsb y_{k}, \\
	x' y_2 \dotsb y_{k},
	\\
	y_2  \dotsb y_{k} v_{k+1} \dotsb v_{\ell} x y_2 \dotsb y_{k-1},\\
	y_2 \dotsb y_{k}x, 
	\\
	y_4 \dotsb y_{k} x' x w_{k+1} \dotsb w_{\ell} y_3 \dotsb y_{k} x',\\
	y_3 \dotsb y_{k} x x'
\end{array}
\right\}
\end{align*}
 and so
\begin{align*}
D(\mathcal{T}_1) = 
\left\{
\begin{array}{cc}  
	\hcancel{x' y_k \dotsb y_{3}}, &
	\hcancel{y_2 \dotsb y_{k}}, \\
	y_{k-1} \dotsb y_2 x', & 
	\hcancel{y_2 \dotsb y_{k}}, 
	\\
	\hcancel{y_k  \dotsb y_{2}} &
	x y_2 \dotsb y_{k-1},\\
	\hcancel{y_k \dotsb y_{2}}, &
	\hcancel{y_3 \dotsb y_{k}x},
	\\
	x x' y_k \dotsb y_{4}, &
	\hcancel{y_3 \dotsb y_{k} x'},\\
	\hcancel{x y_k \dotsb y_{3}}, & 
	y_4 \dotsb y_{k} x x'
\end{array}
\right\}
\end{align*}
as required.
\end{proof}


We construct a \secondgadget{} gadget of the form $T_1(\mathbf y,\mathbf y', x,x')$ in the next proposition.

\begin{proposition}[Swapper gadget -- case~$j=1$]\label{prop:1switch}
Let $1/n \ll \eps \ll 1/\ell, 1/k $ such that $k \ge 3$ and $\ell \ge k^2 - k + 1$.
Let $H$ be a $k$-graph on $n$ vertices with $\tmindeg(H) \geq \eps n$.
Consider vertices $x, x' \in V(H)$ and $(k-1)$-tuples $\mathbf y$,~$\mathbf y'$ of~$V(H)$ such that both~$\{x,x'\}\cup \{y_i\colon 2\leq i\leq k-1\}$ and $\{x,x'\}\cup \{y_i'\colon 2\leq i\leq k-1\}$ are of size~$k$.
Then there exists a $C^{\smash{(k)}}_\ell$-decomposable $k$-graph~$T_1 = T_1(\mathbf y, \mathbf y', x, x')$ in~$H$ and a tour-trail decomposition~$\mathcal{T}_1$ of~$T_1$ such that
\begin{enumerate}[label={\rm(\roman*)}]
	\item $T_1[\{x,x'\}\cup \{y_1,\dots y_{k-1}\}\cup \{y_1',\dots,y'_{k-1}\}] = \emptyset$ and $|T_1| = 2 \ell k$ and
	\item $D(\mathcal{T}_1) = \vect T_1(\mathbf y, \mathbf y', x,x')$.
\end{enumerate}
\end{proposition}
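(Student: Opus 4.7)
The plan is to construct $T_1$ as an edge-disjoint union of gadgets of types already built, most prominently the $F_1$ gadgets from Proposition~\ref{prop:k-1}. The two natural ingredients to start with are $F_1(\mathbf{y}, x, x')$ and $F_1(\mathbf{y}', x', x)$, each produced by Proposition~\ref{prop:k-1} with fresh auxiliary vertices that are disjoint from each other and from $\{x, x'\} \cup \{y_i\} \cup \{y_i'\}$. This is possible because $\tmindeg(H) \geq \eps n$ and only $O(\ell)$ vertices have been used so far, so the common-neighbourhood condition in Proposition~\ref{prop:k-1} can be met each time. The union $F_1(\mathbf{y}, x, x') \cup F_1(\mathbf{y}', x', x)$ has $6\ell$ edges and admits a tour-trail decomposition whose residual is
\[
\vect{T}_1(\mathbf{y}, \mathbf{y}', x, x') \cup E,
\]
where $E$ collects the ``error'' tuples coming from Proposition~\ref{prop:k-1}. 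For $k = 3$ one has $E = \{xx', xx', x'x, x'x\}$, which split into two reverse pairs and cancel via the merging rule~\eqref{dfn:balanced}; since $6\ell = 2k\ell$, the construction is already complete and satisfies the bound in the statement.

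For $k \geq 4$ the error tuples in $E$ are $\{xx'y_k\cdots y_4,\ y_4\cdots y_k xx',\ x'xy_k'\cdots y_4',\ y_4'\cdots y_k'x'x\}$, and they do not pair up with each other. The plan is to absorb them by chaining additional $F_1$-gadgets: for each error tuple $\mathbf{e}$ still present, invoke Proposition~\ref{prop:k-1} with an input $(\tilde{\mathbf{y}}, a, b)$ chosen so that one of the two elements of $\vect{S}_1(\tilde{\mathbf{y}}, a, b)$ equals $\mathbf{e}^{-1}$; the merging rule then removes $\mathbf{e}$, at the cost of creating two new error tuples on the freshly chosen auxiliary vertex $\tilde{y}_k$. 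Designing the inputs $\tilde{\mathbf{y}}$ symmetrically between the $\mathbf{y}$- and $\mathbf{y}'$-sides — so that each new error introduced on one side is the reverse of a new error introduced on the other — allows the process to close after a bounded number of steps, with all remaining errors annihilating in pairs. Throughout, each new application uses only new vertices outside $\{x, x'\} \cup \{y_i\} \cup \{y_i'\}$, so condition~(i) of the statement is preserved.

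The three main points of care are: verifying that all gadgets are pairwise edge-disjoint, which follows since Lemma~\ref{lem:findcycle} continues to apply after removing the $O(k \ell)$ edges used so far; verifying that no edge inside $\{x, x'\} \cup \{y_i\} \cup \{y_i'\}$ is ever added to $T_1$, which follows from the ``moreover'' clauses of Proposition~\ref{prop:k-1} together with the choice of new auxiliary vertices; and tuning the construction so that the total edge count is exactly $2\ell k$. The last point is the most delicate bookkeeping step: each $F_1$ contributes $3\ell$ edges, so to land on $2k\ell$ one may need to blend in a small number of basic $G_j$-gadgets from Lemma~\ref{lem:jbasic} (which contribute $2\ell$ each) whose residuals reduce simply to pairs that merge without disturbing $\vect{T}_1(\mathbf{y},\mathbf{y}',x,x')$. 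I expect this final counting and pairing to be the main obstacle, but it should be routine once the chain of auxiliary applications is laid out carefully.
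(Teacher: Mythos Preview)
Your setup through the $k=3$ case is exactly what the paper does: take $F_1(\mathbf y,x,x')\cup F_1(\mathbf y',x',x)$, note that the residual is $\vect T_1(\mathbf y,\mathbf y',x,x')$ plus four ``error'' tuples, and for $k=3$ those four errors are $\{xx',xx',x'x,x'x\}$ and cancel.

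For $k\ge 4$, however, your plan diverges from the paper and has a genuine gap. You propose to kill each error tuple $\mathbf e$ by a fresh $F_1(\tilde{\mathbf y},a,b)$ chosen so that one element of $\vect S_1(\tilde{\mathbf y},a,b)$ equals $\mathbf e^{-1}$. But $\vect S_1$ has \emph{two} elements, so this application also injects the second element of $\vect S_1(\tilde{\mathbf y},a,b)$ into the residual, on top of the two new error tuples you mention. So one error becomes three new residual tuples, not two, and your symmetric-pairing heuristic does not obviously close. You also flag, correctly, that multiples of $3\ell$ cannot hit $2k\ell$ from $6\ell$ without patching by $G_j$'s, and you leave that patching unspecified; but the $G_j$'s have their own six-tuple residuals from Lemma~\ref{lem:jbasic}, so using them to fix the edge count re-opens the cancellation problem. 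As written, neither termination nor the exact count is verified.

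The paper avoids all of this by \emph{not} chaining further $F_1$'s. Instead it introduces new vertices $z_4,\dots,z_k$ one at a time (each chosen in a triple common neighbourhood, using $\tmindeg(H)\ge\eps n$) and, for each $i\in\{4,\dots,k\}$, adds two single tight cycles $C^i$ and $D^i$ through the ordered edges
\[
z_i\cdots z_4\,x\,x'\,y_k\cdots y_i
\qquad\text{and}\qquad
z_i\cdots z_4\,x'\,x\,y'_k\cdots y'_i.
\]
Decomposing each cycle as usual, the residuals $D(\mathcal T^4),\dots,D(\mathcal T^k)$ telescope: step $i$ replaces the symbols $y_i,y'_i$ in the current error tuples by the common symbol $z_i$, and after step $k$ the four error tuples have become two reverse pairs and cancel. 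This uses exactly $2(k-3)$ extra cycles, so the total is $6\ell+2(k-3)\ell=2k\ell$ on the nose, and no $G_j$'s are needed. If you want to repair your argument, the cleanest fix is to abandon the chained-$F_1$ idea and adopt this telescoping construction.
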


\begin{proof}
We apply Proposition~\ref{prop:k-1} twice, the first time for $x$, $x'$ and~$\mathbf y$ as input and the second time with $x'$, $x$ and $\mathbf y'$ as input (we exchange the r\^oles of~$x$ and~$x'$).
This yields distinct vertices $y_{k},y'_{k} \in V(H)$ and two $C_{\ell}^{\smash{(k)}}$-decomposable $k$-graphs $F =F_1(\mathbf y, x,x')$ and $F' = F_1(\mathbf y',x',x)$ such that 
\begin{enumerate}[label={\rm(a$_{\arabic*}$)}]
	\item $V(F) \cap V(F') \subseteq \{x,x'\} \cup \{y_i, y_i'\colon i\in [k-1]\} $ and $|F|= |F'| = 3 \ell$ and
	\item there exists a tour-trail decomposition~$\mathcal{T}$ of $F \cup F'$ such that
	\[ D(\mathcal{T}) = T_1(\mathbf y,\mathbf y', x',x)  \cup 
	\{	
	x x' y_{k} \dotsb y_4,  \ 
	y_4 \dotsb y_{k} x x' , \ 
	x' x y'_{k} \dotsb y'_4, \ 
	y'_4 \dotsb y'_{k} x' x	\}, 
	\] where, for $k=3$, we interpret the strings $y_k \dotsb y_4$ and $y'_k \dotsb y'_4$ to be empty.
\end{enumerate}
\smallskip
If $k=3$, then $D(\mathcal{T}) = T_1(\mathbf y,\mathbf y', x',x) \cup
	\{	
	x' x , 
    x' x , 
	x x' ,  
	 x x'	\} = T_1(\mathbf y,\mathbf y', x',x)$, thus we are done.
	 So we may assume that $k \geq 4$.
Note that if we had~$y_i = y'_i$ for all $i \in \{4, \dotsc, k\}$, then $D(\mathcal{T})$ is as desired.
Thus, our aim is to `replace'~$y_i, y'_i$ with a new vertex $z_i$, for each $i = \{4, \dots, k\}$.
We do this in turns, as follows. 
For each $i \in \{4, \dotsc, k\}$, let 
\begin{align*}
	z_i \in N(x x' z_4 \dotsb z_{i-1}   y_i \dotsb y_{k}) \cap N( x x' z_4 \dotsb z_{i-1} y'_i \dotsb y'_{k})
\end{align*}
be a new vertex (here we consider~$z_4 \dotsb z_3$ to be empty).
Consider the two ordered edges $z_i \dotsb z_4  x x'  y_{k} \dotsb y_i$ and $z_i \dots z_4 x' x  y'_{k} \dotsb y'_i$ and apply Lemma~\ref{lem:findcycle} to obtain two tight cycles of length~$\ell$, 
\begin{align*}
	C^i = z_i \dotsb z_4  x x'  y_{k} \dotsb y_i  v^i_{k+1} \dotsb v^{i}_{\ell}
	\text{ and }
	D^i = z_i \dotsb z_4 x' x  y'_{k} \dotsb y'_i  w^i_{k+1} \dotsb w^{i}_{\ell},
\end{align*}
such that $v^i_{j},w^i_j$ are new vertices. 
Define a tour-trail decomposition~$\mathcal{T}^i$ of $C^i \cup D^i$ such that 
\begin{align*}
\mathcal{T}^i = 
\left\{
\begin{array}{cc}  
z_{i-1} \dotsb z_4  x x'  y_{k} \dotsb y_i  v^i_{k+1} \dotsb v^{i}_{\ell} z_i \dotsb z_4  x x' y_{k} \dotsb y_{i+1}, 
\\ 
z_i \dotsb z_4  x' x  y_{k} \dotsb y_i,
\\
z_{i-1} \dotsb z_4  x' x  y'_{k} \dotsb y'_i  w^i_{k+1} \dotsb w^{i}_{\ell} z_i \dotsb z_4  x' x  y'_{k} \dotsb y'_{i+1} 
\\
z_i \dotsb z_4  x x'  y'_{k} \dotsb y'_i
\end{array}
\right\}.
\end{align*}
Note that 
\begin{align*}
	D(\mathcal{T}^i) 
	& = 
\left\{
\begin{array}{cc}
	y_i \cdots y_{k} x' x z_4 \cdots z_{i-1}, & 
	z_i \dots z_4  x x'  y_{k} \cdots y_{i+1},
	\\
	y_{i+1} \cdots y_{k} x x' z_4 \cdots z_{i}, &
	z_{i-1} \cdots z_4  x' x  y_{k} \cdots y_i,
	\\
	y'_i \cdots y'_{k} x x' z_4 \cdots z_{i-1},&
	z_i \cdots z_4  x' x  y'_{k} \cdots y'_{i+1},
	\\
	y'_{i+1} \cdots y'_{k} x' x z_4 \cdots z_{i}, &
	z_{i-1} \cdots z_4  x x'  y'_{k} \cdots y'_i
\end{array}
\right\}
\\
& = 
\left\{
\begin{array}{cc}
	y_i \cdots y_{k} x' x z_4 \cdots z_{i-1}, \\ 
	y'_i \cdots y'_{k} x x' z_4 \cdots z_{i-1},\\
	z_{i-1} \cdots z_4  x' x  y_{k} \cdots y_i,\\
	z_{i-1} \cdots z_4  x x'  y'_{k} \cdots y'_i
\end{array}
\right\}
\cup 
\left\{
\begin{array}{cc}
	z_i \cdots z_4  x x'  y_{k} \cdots y_{i+1},\\
	z_i \cdots z_4  x' x  y'_{k} \cdots y'_{i+1}
	\\
	y_{i+1} \cdots y_{k} x x' z_4 \cdots z_{i},\\
	y'_{i+1} \cdots y'_{k} x' x z_4 \cdots z_{i}	
\end{array}
\right\}
.
\end{align*}
When $i=k$, then the second set can be simplified to an empty set. 
Note that $D( \mathcal{T}^4), \dotsb,D( \mathcal{T}^k)$ forms a `telescoping' set of residual di-$(k-1)$-graphs, so we deduce that 
\begin{align*}
  D\left( \bigcup_{ 4 \leq i \leq k} \mathcal{T}^i\right)
= \{	
	x x' y'_{k} \dotsb y'_4,  \ 
	y'_4 \dotsb y'_{k} x x' , \ 
	x' x y_{k} \dotsb y_4, \ 
	y_4 \dotsb y_{k} x' x	\}.
\end{align*}
We are done by setting $T_1 = F \cup F' \cup  \bigcup_{4 \leq i \leq k} (C^i \cup D^i)$ 
and $\mathcal{T}_1 = \mathcal{T} \cup  \bigcup_{4 \leq i \leq k} \mathcal{T}^i$. 
\end{proof}

We can now describe the general version of the \secondgadget{} gadget $T_j$, for all $j \in [k-1]$.

\begin{lemma}[Swapper gadget -- general case]\label{lem:jswitch}
Let $1/n \ll \eps \ll 1/\ell, 1/k $ such that $k \ge 3$ and $\ell \ge k^2 - k + 1$.
Let $H$ be a $k$-graph on $n$ vertices with $\tmindeg(H) \geq \eps n$.
Let $j \in [k-1]$ and consider distinct vertices $x, x' \in V(H)$ and $(k-1)$-tuples $\mathbf y$,~$\mathbf y'$ of~$V(H)$ such that both~$\{x,x'\}\cup \{y_i\colon i\in [k-1]\setminus \{j\}\}$ and $\{x,x'\}\cup \{y_i'\colon i\in [k-1]\setminus \{j\}\}$ are of size~$k$.
Then there exists a $C^{\smash{(k)}}_\ell$-decomposable $k$-graph~$T_j = T_j(\mathbf y, \mathbf y',x,x')$ and a tour-trail decomposition~$\mathcal{T}_j$ of~$T_j$ such that
\begin{enumerate}[label={\rm(\roman*)}]
	\item $T_j[\{x,x'\}\cup \{y_1,\dots y_{k-1}\}\cup \{y_1',\dots,y'_{k-1}\}] = \emptyset$ and $|T_j| \le 3^j \ell k$ and
	\item $	D(\mathcal{T}_j) = \vect T_j(\mathbf y,\mathbf y', x, x')$.
\end{enumerate}
\end{lemma}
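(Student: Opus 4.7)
The plan is to proceed by induction on $j$, with the base case $j = 1$ being exactly Proposition~\ref{prop:1switch}. For the inductive step with $j \geq 2$, I first extend the given $(k-1)$-tuples $\mathbf{y}, \mathbf{y}'$ into $k$-tuples $\tilde{\mathbf{y}} = (y_1, \dots, y_{k-1}, y_k)$ and $\tilde{\mathbf{y}}' = (y_1', \dots, y_{k-1}', y_k')$ by appending new vertices $y_k, y_k'$ chosen (via the codegree condition $\tmindeg(H) \geq \eps n$) to satisfy the common-neighbourhood hypothesis of Lemma~\ref{lem:jbasic} and to lie outside the prescribed set $\{x, x'\} \cup \{y_i : i\in[k-1]\} \cup \{y_i': i\in[k-1]\}$. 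I then apply Lemma~\ref{lem:jbasic} twice, producing edge-disjoint basic gadgets $G_j(\tilde{\mathbf{y}}, x, x')$ and $G_j(\tilde{\mathbf{y}}', x', x)$. Their combined tour-trail decomposition has residual
\[ \vect{S}_j(\mathbf{y}, x, x') \cup \vect{S}_j(\mathbf{y}', x', x) \cup \vect{S}_1(\sigma_1(\tilde{\mathbf{y}}), x', x) \cup \vect{S}_1(\sigma_1(\tilde{\mathbf{y}}'), x, x') \cup \vect{S}_{j-1}(\sigma_2(\tilde{\mathbf{y}}), x, x') \cup \vect{S}_{j-1}(\sigma_2(\tilde{\mathbf{y}}'), x', x), \]
and the first two terms together form precisely the desired $\vect{T}_j(\mathbf{y}, \mathbf{y}', x, x')$. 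It remains to cancel the other four pieces via the merging procedure from Section~\ref{sec:residualgraph}.

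The key observation driving cancellation is the set-level identity that reversing each tuple of $\vect{S}_i(\mathbf{z}, a, a')$ yields exactly $\vect{S}_i(\mathbf{z}, a', a)$, immediate from the definition. Hence the tuple-wise reverses of the two $\vect{S}_1$ leftovers together form exactly $\vect{T}_1(\sigma_1(\tilde{\mathbf{y}}), \sigma_1(\tilde{\mathbf{y}}'), x, x')$, and the reverses of the two $\vect{S}_{j-1}$ leftovers form exactly $\vect{T}_{j-1}(\sigma_2(\tilde{\mathbf{y}}'), \sigma_2(\tilde{\mathbf{y}}), x, x')$. I therefore invoke Proposition~\ref{prop:1switch} and the inductive hypothesis to build edge-disjoint swapper gadgets $T_1(\sigma_1(\tilde{\mathbf{y}}), \sigma_1(\tilde{\mathbf{y}}'), x, x')$ and $T_{j-1}(\sigma_2(\tilde{\mathbf{y}}'), \sigma_2(\tilde{\mathbf{y}}), x, x')$ whose residuals are exactly these; each leftover tuple is then paired with its own reverse in the combined residual and disappears upon merging.

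Setting $T_j$ to be the edge-disjoint union of these four subgadgets produces a $C_\ell^{(k)}$-decomposable $k$-graph whose tour-trail decomposition, after merging, has residual exactly $\vect{T}_j(\mathbf{y}, \mathbf{y}', x, x')$. The size bound $|T_j| \leq 2 \cdot 2\ell + 2k\ell + 3^{j-1} k \ell \leq 3^j k \ell$ is routine for all $j \geq 2$. The main delicate point I anticipate is the bookkeeping for vertex- and edge-disjointness: each subgadget must be constructed edge-disjointly from all previously chosen ones, with its ``new'' internal vertices genuinely outside the prescribed set (and outside the vertex sets of earlier subgadgets), so that in particular no edge of $T_j$ lies entirely inside $\{x, x'\} \cup \{y_i\} \cup \{y_i'\}$. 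This last property follows from the moreover clause of Lemma~\ref{lem:jbasic} (the two in-$Y$ edges of each $G_j$ also involve $y_k$ or $y_k'$, which were selected outside the prescribed set), combined with property~(i) for the $T_1$ gadget from Proposition~\ref{prop:1switch} and the inductive property~(i) for $T_{j-1}$. Since each subgadget has size bounded by a function of $k$ and $\ell$ only, the codegree condition $\tmindeg(H) \geq \eps n$ is essentially preserved throughout, so all disjointness constraints are straightforwardly arranged via Lemma~\ref{lem:findcycle} at each invocation.
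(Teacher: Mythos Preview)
Your proposal is correct and follows essentially the same route as the paper: induct on $j$ with Proposition~\ref{prop:1switch} as base case, extend $\mathbf y,\mathbf y'$ to $k$-tuples via new vertices, apply Lemma~\ref{lem:jbasic} twice (once with $(x,x')$ and once with $(x',x)$), then cancel the unwanted $\vect S_1$ and $\vect S_{j-1}$ leftovers by invoking the inductive gadgets $T_1$ and $T_{j-1}$ with the $x,x'$ arguments swapped. Your observation that tuple-wise reversal sends $\vect S_i(\mathbf z,a,a')$ to $\vect S_i(\mathbf z,a',a)$ is exactly the mechanism the paper exploits (implicitly) to match the leftovers against the inductive residuals; your ordering $T_{j-1}(\sigma_2(\tilde{\mathbf y}'),\sigma_2(\tilde{\mathbf y}),x,x')$ differs only cosmetically from the paper's $T_{j-1}(\sigma_2(\mathbf y),\sigma_2(\mathbf y'),x',x)$ since $\vect T_i(\mathbf z,\mathbf z',a,a')=\vect T_i(\mathbf z',\mathbf z,a',a)$ as sets.
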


\begin{proof}
We proceed by induction on $j$.
Note that Proposition~\ref{prop:1switch} implies the case when $j =1$, so we may assume that $j \ge 2$. 
Let 
$$y_k  \in N(xy_1\dots y_{j-1}y_{j+1}\dots y_{k-1})  \cap N(x'y_1\dots y_{j-1}y_{j+1}\dots y_{k-1} )$$
be a new vertex. 
By Lemma~\ref{lem:jbasic}, there exists a $C^{\smash{(k)}}_\ell$-decomposable $k$-graph $G_j = G_j(\mathbf y, x, x')$ with $|G_j| = 2 \ell$ and a tour-trail decomposition~${\mathcal{G}}_j$ of~$G_j$ satisfying
\begin{align*}
	D(\mathcal{G}_j) = 
		\vect S_j(\mathbf y, x,x') \cup 
		\vect S_{1}(\sigma_1(\mathbf y),x',x) \cup  
		\vect S_{j-1}(\sigma_2(\mathbf y),x,x'),
\end{align*}
where $\sigma_1 = j12\dots(j-1)(j+1)\dots k$ and $\sigma_2 = 2\dots k 1$.
Analogously, there is a~$C^{\smash{(k)}}_\ell$-decomposable $k$-graph $G'_j = G'_j(\mathbf y' , x', x)$  with $|G'_j| = 2 \ell$ and a tour-trail decomposition~$\mathcal{G}'_j$ of~$G_j$ satisfying
\begin{align*}
	D(\mathcal{G}'_j) = 
		\vect S_j(\mathbf y', x',x) \cup 
		\vect S_{1}(\sigma_1(\mathbf y'),x,x') \cup  
		\vect S_{j-1}(\sigma_2(\mathbf y'),x',x).
\end{align*}
Note that 
\begin{align}
 |G_j \cup G'_j|= 4 \ell \label{eqn:SYj}
\end{align}
and 
\begin{multline}
    D(\mathcal{G}_j) \cup 	D(\mathcal{G}'_j) 
    =
    \vect T_j(\mathbf y,\mathbf y', x,x') \cup
	\vect T_1(\sigma_1(\mathbf y),\sigma_1(\mathbf y'), x',x) \cup
	\vect T_{j-1}(\sigma_2(\mathbf y),\sigma_2(\mathbf y'), x,x') .
	\label{eqn:SYj2}
\end{multline}
Due to the induction hypothesis, there are $C^{\smash{(k)}}_\ell$-decomposable $k$-graphs 
\begin{align*}
T_1=T_1(\sigma_1(\mathbf y), \sigma_1(\mathbf y'),x,x') 
\quad \text{and} \quad
T_{j-1}=T_{j-1}(\sigma_2(\mathbf y), \sigma_2(\mathbf y'),x',x)
\end{align*}
and a tour-trail decomposition~$\mathcal{T}^*_j$ of~$T_1 \cup T_{j-1}$
such that their union $T^*_j = T_1\cup T_{j-1}$ satisfies
\begin{enumerate}[label={\rm(\roman*$'$)}]
	\item $T^*_j[\{x,x'\} \cup \{y_1,\dots,y_{k-1}\}\cup\{y_1,\dots,y_{k-1}\}] = \emptyset$ and $|T^*_j| \le 2 \cdot 3^{j-1}\ell k $,\label{itm:S*j1}
	\item $D(\mathcal{T}^*_j) =
	\vect T_1(\sigma_1(\mathbf y),\sigma_1(\mathbf y'), x,x')  \cup  
	\vect T_{j-1}(\sigma_2(\mathbf y),\sigma_2(\mathbf y'), x',x)$.
	\label{itm:S*j2}
\end{enumerate}
We set $T_j= G_j \cup G'_j \cup {T}^*_j$
and $\mathcal{T}_j = \mathcal{G}_j \cup \mathcal{G}'_j \cup \mathcal{T}^*_j$. 
By \eqref{eqn:SYj} and \ref{itm:S*j1}, we deduce that 
\begin{align*}
|T_j| \le 4 \ell + 2 \cdot 3^{j-1}\ell k \le 3^j \ell k.
\end{align*}
Moreover \eqref{eqn:SYj2} and \ref{itm:S*j2} imply that $	D(\mathcal{T}_j) = \vect T_j(\mathbf y,\mathbf y', x,x')$ as required.
\end{proof}


\section{Transformers II: Tour-trail decompositions} \label{section:tourtrail}

Here, we use the gadgets constructed in the previous section to prove that any $C^{\smash{(k)}}_\ell$-divisible $k$-graph can be augmented to a new, not-too-large, subgraph which has a tour decomposition.
That is the content of the next crucial lemma, whose proof will be given at the end of this section.
Note that we only require $\deg_G(v)$ is divisible by~$k$ for all vertices~$v \in V(G)$ instead of $C^{\smash{(k)}}_\ell$-divisible. 

\begin{lemma} \label{lem:tourdecom}
Let $1/n \ll \eps \ll \rho, 1/\ell, 1/k $ with $k \ge 3$ and $\ell \ge k^2 - k + 1$.
Let $H$ be a~$k$-graph on $n$ vertices with $\tmindeg(H) \geq \eps n$.
Let $G$ be a $k$-graph with $V(G) \subseteq V(H)$ and~$m = |V(G)| \le \eps n^{1/k(k+1)}$ such that $\deg_G(v)$ is divisible by $k$ for all $v \in V(G)$.
Then $H-G$ contains a $C_{\ell}^{\smash{(k)}}$-decomposable subgraph~$J$ such that $G \cup J$ has a tour decomposition, $J[V(G)] = \emptyset$ and  $|G \cup J| \le 3^{k+2} k^4 \ell^2 m^{k+1}$.

Moreover, if $G'$ has an edge-bijective homomorphism to~$G$ with $V(G') \subseteq V(H) \setminus V(G)$, then we have $H-G-G'$ contains a subgraph~$J'$ such that $G' \cup J'$ is edge-bijective homomorphic to $G \cup J$ and $J'[V(G')] = \emptyset$.
\end{lemma}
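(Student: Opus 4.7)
The plan is to start with the trivial tour-trail decomposition $\mathcal{T}_0$ of $G$ (each edge, with some fixed ordering, forms its own single-edge trail), then augment $G$ by an edge-disjoint, $C_\ell^{(k)}$-decomposable subgraph $J \subseteq H - G$ assembled from the gadgets of Section~\ref{section:gadgets} in two phases: first, \emph{balancing} (using Corollary~\ref{corjbasic}) to arrange $p_i(v) = p_{k-i}(v)$ for every $v$ and every $i \in [k-1]$; then \emph{swapping} (using Lemma~\ref{lem:jswitch}) to reduce the remaining residual to empty, so that $G \cup J$ admits a tour decomposition.

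For the balancing phase, for each $v \in V(G)$ and $i \in [k-1]$, the initial imbalance $d_i(v) = p_{\mathcal{T}_0,i}(v) - p_{\mathcal{T}_0,k-i}(v)$ is determined by $G$. By Corollary~\ref{corjbasic}, a copy of $B_j(v, v')$ with $j \in \{2, \ldots, k-1\}$ and a fresh $v' \in V(H) \setminus V(G)$ adjusts $d_i(v)$ by $\pm 1$ at $i \in \{j, k-j\}$ and by $\pm j$ at $i \in \{1, k-1\}$, without affecting the imbalances of other vertices of $V(G)$. I would iterate $B_j$ for $j = 2, \ldots, \lceil (k-1)/2 \rceil$ to zero out the middle imbalances $d_j(v)$ for each $v$, which accumulates a controlled side-effect on $d_1(v)$; a counting argument exploiting $k \mid \deg_G(v)$ then shows the resulting $d_1(v)$ is divisible by $k$, so further copies of $B_{k-1}(v, v')$ (each adjusting $d_1(v)$ by $\pm k$) clear it. The codegree hypothesis $\tmindeg(H) \geq \eps n$ supplies the fresh vertices and (via Lemma~\ref{lem:findcycle}) the tight cycles inside each gadget. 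This yields a $C_\ell^{(k)}$-decomposable $B \subseteq H - G$ with $B[V(G)] = \emptyset$ and a balanced tour-trail decomposition $\mathcal{T}_1$ of $G \cup B$.

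For the swapping phase, with $\mathcal{T}_1$ balanced, for each $v \in V(G)$ and $j \leq (k-1)/2$ the identity $p_j(v) = p_{k-j}(v)$ permits a one-to-one pairing of residual tuples having $v$ at position $j$ with those having $v$ at position $k-j$. Writing each such pair (after reversing the second tuple) as $(r_j(\mathbf{y}_1, v), r_j(\mathbf{y}_2, v))$, the swapper $T_j(\mathbf{y}_1, \mathbf{y}_2, v, v')$ from Lemma~\ref{lem:jswitch} with fresh $v'$ has a residual whose combination with the pair merges away the two $v$-tuples and leaves only $\{r_j(\mathbf{y}_1, v')^{-1}, r_j(\mathbf{y}_2, v')\}$ behind. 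I would pair the tuples and reuse the fresh $v'$'s across swaps so that the induced residual on fresh vertices decomposes into reverse pairs and merges away completely. The union $J = B \cup S$ (with $S$ the union of all swappers) is then $C_\ell^{(k)}$-decomposable, satisfies $J[V(G)] = \emptyset$, has $|G \cup J| \leq 3^{k+2} k^4 \ell^2 m^{k+1}$, and makes $G \cup J$ tour-decomposable.

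For the moreover statement, the construction is purely combinatorial and symmetric in $V(G)$: the gadgets depend only on the abstract edges of $G$ and their orderings. Given a homomorphism $\phi \colon V(G') \to V(G)$, I would run the same construction on $V(G')$, taking the fresh vertices for the new gadgets from $V(H) \setminus (V(G) \cup V(G') \cup V(J))$, which is plentiful since $|V(J)| = O(m^{k+1}) \ll n$; the resulting $J'$ then admits an edge-bijective homomorphism to $J$ extending $\phi$. The main obstacle is the swapping phase: showing that a suitable choice of pairings and fresh vertices drives the residual to zero (rather than cascading indefinitely) requires a delicate combinatorial argument, which is the key new technical step over the $k=3$ setting of~\cite{PigaSanhuezaMatamala2021}.
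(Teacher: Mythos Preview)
Your balancing phase is essentially the paper's Lemma~\ref{lem:balancetourdecom}, and your divisibility argument for $d_1(v)$ is exactly Proposition~\ref{prop:sumpi}. That part is fine.

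The swapping phase, however, has a genuine gap that you yourself flag. Replacing $v$ by a fresh $v'$ via $T_j$ does eliminate the two $v$-tuples, but now $v'$ sits in the residual at positions $j$ and $k-j$, with the \emph{other} $k-2$ coordinates inherited from the original tuples $\mathbf{y}_1,\mathbf{y}_2$. For those new tuples to ``decompose into reverse pairs'' you would need $r_j(\mathbf{y}_1,v')$ to meet $r_j(\mathbf{y}_1,v')^{-1}$ somewhere, but the latter carries the \emph{reversed} coordinates of $\mathbf{y}_1$ at positions other than $j$; there is no reason another swap you perform will produce exactly that tuple. Reusing $v'$ across different pairs does not help: each swap leaves residuals whose non-$v'$ coordinates come from unrelated tuples in $D(\mathcal{T}_1)$, so the residuals on the fresh vertices do not cancel in general, and the process cascades.

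The paper avoids this by inserting a \emph{focusing} step between balancing and the final cleanup. Rather than fresh $v'$'s, one swaps onto $k-1$ \emph{fixed} target vertices $z_1,\dots,z_{k-1}$: for each $i<k/2$ one replaces the $i$th and $(k-i)$th coordinates of every residual tuple by $z_i,z_{k-i}$ (Lemma~\ref{lem:balancetourdecom2}). The point that makes this work is that balancedness forces the auxiliary multidigraph $A_i(\mathcal{T})$ (whose arcs are $(i\text{th coord})\to((k-i)\text{th coord})$ over $D(\mathcal{T})$) to be Eulerian; after a preliminary addition of cycles (Lemma~\ref{lemma:convert}) to make each $A_i$ strongly connected, one walks an Euler tour $\mathbf{b}_1,\dots,\mathbf{b}_{2s}$ and applies swappers to consecutive pairs $(\mathbf{b}_{2j-1},\mathbf{b}_{2j})$ and $(\mathbf{b}_{2j},\mathbf{b}_{2j+1})$. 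This is precisely the pairing structure that makes the $z_i$'s land consistently. After focusing, every residual tuple lives on $\{z_1,\dots,z_{k-1}\}$, reducing the problem to a bounded one that is finished off by a separate untangling argument (Lemma~\ref{lem:balancetourdecom3}). The Euler-tour-on-$A_i$ idea and the reduction to fixed targets are the missing pieces in your sketch.
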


As described before, the lemma will be proven by starting with any tour-trail decomposition of $G$, and adding gadgets to it repeatedly.
We will first use
\firstgadget{} gadgets to make sure we have a balanced tour-trail decomposition, and then we will use \secondgadget{} gadgets to eliminate any remaining trails one by one.

\subsection{Basic properties}
We begin by stating basic properties of any tour-trail decomposition $\mathcal T$.
Recall that $p_{\mathcal{T} ,i}(v)$ is the number of (directed) edges of $\mathcal{T}$ where $v$ is the $i$th vertex and that the definition of balanced tour-trail decomposition is given in~\eqref{dfn:balanced}. 

\begin{proposition} \label{prop:sumpi}
Let $H$ be a $k$-graph such that $\deg(v)$ is divisible by $k$ for every vertex~$v \in V(H)$. 
Let $\mathcal{T}$ be a tour-trail decomposition of~$H$.
Then, for each $v \in V(H)$, 
\begin{align*}
 \sum_{i \in [k-1]} i p_{\mathcal{T},i}(v) \equiv 0 \bmod k.
\end{align*}
Moreover, if $k$ is even and $\mathcal{T}$ is balanced, then $p_{\mathcal{T},k/2}(v)$ is even for all $v \in V(H)$. 
\end{proposition}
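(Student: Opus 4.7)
The plan is to derive both parts by double counting, comparing for each vertex $v \in V(H)$ and each element of $\mathcal{T}$ the contribution to $\deg_H(v)$ with the contribution to $\sum_{i \in [k-1]} i \cdot p_{\mathcal{T}, i}(v)$, and then extracting a clean mod-$k$ identity.

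First I would fix a trail $P = v_1 \dotsb v_t$ in $\mathcal{T}$ and a vertex $v$, and for each position $s \in [t]$ with $v_s = v$ compute (i) how many edges of $P$ contain this occurrence and (ii) what this occurrence contributes to $\sum_i i \cdot p_{\mathcal{T}, i}(v)$ via the ordered ends $v_{k-1} \dotsb v_1$ and $v_{t-k+2} \dotsb v_t$ of $P$. A short case analysis, depending on whether $s$ is interior ($k$ edges; contributes $0$), lies in $[k-1]$ only ($s$ edges; contributes $k-s$), lies in $\{t-k+2, \dotsc, t\}$ only ($t-s+1$ edges; contributes $s-t+k-1$), or, in very short trails, lies in both ends simultaneously ($t-k+1$ edges; contributes $2k-t-1$), shows that in every case the two numbers sum to exactly $k$. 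Summing over all positions of $v$ in $P$ then yields $\deg_P(v) + (\text{contribution of } P \text{ to } \sum_i i \cdot p_{\mathcal{T}, i}(v)) = k \cdot |\{s \in [t] : v_s = v\}|$, which is a multiple of $k$.

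Next I would observe that tours contribute $0$ to $D(\mathcal{T})$ and a multiple of $k$ to $\deg_H(v)$: by the wrap-around identity $v_i = v_{t-k+1+i}$, each occurrence of $v$ in a tour lies in exactly $k$ edges. Summing the previous identity over all trails of $\mathcal{T}$ and adding the tour contributions yields
\[
\deg_H(v) + \sum_{i \in [k-1]} i \cdot p_{\mathcal{T}, i}(v) \equiv 0 \pmod{k},
\]
and the hypothesis $k \mid \deg_H(v)$ then delivers the first assertion.

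For the moreover statement, with $k$ even and $\mathcal{T}$ balanced, I would pair index $i$ with $k-i$ in $\sum_{i \in [k-1]} i \cdot p_{\mathcal{T}, i}(v)$; using $p_{\mathcal{T}, i}(v) = p_{\mathcal{T}, k-i}(v)$ each such pair contributes $k \cdot p_{\mathcal{T}, i}(v)$, leaving only the unpaired middle term $\tfrac{k}{2} p_{\mathcal{T}, k/2}(v)$. Reducing modulo $k$ and applying the first part gives $\tfrac{k}{2} p_{\mathcal{T}, k/2}(v) \equiv 0 \pmod{k}$, equivalently $p_{\mathcal{T}, k/2}(v)$ is even. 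The only mild subtlety is the degenerate short-trail case where the left and right ends overlap, but the case analysis above handles it uniformly because $D(P)$ is treated as a multiset and each end occurrence is counted separately; so I do not anticipate any real obstacle.
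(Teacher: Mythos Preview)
Your proposal is correct and follows essentially the same double-counting strategy as the paper: both compare, for each trail and each vertex, the edge-degree contribution against the end-contribution to $\sum_i i\,p_{\mathcal{T},i}(v)$, then handle tours separately via their degree being divisible by~$k$, and the ``moreover'' part is identical in both. Your per-position case analysis is in fact more explicit than the paper's (which simply asserts the key congruence is ``not hard to check''), and you obtain the clean exact identity $\deg_P(v)+\sum_i i\,p_{P,i}(v)=k\cdot|\{s:v_s=v\}|$ rather than only a congruence modulo~$k$.
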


\begin{proof}
Note that only trails in~$\mathcal{T}$ contribute to $ \sum_{i \in [k-1]} i p_{\mathcal{T},i}(x)$.
Moreover, for any tour~$C$, we have $\deg_C(v) \equiv 0 \bmod k$ for all $v \in V(H)$. 
Hence by deleting all tours in~$\mathcal{T}$ and their corresponding edges in~$H$, we may assume that $\mathcal{T}$ consists of trails only.

Fix $v \in V(H)$. 
Let $T=v_1\dotsb v_t$ be a trail in~$\mathcal T$ and consider~$I\subseteq [t]$ such that~$v_i=v$ for each~$i\in I$. 
Let
\begin{align*}
\phi_T(v) =\sum_{i\in[t-k+1]} \left( \mathbbm{1}_{v=v_i}+\mathbbm{1}_{v=v_{i+1}}+\dots+\mathbbm{1}_{v=v_{i+k-1}} \right).
\end{align*}
Observe that every time the trail `passes through~$v$' it increases~$\phi_T(v)$ by~$k$ except if it is at the beginning or the end of~$T$. 
More precisely, it is not hard to check that
\begin{align*}
\phi_T(v) 
&=
k\vert I\cap [k, t-k]\vert + \sum_{i\in [k-1]\cup [t-k+1,t]} \left( \mathbbm{1}_{v=v_i}+\dots+\mathbbm{1}_{v=v_{i+k-1}} \right) \, 
\\ & \equiv \sum_{i\in [k-1]}ip_{i,T}(v)\, \bmod k.
\end{align*}
On the other hand, it is easy to see that~$\phi_T(v) = \sum_{e\in T}\mathbbm{1}_{v\in e}$.
Thus, summing over all trails in~$\mathcal T$, we get 
\begin{align*}
    0 \equiv \deg_H(v) = \sum_{T\in \mathcal T} \phi_T(v) \equiv \sum_{i\in [k-1]}ip_{i,\mathcal T}(v)\, \bmod k.
\end{align*}
Furthermore, suppose that $k$ is even and $\mathcal{T}$ is balanced, then
\begin{align*}
   \sum_{i \in [k-1]} i p_{\mathcal{T},i}(v) = \sum_{i \in [k/2-1]} k p_{\mathcal{T},i}(v) + (k/2) p_{\mathcal{T},k/2}(v)\,.
\end{align*}
Since this is equivalent to~$0 \bmod{k}$, we get that $p_{\mathcal{T},k/2}(v)$ is even.
\end{proof}

\subsection{Balancing}

Recall that any $k$-graph admits a trail decomposition, by orienting edges arbitrarily.
We begin by using \firstgadget{} gadgets (as given by Corollary~\ref{corjbasic}) repeatedly to obtain a balanced tour-trail decomposition of~$G$.

\begin{lemma} \label{lem:balancetourdecom}
Let $1/n \ll \eps \ll \rho, 1/\ell, 1/k $ with $k \ge 3$ and $\ell \ge k^2 - k + 1$.
Let $H$ be a $k$-graph on $n$ vertices and $\tmindeg(H) \geq \eps n$.
Let $G$ be a $k$-graph with $V(G) \subseteq V(H)$, $m =  |V(G)|  \le \eps n^{1/k^2}$ and such that $\deg_G(v)$ is divisible by $k$ for all $v \in V(G)$.
Then~$H-G$ contains a $C_{\ell}^{\smash{(k)}}$-decomposable~$J$ such that $G \cup J$ has a balanced tour-trail decomposition~$\mathcal{T}^*$, $J [V(G)] = \emptyset$ and $|G \cup J| \le \ell m^{k+1}$.
\end{lemma}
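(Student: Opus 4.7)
The plan is to start with the trivial trail decomposition $\mathcal{T}$ of $G$, in which each edge forms a single-edge trail with an arbitrary orientation, and iteratively augment $G$ with edge-disjoint copies of the \firstgadget{} gadgets from Corollary~\ref{corjbasic} until the resulting tour-trail decomposition of $G \cup J$ is balanced. Setting $q_i(v) = p_{\mathcal{T},i}(v) - p_{\mathcal{T},k-i}(v)$, the target is $q_i(v) = 0$ for all $v \in V(G)$ and all $i \in [k-1]$; when $k$ is even we additionally need $p_{\mathcal{T},k/2}(v)$ to be even at every vertex, as forced by Proposition~\ref{prop:sumpi}. Two conservation laws drive the argument: $q_{k/2}$ vanishes identically, and $\sum_v q_j(v) = 0$ holds throughout, since each trail has exactly two ends and so $\sum_v p_{\mathcal{T},i}(v)$ is independent of~$i$.

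The balancing proceeds in three stages, with a pivot $v^\star \in V(G)$ fixed once and for all; every added gadget uses $\{v, v^\star\} \subseteq V(G)$ together with fresh vertices from $V(H) \setminus V(G)$, which always exist since $\tmindeg(H) \ge \eps n$ forces $\delta^{(2)}(H) \ge \eps n$ and $m$ is small compared to~$n$. \emph{Stage A.} For each $j \in \{2, \dots, k-2\}\setminus\{k/2\}$ and each $v \in V(G) \setminus \{v^\star\}$, add $|q_j(v)|$ copies of $B_j(v, v^\star)$ or $B_{k-j}(v, v^\star)$ (orientation chosen to reduce $|q_j(v)|$ step by step); by Corollary~\ref{corjbasic} each such gadget only shifts $q_j$ and $q_1$, and only at $v$ and $v^\star$, so the sum conservation forces $q_j(v^\star) = 0$ once the loop on $v$ is complete. \emph{Stage B} (only when $k$ is even). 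For each $v$ whose $p_{\mathcal{T},k/2}(v)$ is currently odd, add $B_{k/2}(v, v^\star)$; the moreover clause of Corollary~\ref{corjbasic} flips the parity of $p_{\mathcal{T},k/2}$ at $v$ and $v^\star$ only, and the global parity $\sum_v p_{\mathcal{T},k/2}(v) \equiv 0 \bmod 2$ ensures the pivot's parity also gets corrected; Stage~B leaves every $q_j$ for $j \in \{2,\dots,k-2\}\setminus\{k/2\}$ untouched and only disturbs $q_1$. \emph{Stage C.} After Stages A and B, substituting $q_j = 0$ and $p_{\mathcal{T},k/2}$ even into Proposition~\ref{prop:sumpi} yields $q_1(v) \equiv 0 \bmod k$; since $B_{k-1}(v, v^\star)$ shifts $q_1(v)$ by $-k$ and $q_1(v^\star)$ by $+k$ while preserving every other $q_j$, adding $|q_1(v)|/k$ copies of $B_{k-1}(v, v^\star)$ (with the appropriate orientation) for each $v \ne v^\star$ completes the balancing.

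The main subtlety is the interaction between the three stages, which forces them to be performed in the stated order: every $B_j$ with $j \notin \{1, k-1\}$ perturbs $q_1$, so $q_1$ must be balanced last; and $B_{k/2}$ perturbs $q_1$ by multiples of $k/2$, so the divisibility $q_1(v) \equiv 0 \bmod k$ needed to begin Stage~C is only recovered via Proposition~\ref{prop:sumpi} after Stage~B. One must also check that the gadgets used in Stages A and C (all with $j \ne k/2$) do not accidentally flip the parity of $p_{\mathcal{T},k/2}$ at any vertex; this follows by a short inspection of $D(\mathcal{G}_j)$ from the proof of Lemma~\ref{lem:jbasic}, where odd contributions to $p_{k/2}(x)$ or $p_{k/2}(x')$ occur only for the basic gadgets $G_{k/2}$ and $G_{k/2+1}$, and for $j \ne k/2$ these odd contributions always appear in pairs inside the recursive construction of $B_j$ and thus cancel modulo~$2$. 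The remaining requirements—$J[V(G)] = \emptyset$ (each gadget edge has at most the two vertices $x$ and $x'$ from $V(G)$, while the rest come from $V(H)\setminus V(G)$) and the edge-count bound $|G \cup J| \le \ell m^{k+1}$ (a crude count yields $O(m^k)$ gadgets in total, each of size $O(k\ell)$)—are then routine.
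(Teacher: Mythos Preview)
Your approach is correct and follows essentially the same strategy as the paper's proof: both start from an arbitrary trail decomposition, use balancer gadgets $B_j$ to zero out $q_j$ for $2 \le j < k/2$, handle the parity of $p_{k/2}$ via $B_{k/2}$ when $k$ is even, and finish with $B_{k-1}$ after invoking Proposition~\ref{prop:sumpi} to obtain $q_1 \equiv 0 \bmod k$. The only differences are cosmetic---you route all corrections through a single pivot $v^\star$ whereas the paper pairs vertices via an auxiliary multidigraph, and you perform the parity fix after balancing $q_2,\dots,q_{\lceil k/2\rceil-1}$ whereas the paper does it first---but neither change affects the argument.
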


\begin{proof}
Let $\mathcal{T}_0$ be an arbitrary tour-trail decomposition of~$G$. 
The following claim forms the basis of our proof and allows us to adjust the values of $p_{\mathcal{T}_0, k/2}(v)$.

\begin{claim} \label{clm:balnace_1}
Suppose $k$ is even. Then $H\!-\!G$ contains a $C_{\ell}^{\smash{(k)}}$-divisible subgraph~$J_{{k/2}}$ with~$|J_{k/2}|\!\le\! k \ell  m/2$ and $J_{k/2}[V(G)] = \emptyset $ such that there exists a tour-trail decomposition~$\mathcal{T}_{{k/2}}$ of~$J_{k/2}$ satisfying, for all $v \in V(H)$,
\begin{align*}
	p_{\mathcal{T}_{{k/2}}, k/2 }(v) & \equiv  \mathbbm{1}_{\text{$p_{\mathcal{T}_{0}, k/2}(v)$ is odd}} \bmod{2},\\
	|p_{\mathcal{T}_{{k/2}}, 1}(v) -  p_{\mathcal{T}_{{k/2}}, k-1}(v)| & = (k/2) \mathbbm{1}_{\text{$p_{\mathcal{T}_{0}, k/2}(v)$ is odd}},\\
	p_{\mathcal{T}_{{k/2}}, i}(v) -  p_{\mathcal{T}_{{k/2}}, k-i}(v) & = 0 \text{ if $i \in [2,k-2]$.}
\end{align*}
\end{claim}

\begin{proofclaim}
Note that $\sum_{v \in V(H)} p_{\mathcal{T}_{0}, k/2}(v) = |D(\mathcal{T}_0)|$ is twice the number of trails in $\mathcal{T}_0$.
Thus, there is an even number of vertices $v$ such that $p_{\mathcal{T}_{0}, k/2}(v)$ is odd.
Suppose that $v_1, \dots, v_{2s} \in V(H)$ are precisely those vertices, so $p_{\mathcal{T}_{0}, k/2}(v_j)$ is odd for each~$j\in[2s]$. 
For each $j \in [s]$, we apply Corollary~\ref{corjbasic} and obtain edge-disjoint \firstgadget{} gadgets~$B_{k/2} ( v_{2j-1} ,  v_{2j} )$ in~$H-G$. 
Let $J_{k/2}$ be the union of these \firstgadget{} gadgets. 
Clearly, $|J_{k/2}| \le s k \ell/2  \le k \ell  m/2$.
Let $\mathcal{T}_{k/2}$ be the tour-trail decomposition of~$J_{k/2}$, which is the union of the corresponding tour-trail decompositions of each~$B_{k/2} ( v_{2j-1} ,  v_{2j} )$.
For all $v \in V(H)$, we have $	p_{\mathcal{T}_{{k/2}}, k/2 }(v) \equiv 1 \bmod 2$ if and only if~$v \in \{v_1, \dotsc, v_{2s}\}$, which proves the first required property. 
We can deduce the other two properties using the properties of the corresponding tour-trail decomposition of each~$B_{k/2} (    v_{2j-1} ,  v_{2j} )$.
This proves the claim.
\end{proofclaim}

If $k$ is even, then let $J_{k/2}$ be given by Claim~\ref{clm:balnace_1}, otherwise just set $J_{k/2} = \emptyset$.
The next claim allows us to adjust the values of $p_{\mathcal{T}_0, i}(v)$ for $2\le  i < k/2$.

\begin{claim}  \label{clm:balnace_2}
For each $2 \leq i < k/2$, $H-G-J_{k/2}$ contains a $C_{\ell}^{\smash{(k)}}$-divisible subgraph~$J_i$ such that there exists a tour-trail decomposition~$\mathcal{T}_i$ of~$J_i$ satisfying, for all $v \in V(H)$ and $i' \in [k-1]$,
\begin{align*}
	p_{\mathcal{T}_i, i' } ( v ) - p_{\mathcal{T}_i, k-i'} ( v ) = 
	\begin{cases}
			p_{\mathcal{T}_0, k-i' } ( v ) - p_{\mathcal{T}_0, i'} ( v ) & \text{if $i' \in \{ i, k-i \}$,}\\
			- i (p_{\mathcal{T}_0, k-i' } ( v ) - p_{\mathcal{T}_0, i'} ( v )) & \text{if $i' \in \{1,k-1\}$,}\\
			0 & \text{otherwise.}
	\end{cases}	
\end{align*}
Moreover, $|J_i| \le 4 i \ell  \binom{m}k$, $J_i[V(G)] = \emptyset$ and the $J_i$'s are edge-disjoint. 
\end{claim}

\begin{proofclaim}
Let $2 \leq i < k/2$.
Suppose that we have already constructed subgraphs $J_2, \dots, J_{i-1}$.
We now construct $J_i$ as follows. 
Let $H' = H-G - J_{k/2} - \bigcup_{i' \in [2,i-1]} J_{i'}$.

For all $i' \in [k-1]$, note that $ \sum_{v \in V(H)} 
 p_{\mathcal{T}_0, i' } ( v )  = | D ( \mathcal{T}_0 ) |$ and so 
 if we define~$w_i(v):=p_{\mathcal{T}_0, i } ( v ) - p_{\mathcal{T}_0, k-i} ( v )$ then
\begin{align}\label{eq:zerosum}
	 \sum_{v \in V(H)} w_i(v)
  = 0.
\end{align}
Define a ($2$-uniform) multidigraph~$H_i$ on~$V(H)$ such that, for all $v \in V(H)$,
\begin{align}\label{eq:deg-weights}
    d^-_{H_i}(v)  = 	\max \{ w_i(v), 0 \} 
	   \text{ and }
   d^+_{H_i}(v)  = 	\max \{ -w_i(v), 0 \} \,.
\end{align}
Note that $H_i$ can be constructed greedily.%
    \footnote{
        Indeed, let $V^+ = \{v \in V(H) : w(v) >0\}$ and $V^- = \{v \in V(H) : w(v) < 0\}$.
        Note that  \eqref{eq:zerosum} implies that $\sum_{v \in V^+} w(v) = - \sum_{v \in V^-} w(v)$.
    Thus $H_i$ can be obtained by adding appropriate edges from~$V^+$ to~$V^-$.
}
Note that $|H_i| = |D(\mathcal{T}_0)|$ is twice the number of trails in~$\mathcal{T}_0$, so $|H_i| \le 2 \binom{m}k$.
For each directed edge $xy \in H_i$, we apply Corollary~\ref{corjbasic} and obtain edge-disjoint \firstgadget{} gadgets~$B_i (  x ,  y )$ in~$H'$. 
Let $J_i$ be the union of these \firstgadget{} gadgets. 
Clearly, $|J_i| \le 2 i \ell  |H_i| \le 4 i \ell \binom{m}k$.
Let $\mathcal{T}_i$ be the tour-trail decomposition of~$J_i$, which is the union of the corresponding tour-trail decompositions of each~$B_i (  x ,  y )$.
It is straightforward to check that $\mathcal{T}_i$ has the desired properties, which proves the claim. 
\end{proofclaim}

For each $2 \leq i < k/2$, let $J_i$ be given by Claim~\ref{clm:balnace_2}.
Together with $J_{k/2}$, we have then edge-disjoint $J_2, \dotsc, J_{\lfloor k/2 \rfloor}$ for any $k$.
Let $H^* = H - G - \bigcup_{2 \leq i \leq \lfloor k/2 \rfloor} J_i$ and set~$\mathcal{T}' = \mathcal{T}_0 \cup \bigcup_{2 \leq i \leq \lfloor k/2 \rfloor} \mathcal{T}_i$. 
Recall that $\sum_{i \in [k-1]} p_{\mathcal{T}_0, i } ( v )$ is the number of $(k-1)$-tuples in $D(\mathcal{T}_0)$ containing~$v$, so $\sum_{i \in [k-1]} p_{\mathcal{T}_0, i } ( v ) \le 2 \binom{m}k$.
For $i \in \{2, \dotsc, k-2\}$ and $v \in V(H)$, we have 
\begin{align}
 p_{\mathcal{T}',i}(v)  & = p_{\mathcal{T}',k-i}(v) , \label{eqn:balance1}\\
	p_{\mathcal{T}',k/2}(v)  & \equiv 0 \bmod{2} \text{ if $k$ is even, and}  \label{eqn:balance2}
\\
| p_{\mathcal{T}',1}(v) - p_{\mathcal{T}',k-1}(v) | & \le \sum_{i \in [k-1]} i p_{\mathcal{T}_0, i } ( v ) \le 2 (k-1) \binom{m}k. \label{eqn:balance3}
\end{align}
Moreover, $p_{\mathcal{T}',1}(v) = p_{\mathcal{T}',k-1}(v)$ for all $v \in V(H) \setminus V(G)$. 

We now balance $p_{\mathcal{T}',1}(v)$ and $p_{\mathcal{T}',k-1}(v)$ as follows.
Note that $\lceil k/2 \rceil - 1$ is the largest integer which is strictly less than $k/2$.
We have 
\begin{align*}
 p_{\mathcal{T}',1}(v)  - p_{\mathcal{T}',k-1}(v)  
& \overset{\eqref{eqn:balance1}}{=}  p_{\mathcal{T}',1}(v)  - p_{\mathcal{T}',k-1}(v) + \sum_{2 \le i \le \lceil k/2 \rceil - 1} i \left( p_{\mathcal{T}',i}(v) - p_{\mathcal{T}',k-i}(v) \right)\\
& \overset{\eqref{eqn:balance2}}{\equiv}  \sum_{1 \leq i \leq k-1} i p_{\mathcal{T}',i}(v) 
\overset{\text{Prop.~\ref{prop:sumpi}}}{\equiv} 0 \pmod{k}.
\end{align*}
For each $v \in V(G)$, let 
\begin{align*}
b(v) = (p_{\mathcal{T}',1}(v) -  p_{\mathcal{T}',k-1}(v)  ) / k ,
\end{align*}
so $b(v) \in  \mathbb{Z}$. 
Define (greedily) a multi-digraph $H_1$ on $V(H)$ such that, for all $v \in V(H)$,
\begin{align*}
	d^-_{H_1}(v)  = 	\max \{ b(v) , 0 \}
	\text{ and }
	d^+_{H_1}(v)  = 	\max \{ - b(v) , 0 \}.
\end{align*}
By~\eqref{eqn:balance3}, $\Delta^{\pm}(H_1) \le 2 \binom{m}k$ and $|H_1| \le m \binom{m}{k}$.
For each directed edge $xy \in H_1$, we apply Corollary~\ref{corjbasic} to obtain edge-disjoint \firstgadget{} gadgets~$B_{k-1} (  x ,  y )$ in~$H^*$. 
We call~$J_1$ the union of these \firstgadget{} gadgets.
Clearly, $|J_1| \le 2 (k-1) \ell  |H_1| \le 2 (k-1) \ell m \binom{m}k$.

Let $J = \bigcup_{1 \leq i \leq \lfloor k/2 \rfloor} J_i$.
Note that 
\begin{align*}
|G \cup J| \le \binom{m}k+ 2 (k-1) \ell m \binom{m}k + \sum_{2 \leq i \leq \lfloor k/2 \rfloor} 4 i \ell \binom{m}k \le 2 k \ell m \binom{m}k \le \ell m^{k+1}.
\end{align*}
Let $\mathcal{T}_1$ be the tour-trail decomposition of~$J_1$, which is the union of the corresponding tour-trail decompositions of each~$B_{k-1} (  x ,  y )$.
Note that given a $B_{k-1} (  x ,  y )$ and its tour-trail decomposition~$\mathcal{T}=\mathcal T(x,y)$,
we have, for all $v \in V(H)$ and $2 \leq i \leq k - 2$, 
\begin{align*}
	p_{\mathcal{T},1}(v)  - p_{\mathcal{T},k-1}(v)  = - k (\mathbbm{1}_{v = x} - \mathbbm{1}_{v = y})\, 
	\text{ and }
	p_{\mathcal{T},i}(v)  - p_{\mathcal{T},k-i}(v)  = 0\,.
\end{align*}
Hence $\mathcal{T}^* = \mathcal{T}_1 \cup \mathcal{T}'$ is a balanced tour-trail decomposition of $J \cup G$, as required.
\end{proof}

\subsection{Focusing}

The following lemma shows that all the residual $D(\mathcal{T}^*)$ can be moved onto a fixed set of $k-1$ vertices.

\begin{lemma} \label{lem:balancetourdecom2}
Let $1/n \ll \eps \ll 1/\ell, 1/k $ with $k \ge 3$ and $\ell \ge k^2 - k + 1$.
Let $H$ be a~$k$-graph on $n$ vertices and $\tmindeg(H) \geq \eps n$.
Let $G$ be a $k$-graph with $V(G) \subseteq V(H)$ and $m = |V(G)|  \le \eps n^{1/k^2}/2$ such that $\deg_G(v)$ is divisible by $k$ for all vertices $v \in V(G)$.
Suppose that $m$ is prime and $|G| < m/k$. 
Suppose that $G$ has a balanced tour-trail decomposition~$\mathcal{T}$. 
Let $z_1, \dots, z_{k-1} \in V(H) \setminus V(G)$ be distinct vertices. 
Then~$H-G$ contains a~$C_{\ell}^{\smash{(k)}}$-decomposable~$J^*$ such that~$|J^{*}|\leq 3^k k \ell m$, $J^{*}[V(G)]=\emptyset$ and~$G\cup J^{*}$ has a balanced tour-trail decomposition~$\mathcal {T}^{*}$ with~$|D(\mathcal{T}^*)| \leq 3m$ and satisfying
$$p_{\mathcal {T}^{*},i}(v) = 0 \text{ for all~$v\notin \{z_i, z_k-i\}$} 
\qquad \text{and} \qquad
p_{\mathcal{T}^*,i}(z_i) = p_{\mathcal{T}^*,i}(z_{k-i})\,,
$$
for all~$i\in[k-1]$.
\end{lemma}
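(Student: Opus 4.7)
The plan is to augment $G$ with a $C_\ell^{(k)}$-decomposable subgraph $J^*$ built as an edge-disjoint union of swapper gadgets from Lemma~\ref{lem:jswitch}, added one at a time, so as to transform the initial balanced residual $D(\mathcal{T})$ into the target form concentrated on $\{z_1,\ldots,z_{k-1}\}$.

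The key mechanism is that attaching a swapper $T_j(\mathbf{y},\mathbf{y}',x,x')$ contributes the four tuples of $\vect T_j(\mathbf{y},\mathbf{y}',x,x')$ to the current residual $D$. By selecting parameters carefully, two or even all four of these new tuples coincide with reverses of tuples already in $D$ and are eliminated by the merging procedure of Section~\ref{sec:residualgraph}. Such matches are supplied by the balance of~$\mathcal{T}$, which guarantees $p_i(v)=p_{k-i}(v)$ for every vertex $v$ and index $i$ and hence natural pairings of the tuples in $D$. Accordingly, the net effect of one well-chosen swapper is to replace the vertex~$x$ at a chosen position of a tuple in $D$ by $x'$, with a compensating replacement in a paired tuple.

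Iterating this, I process the vertices of $V(G)\setminus\{z_1,\ldots,z_{k-1}\}$ that appear in $D(\mathcal{T})$, using swappers to replace each such dirty vertex by some vertex of $\{z_1,\ldots,z_{k-1}\}$. After all dirty vertices have been cleared, a further round of swappers acting only on tuples among the $z_j$'s arranges each position $i$ of the surviving tuples to be $z_i$ or $z_{k-i}$ and ensures the parity condition $p_{\mathcal{T}^*,i}(z_i)=p_{\mathcal{T}^*,i}(z_{k-i})$. For the count, the initial residual satisfies $|D(\mathcal{T})|\leq 2|G|<2m/k$, and since each tuple has $k-1$ positions, at most $2m$ dirty (tuple, position) incidences ever need to be resolved. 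Each swapper reduces this count and contributes at most $3^{k-1}\ell k$ edges by Lemma~\ref{lem:jswitch}; hence $|J^*|\leq 3^k k\ell m$, and by a similar count $|D(\mathcal{T}^*)|\leq 3m$. Edge-disjointness of the successive gadgets follows from $\tmindeg(H)\geq\varepsilon n$ together with $m\leq \varepsilon n^{1/k^2}/2$, which ensures that each new swapper may be built on fresh vertices of $H-G$.

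The main obstacle is the bookkeeping required to preserve balance of the tour-trail decomposition at every step, since each $T_j$ swap simultaneously reverses two tuples and moves vertices between positions $j$ and $k-j$ in non-trivial ways. Handling this requires selecting swapper parameters so that, after each small block of additions, the net change to $p_i(v)-p_{k-i}(v)$ is zero for every vertex $v$ and index $i$; this can be arranged by pairing each $T_j$ operation with a compensating $T_{k-j}$ or by choosing $x$ and $x'$ symmetrically so the reversal effects cancel.
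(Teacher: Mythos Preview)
Your proposal identifies the right tool (swapper gadgets from Lemma~\ref{lem:jswitch}) but does not supply the crucial combinatorial mechanism that makes the swaps possible. A single swapper $T_j(\mathbf{y},\mathbf{y}',x,x')$ only cancels two existing tuples of $D$ if those two tuples share the \emph{same} vertex $x'$ at position $j$ (in one) and at position $k-j$ (in the other). Saying that ``balance provides natural pairings'' is not enough: balance only tells you that the total counts $p_i(v)$ and $p_{k-i}(v)$ agree, not that the tuples can be matched into pairs suitable for swapping in a way that simultaneously achieves the target condition $p_{\mathcal{T}^*,i}(z_i)=p_{\mathcal{T}^*,i}(z_{k-i})$. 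In particular, your suggestion of ``pairing each $T_j$ with a compensating $T_{k-j}$'' does not explain how to avoid getting stuck, nor why the final distribution over $\{z_i,z_{k-i}\}$ comes out equal.

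The paper's proof supplies exactly this missing structure. For each $1\le i<k/2$ it forms the auxiliary multidigraph $A_i(\mathcal{T})$ on $V(G)$ whose arcs record the pair $(v_i,v_{k-i})$ for each tuple $v_1\cdots v_{k-1}\in D(\mathcal{T})$. Balance makes $A_i$ Eulerian, and a \emph{single} Euler tour of $A_i$ yields an enumeration $\mathbf{b}_1,\ldots,\mathbf{b}_{2s}$ of $D$ with $b_{j,k-i}=b_{j+1,i}$ for all~$j$; this is precisely the pairing that lets one swapper replace positions $i$ and $k-i$ in two consecutive tuples at once, and the alternating pattern along the tour is what forces the equal split between $z_i$ and $z_{k-i}$. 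But an Euler tour exists only if $A_i$ is connected, which is not guaranteed a priori. The paper fixes this by first adding an auxiliary $C_\ell^{(k)}$-decomposable $J_0$ (Lemma~\ref{lemma:convert}) whose own residual makes every $A_i$ strongly connected on $V(G)$; this step is where the hypothesis that $m$ is prime is used. You never invoke the primality of $m$, which is a strong signal that your argument is missing this connectivity step. Without it, the pairing scheme can fail and the proof does not go through.
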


We first outline the proof of Lemma~\ref{lem:balancetourdecom2}.
Take $z_1, \dots, z_{k-1}\in V$ distinct vertices. 
Recall the definition of~$r_j$ at the beginning of Section~\ref{section:gadgets}.
For $1 \leq i \leq \lfloor k/2 \rfloor$, define the functions $\zeta_i, \bar \zeta_i : V^{k-1} \rightarrow V^{k-1} $ be such that, for $\mathbf a  = a_1\dotsb a_{k-1}\in  V^{k-1}$,
\begin{align*}
	\zeta_i (\mathbf a) & =  r_{k-i}(  r_{i}(\mathbf a, z_{i}) , z_{k-i} ) 
	=  a_1 \dotsb a_{i-1} z_i a_{i+1} \dotsb a_{k-i-1} z_{k-i} a_{k-i+1} \dotsb  a_{k-1} \text{ and }\\
	\bar\zeta_i( \mathbf a ) & =   r_{k-i}(  r_{i}(\mathbf a, z_{k-i} ) , z_{i} ) = a_1 \dotsb a_{i-1}  z_{k-i}  a_{i+1}  \dotsb a_{k-i-1}  z_{i}  a_{k-i+1} \dotsb   a_{k-1}.
\end{align*}
That is, $\zeta_i$ replaces the $i$th and $(k-i)$th vertices with the vertices $z_{i}$ and~$z_{k-i}$, respectively, whereas $\bar\zeta_i$ replaces the $i$th and $(k-i)$th vertices with vertices $z_{k-i}$ and~$z_{i}$, respectively.
If $k$ is even and $i = k/2$, then both $\zeta_{k/2}$ and $\bar\zeta_{k/2}$ replace the $(k/2)$th vertex with $z_{k/2}$, this is~$\zeta(\mathbf a) = \bar\zeta(\mathbf a) = r_{k/2}(\mathbf a, z_{k/2})$.

We say that a tour-trail decomposition~$\mathcal{T}'$ is an \emph{$i$-convert} of~$\mathcal{T}$ if $D(\mathcal{T})$ can be partitioned into $D_1$ and~$D_2$ of equal size such that $D(\mathcal{T}') = \zeta_i (D_1) \cup \bar\zeta_i ( D_2 )$.
Note that the definitions of $\zeta_i, \bar\zeta_i$ and $i$-convert depend on a choice of $z_1, \dotsc, z_{k-1}$. 
However, such choice will be always clear from the context, so we omit it in the notation.

Let $\mathcal{T}_0$ be a balanced tour-trail decomposition of~$G$.
Our aim is to construct tour-trail decompositions $\mathcal{T}_1,\dots, \mathcal{T}_{\lfloor k/2 \rfloor}$ such that $\mathcal{T}_i$ is an $i$-convert of~$\mathcal{T}_{i-1}$.
Notice that $\mathcal{T}_{\lfloor k/2 \rfloor}$ will be the desired tour-trail decomposition.
Indeed, observe first that for each edge $\mathbf{a} = a_1\cdots a_{k-1} \in D(\mathcal T_0)$ all of its vertices are replaced eventually by a vertex in~$\{z_1,\dots,z_{k-1}\}$ in~$\mathcal{T}_{\lfloor k/2 \rfloor}$. 
And second, for every~$1\leq i<k/2$, each directed edge increases the value of both $p_{\mathcal{T}_i,i}(z_i)$ and $p_{\mathcal{T}_i,i}(z_{k-i})$ by exactly one with respect to their value in the previous tour-trail decomposition~$\mathcal {T}_{i-1}$.
In particular, $p_{\mathcal{T}_i,i}(z_i)=p_{\mathcal{T}_i,i}(z_{k-i})$ for every~$1\leq i< k/2$. 

We will also need the following notation.
Let $\mathcal{T}$ be a tour-trail decomposition and~$1 \leq i < k/2$. 
Define $A_i(\mathcal{T})$ to be the multidigraph on~$V(H)$ such that every ordered tuple $v_1 \dotsb v_{k-1}$ in~$D(\mathcal{T})$ corresponds to a distinct directed edge $v_{i}v_{k-i}$ in~$A_i(\mathcal{T})$.

The following is immediate from our definition of $i$-convert and~$A_{j}(\mathcal{T})$.

\begin{proposition}\label{prop:convert}
Let $k \ge 3$ and $1 \leq i < k/2$.
Let $V$ be a set of vertices, and let~$z_1, \dots, z_{k-1} \in V$ be distinct vertices. 
Let $\mathcal{T}$ and $\mathcal{T}'$ be tour-trail decompositions of two (not necessarily of the same) subgraphs in $V$.
Suppose $\mathcal{T}'$ is an \emph{$i$-convert} of~$\mathcal{T}$.
Then~$A_{j}(\mathcal{T}') = A_{j}(\mathcal{T})$ for all $1 \leq j < k/2$ such that $j \neq i$.
\end{proposition}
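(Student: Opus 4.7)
The plan is to verify the claim by unfolding the definitions and observing that the operations $\zeta_i$ and $\bar\zeta_i$ leave the $j$th and $(k-j)$th coordinates untouched whenever $j \neq i$ and $1 \leq j < k/2$. More precisely, for such a $j$, the hypothesis $1 \leq i < k/2$ together with $j < k/2$ gives that $k-j > k/2 > i$ so $k-j \neq i$, and $k-j \neq k-i$ because $j \neq i$. Thus both $\{j, k-j\} \cap \{i, k-i\} = \emptyset$, and for any $(k-1)$-tuple $\mathbf{a} = a_1 \dotsb a_{k-1}$, the entries in positions $j$ and $k-j$ of $\zeta_i(\mathbf{a})$ and of $\bar\zeta_i(\mathbf{a})$ both coincide with those of $\mathbf{a}$, namely $a_j$ and $a_{k-j}$.

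Therefore, for the partition $D(\mathcal{T}) = D_1 \sqcup D_2$ witnessing that $\mathcal{T}'$ is an $i$-convert of $\mathcal{T}$, every tuple $\mathbf{a} \in D_1$ contributes the directed edge $a_j a_{k-j}$ to $A_j(\mathcal{T})$ and the same directed edge $a_j a_{k-j}$ (from $\zeta_i(\mathbf{a})$) to $A_j(\mathcal{T}')$; likewise for every $\mathbf{a} \in D_2$ via $\bar\zeta_i$. Since $D(\mathcal{T}')$ is precisely the disjoint union $\zeta_i(D_1) \cup \bar\zeta_i(D_2)$, summing the contributions edge-by-edge yields the equality of multidigraphs $A_j(\mathcal{T}') = A_j(\mathcal{T})$. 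There is no real obstacle here; the statement is essentially a bookkeeping consequence of the fact that an $i$-convert only alters two specific coordinates of each end-tuple.
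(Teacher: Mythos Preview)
Your proof is correct and is precisely the unfolding of definitions that the paper has in mind; the paper itself gives no proof, simply declaring the statement ``immediate from our definition of $i$-convert and $A_j(\mathcal{T})$.'' Your check that $\{j,k-j\}\cap\{i,k-i\}=\emptyset$ whenever $1\le i,j<k/2$ with $j\ne i$ is exactly the reason the positions read off by $A_j$ are untouched by $\zeta_i$ and $\bar\zeta_i$, and the multiset bijection between $D(\mathcal{T})$ and $D(\mathcal{T}')$ then transports the multidigraph $A_j$ unchanged.
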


The next lemma shows that we can always get a tour-trail decomposition such that~$A_i(\mathcal{T})$ is strongly connected for all $1 \leq i < k/2$ and spans~$V(G)$.
The proof is simple and follows by greedily adding new arcs to~$A_i(\mathcal T)$.

\begin{lemma} \label{lemma:convert}
Let $1/n \ll \eps \ll 1/\ell, 1/k $ with $k \ge 3$ and $\ell \ge k^2 - k + 1$.
Let $H$ be a~$k$-graph on $n$ vertices with $\tmindeg{(H)} \geq \eps n$.
Let $U \subseteq V(H)$ with $|U| = m \le \eps n$ and~$m$ is a prime number. 
Then there exists a $C_{\ell}^{\smash{(k)}}$-decomposable~$J_0$ such that $|J_0| = \ell m$, $J_0[U] = \emptyset$, $J_0$ has a balanced tour-trail decomposition~$\mathcal{T}_0$ satisfying $V(D(\mathcal{T}_0)) \subseteq U$ and, for all $1 \leq i < k/2$, $A_i(\mathcal{T}_0)$ is a strongly connected multidigraph which spans~$U$ and $|A_i(\mathcal{T}_0)| = 2m$.
\end{lemma}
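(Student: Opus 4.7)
Label $U=\{u_0,u_1,\dots,u_{m-1}\}$ with indices in $\mathbb Z/m\mathbb Z$, and for each $j\in[m]$ set $\mathbf p_j=u_ju_{j+1}\cdots u_{j+k-2}\in U^{k-1}$. The plan is to construct $J_0$ together with a balanced tour-trail decomposition $\mathcal T_0$ whose residual multiset is exactly $D^{*}=\{\mathbf p_j:j\in[m]\}$ with every tuple counted twice. This target simultaneously achieves all required conclusions: $|D^{*}|=2m$; no reverse pair appears in $D^{*}$ (no forward tuple $\mathbf p_j$ is the reverse of another forward tuple), so the decomposition is already in maximally merged form; for each $1\le i<k/2$ the arcs of $A_i(\mathcal T_0)$ are precisely the doubled shift-by-$(k-2i)$ directed cycle on $U$, which is Hamiltonian (hence strongly connected and spanning $U$) because $m$ is prime and $0<k-2i<m$ (the latter being automatic when $m$ is taken large, which we may assume); finally, the balance $p_{\mathcal T_0,i}(u)=p_{\mathcal T_0,k-i}(u)=2$ for every $u\in U$ follows by symmetry.

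The tight cycles are built greedily via Lemma~\ref{lem:findcycle}. For each $j\in[m]$ in turn, find a tight cycle $C_j$ of length $\ell$ in $H$, edge-disjoint from the previously chosen cycles, whose vertex sequence contains the two $U$-subpaths $\mathbf p_j$ (at positions $1,\dots,k-1$) and $\mathbf p_{j+1}$ (at positions $2k-1,\dots,3k-3$), with the remaining positions filled by fresh vertices from $V(H)\setminus U$ not used in any previous cycle. The buffers of at least $k-1$ non-$U$ positions between the two strips, and between the second strip and the wrap, ensure no $k$ consecutive positions of $C_j$ lie in $U$, so $C_j[U]=\emptyset$ and hence $J_0[U]=\emptyset$. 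The codegree hypothesis $\delta^{(3)}(H)\ge\varepsilon n$ together with the bound $m\le\varepsilon n$ guarantees that fresh vertices and the required codegrees are always available, so each greedy step succeeds.

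Consecutive cycles $C_j$ and $C_{j+1}$ share the $(k-1)$-tuple $\mathbf p_{j+1}$; this is precisely what allows trails to stitch across cycle boundaries. The decomposition $\mathcal T_0$ is obtained by cutting each $C_j$ at its two $U$-supported $(k-1)$-tuples, then, in the spirit of the short-edge reordering in Lemma~\ref{lem:jbasic}, reversing one of the two resulting pieces of each $C_j$ so that the in-cycle reverse pairs do not collapse to a tour. The remaining ends merge across consecutive cycles through the shared tuples $\mathbf p_{j+1}$, producing $m$ stitched trails whose ends are precisely the multiset $D^{*}$.

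The main obstacle is the bookkeeping in the last step: one must choose the cut orientations so that the cross-cycle merges produce exactly $2m$ ends of the prescribed form rather than closing the merge chain into tours (which would yield $|D(\mathcal T_0)|=0$). Once the correct orientations are fixed, verifying $|J_0|=\ell m$, $J_0[U]=\emptyset$, $V(D(\mathcal T_0))\subseteq U$, balance, $|A_i(\mathcal T_0)|=2m$, and the strong connectivity of each $A_i$ is immediate from the explicit description of $D^{*}$ above.
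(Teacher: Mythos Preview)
Your plan is more elaborate than necessary and contains two genuine gaps.

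\textbf{First gap: the cycles $C_j$ are not copies of $C_\ell^{(k)}$.} You place $\mathbf p_j = u_j u_{j+1}\cdots u_{j+k-2}$ and $\mathbf p_{j+1} = u_{j+1}u_{j+2}\cdots u_{j+k-1}$ at disjoint positions of $C_j$, but these two tuples share the $k-2\ge 1$ vertices $u_{j+1},\dots,u_{j+k-2}$. Hence $C_j$ has repeated vertices and so is a closed tour, not a tight cycle $C_\ell^{(k)}$ (whose vertex set is a set of size~$\ell$). A closed tour on $\ell$ edges with strictly fewer than $\ell$ vertices is not isomorphic to $C_\ell^{(k)}$, so $J_0=\bigcup_j C_j$ is not shown to be $C_\ell^{(k)}$-decomposable.

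\textbf{Second gap: ``reversing one of the two resulting pieces'' does nothing.} If $P=v_1\cdots v_t$ is a trail with ends $D(P)=\{v_{k-1}\cdots v_1,\, v_{t-k+2}\cdots v_t\}$, then its reversal $P^{-1}$ has exactly the same multiset of ends. Thus after cutting $C_j$ at its two $U$-strips you obtain trails with ends $\{\mathbf p_j^{-1},\mathbf p_{j+1}\}$ and $\{\mathbf p_{j+1}^{-1},\mathbf p_j\}$, and no reversal will turn this into your target $D^*=\{\mathbf p_j,\mathbf p_j: j\in[m]\}$; the cross-cycle merges collapse everything into tours and you get $D=\emptyset$. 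The ``short-edge reordering'' of Lemma~\ref{lem:jbasic} is a different operation (re-ordering a single removed edge to obtain a permuted end tuple), and you have not specified how to deploy it here.

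\textbf{What the paper does.} The paper's construction is far simpler and avoids both issues: each $C_j$ contains \emph{one} $U$-strip $u_{j+1}\cdots u_{j+k-1}$ (so $C_j$ is a genuine $C_\ell^{(k)}$), and $C_j$ is simply declared to be a single trail $u_{j+1}\cdots u_{j+k-1}w_{j,k}\cdots w_{j,\ell}u_{j+1}\cdots u_{j+k-1}$, \emph{without} merging or simplification. Then $D(\mathcal T_0)=\{\,u_{j+1}\cdots u_{j+k-1},\ u_{j+k-1}\cdots u_{j+1}:j\in[m]\,\}$, which is automatically balanced (each vertex occurs once at position~$i$ among the forward tuples and once among the reverse tuples), and for $1\le i<k/2$ the digraph $A_i(\mathcal T_0)$ is the doubled shift-by-$(k-2i)$ cycle on $U$, strongly connected since $m$ is prime. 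No stitching across cycles, no second strip, no reordering is needed.
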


\begin{proof}
Let $u_1, \dots, u_m$ be an enumeration of~$U$. 
Consider $j \in [m]$.
We apply Lemma~\ref{lem:findcycle} to obtain a copy $C_j$ of~$C_{\ell}^{\smash{(k)}}$ with $V(C_j) = u_{j+1} \dotsb u_{j+k-1} w_{j,k} \dotsb w_{j,\ell}$, where $w_{j,j'}$ are new vertices. 
For its trail decomposition~$\mathcal{T}_j$ we consider~$C_j$ to be a trail
\begin{align*}
u_{j+1} \dotsb u_{j+k-1} w_{j,k} \dotsb w_{j,\ell} u_{j+1} \dotsb u_{j+k-1}.
\end{align*}
Then $u_{j+i}u_{j+k-i} ,u_{j+k-i}u_{j+i} \in  A_{i}( \mathcal{T}_j )$ for all $1 \le i < k/2$.
Let $J_0 = \bigcup_{j \in [m]} C_j$ and $\mathcal{T}_0 = \bigcup_{j \in [m]} \mathcal{T}_j$ (without simplification). 
Note that $|J_0| = \ell m$ as each $C_j$ has $\ell $ edges.
Note that 
\begin{align*}
|A_i(\mathcal{T}_0)| = \sum_{j \in [m]} |A_i(\mathcal{T}_j)| = \sum_{j \in [m]} |D(\mathcal{T}_j)| = 2m.
\end{align*}
Moreover, since~$A_i(\mathcal T_0)$ consists on arcs~$u_{j_1}u_{j_2}$ and $u_{j_1}u_{j_2}$ where~$j_2-j_1 = k-2i$, and thus it spans~$U$.
Moreover, recall~$m$ is prime and in particular~$k-2i$ does not divide~$m$. 
Therefore,~$A_i$ never closes a cycle of length smaller than~$m$ and thus~$A_i(\mathcal{T}_0)$ is strongly connected for all $1 \leq i < k/2$.
\end{proof}

We are now ready to prove Lemma~\ref{lem:balancetourdecom2}.

\begin{proof}[Proof of Lemma~\ref{lem:balancetourdecom2}]
Apply Lemma~\ref{lemma:convert} with $U = V(G)$ and $H = H \setminus \{z_1, \dots, z_{k-1}\}$, we obtain a $C^{\smash{(k)}}_{\ell}$-decomposable graph~$J_0 \subseteq H -G$ such that 
\begin{align}
	|J_0| = \ell m,
	\label{eqn:convert1}
\end{align}
and $J_0$ has a balanced tour-trail decomposition~$\mathcal{T}_0$ such that, for all $1 \leq i < k/2$, $A_i(\mathcal{T}_0)$ is a connected multidigraph which spans~$V(G)$. 

Let $G_0^* = G \cup J_0$ and $\mathcal{T}_0^* = \mathcal{T} \cup \mathcal{T}_0$, considering all trails, without doing any further simplification even if it is possible to do so.
For all $1 \leq i < k/2$, $A_i = A_i(\mathcal{T}_0^*)$.
Thus~$A_i$ is a connected multidigraph spanning~$V(G)$.
Observe that since $\mathcal{T}^*_0$ is balanced, then~$A_i$ is Eulerian.
Let $s$ be such that $|D(\mathcal{T}^*_0)| = 2s$.
Note that 
\begin{align}
	2s = |A_i| = |D(\mathcal{T}^*_0)| =  2|G| + |A_i(\mathcal{T}_0)| \le 2 m /k + 2 m \le  3 m.
	\label{eqn:convert2}
\end{align}

\begin{claim} \label{clm:convert}
There exist edge-disjoint $k$-graphs $J_0, \dots, J_{\lfloor k/2 \rfloor}$ in~$H - G$ such that 
\begin{enumerate}[label={\rm(\roman*)}]
	\item each $J_{i}$ is $C_{\ell}^{\smash{(k)}}$-decomposable and $|J_{i}| \le 2 \cdot 3^{i}  k \ell s$, \label{itm:convert1}	
	\item $G \cup J_0 \cup \bigcup_{j \in [i]}J_j$ has a balanced tour-trail decomposition~$\mathcal{T}_{i}^*$, and \label{itm:convert2}
	\item $\mathcal{T}_{i}^*$ is an $i$-convert of~$\mathcal{T}_{i-1}^*$. \label{item:convert3}
\end{enumerate}
\end{claim}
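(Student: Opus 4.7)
The plan is to prove the claim by induction on $i$, with the base case $i = 0$ given by $J_0$ and $\mathcal{T}_0^*$ already defined before the claim. For the inductive step, suppose $\mathcal{T}_{i-1}^*$ is a balanced tour-trail decomposition of $G \cup J_0 \cup \cdots \cup J_{i-1}$. Since each $\mathcal{T}_{j}^*$ ($1 \le j < i$) was a $j$-convert of $\mathcal{T}_{j-1}^*$, Proposition~\ref{prop:convert} applied repeatedly yields $A_i(\mathcal{T}_{i-1}^*) = A_i(\mathcal{T}_0^*)$, which is a strongly connected Eulerian multidigraph spanning $V(G)$ by Lemma~\ref{lemma:convert}. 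In particular, $|D(\mathcal{T}_{i-1}^*)| = 2s$.

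To build $J_i$, I would take an Eulerian circuit $v_0 v_1 \cdots v_{2s} = v_0$ of $A_i(\mathcal{T}_{i-1}^*)$; its $j$-th edge corresponds to a tuple $\mathbf{a}^{(j)} \in D(\mathcal{T}_{i-1}^*)$ with $a_i^{(j)} = v_{j-1}$ and $a_{k-i}^{(j)} = v_j$. Partitioning tuples according to the parity of $j$ along the circuit into $D_1$ (odd $j$) and $D_2$ (even $j$) gives $|D_1| = |D_2| = s$, which will produce a balanced $i$-convert: each $\mathbf{a}^{(j)}$ must be transformed into $\zeta_i(\mathbf{a}^{(j)})$ or $\bar\zeta_i(\mathbf{a}^{(j)})$, i.e.\ its values at positions $i$ and $k-i$ are replaced by $z_i, z_{k-i}$ or $z_{k-i}, z_i$.

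These replacements are carried out by swapper gadgets $T_i$ from Lemma~\ref{lem:jswitch}. The crucial property is that, given a swapper $T_i(\mathbf{y}, \mathbf{y}', x, x')$, by tuning its parameters, two of the four tuples of $\vect T_i(\mathbf{y}, \mathbf{y}', x, x')$ can be made to match (as reverses) two specific tuples of $D(\mathcal{T})$, so that the merging procedure removes those two old tuples and the remaining two of $\vect T_i$ become the replacements in the new residual. Choosing $x, x' \in \{z_i, z_{k-i}\}$ and $\mathbf{y}, \mathbf{y}'$ aligned with consecutive tuples along the Eulerian circuit, one can arrange the swappers in a telescoping chain: the new tuples introduced by swapper $j$ are cancelled by swapper $j+1$, so that the cumulative effect is exactly the intended $i$-convert, namely $\zeta_i$ on $D_1$ and $\bar\zeta_i$ on $D_2$. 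For $k$ even and $i = k/2$, positions $i$ and $k-i$ coincide and the construction simplifies; balance is then preserved thanks to the parity property recorded in the ``moreover'' clause of Corollary~\ref{corjbasic}.

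The size bound follows routinely: each swapper $T_i$ has $|T_i| \le 3^i \ell k$ by Lemma~\ref{lem:jswitch}, and the construction uses $O(s)$ of them—specifically at most $2s$ swappers, one per edge of the Eulerian circuit—giving $|J_i| \le 2 \cdot 3^i k \ell s$. Edge-disjointness with previous $J_j$'s and with $G$ is ensured by always picking fresh auxiliary vertices from $V(H) \setminus V(G)$ for each gadget, which is feasible under $\tmindeg(H) \ge \eps n$ and the assumption $m \le \eps n^{1/k(k+1)}$ since all gadgets together involve only $O_{k,\ell}(m)$ vertices. The main obstacle is verifying that the chain of swappers produces exactly the desired $i$-convert with no leftover residual, which demands careful bookkeeping of cancellations among the successive swappers' residuals and the existing tuples in $D(\mathcal{T})$, and is where the bulk of the technical work lies.
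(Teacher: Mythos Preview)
Your overall architecture---induction on $i$, using Proposition~\ref{prop:convert} to get $A_i(\mathcal{T}_{i-1}^*) = A_i$, running along an Eulerian circuit for $i < k/2$, and assembling $J_i$ from swapper gadgets---is exactly the paper's approach. The size bound and edge-disjointness are handled as you say.

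There is, however, a genuine gap in how you intend to call the swappers. You write that one should take $x, x' \in \{z_i, z_{k-i}\}$. But look at the residual $\vect T_i(\mathbf{y}, \mathbf{y}', x, x')$: every tuple in it has its $i$th entry equal to $x$ or to $x'$. Hence such a swapper can only cancel (via the merging procedure) tuples whose $i$th entry already lies in $\{z_i, z_{k-i}\}$. The tuples in $D(\mathcal{T}_{i-1}^*)$ have their $i$th and $(k-i)$th entries still in $V(G)$, so your first swapper cannot cancel any of them and the telescoping chain never starts. The paper instead uses, for each pair of consecutive tuples $\mathbf{b}_{2j-1}, \mathbf{b}_{2j}$ along the Eulerian tour (so that $b_{2j-1,k-i} = b_{2j,i} = b^*_j$), the swapper $T_i(\mathbf{b}_{2j}, \mathbf{b}_{2j-1}^{-1}, z_i, b^*_j)$: one parameter is the target $z_i$ and the other is the \emph{current} common vertex~$b^*_j$. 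Its residual then contains $\mathbf{b}_{2j-1}^{-1}, \mathbf{b}_{2j}^{-1}$, which cancel the originals, and the two new tuples have $z_i$ planted at the correct position. A second round, pairing $r_i(\mathbf{b}_{2j}, z_i)$ with $r_{k-i}(\mathbf{b}_{2j+1}, z_i)$, similarly installs $z_{k-i}$ at the other position. This two-pass scheme with $x' = b^*_j$ is what makes the cancellations work and is the missing idea in your sketch.

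For $i = k/2$ there are two smaller issues. First, $A_{k/2}$ is not defined (the Eulerian-circuit mechanism is only set up for $i < k/2$); the paper instead pairs tuples directly by their common $(k/2)$th vertex. Second, the parity of $p_{\mathcal{T}_{i-1}^*,k/2}(v)$ comes from Proposition~\ref{prop:sumpi} (balanced $\Rightarrow$ $p_{k/2}$ even), not from the moreover clause of Corollary~\ref{corjbasic}; that corollary concerns the balancer gadgets used earlier in Lemma~\ref{lem:balancetourdecom}, not the present claim.
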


We first show that the claim implies the lemma. 
Set $J^* =J_0 \cup \bigcup_{j \in [\lfloor k/2 \rfloor]}J_j$ and set~$\mathcal{T}^* = \mathcal{T}_{\lfloor k/2 \rfloor}^*$.
Clearly, $J^*$ is $C^{\smash{(k)}}_{\ell}$-decomposable since each $J_j$ is.
Note that 
\begin{align*}
|J^*| \le |J_0| + \sum_{i \in [\lfloor k/2 \rfloor]}  |J_i| 
\overset{\eqref{eqn:convert1}, \ref{itm:convert1}}{\le}
 \ell m +  2 k \ell  s\sum_{i \in [\lfloor k/2 \rfloor]} 3^{i} 
\overset{{\eqref{eqn:convert2}}}{\le}
 3^k k \ell  m.
\end{align*}
Since $\mathcal{T}^*$ is balanced by~\ref{itm:convert2}, \ref{item:convert3} implies that each $v_1 \dotsb v_{k-1} \in D( \mathcal{T}^*)$ satisfies
\begin{align*}
	v_1 \dotsb v_{k-1} \in \{z_1,z_{k-1}\} \times\{z_2,z_{k-2}\} \times \dots \times \{ z_{k-1},z_{1}\}.
\end{align*}
Hence, for all $i \in [k-1]$ and $v \in V(H)$, $p_{\mathcal{T}^*,i}(v) = 0$ unless $v \in \{z_i,z_{k-i}\}$ as desired. 
Also $|D(\mathcal{T}^*)| = |D(\mathcal{T})| = 2s \le 3m$ by \ref{item:convert3} and~\eqref{eqn:convert2}.
Therefore to complete the proof, it remains to prove Claim~\ref{clm:convert}.

\begin{proofclaim}
Suppose that we have already found $J_0, \dots, J_{i-1}$ and we now construct~$J_i$ as follows. 

\medskip
\noindent \emph{Case 1: $i = k/2$}. 
We first prove the case $i = k/2$ as it is simpler and illustrates some of the key ideas. 
If we are in this case, then $k$ is even and, by Proposition~\ref{prop:sumpi}, we have~$p_{\mathcal{T}_{k/2-1},k/2}(v) \equiv 0 \pmod{2}$ for all~$v \in V(H)$. 
Also $| D(\mathcal{T}^*_{k/2-1}) | = | \mathcal{T}^*_{0} | = 2s$.
Take an arbitrary enumeration of~$D(\mathcal{T}^*_{k/2-1})$ into $\mathbf {b}_1, \dots, \mathbf {b}_{2s}$ such that $b_{2j-1,k/2} = b_{2j,k/2}$ for all $j \in [s]$, where $\mathbf {b}_j =  b_{j,1} \dotsb b_{j,k-1}$ (here we use the fact that $p_{\mathcal{T}_{k/2-1},k/2}(v)$ is even).
For every $j \in [s]$, let
\begin{align*}
	b^*_j & = b_{2j-1,k/2} = b_{2j,k/2}.
\end{align*}
Apply Lemma~\ref{lem:jswitch} to obtain a \secondgadget{} gadget~$T^j_{k/2} = T_{k/2}(\mathbf {b}_{2j-1}, \mathbf {b}_{2j}^{-1},  z_{k/2}, b^*_j )$ with a tour-trail decomposition~$\mathcal{T}^j$ such that 
\begin{align*}
D(\mathcal{T}^j) 
= \vect T_{k/2}(\mathbf {b}_{2j-1}, \mathbf {b}_{2j}^{-1},z_{k/2}, b^*_j ) =
\{ \mathbf {b}^{-1}_{2j-1}, \mathbf {b}^{-1}_{2j},  \zeta_{k/2}(\mathbf {b}_{2j-1}), \zeta_{k/2}(\mathbf {b}_{2j})\}. 
\end{align*}
We may further assume that these $T^j$ are edge-disjoint. 

Let $J_{k/2}$ be the union of these \secondgadget{} gadgets and $\mathcal{T}_{k/2}$ be the union of their tour-trail decompositions, together with~$\mathcal{T}_{k/2-1}$. 
Note that $|J_{k/2} | \le 3^{k/2} \ell k s$, and since
\begin{align*}
	D(\mathcal{T}^*_{k/2}) &= D(\mathcal{T}^*_{k/2-1}) \cup \bigcup_{j \in [s]} \{ \mathbf {b}^{-1}_{2j-1}, \mathbf {b}^{-1}_{2j},  \zeta_{k/2}(\mathbf {b}_{2j-1}), \zeta_{k/2}(\mathbf {b}_{2j})\} 
	\\ &= \{ \mathbf {b}_{j}, \mathbf {b}^{-1}_{j}, \zeta_{k/2}(\mathbf {b}_{j}) : j \in [2s]\}
	=  \{ \zeta_{k/2}(\mathbf {b}_{j}) : j \in [2s]\},
\end{align*}
we deduce $\mathcal{T}_{k/2}$ is a $(k/2)$-convert of~$\mathcal{T}_{k/2-1}$, as required.

\medskip
\noindent \emph{Case 2: $i < k/2$}. 
By \ref{item:convert3} and Proposition~\ref{prop:convert}, we deduce that $A_i(\mathcal{T}_{i-1}) = A_i$ and so, it is an Eulerian multidigraph. 
Hence, there exists an enumeration of~$D(\mathcal{T}_i)$ as~$\mathbf {b}_1, \dots, \mathbf {b}_{2s}$, which corresponds to an Eulerian tour in~$A_i$. 
Let $\mathbf {b}_j = b_{j,1} \dotsb b_{j,k-1}$, so we have~$b_{j,k-i} = b_{j+1,i}$ for all $j \in [2s]$. 

We now replace the $(k-i)$th vertex in each~$\mathbf{b}_{2j-1}$ with~$z_{i}$ and the $i$th vertex in each~$\mathbf{b}_{2j}$ with~$z_{i}$, as follows. 
For every~$j \in [s]$ let
\begin{align*}
	b^*_j & = b_{2j-1,k-i} = b_{2j,i}.
\end{align*}
Apply Lemma~\ref{lem:jswitch} to obtain a \secondgadget{} gadget $T^j = T_{i}(\mathbf b_{2j},\mathbf b_{2j-1}^{-1}, z_{i}, b^*_j ,)$ with a tour-trail~$\mathcal{T}^j$ such that 
\begin{align*}
D(\mathcal{T}^j) 
= \vect T_{i}(\mathbf b_{2j}, \mathbf b_{2j-1}^{-1}, z_{i}, b^*_j ) = 
\{ \mathbf {b}^{-1}_{2j-1}, \mathbf {b}^{-1}_{2j},  r_{k-i}(\mathbf {b}_{2j-1}, z_i), r_{i}(\mathbf {b}_{2j}, z_i)\}. 
\end{align*}
Recall that~$r_{i}(\mathbf{b}_{2j}, z_{i})$ and $r_{k-i}(\mathbf{b}_{2j-1},z_{i})$ correspond to the tuples $\mathbf {b}_{2j}$ and $\mathbf {b}_{2j-1}$ replacing $b^*_j$ with~$z_{i}$ (see definition at the beginning of Section~\ref{section:gadgets}). 
Note that 
\begin{align*}
	D\big(\mathcal{T}_i \cup \bigcup_{j \in [s]} \mathcal{T}^j\big) 
	&= \bigcup_{j \in [2s]} \{ \mathbf {b}_{2j-1}, \mathbf {b}_{2j}, \mathbf {b}^{-1}_{2j-1}, \mathbf {b}^{-1}_{2j}, r_{k-i}(\mathbf {b}_{2j-1},z_i), r_{i}(\mathbf {b}_{2j} ,z_i)\} \\ 
	&= \bigcup_{j\in [s]}\{r_{k-i}(\mathbf {b}_{2j-1},z_i), r_{i}(\mathbf {b}_{2j}, z_i)\}.
\end{align*}
Equivalently, $D(\mathcal{T}_i \cup \bigcup_{j \in [2s]} \mathcal{T}^j)$ is obtained from $D(\mathcal{T}_i)$ by replacing the $(k-i)$th vertex in $\mathbf {b}_{2j-1}$ and $i$th vertex in $\mathbf {b}_{2j}$ with $z_{i}$.

By considering the pairs of tuples $r_i(\mathbf {b}_{2j},z_i), r_{k-i}(\mathbf {b}_{2j+1},z_i)$, 
a similar argument implies that we can replace the $i$th vertex in $r_{k-i}(\mathbf {b}_{2j+1},z_i)$ and $(k-i)$th vertex in~$r_i(\mathbf {b}_{2j},z_i)$ with~$z_{k-i}$.

Let $J_{i}$ be the union of these \secondgadget{} gadgets and $\mathcal{T}_i$ be the union of $\mathcal{T}_{i-1}$ and the corresponding tour-trail decomposition. 
Notice that $\mathcal{T}_i$ is an $i$-convert of~$\mathcal{T}_{i-1}$ and that~$| J_i | \le 3^{j} \ell k \cdot 2 s $.
This finishes the proof of Claim~\ref{clm:convert}.
\end{proofclaim}

As discussed, this finishes the proof of the lemma.
\end{proof}

\subsection{Untangling the last arcs}
Observe that after Lemma~\ref{lem:balancetourdecom2} in the previous subsection we have found a tour-trail decomposition in which the arcs of its residual digraph lie in a small set of~$k-1$ vertices~$z_1,\dots, z_{k-1}$. 
Here we show how to `untangle' those arcs in such a way that all `cancel' each other.
After this cancellation, the trails from the tour-trail decomposition are removed and we obtain a tour decomposition. 

\begin{lemma} \label{lem:balancetourdecom3}
Let $1/n \ll \eps \ll 1/\ell, 1/k $ with $k \ge 3$ and $\ell \ge k^2 - k + 1$.
Let $H$ be a $k$-graph on $n$ vertices with $\tmindeg(H) \geq \eps n$.
Let $G$ be a $k$-graph with $V(G) \subseteq V(H)$ and $|V(G)| \le \eps n$. 
Let $z_1, \dots, z_{k-1} \in  V(G)$ be distinct vertices. 
Suppose that $G$ has a balanced tour-trail decomposition~$\mathcal{T}_1$ such that $ | D(\mathcal{T}_1)| \le 5m $ and
$$
p_{\mathcal{T}_1,i}(v) = 0 \text{ for all~$v\notin \{z_i, z_k-i\}$} 
\qquad \text{and} \qquad
p_{\mathcal{T}_1,i}(z_i) = p_{\mathcal{T}_1,i}(z_{k-i})\,,
$$
for all~$i\in[k-1]$. 
Then $H-G$ contains a  $C^{\smash{(k)}}_{\ell}$-decomposable subgraph~$J$ such that $|J| \le k^3  \ell |D(\mathcal{T}_1)|$, $J[V(G)] = \emptyset$ and $G \cup J$ has a tour decomposition.
\end{lemma}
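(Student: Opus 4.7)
\textit{Proof plan.} The strategy is to add a $C^{(k)}_\ell$-decomposable subgraph $J$ that modifies the residual $D(\mathcal{T}_1)$ so that it becomes symmetric under reversal; once this holds, every trail of $\mathcal{T}_1$ can be merged pairwise with another to form a tour, yielding a tour decomposition of $G \cup J$.

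First, I will analyse the multiset $D(\mathcal{T}_1) \subseteq \prod_{i=1}^{k-1} \{z_i, z_{k-i}\}$. Combining the assumption $p_{\mathcal{T}_1,i}(z_i) = p_{\mathcal{T}_1,i}(z_{k-i})$ with the fact that $\mathcal{T}_1$ is balanced ($p_{\mathcal{T}_1,i}(v) = p_{\mathcal{T}_1,k-i}(v)$) yields count relations which imply $N(\mathbf{v}) = N(\mathbf{v}^{-1})$ for every $\mathbf{v} \in \prod_i \{z_i, z_{k-i}\}$, where $N$ denotes multiplicities in $D(\mathcal{T}_1)$. Consequently, non-self-reverse tuples already form reverse-pairs. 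The only obstruction to immediate merging comes from self-reverse tuples with odd multiplicity, which the count relations moreover link to each other, so they can be corrected in coordinated groups (for example, when $k=3$ one checks $N(z_1 z_1) = N(z_2 z_2)$, so if one is odd both are, and Proposition~\ref{prop:sumpi} forces an even number of such parity defects).

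Second, for each required correction I will add to $J$ a $C^{(k)}_\ell$-decomposable bridge built from $O(k^2)$ tight $\ell$-cycles in $H - G$, assembled along the lines of the \firstgadget{} and \secondgadget{} gadgets of Section~\ref{section:gadgets}. The bridge is designed so that, upon merging, two problematic residual tuples are replaced by a reverse-pair (and then cancel). Each bridge is realised using Lemma~\ref{lem:findcycle} with auxiliary vertices drawn from $V(H) \setminus V(G)$; because $\tmindeg(H) \ge \eps n$ and $|V(G)| \le \eps n$ there is enough room to make the bridges mutually edge-disjoint and to ensure that every edge of every bridge contains a vertex outside $V(G)$, giving $J[V(G)] = \emptyset$. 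Each bridge contributes $O(k^2 \ell)$ edges, and since at most $|D(\mathcal{T}_1)|$ corrections are needed the total is $|J| \le k^3 \ell |D(\mathcal{T}_1)|$, as required.

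The main obstacle is that a single swapper gadget from Lemma~\ref{lem:jswitch} cannot be applied \emph{directly} between two tuples in $\prod_i \{z_i, z_{k-i}\}$: the size condition on $\{x,x'\} \cup \{y_i : i \ne j\}$ forbids the values $z_j, z_{k-j}$ at the non-operated positions, whereas tuples of $D(\mathcal{T}_1)$ take precisely such values at position $j$ and at position $k-j$. I will resolve this by chaining swappers at positions $j$ and $k-j$ in compatible pairs, so that the external auxiliary introduced by the first swap at position $k-j$ is reabsorbed by the second; the compound operation thus only changes $D(\mathcal{T}_1)$ within $\prod_i \{z_i, z_{k-i}\}$, and a constant-depth chain of such compound operations suffices per correction. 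Once all bridges are added, the residual is reverse-symmetric and merging produces the desired tour decomposition of $G \cup J$.
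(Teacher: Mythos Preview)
Your central structural claim—that the balance condition together with $p_{\mathcal{T}_1,i}(z_i)=p_{\mathcal{T}_1,i}(z_{k-i})$ forces $N(\mathbf{v})=N(\mathbf{v}^{-1})$ for every $\mathbf{v}\in\prod_i\{z_i,z_{k-i}\}$—is false once $k\ge 7$. Encode each tuple by $\boldsymbol\epsilon\in\{0,1\}^{\lfloor(k-1)/2\rfloor}$, where $\epsilon_i=0$ means $(v_i,v_{k-i})=(z_i,z_{k-i})$ and $\epsilon_i=1$ means the swap; reversal then flips every coordinate of~$\boldsymbol\epsilon$. The hypotheses give only the marginal identities $\sum_{\epsilon_i=0}N(\boldsymbol\epsilon)=\sum_{\epsilon_i=1}N(\boldsymbol\epsilon)$ for each~$i$. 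For $k=7$ take $N(000)=N(011)=N(101)=N(110)=1$ and all other values zero: every marginal is balanced, yet not a single tuple is paired with its reverse ($N(000)=1$ but $N(111)=0$, etc.). So ``non-self-reverse tuples already form reverse-pairs'' is wrong, and the reduction to self-reverse parity defects collapses. (Under the lemma's hypotheses as literally stated—allowing $v_i=v_{k-i}$—the claim already fails at $k=5$, e.g.\ with $D=\{z_1z_2z_2z_4,\,z_4z_3z_3z_1\}$.)

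The paper's proof does not aim for reverse-symmetry. It colours a tuple \emph{red} if it starts with $z_1$ and \emph{blue} otherwise, and for each $2\le i<k/2$ calls a tuple \emph{$i$-bad} when positions $i,k-i$ are in the wrong order relative to its colour's target ($z_1\cdots z_{k-1}$ for red, $z_{k-1}\cdots z_1$ for blue). What the constraints \emph{do} imply is that the number of red $i$-bad tuples equals the number of blue $i$-bad tuples: this is the identity $B=C$ in the $2\times 2$ table for the joint marginal on $(\epsilon_1,\epsilon_i)$, and it holds even in the counterexample above. One then pairs a red $i$-bad tuple with a blue $i$-bad tuple and corrects both with a swapper. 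The obstacle you flagged—that Lemma~\ref{lem:jswitch} cannot be applied directly because both $z_i$ and $z_{k-i}$ already occur in the tuple—is real, and the paper's workaround matches yours: route through a fresh auxiliary vertex~$w$ via three swappers ($z_i\to w$, then $z_{k-i}\to z_i$, then $w\to z_{k-i}$). After processing all~$i$, every red tuple is $z_1\cdots z_{k-1}$ and every blue tuple is its reverse, and they cancel. Your gadget-level mechanics are fine; the gap is in the combinatorial analysis of $D(\mathcal{T}_1)$.
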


\begin{proof}
We simplify $\mathcal{T}_1$ as much as possible. 
Let $|D(\mathcal{T}_1)| = 2s$ and then we have, for all~$i \in [k-1]$,
\begin{align}
	p_{\mathcal{T}_1,i}(z_i) = p_{\mathcal{T}_1,i}(z_{k-i})
 = 
		\begin{cases}
			s & \text{if $i \ne k/2$,}\\
			2s & \text{if $i = k/2$.}
		\end{cases}
		\label{eqn:bad1}
\end{align}
We now colour $\mathbf {b} \in D(\mathcal{T}_1)$ red if $\mathbf {b}$ starts at~$z_1$ (i.e. the first vertex of~$\mathbf b$ is~$z_1$), and blue otherwise.  
So there are $s$ red $(k-1)$-tuples and $s$ blue $(k-1)$-tuples in~$D(\mathcal{T}_1)$. 
Ideally, we would like to transform all red~$(k-1)$-tuples to $z_1 \dotsb z_{k-1}$ and all blue~$(k-1)$-tuples to $z_{k-1} \dotsb z_{1}$ (so that they would cancel out).
A $(k-1)$-tuple is~\emph{$i$-bad} if~$z_i$ is at the ``wrong place''.
More precisely, an $i$-bad red $(k-1)$-tuple (and an $i$-bad blue $(k-1)$-tuple) will be of form $v_1 \dotsb v_{i-1} z_{k-i} v_{i+1} \dotsb v_{k-i-1} z_{i} v_{k-i+1} \dotsb v_{k-1}$ (and $v_1 \dotsb v_{i-1} z_{i} v_{i+1} \dotsb v_{k-i-1} z_{k-i} v_{k-i+1} \dotsb v_{k-1}$, respectively), where $\{v_j, v_{k-j}\} = \{z_j, z_{k-j}\}$ for all $j \in [k-1] \setminus \{ i, k-i \}$. 
Note that, by definition, all red tuples start with $z_1$ and therefore are not~$1$-bad.
Similarly, blue tuples cannot by~$1$-bad and thus, there are no~$1$-bad~$(k-1)$-tuples.
Analogously, there are no $(k-1)$-tuple which is~$(k-1)$-bad or $k/2$-bad.
If a $(k-1)$-tuple is $i$-bad, then it is also $(k-i)$-bad.
By~\eqref{eqn:bad1}, the number of red $i$-bad tuples is equal to the number of blue $i$-bad tuples.

We claim that there exist edge-disjoint $k$-graphs $J_2, \dots, J_{\lceil k/2 \rceil-1}$ in~$G - H$ such that, for $2 \leq i < k/2$,
\begin{enumerate}[label={\rm(\roman*)}]
	\item each $J_{i}$ is $C^{\smash{(k)}}_{\ell}$-decomposable and $|J_{i}| \le 8 i \ell k s$,
	\item $G  \cup \bigcup_{j \in [2,i]}J_j$ has a balanced tour-trail decomposition~$\mathcal{T}_{i}$,
	\item for all $j \in [k-1]$ and $v \in V(H)$, $p_{\mathcal{T}_i,j}(v) = 0$ unless $v \in \{z_{j},z_{k-j}\}$,
	\item	 $|D(\mathcal{T}_i)| = |D(\mathcal{T}_{i-1})|$, and
	\item $D(\mathcal{T}_i)$ contains no $j$-bad $(k-1)$-tuple for all $j \in [i]$.
\end{enumerate}

Suppose that, for some $2 \leq i < k/2 - 1$,  we have constructed  $J_2, \dots, J_{i-1}$.
We describe the construction of $J_i$ as follows. 

Let $\mathbf {a}_1, \dots, \mathbf {a}_t$ be the $i$-bad red $(k-1)$-tuples in $D(\mathcal{T}_{i-1})$ and $\mathbf {b}_1, \dots, \mathbf {b}_t$ be the $i$-bad blue $(k-1)$-tuples in $D(\mathcal{T}_{i-1})$.
Consider any $j \in [t]$.
Let 
\begin{align*}
 \mathbf {a}_j & = a_{j,1} \dotsb a_{j,i-1} z_{k-i} a_{j,i+1} \dotsb a_{j,k-i-1} z_{i} a_{j, k-i+1} \dotsb a_{j,k-1}\\
 \mathbf {b}_j & = b_{j,1} \dotsb b_{j,i-1} z_{i} b_{j,i+1} \dotsb b_{j,k-i-1} z_{k-i} b_{j,k-i+1} \dotsb b_{j,k-1}.
\end{align*}
We now mimic the argument in the proof of Lemma~\ref{lem:balancetourdecom2} to swap $z_i$ and~$z_{k-i}$ in $\mathbf {a}_j$'s and~$\mathbf {b}_j$'s.
However, we are unable to construct \secondgadget{} gadgets $T_i(\mathbf a_j, \mathbf b_j^{-1}, z_i,z_{k-i})$ as both $\mathbf {a}_j$ and~$\mathbf {b}_j$ contain both $z_i$ and~$z_{k-i}$.
To overcome this problem, we first replace~$z_i$ with a new vertex~$w$ (so $\mathbf {a}_j$ and~$\mathbf {b}_j$ are now free of~$z_i$).
After this is done, then we can replace~$z_{k-i}$ with~$z_i$, and finally we replace~$w$ with~$z_{k-i}$.
We now formalise the proof as follows. 

Let $w \in V(H) \setminus \{z_{i'} : i' \in [k-1] \}$ be a new vertex.
Apply Lemma~\ref{lem:jswitch} to obtain three edge-disjoint \secondgadget{} gadgets
\begin{multline*}
    T_i(\mathbf a_j,\mathbf b_j^{-1}, w, z_{k-i}), \quad
    T_i(r_{i}(\mathbf a_j, w)^{-1}, r_{k-i}(\mathbf b_j, w), z_{i}, z_{k-i})
    \quad\text{and } \\
    T_i(r_{k-i}(\mathbf a_j,z_{k-i}), r_{i}(\mathbf b_j, z_{k-i})^{-1}, z_i, w) 
\end{multline*}
Let~$T^j$ be its union and~$\mathcal T^j$ be the union of their tour-trail decompositions. 
It is not hard to check that after cancellation we obtain
$$D(\mathcal{T}^j)=\{ 
\mathbf a_j^{-1},\,
\mathbf b_j^{-1},\,
\zeta_i(\mathbf a_j),\,
\bar\zeta_i(\mathbf b_j)\}\,.$$
Note that $\zeta_i(\mathbf a_j)$ and 
$\bar\zeta_i(\mathbf b_j)$ are not~$i$-bad.

Let $J_i = \bigcup_{j \in [t]} T^j$ and let $\mathcal{T}_{i} = \mathcal{T}_{i-1} \cup \bigcup_{j \in [t]} \mathcal{T}^j$ be the corresponding tour-trail decomposition of~$G  \cup \bigcup_{2 \leq j \leq i}J_j$.
This finishes the construction of $J_i$.

Now we have constructed $J_i$ for all $2 \leq i < k/2$.
We set $J = \bigcup_{2 \leq i < k/2} J_i$, so~$|J| \le  2 k^3  \ell s = k^3  \ell |D(\mathcal{T}_1)|$.
Note that $\mathcal{T}_{\lceil k/2 \rceil -1}$ is a balanced tour-trail decomposition of~$G \cup J$ without any bad $(k-1)$-tuple. 
Therefore, after cancellation, $D(\mathcal{T}_{\lceil k/2 \rceil -1})$ is empty, implying that $\mathcal{T}_{\lceil k/2 \rceil -1}$ is a tour decomposition. 
\end{proof}

\subsection{Proof of Lemma~\ref{lem:tourdecom}}
We now put the pieces together to prove Lemma~\ref{lem:tourdecom}.

\begin{proof}[Proof of Lemma~\ref{lem:tourdecom}]
Apply Lemma~\ref{lem:balancetourdecom} to obtain a $C^{\smash{(k)}}_{\ell}$-decomposable~$J_1$ in $H-G$ such that $|G \cup J_1| \le \ell m^{k+1}$
and $G \cup J_1$ has a balanced tour-trail decomposition~$\mathcal{T}_1$. 

Let $m_1$ be a prime between $k \ell m^{k+1}$ and $2k \ell m^{k+1}$ (this exists by Bertrand's postulate).
Add isolated vertices to $G \cup J_1$ to obtain a subgraph $G_1$ of~$H$ such that 
\begin{align*}
	|V(G_1)| = m_1 \text{ and } |G_1| \le m_1/k.
\end{align*}

Let $z_1,\dots, z_{k-1} \in V(H) \setminus V(G_1)$. 
Apply Lemma~\ref{lem:balancetourdecom2} (with $G_1, \mathcal{T}_1$ playing the rôles of~$G ,\mathcal{T}$ ) to obtain a $C^{\smash{(k)}}_{\ell}$-decomposable~$J_2$ in $H-G_1$ such that
\begin{align*}
	|J_2| \le 3^k k \ell m_1 
\end{align*}
and $G_2 = G_1 \cup J_2$ has a balanced tour-trail decomposition~$\mathcal{T}_2$ satisfying, for all $i \in [k-1]$ and $v \in V(H)$, $p_{\mathcal{T}_2,i}(v) = 0$ unless $v \in \{z_i,z_{k-i}\}$ and $|D(\mathcal{T}_2)| \le 3 m_1 $.

Apply Lemma~\ref{lem:balancetourdecom3} (with $G_2, \mathcal{T}_2$ playing the rôles of $G, \mathcal{T}_1$) to acquire a $C^{\smash{(k)}}_{\ell}$-decomposable subgraph~$J_3$ in~$H-G_2$ such that 
\begin{align*}
|J_3| \le k^3  \ell |D(\mathcal{T}_2)| \le 3 k^3 \ell m_1
\end{align*}
and $G_2 \cup J_3$ has a tour decomposition.
We set $J = J_1 \cup J_2 \cup J_3$ and so $G \cup J$ has a tour decomposition and 
\begin{align*}
	|G \cup J| \le m_1 +  3^k k \ell m_1 + 3 k^3 \ell m_1 \le 3^{k+2} k^4 \ell^2 m^{k+1}.
\end{align*}

To prove the `moreover' statement, we simply repeat the identical construction of~$J$ to obtain~$J'$ in $H'=H-G-J - G'$ as $\delta^{(3)} (H') \ge \eps n /2$. 
For example, $J_1$ consists of edge-disjoint balancer gadgets. 
Then $J'_1$ will consists of the same number of edge-disjoint balancer gadgets in the same configurations. 
Thus we can then assume that $G \cup J_1 $ is homomorphic to~$G'\cup J'_1$.
Therefore, we obtain~$J'$ homomorphic to~$J$ and such that~$G\cup J$ is homomorphic to~$G'\cup J'$.
\end{proof}

\section{Transformers III: Proof of Lemma~\ref{lem:transformer}} \label{section:transformer}

Finally, in this section we use the machinery of transformers and tour-trail decompositions to find cycle absorbers and prove that every~$H$ with~$\delta^{(3)}(H) \geq 2\eps n$ contains an absorber for every given leftover~$G\subseteq H$ with~$|V(G)|$ sufficiently small. 
More precisely, we prove Lemma~\ref{lem:transformer}. 

\begin{proof}[Proof of Lemma~\ref{lem:transformer}] Given~$n$, $\eps$, $\ell$, $k$, $m$, and~$H$ as in the statement of the lemma.
Let $\eta: \mathbb{N} \rightarrow \mathbb{N}$ such that~$\eta (x) = 3^{k+3} k^5 \ell^3 x^{k+1}$ and let $W \subseteq V(H)$ be of size~$m \le m_0$. 
By Lemma~\ref{lem:findcycle}, given any two ordered $k$-tuples in $V(H) \setminus W$, $v_1 \dotsb v_k$ and $v_{\ell - k+1} \dotsb v_{\ell}$, the~$k$-graph~$H \setminus W$ contains a tight walk~$v_1 \dotsb v_{\ell}$ on $\ell$ vertices and no repeated vertices outside of $v_1 \dotsb v_k$ and~$v_{\ell - k + 1} \dotsb v_\ell$.
In particular, there is an $\ell$-cycle in $H \setminus W$ covering any arbitrary $k$-tuple $v_1, \dotsc, v_k$ in $V(H) \setminus W$.
Hence, by Lemma~\ref{lem:absorbing}, it suffices to show that $H$ is $(C^{\smash{(k)}}_{\ell}, m_0, m_0 ,\eta)$-transformable for some increasing function $\eta: \mathbb{N} \rightarrow \mathbb{N}$ which satisfies~$\eta(x) \geq x$.
Indeed, this will imply that $H$ is $(C^{\smash{(k)}}_\ell, m_0, m_0, \eta')$-absorbable for some increasing function~$\eta': \mathbb{N} \rightarrow \mathbb{N}$ such that $\eta'(x) \geq x$, as desired.
Observe that here, since~$\eta$ is independent of~$\eps$ and~$n$ so is~$\eta'$.

We will show that $H$ is $(C^{\smash{(k)}}_{\ell}, m_0, m_0 ,\eta)$-transformable.
To do so, let $G_1$ and $G_2$ be two vertex-disjoint $C^{\smash{(k)}}_{\ell}$-divisible $k$-graphs with $V(G_1), V(G_2) \subseteq V(H)$; suppose that $|V(G_1)|,|V(G_2)| \le m_0$ and that there is an edge-bijective homomorphism from $G_1$ to~$G_2$.
Let $W \subseteq V(H) \setminus V(G_1 \cup G_2)$ with~$|W| \leq m_0$.
Let $m = \max \{|V(G_1)|,|V(G_2)|\}$.
Let $H' = H \setminus W$. 
It is enough to show that $H'$ contains a $(G_1,G_2;C^{\smash{(k)}}_{\ell})$-transformer of order at most~$\eta(m)$.
This will be our task from now on.

Apply Lemma~\ref{lem:tourdecom} with~$G_1$ and~$G_2$ playing the r\^oles of~$G$ and~$G'$ to obtain edge-disjoint subgraphs~$J_1$ and~$J_2$ of~$H'-G_1-G_2$ such that 
\begin{enumerate}[label={\rm(a$_{\arabic*}$)}]
	\item  $G_1 \cup J_1$ and $G_2 \cup J_2$ have tour decompositions,
	\item  $J_1$ and $J_2$ are $C^{\smash{(k)}}_{\ell}$-decomposable, \label{itm:trans1}
	\item  $J_1[V(G_1 \cup G_2)]$ and $J_2[V(G_1 \cup G_2)]$ are empty, \label{itm:trans2}
	\item  $|G_1 \cup J_1|,|G_2 \cup J_2| \le 3^{k+2} k^4 \ell^2 m^{k+1}$, and\label{itm:trans4}
	\item  there is an edge-bijective homomorphism~$\phi$ from~$G_1 \cup J_1$ to~$G_2 \cup J_2$.
\end{enumerate}

Let $G'_j= G_j \cup J_j$ for~$j \in [2]$.
We now claim that there exists a $(G_1',G_2';C^{\smash{(k)}}_{\ell})$-transformer $T^*$ with $|T^*| = (\ell-1) |G'_1|$.
Indeed, let $\{A_i : i \in [s] \}$ be a tour decomposition of~$G'_1$, and recall that $\phi$ is an edge-bijective homomorphism from $G'_1$ to $G'_2$.
Therefore,~$\{ \phi(A_i):  i \in [s] \}$ is a tour decomposition of~$G'_2$.
Now, suppose that for some~$ i \in [s]$, we have already constructed edge-disjoint $T_1, \dots, T_{i-1}$ in $H' - G_1' - G_2'$ such that, for $i' \in [i-1]$
\begin{enumerate}[label={\rm(b$_{\arabic*}$)}]
    \item  $T_{i'}$ is an $(A_{i'},\phi(A_{i'});C^{(k)}_{\ell})$-transformer,
	\item  $|T_{i'}| = (\ell-1) |A_{i'}|$, and
	\item  $T_{i'} [V(G_1')] \cup  T_{i'} [V(G_2')]= \emptyset $.
\end{enumerate}
We construct $T_i$ as follows. 
Let $H^* = H' - G_1' - G_2' - \bigcup_{i' \in [i-1]} T_{i'}$.
Let $A_i = v_1\dotsb v_{t}$.
Let~$w_j = \phi(v_j)$ for all $j \in [t]$, so $\phi(A_i) = w_1 \dotsb w_t$.
Note that $\ell - k^2 + k - 1 \geq k^2 - k$ by the choice of $\ell$.
Therefore, by Lemma~\ref{lem:findcycle}, $H^*$ contains tight paths $P_1,\dots, P_s, Q_1, \dots, Q_s$ such that, for each $j \in [t]$,
\begin{enumerate}[label={\rm(c$_{\arabic*}$)}]
	\item  $P_j$ is a tight path of length~$k^2 - k$ from $v_{j+1}v_{j+2} \dotsb v_{j+k-1}$ to~$w_{j} w_{j+1} \dotsb w_{j+k-2}$,
    \item  $Q_j$ is a tight path of length~$\ell - k^2 + k - 1$ starting from~$w_{j} w_{j+1} \dotsb w_{j+k-2}$ and ending in~$v_j v_{j+1} \dotsb v_{j + k-2}$, and
	\item $V(P_j) \setminus V(D(P_j))$ and $V(Q_j) \setminus V(D(Q_j))$ are new vertices.
\end{enumerate}
Each of $ v_{j} v_{j+1} \dotsb v_{j+k-1}  \cup P_j \cup Q_j$ and $w_{j} w_{j+1} \dotsb w_{j+k-1}  \cup P_j \cup Q_{j+1}$ forms a~$C^{\smash{(k)}}_{\ell}$.
Hence, we are done by letting $T_i = \bigcup_{j \in [s]} P_j \cup Q_j$.
This finishes the construction of~$T_i$.
Thus $T^* = \bigcup_{i \in [s]} T_i$ is the desired $(G_1',G_2';C^{\smash{(k)}}_{\ell})$-transformer.

We finish by defining $T = J_1 \cup J_2 \cup T^*$, so 
\begin{align*}
|V(T)| \le k |T| \le k (\ell+1) |G'_1| \overset{\mathclap{\text{\ref{itm:trans4}}}}{\le} 3^{k+3} k^5 \ell^3 m^{k+1} = \eta(m).
\end{align*}
Together with~\ref{itm:trans1} and~\ref{itm:trans2}, we deduce that $T[V(G_1)]$ is empty and
\begin{align*}
 G_1 \cup T = \left( (G_1 \cup J_1) \cup T^* \right) \cup J_2  = (G'_1 \cup T^*) \cup J_2
\end{align*}
is $C^{\smash{(k)}}_{\ell}$-decomposable, and similarly $G_2 \cup T$ is $C^{\smash{(k)}}_{\ell}$-decomposable.
Therefore, $T$ is a~$(G_1,G_2;\smash{C^{(k)}_{\ell}})$-transformer.
\end{proof}

\section{Cover-down lemma} \label{section:coverdown}

In this section we prove Lemma~\ref{lem:coverdownlemma} which is the main step in the iterative part of iterative absorption.
We prove this lemma by induction on $k$, and when dealing with~$k$-uniform hypergraphs we will require results on path decompositions for $(k-1)$-uniform hypergraphs.
To organise our arguments, we define the following two statements for each $k \geq 3$.
The first statement corresponds precisely to the Cover-down lemma for $k$-graphs, while the second one concerns decompositions of $k$-graphs into paths.

\medskip
\begin{enumerate}[label={\rm ($\circledcirc_k$)}]
    \item \label{statement:coverdown-k} 
    For every~$\alpha>0$, 
    there is an $\ell_0 \in \mathbb{N}$ such that for every~$\mu>0$ and every~$n,\ell\in\mathbb{N}$ with $\ell \ge \ell_0$ and $1/n \ll \mu, \alpha$ the following holds.
	Let $H$ be a $k$-graph on $n$ vertices and $U \subseteq V(H)$ with $|U| = \lfloor \alpha n \rfloor$ such that
	\smallskip
\begin{adjustwidth}{20pt}{0pt}
	\begin{enumerate}[label={\rm(CD$_{\arabic*}$)}]
        \item $\delta^{(2)}(H) \geq 2\alpha n$,
	    \item $\delta^{(2)}(H,U) \geq \alpha \vert U\vert$ and \label{it:CDdegree-synth}
	    \item $\deg_H(x)$ is divisible by $k$ for each $x \in V(H) \setminus U$.\label{it:CDdiv-synth}
	\end{enumerate}
\end{adjustwidth}
    \smallskip
	Then $H$ contains  a $C_{\ell}^{\smash{(k)}}$-decomposable subgraph~$F\subseteq H$ such that $H - H[U] \subseteq F$
	and~$\Delta_{k-1}(F[U]) \leq \mu n$.
\end{enumerate}
\bigskip
\begin{enumerate}[label={\rm ($\curvearrowright_k$)}]
    \item \label{statement:pathdec-k} For every~$\ell\geq k$ and for every~$\alpha>0$ there is an $n_0$ such that the following holds.
    Every $k$-graph~$H$ on~$n\geq n_0$ vertices with~$\delta^{(2)}(H) \geq \alpha n$ and~$|H| \equiv 0 \bmod{\ell}$ contains a $P_\ell^{(k)}$-decomposition.
\end{enumerate}
\medskip

Thus, Lemma~\ref{lem:coverdownlemma} can be synthetically stated as follows.

\begin{lemma*}[Cover-down lemma (reprise)] 
   \ref{statement:coverdown-k} holds for every~$k\geq 3$.
\end{lemma*}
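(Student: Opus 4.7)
Following the excerpt's own remark, the proof is by induction on $k$: the base case $k=3$ is from~\cite{PigaSanhuezaMatamala2021}, and for $k \geq 4$ we assume the path-decomposition statement $\curvearrowright_{k-1}$ for $(k-1)$-uniform hypergraphs has already been established in the parallel inductive development (ultimately yielding Theorem~\ref{theorem:path}). The strategy mirrors the $k=3$ case: (i) reserve a pseudorandom subgraph, (ii) find an approximate tight-cycle packing of $H - R$, (iii) decompose the sparse leftover on $H - H[U]$ into short tight paths using $\curvearrowright_{k-1}$ on an auxiliary $(k-1)$-uniform object, and (iv) close each path into a $C_\ell^{(k)}$ via the reservoir, spreading the closings evenly across $(k-1)$-subsets of $U$.

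For (i) and (ii), choose $R \subseteq H$ by including each edge independently with probability $p = p(\mu,\alpha)$; standard concentration gives that whp $\delta^{(2)}(R, U) \geq \tfrac{p\alpha}{2}|U|$ and $\Delta_{k-1}(R[U]) \leq \tfrac{\mu}{10}n$. The complement $H - R$ still satisfies $\delta^{(2)}(H - R) \geq (2\alpha - o(1))n$, which exceeds the fractional $C_\ell^{(k)}$-decomposition threshold arising from Joos--Kühn~\cite{JoosKuhn2021} for $\ell$ large enough. A standard randomised rounding of a fractional decomposition then produces an integral $C_\ell^{(k)}$-packing $\mathcal{Q}$ of $H - R$ whose leftover $L$ satisfies $\Delta_{k-1}(L) \leq \mu^{10} n$. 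Write $L_\circ := L - L[U]$ for the still-uncovered portion of $H - H[U]$.

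For (iii), apply $\curvearrowright_{k-1}$ to a suitable auxiliary $(k-1)$-uniform graph encoding $L_\circ$ (built by augmenting $L_\circ$ with edges drawn from $R$ and possibly from $H[U]$ so as to satisfy the codegree and $\ell$-divisibility hypotheses of $\curvearrowright_{k-1}$). This yields a partition of $L_\circ$ into a family $\mathcal{P}$ of tight $k$-paths of bounded length whose ordered endpoint $(k-1)$-tuples are spread evenly across $V(H)$. For (iv), for each $P \in \mathcal{P}$ with endpoints $\mathbf{u},\mathbf{v}$, Lemma~\ref{lem:findcycle} applied inside $R \cup H[U]$ produces many completing tight paths that turn $P$ into a $C_\ell^{(k)}$; we pick one via a random-greedy matching designed so that each $(k-1)$-subset of $U$ is used at most $\tfrac{\mu}{2}n$ times in total. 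Collecting these cycles together with $\mathcal{Q}$ yields $F \supseteq H - H[U]$, $C_\ell^{(k)}$-decomposable by construction, with $\Delta_{k-1}(F[U]) \leq \mu n$; a final small adjustment inside $H[U]$, using the divisibility hypothesis~\ref{it:CDdiv-synth}, makes $F$ itself $C_\ell^{(k)}$-divisible.

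The main obstacle is stage (iii): the sparsity $\Delta_{k-1}(L_\circ) \leq \mu^{10} n$ precludes a direct application of $\curvearrowright_{k-1}$, so one must build a denser auxiliary $(k-1)$-uniform host that encodes $L_\circ$ faithfully and satisfies the codegree requirement of $\curvearrowright_{k-1}$, while ensuring the endpoint distribution produced is compatible with the random-greedy closing phase so that no $(k-1)$-subset of $U$ is overloaded. Keeping the divisibility conditions consistent through the approximate-packing, path-decomposition, and closing phases is the underlying bookkeeping challenge.
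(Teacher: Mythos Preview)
Your outline is broadly in the right spirit (random reservoir, approximate packing via Joos--K\"uhn plus nibble, then closing short paths into cycles with an extending-type lemma), and this does match the paper's architecture. However, the crucial step (iii) is left as a genuine gap, and your own closing paragraph admits as much: you never specify what the ``auxiliary $(k-1)$-uniform graph encoding $L_\circ$'' is, how the codegree hypothesis of $\curvearrowright_{k-1}$ is met by a sparse leftover, or where its divisibility condition comes from. Without this, the argument does not close.

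The paper resolves this with a layered structure that your proposal does not have. Edges of $H$ are stratified by $|e\cap U|$ into $H_0,\dots,H_{k-1}$, and a separate random reservoir $R_i\subseteq H_i$ is set aside for each layer (with carefully nested densities $p_0\ll\cdots\ll p_{k-1}$). After the approximate packing, the leftover in layers $0,\dots,k-2$ is handled \emph{edge by edge}: each leftover edge is extended into a $C_\ell^{(k)}$ using only reserved edges from strictly higher layers, via the Extending Lemma. No path decomposition is used here at all. The statement $\curvearrowright_{k-1}$ enters only at the very last layer $H_{k-1}$, where every remaining edge has exactly one vertex $v\notin U$. For each such $v$ one takes the \emph{link graph} $F(v)\subseteq\binom{U}{k-1}$, which is an honest $(k-1)$-graph sitting inside $U$; its codegree is controlled by the reservoir, and its size is divisible by~$k$ precisely because $\deg_H(v)\equiv 0\bmod k$ (hypothesis~\ref{it:CDdiv-synth}) and every cycle removed so far preserves this. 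Applying $\curvearrowright_{k-1}$ to each $F(v)$ yields a $P_k^{(k-1)}$-decomposition, which lifts to a $P_{k+1}^{(k)}$-decomposition of the last layer; these short paths are then closed inside $H[U]$.

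Two smaller points. First, a single homogeneous reservoir $R$ (your step (i)) is not enough: the paper needs the stratified $R_i$'s so that extensions at stage $i$ consume only edges with larger $|e\cap U|$, which is what keeps the process from eating into the wrong layers. Second, your final remark about making $F$ ``$C_\ell^{(k)}$-divisible'' is unnecessary: $F$ is built as a union of $\ell$-cycles, hence is $C_\ell^{(k)}$-decomposable by construction.
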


We show Lemma~\ref{lem:coverdownlemma} through an induction on~$k$, in which~\ref{statement:pathdec-k} is helpful to enable the induction step.

\begin{lemma}\label{lem:pathtocover}
    For each~$k\geq 3$, if \hyperref[statement:pathdec-k]{\upshape{($\curvearrowright_{k-1}$)}} holds, then \ref{statement:coverdown-k} holds.
\end{lemma}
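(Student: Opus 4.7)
The plan is to induct on $k$ using $(\curvearrowright_{k-1})$. The overall strategy has two ingredients: first, an approximate $C_\ell^{(k)}$-decomposition of $H - H[U]$ via Joos--Kühn's fractional $C_\ell^{(k)}$-decomposition result combined with a nibble-type argument, which covers all but a sparse leftover; and second, a link-based cleanup using path decompositions of $(k-1)$-uniform hypergraphs, which will convert the leftover into a union of tight $\ell$-cycles.

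I would begin by reserving a random subset $R \subseteq U$ of size roughly $\mu n / C$ for some $C = C(k,\alpha,\ell)$, to serve as a pool of ``interior'' vertices through which cycles are routed; a standard union bound ensures $R$ inherits the codegree conditions with only a negligible loss. An application of the fractional decomposition machinery then yields an $\eps$-approximate $C_\ell^{(k)}$-decomposition of $H$ (with $\eps \ll \mu$) whose residue $L \subseteq H$ satisfies $\Delta_{k-1}(L) \le \mu n/2$. It remains to cover the edges of $L$ meeting $W := V(H) \setminus U$ by tight $\ell$-cycles that use few edges inside $U$; the edges of $L$ sitting entirely in $U$ can simply be left for the absorber at the next stage of iterative absorption.

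To handle the leftover edges meeting $W$, fix an ordering of $W$ and assign each edge $e$ with $e\cap W \neq \emptyset$ to its least vertex $w_e\in e\cap W$, obtaining for every $w \in W$ a $(k-1)$-graph $L_w^\ast$ on $V(H) \setminus\{w\}$ whose edges are $\{e\setminus\{w\}: e \text{ assigned to } w\}$. Using condition (CD$_1$) together with $\delta^{(2)}(H)\ge 2\alpha n$, one checks that $\delta^{(2)}(L_w^\ast) \ge \alpha(n-1)$. Condition (CD$_3$) ensures $\deg_H(w)\equiv 0\bmod k$; after using Corollary~\ref{corollary:degreeadjuster} (combined with a few extra $\ell$-cycles that are absorbed into the approximate decomposition) to make $|L_w^\ast|\equiv 0\bmod k$ uniformly in $w$, the hypothesis $(\curvearrowright_{k-1})$ applies to every $L_w^\ast$ with target path-length $k$: we obtain a decomposition of each $L_w^\ast$ into tight $(k-1)$-paths of length $k$. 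Precisely such a tight $(k-1)$-path $P=u_1\dotsb u_{2k-2}$ in $L_w^\ast$ represents the $k$ link-edges of a tight $k$-cycle through $w$ with consecutive vertices $w, u_k, u_{k+1},\dotsc, u_{2k-2}, *, \dotsc, *, u_1,\dotsc, u_{k-1}$; what is missing is a tight $k$-path in $H$ of length $\ell-k$ from the ordered $(k-1)$-tuple $(u_k,\dotsc,u_{2k-2})$ to $(u_1,\dotsc,u_{k-1})$ through $\ell-2k+1$ intermediate vertices. For $\ell \ge \ell_0$ large enough, Lemma~\ref{lem:findcycle} supplies $\Theta(n^{\ell-2k+1})$ such closing paths, and we may insist that the interior vertices lie in $R$; greedily picking them completes each link-path to a tight $\ell$-cycle in an edge-disjoint way.

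The main obstacle will be to handle the three constraints simultaneously: each edge of $H - H[U]$ is covered exactly once; every $|L_w^\ast|$ is divisible by $k$; and the $(k-1)$-codegree load inside $U$ stays below $\mu n$. Avoiding overcovering is what the ownership assignment $e\mapsto w_e$ achieves. Divisibility is delicate because assignment destroys the clean identity $|L_w^\ast| = \deg_H(w)$, so one must shift a $O(1)$-number of edges per vertex between different owners (or absorb them into the up-front approximate decomposition) to restore $|L_w^\ast|\equiv 0 \bmod k$; this uses the density of $H$ between $W$ and $R$. For the codegree-load budget, process the closing-path choices in a uniformly random order; the expected number of tight paths crossing any fixed $(k-1)$-tuple in $U$ is of order $|R|^{-(k-2)}$ times the total number of paths, which by the size of $R$ is $O(\mu n)$, and a Chernoff/McDiarmid bound gives concentration. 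Everything below these three fronts is routine extremal counting; the heart of the proof is the reduction of $k$-uniform cycle covering to the inductive hypothesis $(\curvearrowright_{k-1})$ through the link-path correspondence explained above.
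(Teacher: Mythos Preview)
There is a genuine gap. Your link graphs $L_w^\ast$ are built from the \emph{sparse} leftover $L$: you explicitly take $\Delta_{k-1}(L)\le\mu n/2$ and then set $L_w^\ast=\{e\setminus\{w\}:e\in L,\ w_e=w\}$. Such an $L_w^\ast$ has edge density $O(\mu)$, so the claim ``$\delta^{(2)}(L_w^\ast)\ge\alpha(n-1)$'' is false, and $(\curvearrowright_{k-1})$ cannot be invoked on it. Reserving a random \emph{vertex} set $R\subseteq U$ does nothing to fix this, because $R$ only helps when you route closing paths; it does not put any edges back into $L_w^\ast$.

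The paper repairs exactly this point by reserving random \emph{edges}, not vertices: for each $0\le i\le k-1$ it sets aside a random $R_i\subseteq H_i=\{e:|e\cap U|=i\}$ with probabilities $p_0\ll p_1\ll\dotsb\ll p_{k-1}$, and then processes the layers $H_0,H_1,\dotsc,H_{k-1}$ in order, using the Extending lemma to absorb each sparse residue $\tilde J_i$ into cycles whose new edges live in $R_{\ge i+1}$. Only at the final stage $i=k-1$ does one look at link graphs, and there each remaining edge has a unique vertex $v\notin U$; the link $F(v)\subseteq\binom{U}{k-1}$ contains (most of) the reserved $R_{k-1}$, which is what makes $\delta^{(2)}(F(v))\ge\alpha_{k-1}n$ hold and allows $(\curvearrowright_{k-1})$ to be applied. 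Your ownership scheme (assigning each leftover edge to its least vertex in $W$) also creates a secondary problem: edges with several vertices in $W$ get split among owners, destroying the identity $|L_w^\ast|=\deg(w)$ that you need for divisibility by~$k$; the paper's stratification sidesteps this, since by the time one reaches the link stage every remaining edge has exactly one vertex outside $U$, and $|F(v)|\equiv 0\pmod k$ then follows directly from (CD$_3$) and the fact that everything removed so far was cycle-decomposable.
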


\begin{lemma}\label{lem:covertopath}
    For each~$k\geq 3$, if \ref{statement:coverdown-k} holds, then \ref{statement:pathdec-k} holds.
\end{lemma}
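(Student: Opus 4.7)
The plan is to apply the Cover-down lemma \ref{statement:coverdown-k} iteratively along a vortex to decompose $H$ into tight cycles of length $L$, where $L$ is a large multiple of $p := \ell - k + 1$ (the number of edges of $P_\ell^{(k)}$), and then cut each $C_L^{(k)}$ into $L/p$ copies of $P_\ell^{(k)}$. The combinatorial cornerstone is the following observation: if $C_L^{(k)}$ has vertex sequence $v_1 \dotsb v_L$ and edges $e_i = v_i v_{i+1} \dotsb v_{i+k-1}$ (indices modulo $L$), then for each $j = 0, \dotsc, L/p - 1$ the $p$ consecutive edges $e_{jp+1}, \dotsc, e_{(j+1)p}$ form a tight path on the $\ell$ vertices $v_{jp+1}, \dotsc, v_{jp + \ell}$, i.e.\ a copy of $P_\ell^{(k)}$, and the collection of these $L/p$ paths partitions $E(C_L^{(k)})$. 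Consequently, every $C_L^{(k)}$-decomposable subgraph is automatically $P_\ell^{(k)}$-decomposable.

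Concretely, I would first fix $L$ to be a multiple of $p$ with $L \geq \ell_0$, where $\ell_0$ comes from \ref{statement:coverdown-k} applied with codegree parameter $\alpha/4$, and apply Lemma~\ref{lemma:vortex} to obtain a vortex $V(H) = U_0 \supseteq U_1 \supseteq \dotsb \supseteq U_t$ with $|U_t|$ bounded by a constant depending only on $\alpha, k, \ell$. Then reserve a small $P_\ell^{(k)}$-absorber $A \subseteq H - H[U_1]$ with the property that $A \cup R$ admits a $P_\ell^{(k)}$-decomposition for every $P_\ell^{(k)}$-divisible $R \subseteq H[U_t]$. Following the blueprint of Step~2 in the proof of Theorem~\ref{theorem:main}, for each $i = 0, \dotsc, t - 1$, first remove a small $P_\ell^{(k)}$-decomposable degree-fixer (analogous to Corollary~\ref{corollary:degreeadjuster}) so that hypothesis \ref{it:CDdiv-synth} is satisfied, and then invoke \ref{statement:coverdown-k} with cycle length $L$ and vortex set $U_{i+1}$ to extract a $C_L^{(k)}$-decomposable subgraph covering all remaining edges in $H[U_i] \setminus H[U_{i+1}]$; slice each such $C_L^{(k)}$ into $L/p$ copies of $P_\ell^{(k)}$ as above. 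At the end of the iteration the uncovered edges lie in $H[U_t]$ and form a $P_\ell^{(k)}$-divisible graph $R$ (since $H$, the absorber $A$, the degree-fixers, and all Cover-down outputs are $P_\ell^{(k)}$-divisible), and these are absorbed by $A$.

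The main obstacle is the construction of the path-absorber $A$. I would handle this via the general framework of Section~\ref{section:transformers}: Lemma~\ref{lem:absorbing} reduces the problem to showing that $H$ is $P_\ell^{(k)}$-transformable, i.e.\ that for any two vertex-disjoint homomorphic $P_\ell^{(k)}$-divisible subgraphs $G, G'$ one can build a $(G, G'; P_\ell^{(k)})$-transformer of bounded size inside $H$. Constructing such transformers should be strictly simpler than in the tight-cycle case handled in Sections~\ref{section:gadgets}--\ref{section:transformer}: because tight paths do not close up, the tour-trail decomposition machinery and the balancing of residual di-$(k-1)$-graphs are not required, and one can assemble a transformer directly from short tight paths joining arbitrary $(k-1)$-tuples, which Lemma~\ref{lem:findcycle} readily supplies under the codegree assumption $\delta^{(2)}(H) \geq \alpha n$.
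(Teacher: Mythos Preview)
Your proposal is correct and follows essentially the same approach as the paper's own proof: both use the iterative absorption framework (vortex plus the assumed cover-down statement \ref{statement:coverdown-k} with cycle length divisible by the edge-count of $P_\ell^{(k)}$), slice each output cycle into tight paths, and handle the final leftover via a path-absorber built through the transformer framework of Lemma~\ref{lem:absorbing} and short tight paths from Lemma~\ref{lem:findcycle}. Your treatment is in fact slightly more careful than the paper's sketch in two respects: you correctly identify $p=\ell-k+1$ (rather than $\ell$) as the required divisor of the cycle length~$L$, and you explicitly flag the need for a $P_\ell^{(k)}$-decomposable degree-fixer (as in Corollary~\ref{corollary:degreeadjuster}) to ensure hypothesis~\ref{it:CDdiv-synth}, since $P_\ell^{(k)}$-divisibility alone does not give vertex degrees divisible by~$k$.
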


Assuming the validity of these two last lemmata, Lemma~\ref{lem:coverdownlemma} follows easily, if we are provided with a base case.
For this, we use the following result by Botler, Mota, Oshiro and Wakabayashi~\cite{BMOW2017} on path decompositions in graphs.
A graph is~\emph{$k_\ell$-edge-connected} if after deleting fewer than~$k_\ell$ edges it remains connected.
It is not hard to check that every graph~$G$ is~$\delta^{(2)}(G)$-edge-connected, so the following result immediately yields~\hyperref[statement:pathdec-k]{\upshape{($\curvearrowright_{2}$)}}.

\begin{theorem}[\cite{BMOW2017}] \label{theorem:graphpathdecomposition}
    For each $\ell \geq 1$, there exists $k_{\ell}$ such that each $k_{\ell}$-edge-connected graph whose number of edges is divisible by $\ell$ has a $P^{\smash{(2)}}_\ell$-decomposition.
\end{theorem}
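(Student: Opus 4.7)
The plan is to prove Theorem~\ref{theorem:graphpathdecomposition} via a three-stage iterative-absorption-style strategy, in the spirit of results on the Barát--Thomassen conjecture for paths. The constant $k_\ell$ is chosen sufficiently large in terms of~$\ell$ so that whenever we need to connect any pair of vertices by internally edge-disjoint paths of bounded length, while avoiding a small set of previously used edges, Menger's theorem guarantees we can do so.

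First, I would construct a small \emph{absorber} $A \subseteq G$: a $P_\ell^{(2)}$-decomposable subgraph with the property that for any subgraph $L$ edge-disjoint from $A$, interacting with $V(A)$ in a prescribed way and whose number of edges satisfies the necessary divisibility by $|E(P_\ell^{(2)})|$, the graph $A \cup L$ admits a $P_\ell^{(2)}$-decomposition. Such an absorber is assembled from many small edge-gadgets, each possessing two alternative $P_\ell^{(2)}$-decompositions (one that absorbs a specific leftover configuration and one that does not), glued together by internally disjoint paths supplied by the high edge-connectivity.

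Second, I would produce an approximate $P_\ell^{(2)}$-decomposition of $G - E(A)$ covering all but a small fraction of the edges. At vertices of reasonably high degree, this can be handled by greedy or random path-packing; the extra difficulty posed by low-degree vertices is again mitigated by edge-connectivity, since every edge is required to lie on many short trails. Because $|E(G)|$, $|E(A)|$, and the edge count of every path in the packing are all divisible by $|E(P_\ell^{(2)})|$, the uncovered leftover $L$ has the right divisibility for the absorber to swallow it. Finally, by design, $A \cup L$ is $P_\ell^{(2)}$-decomposable, and together with the approximate decomposition this yields the full $P_\ell^{(2)}$-decomposition of~$G$.

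The main obstacle will be \textbf{Stage~1}: constructing absorbers for $P_\ell^{(2)}$ under an edge-connectivity hypothesis alone, since the standard iterative-absorption approach used throughout the present paper relies on minimum (co)degree conditions guaranteeing many common neighbours for any two vertices. Overcoming this requires exploiting structural consequences of high edge-connectivity, such as the existence of many edge-disjoint spanning trees (Nash-Williams/Tutte) or Mader's splitting-off theorem, which supply the long-range connecting paths that locally substitute for minimum-degree arguments; wiring these into flexible gadgets that can be toggled to absorb arbitrary small $|E(P_\ell^{(2)})|$-divisible leftovers is the technical heart of the proof.
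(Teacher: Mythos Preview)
This theorem is not proved in the paper at all: it is quoted from Botler, Mota, Oshiro and Wakabayashi~\cite{BMOW2017} and used as a black box to supply the base case \hyperref[statement:pathdec-k]{($\curvearrowright_{2}$)} of the induction behind the Cover-down lemma. There is therefore no ``paper's own proof'' to compare your proposal against.

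As for the proposal itself, it is a plausible high-level outline in the spirit of the work surrounding the Bar\'at--Thomassen conjecture for paths, but it is far from a proof. The genuine difficulty you flag in Stage~1 is real and is not something the machinery developed in the present paper addresses: all of the absorber and transformer constructions here rely on codegree-type conditions (in fact on $\delta^{(2)}$ or $\delta^{(3)}$ being a positive fraction of $n$), which an edge-connectivity hypothesis alone does not give you---a highly edge-connected graph can have vertices of bounded degree relative to~$n$. The actual arguments in~\cite{BMOW2017} (and the related work of Bensmail, Harutyunyan, Le, Merker and Thomass\'e) proceed quite differently, via many edge-disjoint spanning trees obtained from the Nash-Williams/Tutte theorem and a careful parity/balancing argument on those trees, rather than the vortex-plus-absorber template you sketch. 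If you intend to prove this statement rather than cite it, you should consult those sources; the iterative-absorption framework of the present paper is not the right tool here.
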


The proof of Lemma~\ref{lem:covertopath} is given in the next subsection.
The proof of Lemma~\ref{lem:pathtocover} will require more effort and is given in Subsection~\ref{subsec:coverdownproof}, after some previous necessary results.

\subsection{Path decompositions: proof of~Lemma~\ref{lem:covertopath} and Theorem~\ref{theorem:path}}\label{subsec:pathdecom}
To see that the bound $\delta_{\smash{P_{\ell}^{(k)}}} \ge 1/2$ holds, consider the following example. 
Take the union of $K^{\smash{(k)}}_{\lfloor n/2 \rfloor }$ and $K^{\smash{(k)}}_{\lceil n/2 \rceil}$ on vertex sets $A$ and $B$, respectively.
Delete a few edges if necessary so that the resulting $k$-graph~$H$ satisfies $|H| \equiv 0 \bmod \ell$ but $|H[A]| \not\equiv 0 \bmod \ell$.
Then $H$ is not $P_{\ell}^{\smash{(k)}}$-decomposable and $\delta(H) \ge (1/2-o(1))n$.
On the other hand, note that the upper bound of Theorem~\ref{theorem:path} can be obtained from Lemmata~\ref{lem:covertopath} and~\ref{lem:coverdownlemma}.

The proof of~Lemma~\ref{lem:covertopath} follows essentially the same strategy we use to prove Theorem~\ref{theorem:main} in Section~\ref{section:mainproof}. 
The Vortex lemma is the same and for the Cover-down lemma we may use~\ref{statement:coverdown-k}, which is assumed to hold as a hypothesis.
To see this, it is enough to notice that for every sufficiently large~$\ell'$ divisible by~$\ell$, a~$C_{\ell'}^{\smash{(k)}}$-decomposable subgraph is~$P_{\ell}^{\smash{(k)}}$-decomposable as well. 
Hence, the only new ingredient needed is the following Absorber lemma for paths.

\begin{lemma}[Absorber lemma for paths] \label{lem:transformer-paths}
Let $1/n \ll \eps \ll 1/\ell, 1/k, 1/m_0 $ with $k \ge 3$.
Let $H$ be a $k$-graph on $n$ vertices with $\delta^{(2)}(H) \geq \eps n$.
Then $H$ is $(P^{\smash{(k)}}_{\ell}, m_0, m_0 ,\eta')$-absorbing for some increasing function $\eta': \mathbb{N} \rightarrow \mathbb{N}$ satisfying $\eta'(x) \geq x$ and independent of $\eps$ and~$n$.
\end{lemma}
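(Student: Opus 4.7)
The plan is to follow the high-level strategy of Lemma~\ref{lem:transformer}: reduce to the construction of transformers via Lemma~\ref{lem:absorbing}, then build a transformer for any pair of homomorphic $P^{(k)}_\ell$-divisible subgraphs $G_1, G_2$ of $H \setminus W$ (with edge-bijective homomorphism $\phi\colon V(G_1) \to V(G_2)$). The path case admits a significant simplification: because $P^{(k)}_\ell$-divisibility only requires $|E|$ to be divisible by $p := \ell - k + 1$ and imposes no vertex-degree conditions, the tour-trail machinery of Sections~\ref{section:gadgets}--\ref{section:transformer} is not needed, and transformers can be constructed directly in an edge-by-edge fashion. The hypothesis of Lemma~\ref{lem:absorbing}, namely the existence of copies of $P^{(k)}_\ell$ through any prescribed edge while avoiding $W$, follows immediately from Lemma~\ref{lem:findcycle}.

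For the transformer construction, fix $L$ to be a positive multiple of $p$ with $L \geq k^2$ (say $L = k^2 p$). For each edge $e = v_1 \dots v_k$ of $G_1$ with image $\phi(e) = w_1 \dots w_k$, I invoke Lemma~\ref{lem:findcycle} twice---once to construct a tight path of length $L - 1$ from $(v_2, \dots, v_k)$ to $(w_1, \dots, w_{k-1})$ using fresh vertices, and once to extend $(w_2, \dots, w_k)$ by a tight path of length $L$ also using fresh vertices. Concatenating these with $e$ and $\phi(e)$ yields a single tight path $P_e$ of length $2L+1$ in $H \setminus W$ in which $e$ sits at position $1$ and $\phi(e)$ sits at position $L+1$, while all other vertices are fresh (outside $V(G_1 \cup G_2) \cup W$ and distinct from the fresh vertices already used in previously built $P_{e'}$). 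The required edge-disjointness of the family $\{P_e\}$ follows from this greedy choice of fresh vertices, which is feasible because $\delta^{(2)}(H) \geq \eps n$ and $n$ is much larger than all relevant parameters.

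Setting $T := \bigcup_{e \in G_1}(P_e - e - \phi(e))$, the key algebraic observation is that for each $e$, both $P_e - \phi(e)$ and $P_e - e$ are disjoint (in edges) unions of two tight paths of length $L$---at positions $[1, L]\cup[L+2, 2L+1]$ and $[2, L+1]\cup[L+2, 2L+1]$ respectively---and since $p \mid L$, each length-$L$ tight sub-path naturally decomposes into $L/p$ copies of $P^{(k)}_\ell$. Thus both $T \cup G_1 = \bigcup_e(P_e - \phi(e))$ and $T \cup G_2 = \bigcup_e(P_e - e)$ admit $P^{(k)}_\ell$-decompositions. The conditions $T[V(G_1 \cup G_2)] = \emptyset$ (since every edge of $P_e - e - \phi(e)$ contains at least one fresh vertex, because $e$ and $\phi(e)$ are separated by $L - k$ fresh positions in $P_e$) and the polynomial size bound $|V(T)| \leq \eta(\max\{|V(G_1)|,|V(G_2)|\})$ for some $\eta$ depending only on $\ell$ and $k$ follow immediately. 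The sole technical point is the simultaneous placement of two prescribed edges at prescribed positions within a single tight path $P_e$, which is exactly what the dual invocation of Lemma~\ref{lem:findcycle} accomplishes.
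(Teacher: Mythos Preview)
Your proposal is correct and follows essentially the same route as the paper: reduce to transformability via Lemma~\ref{lem:absorbing}, then for each edge $e$ of $G_1$ build a tight path $P_e$ through $e$ and $\phi(e)$ of length divisible by $p=\ell-k+1$, letting $T=\bigcup_e(P_e-e-\phi(e))$. The only difference is that the paper stops $P_e$ at $\phi(e)$ rather than appending an extra tail of length $L$; your tail is harmless but unnecessary, since already without it $P_e-\phi(e)$ and $P_e-e$ are single tight paths of the same length.
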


\begin{proof}
    Pick an arbitrary edge~$e\in P^{\smash{(k)}}_{\ell}$ and let~$\mathbf{a}$ be any~$k$-tuple in~$V(H)$.
    Since the inequality~$\delta^{(2)}(H)\geq \eps n$ holds, it is easy to see for every~$W\subseteq V(H)\setminus \{v_1,\dots,v_k\}$ of size at most~$m_0$ there is a copy of~$P_\ell^{\smash{(k)}}$ in~$(H\cup \{\mathbf{a}\})\setminus W$ in which~$\mathbf a$ plays the r\^ole of~$e$.
    We do this by simply extending a path (maybe in both directions) starting with~$\mathbf a$, which we can do simply because of $\delta(H) \geq \delta^{(2)}(H) \geq \eps n$.
    This enables us to use Lemma~\ref{lem:absorbing}, and hence, it is enough to prove that~$H$ is~$(P^{\smash{(k)}}_{\ell}, m_0, m_0, \eta)$-transformable for some increasing function~$\eta\colon \mathbb N \to \mathbb N$. 
    
    Let~$\ell_0$ be the smallest number larger than~$k^2-k$ which is divisible by~$\ell$ and define~$\eta(x)=\ell_0 x^2$.
    Let~~$G, G'\subseteq H$ be vertex-disjoint $P_{\ell}^{\smash{(k)}}$-divisible subgraphs such that there is an edge-bijective homomorphism~$\phi$ from~$G$ to~$G'$.
    Also, let  $W \subseteq V(H) \setminus V(G \cup G')$ and suppose $|V(G)|,|V(G')| \le m_0$ and $|W| \le m_0 - \eta(|V(G)|)$. 
    For every edge~$e\in G$ apply Lemma~\ref{lem:findcycle} to find a path~$P_e\subseteq H\setminus W$ between~$e$ and~$\phi(e)$ with precisely~$\ell_0+1$ edges. 
    Since~$\ell_0$ is divisible by~$\ell$,~$T=\bigcup_{e\in G} P_e$ is a~$(G,G'; \smash{P_\ell^{(k)}})$-transformer of size at most~$\ell_0 e(G) \leq \eta(\max\{|V(G)|, |V(G')|\})$.
\end{proof}

We omit further details of proof of Lemma~\ref{lem:covertopath} and reference the reader to the proof of Theorem~\ref{theorem:main}.

\subsection{Well-behaved approximate cycle decompositions} \label{subsection:wellbehaved}

Given a $k$-graph~$H$ such that~$\delta^{(2)}(H)\geq \alpha n$, we find a~$C_\ell^{\smash{(k)}}$-packing~$\mathcal C$ that covers almost all edges of $H$ and such that the leftover is not too concentrated in any~$(k-1)$-tuple. 
Here, a $C_\ell^{\smash{(k)}}$-packing is a set of edge-disjoint copies of~$C_\ell^{\smash{(k)}}$.
More precisely, we have the following lemma.

\begin{lemma}[Well-behaved cycle decompositions] \label{lem:wellbehaved}
    Given~$k\in \mathbb{N}$ and~$\alpha\geq 0$ there is an~$\ell_0\in\mathbb{N}$ such that for every~$\gamma>0$ and $\ell,n\in \mathbb{N}$ with~$\ell\geq \ell_0$ and~$1/n \ll \gamma, \alpha, 1/\ell$ the following holds.
    Let $H$ be a $k$-graph on $n$ vertices with $\delta^{(2)}(H) \ge \alpha n$.
	Then $H$ has a~$C_{\ell}^{\smash{(k)}}$-packing $\mathcal{C}$ such that $\Delta_{k-1}(H - \bigcup \mathcal{C}) \leq \gamma n$.
\end{lemma}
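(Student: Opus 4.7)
\smallskip
\noindent\textit{Proof plan.}
The plan is to apply a pseudorandom nibble / matching argument to an auxiliary hypergraph encoding copies of~$C_\ell^{(k)}$ in~$H$. Let~$\mathcal{A}$ be the $\ell$-uniform hypergraph whose vertex set is~$E(H)$ and whose edges are precisely the edge-sets of copies of~$C_\ell^{(k)}$ in~$H$. Matchings in~$\mathcal{A}$ correspond bijectively to $C_\ell^{(k)}$-packings in~$H$, and a vertex of~$\mathcal{A}$ uncovered by a matching~$M$ is an edge of~$H$ left uncovered by the associated packing. The goal is therefore to exhibit a matching~$M$ in~$\mathcal{A}$ whose uncovered set meets every $(k-1)$-tuple~$S\subseteq V(H)$ in at most $\gamma n$ vertices.

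First, I would verify that~$\mathcal{A}$ has the structural properties required to run a nibble. Using Lemma~\ref{lem:findcycle} together with the hypothesis $\delta^{(2)}(H)\geq \alpha n$ and $\ell\geq \ell_0$ large, each edge $e\in H$ lies in $\Theta(n^{\ell-k})$ copies of $C_\ell^{(k)}$ (extend $e$ into a tight $\ell$-cycle by repeated applications of Lemma~\ref{lem:findcycle}), so the maximum and minimum degrees of~$\mathcal{A}$ are both of order $D=\Theta(n^{\ell-k})$. A standard counting argument shows that the codegree of any two distinct vertices of~$\mathcal{A}$ (i.e.\ two distinct edges of~$H$) is at most $O(n^{\ell-k-1})=o(D)$: fixing the positions of two edges inside a tight $\ell$-cycle leaves at most $\ell-k-1$ free vertex slots to be filled by Lemma~\ref{lem:findcycle}, and there are only $O(1)$ choices of the relative positions.

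Next, I would invoke a pseudorandom matching theorem — for instance the one of Ehard, Glock and Joos — which, given a near-regular hypergraph with small codegree together with a polynomial family of bounded weight functions, produces a matching $M$ whose uncovered set has small weight under every weight function. For each $(k-1)$-subset $S\in\binom{V(H)}{k-1}$, define the indicator weight $w_S\colon E(H)\to\{0,1\}$ by $w_S(e)=1$ if and only if $S\subseteq e$, so that $\sum_e w_S(e)=\deg_H(S)\leq n$ and there are at most $n^{k-1}$ such weights. The theorem then yields a matching $M\subseteq\mathcal{A}$, and hence a packing $\mathcal{C}$ in~$H$, such that for every~$S$,
\[
\deg_{H-\bigcup\mathcal{C}}(S)=\sum_{e\notin V(M)} w_S(e)\leq n\cdot D^{-\beta}=n^{\,1-\beta(\ell-k)}
\]
for some $\beta=\beta(\alpha,\ell,k)>0$; for $n$ large this is at most $\gamma n$, giving the required bound on $\Delta_{k-1}(H-\bigcup\mathcal{C})$.

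The main technical obstacle will be verifying the codegree estimate for $\mathcal{A}$ with enough slack (i.e.\ $\Delta_2(\mathcal{A})\leq D^{1-\eps}$ for some fixed $\eps>0$) to apply the pseudorandom matching theorem. This reduces to a slightly case-ridden count — depending on whether the two fixed edges of~$H$ share vertices and on their relative positions inside the putative $\ell$-cycle — but in every case yields a codegree of order $n^{\ell-k-1}$, which is polynomially smaller than $D$ once $\ell-k$ is large enough. With this in hand, the simultaneous control of the $n^{k-1}$ indicator weights follows from the standard quantitative form of the theorem, provided $\ell_0$ is chosen sufficiently large in terms of $k$ and $1/\alpha$.
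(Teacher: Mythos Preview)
Your overall framework---build the auxiliary $\ell$-graph $\mathcal{A}$ on $E(H)$ and find a pseudorandom matching---is exactly the right picture, and it matches the paper's approach. However, there is a real gap in the regularity claim. You assert that ``the maximum and minimum degrees of~$\mathcal{A}$ are both of order $D=\Theta(n^{\ell-k})$'', which is true, but this is \emph{not} enough to invoke Ehard--Glock--Joos (or Alon--Yuster, or any standard nibble). Those theorems need $\delta_1(\mathcal{A})\ge (1-o(1))\Delta_1(\mathcal{A})$, whereas Lemma~\ref{lem:findcycle} only gives $\delta_1(\mathcal{A})\ge \rho\, n^{\ell-k}$ with $\rho=\rho(\alpha,\ell,k)$ a fixed constant that can be tiny (think $\alpha^{\Theta(\ell)}$). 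Concretely, consider an $H$ in which some $(k-1)$-sets have codegree $\alpha n$ and others have codegree $(1-o(1))n$; edges supported on the sparse part sit in far fewer $\ell$-cycles than edges in the dense part, so $\Delta_1(\mathcal{A})/\delta_1(\mathcal{A})$ is a genuine constant bounded away from~$1$. In that regime the pseudorandom-matching conclusion only tells you the matching covers a $(1-c)$-fraction of each $U_S$ for some constant $c>0$, not a $(1-D^{-\beta})$-fraction, so you cannot get $\Delta_{k-1}(H-\bigcup\mathcal{C})\le \gamma n$ for arbitrary $\gamma$.

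The paper fixes exactly this by inserting a regularisation step you are missing: it first applies the Joos--K\"uhn fractional $C_\ell^{(k)}$-decomposition theorem (Theorem~\ref{them:FracDecom}) to obtain weights $\omega(C)$ with $\sum_{C\ni e}\omega(C)=1$ for every $e$, and then samples each cycle $C$ into $\mathcal{A}$ independently with probability $n^{1/2}\omega(C)$. The resulting random sub-hypergraph has all vertex degrees $(1+o(1))n^{1/2}$ and codegrees $O(\log n)$, i.e.\ it is genuinely near-regular, and \emph{then} Alon--Yuster (Theorem~\ref{thm:alonyuster}) applies with the sets $U_S$. So your codegree analysis is fine; what you are missing is the fractional decomposition that converts ``degrees of the same order'' into ``degrees asymptotically equal''.
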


The case~$k=3$ is proven by the last two authors in \cite{PigaSanhuezaMatamala2021} and here we follow the same lines.
Given a $k$-graph~$H$ and an edge $e \in H$, recall that $\mathcal{C}_{\ell}(H)$ and $\mathcal C_\ell(H,e)$ are the family of all $\ell$-cycles in~$H$ and those containing~$e$. 
The proof of Lemma~\ref{lem:wellbehaved} rests in a result by Joos and K\"uhn~\cite{JoosKuhn2021} about fractional $C^k_{\ell}$-decompositions (see the definition at the beginning of Section~\ref{section:lowerbounds}).

\begin{theorem}[Joos and Kühn~\cite{JoosKuhn2021}]\label{them:FracDecom}
    Given~$k\in \mathbb{N}$ and~$\alpha,\mu\geq 0$ there is an~$\ell_0\in\mathbb{N}$ such that for every~$\ell,n\in \mathbb{N}$ with~$\ell\geq \ell_0$ and~$1/n \ll \alpha, 1/\ell$ the following holds.
    Let~$H$ be a~$k$-graph on $n$ vertices with~$\delta^{(2)}(H)\geq \alpha n$.
    Then there is a fractional~$C_\ell^{(k)}$-decomposition~$\omega$ of~$H$ with
    $$(1-\mu)\frac{2|H|}{\Delta_{k-1}(H)^\ell}\leq \omega(C) \leq (1+\mu)\frac{2|H|}{\delta_{k-1}(H)^{\ell}}$$
    for all cycles~$C\in \mathcal C_\ell(H)$.
\end{theorem}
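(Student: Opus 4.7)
The plan is to obtain the packing $\mathcal{C}$ by converting a fractional $C_\ell^{(k)}$-decomposition into an integer packing via a pseudorandom near-optimal matching theorem of Ehard--Glock--Joos type; the fractional decomposition is supplied by Theorem~\ref{them:FracDecom}.

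First I set up the auxiliary $\ell$-uniform hypergraph $\mathcal{H}$ whose vertex set is $E(H)$ and whose hyperedges are the $\ell$-edge sets of copies of $C_\ell^{(k)}$ contained in $H$. A matching in $\mathcal{H}$ corresponds precisely to a $C_\ell^{(k)}$-packing of $H$, and for each $(k-1)$-set $S \subseteq V(H)$ the test set $I_S \subseteq V(\mathcal{H})$ of edges of $H$ containing $S$ satisfies $|I_S| = \deg_H(S) \le n$. Using Lemma~\ref{lem:findcycle} and its variants obtained by prescribing one or two edges inside the cycle, every vertex of $\mathcal{H}$ has degree $\Theta(n^{\ell-k})$, while any two distinct vertices share $O(n^{\ell-k-1})$ common hyperedges; in particular, the codegree-to-degree ratio of $\mathcal{H}$ is $O(1/n)$, which is the critical smallness condition for the nibble.

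Next, apply Theorem~\ref{them:FracDecom} with a small parameter $\mu$ to obtain a fractional $C_\ell^{(k)}$-decomposition $\omega$ with $\omega(C) = \Theta(n^{k-\ell})$ for every $\ell$-cycle $C$; this is a balanced fractional perfect matching in $\mathcal{H}$. Then I invoke the Ehard--Glock--Joos pseudorandom matching theorem: since $\mathcal{H}$ has bounded codegree-to-degree ratio and the fractional matching is well-balanced, there exists an integer matching $M$ of $\mathcal{H}$ such that, for each of a polynomially bounded family of indicator weight functions $w_S$ on $V(\mathcal{H})$, the uncovered weight satisfies $w_S(V(\mathcal{H}) \setminus V(M)) \le n^{-\beta}\, w_S(V(\mathcal{H}))$ for some $\beta = \beta(\ell, k) > 0$. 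Taking a union bound over the $O(n^{k-1})$ choices of $S$ and choosing $n$ large enough yields $|I_S \setminus V(M)| \le \gamma n$ simultaneously for all $S$. Setting $\mathcal{C}$ to be the $C_\ell^{(k)}$-packing of $H$ that corresponds to $M$, we get $\Delta_{k-1}(H - \bigcup \mathcal{C}) \le \gamma n$, as required.

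The main obstacle is that $\mathcal{H}$ is only approximately regular, while the cleanest forms of the pseudorandom matching theorem demand near-regularity. One workaround is to pass to a weighted auxiliary multi-hypergraph by splitting each cycle $C$ into $\lfloor \omega(C) \cdot N \rfloor$ parallel copies for a suitable integer $N$, producing an object that is genuinely near-regular and has the same matching structure; alternatively one may appeal to a version of the pseudorandom matching theorem which accepts a balanced fractional matching directly as input. In either route, the codegree estimates that need to be verified in $\mathcal{H}$ follow routinely from iterated applications of Lemma~\ref{lem:findcycle} and the assumption $\delta^{(2)}(H) \ge \alpha n$.
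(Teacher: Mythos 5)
Your proposal does not prove the statement in question. The statement is Theorem~\ref{them:FracDecom} itself, i.e.\ the Joos--K\"uhn result guaranteeing a fractional $C_\ell^{(k)}$-decomposition with all weights $\omega(C)$ pinned between $(1-\mu)2|H|/\Delta_{k-1}(H)^\ell$ and $(1+\mu)2|H|/\delta_{k-1}(H)^\ell$ whenever $\delta^{(2)}(H)\ge\alpha n$. In the paper this is an external citation (it is the main input from~\cite{JoosKuhn2021}), and any proof of it would have to construct the fractional decomposition from scratch; your argument instead \emph{invokes} Theorem~\ref{them:FracDecom} as a black box (``the fractional decomposition is supplied by Theorem~\ref{them:FracDecom}''), so as a proof of that theorem it is circular and contains no content addressing the actual claim: nowhere do you produce a weight function on $\mathcal{C}_\ell(H)$ with $\sum_{C\in\mathcal{C}_\ell(H,e)}\omega(C)=1$ for every edge $e$, nor do you establish the two-sided bounds on the individual weights.

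What you have sketched is, in substance, a proof of the subsequent Lemma~\ref{lem:wellbehaved} (converting a fractional decomposition into an integral $C_\ell^{(k)}$-packing whose leftover has small maximum codegree). On that separate statement your route is reasonable and differs from the paper's: the paper sparsifies by keeping each cycle $C$ independently with probability $n^{1/2}\omega(C)$ and then applies the Alon--Yuster matching theorem (Theorem~\ref{thm:alonyuster}) with the test sets $U_S$, whereas you propose an Ehard--Glock--Joos pseudorandom matching theorem applied to the auxiliary $\ell$-graph directly, handling the lack of regularity either by multiplicity-splitting or by a version accepting a balanced fractional matching as input. Both conversions are standard and would work, but neither touches the fractional decomposition theorem you were asked to prove; the genuinely hard content of Theorem~\ref{them:FracDecom} (constructing and balancing the fractional weights under only a $\delta^{(2)}$ hypothesis) is entirely missing from your write-up.
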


Additionally, we need the following nibble-type matching theorem.
The statement of the theorem is technical, but in our context the conditions are easy to check.
Consider the following parameter
$g(H)=\Delta_1(H)/\Delta_2(H)$ for every~$k$-graph~$H$.

\begin{theorem}[Alon and Yuster~\cite{AlonYuster2005}]\label{thm:alonyuster}
    For every~$\gamma>0$, there is a~$\xi>0$ such that for every sufficiently large~$n$ the following holds.
    Let~$H$ be a $k$-graph on $n$ vertices and let~$U_1,\dots, U_t\subseteq V(H)$ be subsets of vertices with~$\log t\leq {g(H)^{1/(3k-3)}}$ and such that~$\vert U_i\vert \geq 5 g(H)^{1/(3k-3)}\log (g(H)t)$ for every~$i \in [t]$. 
    Suppose that
    \begin{enumerate}[label={\rm(\alph*)}]
        \item $\delta_1(H) \geq (1-\xi)\Delta_1(H)$ and 
        \item $\Delta_1(H)\geq (\log n)^7 \Delta_2(H)$\,.
    \end{enumerate}
    Then $H$ contains a matching such that at most $\gamma \vert U_i\vert$ vertices are uncovered in each~$U_i$.
\end{theorem}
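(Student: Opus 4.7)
The plan is to prove this via the R\"odl nibble (semi-random) method, the standard tool for producing near-perfect matchings in approximately regular uniform hypergraphs with small codegree. The new ingredient beyond the classical Pippenger--Spencer theorem is that the matching must cover a large fraction of each prescribed subset $U_i$ \emph{simultaneously}, so the sizes $|U_i|$ must be tracked as auxiliary random variables throughout the nibble.

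I would construct the matching in $r = O(\log(1/\gamma))$ rounds. Starting with $H^{(0)} = H$, $U_i^{(0)} = U_i$, and $\Delta^{(0)} = \Delta_1(H)$, in round~$s$ include each edge of $H^{(s)}$ independently with probability $p_s = c/\Delta^{(s)}$ for a small constant $c = c(\xi)$, then discard every sampled edge that shares a vertex with another sampled one. The surviving edges form a partial matching~$M^{(s)}$; set $H^{(s+1)} = H^{(s)} - V(M^{(s)})$ and $U_i^{(s+1)} = U_i^{(s)} \setminus V(M^{(s)})$. The output matching is~$\bigcup_s M^{(s)}$.

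The induction invariants I would maintain are: $H^{(s)}$ stays $(1 \pm o(1))\Delta^{(s)}$-regular with $\Delta^{(s)} \approx e^{-cs}\Delta^{(0)}$; its codegree satisfies $\Delta_2(H^{(s)}) \leq \Delta^{(s)}/(\log n)^{6}$; and $|U_i^{(s)}| \approx e^{-cs}|U_i|$ for each $i \in [t]$. Each invariant is established by concentration of vertex degrees, pair-codegrees, and subset sizes around their expectations via McDiarmid's bounded-differences inequality applied to a suitable vertex-exposure martingale. Condition~(b) is used crucially to bound the Lipschitz constants: resampling a single edge changes only $O(\Delta_2)$ vertex degrees, and the bound $\Delta_1 \geq (\log n)^{7} \Delta_2$ provides the slack needed for Azuma-type tail bounds to beat a union bound over all $n$ vertices.

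After $r$ rounds one has $|U_i^{(r)}| \leq \gamma |U_i|$, as required. The main obstacle is the bookkeeping: tracking how multiplicative errors in each invariant compound across the $r$ rounds, while absorbing a union bound over the $t$ subsets. The quantitative hypotheses $\log t \leq g(H)^{1/(3k-3)}$ and $|U_i| \geq 5 g(H)^{1/(3k-3)} \log(g(H) t)$ are calibrated so that the failure probability of each concentration event can be set to $1/(10tr)$, keeping the overall failure probability well below~$1$, and the exponent $1/(3k-3)$ arises from balancing the Lipschitz constant (proportional to the uniformity~$k$) against the required concentration scale throughout the iterated nibble.
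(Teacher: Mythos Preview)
This theorem is not proved in the paper: it is quoted as a black-box result from Alon and Yuster~\cite{AlonYuster2005}, and the paper only applies it (in the proof of Lemma~\ref{lem:wellbehaved}). So there is no ``paper's own proof'' to compare against.

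That said, your sketch is the correct approach and is essentially what Alon and Yuster do. The R\"odl nibble with concentration via bounded-differences/martingale inequalities, tracking the sizes $|U_i^{(s)}|$ as auxiliary parameters alongside the usual degree and codegree invariants, is exactly the method behind this statement. One small correction: the explanation of where the exponent $1/(3k-3)$ comes from is vague and not quite right as stated --- it arises more specifically from the concentration bound one gets on $|U_i^{(s)}|$ in each round, which has a deviation of order $|U_i^{(s)}|^{1/2}$ times a polylogarithmic factor, and one needs this to be summable over the rounds while the $|U_i^{(s)}|$ shrink geometrically; the constraint $|U_i| \ge 5 g(H)^{1/(3k-3)}\log(g(H)t)$ ensures the sets stay large enough throughout. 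But the overall plan is sound.
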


Lemma~\ref{lem:wellbehaved} follows by an straightforward application of Theorems~\ref{them:FracDecom} and~\ref{thm:alonyuster}. 

\begin{proof}[Proof of Lemma~\ref{lem:wellbehaved}]
    Given~$k\in \mathbb{N}$ and $\alpha >0$ fix any~$\mu, \xi<1/3$ and take~$\ell_0$ given by Theorem~\ref{them:FracDecom}.
    Let~$\ell\geq \ell_0$, $\gamma>0$ and let $n$ be sufficiently large for an application of Theorems~\ref{them:FracDecom} and~\ref{thm:alonyuster}.
    
    First, we apply Theorem~\ref{them:FracDecom} to obtain a fractional $C_\ell^{\smash{(k)}}$-decomposition~$\omega$ of~$H$ satisfying
    \begin{align}\label{eq:fracbound}
        \omega(C) 
        \leq
        (1+\mu)\frac{2|H|}{\delta_{k-1}(H)^{\ell}}
        \leq
        \frac{ 3 n^{k}} {\delta^{(2)}(H)^{\ell}}
        \leq 
        \frac{3}{\alpha^{\ell} n^{\ell-k}}\,,
    \end{align}
    for all cycles~$C\in \mathcal C_\ell(H)$.

    Then, consider the auxiliary $\ell$-graph~$F$ with vertex set~$E(H)$ and an edge in~$F$ for each cycle in $\mathcal{C}_{\ell}(H)$ corresponding to its set of $\ell$ edges in~$H$.
    Define a random subgraph~$F' \subseteq F$ by keeping each edge~$C$ with probability $p_C = n^{1/2} \omega(C) \le 1$ by~\eqref{eq:fracbound}.

    For every edge~$e \in H$ we have $\mathbb{E}[d_{F'}(e)] = n^{1/2} \sum_{C \in \mathcal{C}_\ell(H, e)} \omega(C) = n^{1/2}$.
    Moreover, since two distinct edges $e, f \in E(H)$ can participate together in at most~$O(n^{\ell - (k+1)})$ many $C_\ell^{\smash{(k)}}$ in $H$, \eqref{eq:fracbound} implies that the expected $2$-degree is bounded by~$\mathbb{E}[ d_{F'}(e, f)] = O(n^{-1/2})$.
    Using standard concentration inequalities we get that with high probability $d_{F'}(e) = (1 + o(1))n^{1/2}$ for each $e \in V(F')$ and that~$\Delta_2(F') = O(\log n )$.
    This means that
    \begin{align*}
    \delta_1(F') \ge (1 - o(1))\Delta_1(F')\text{, }g(F')=\Omega(n^{1/2}/\log n) \text{ and } g(F')=O(n^{1/2}).
    \end{align*}
    
    For each $(k-1)$-set~$S$ of vertices of $H$, let $U_{S} \subseteq V(F)$ correspond to the edges in $H$ containing $S$.
    There are at most $n^{k-1}$ such sets and each has size at least~$\eps n$.
    Thus, it is easy to check that the conditions for Theorem~\ref{thm:alonyuster} are satisfied. 
    Therefore, there is a matching $M$ in $F'$ such that at most~$\gamma n$ vertices in $V(F')$ are uncovered in each~$U_{S}$.
    The matching~$M$ in~$F' \subseteq F$ translates to a $C^{\smash{(k)}}_{\ell}$-packing~$\mathcal{C}$ in~$H$ and the latter condition implies $\Delta_{k-1}(H - \bigcup \mathcal{C} ) \leq \gamma n$, as desired.
\end{proof}

\subsection{Extending lemma}

For this section we will use the following result (see~\cite[Theorem 5.5]{PigaSanhuezaMatamala2021}).

\begin{theorem} \label{thm:jain}
	Let $X_1, \dotsc, X_t$ be Bernoulli random variables (not necessarily independent) such that, for each~$i \in [t]$, we have $\mathbb{P} [X_i = 1 | X_1, \dotsc, X_{i-1}] \leq p_i$.
	Let $Y_1, \dotsc, Y_t$ be independent Bernoulli random variables such that $\mathbb{P} [Y_i = 1] = p_i$ for all~$i \in [t]$.
	Let~$X = \sum_{i \in [t]} X_i$ and $Y = \sum_{i \in [t]} Y_i$.
	Then $\mathbb{P} [X \ge k] \leq \mathbb{P} [Y \ge k]$ for all~$k \in \{0, 1, \dotsc, t \}$.
\end{theorem}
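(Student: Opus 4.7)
The plan is to prove this via an explicit coupling, showing that $X$ is stochastically dominated by $Y$ in the strong sense that $X \le Y$ pointwise on a suitable probability space. Let $U_1, \dotsc, U_t$ be independent uniform random variables on $[0,1]$, constructed on some probability space. For each $i \in [t]$, write $q_i(x_1, \dotsc, x_{i-1})$ for the conditional probability $\mathbb{P}[X_i = 1 \mid X_1 = x_1, \dotsc, X_{i-1} = x_{i-1}]$; the hypothesis guarantees $q_i \le p_i$ for every realisation of the past.

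First, I would construct the joint realisation recursively: set $\widetilde{X}_i := \mathbf{1}[U_i \le q_i(\widetilde{X}_1, \dotsc, \widetilde{X}_{i-1})]$ and, in parallel on the same space, $\widetilde{Y}_i := \mathbf{1}[U_i \le p_i]$. A straightforward induction on $i$ shows that $(\widetilde{X}_1, \dotsc, \widetilde{X}_t)$ has the same joint law as $(X_1, \dotsc, X_t)$, because at each step the conditional probability of $\widetilde{X}_i = 1$ given the past matches $q_i$ by construction. Likewise, since the $U_i$ are independent and each $p_i$ is a deterministic constant, the variables $\widetilde{Y}_1, \dotsc, \widetilde{Y}_t$ are independent Bernoulli with parameters $p_i$, matching the law of $(Y_1, \dotsc, Y_t)$.

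The key monotonicity step is then immediate: because $q_i(\widetilde{X}_1, \dotsc, \widetilde{X}_{i-1}) \le p_i$ pointwise, the event $\{U_i \le q_i(\cdots)\}$ is contained in $\{U_i \le p_i\}$, so $\widetilde{X}_i \le \widetilde{Y}_i$ for every $i$. Summing yields $\widetilde{X} := \sum_i \widetilde{X}_i \le \sum_i \widetilde{Y}_i =: \widetilde{Y}$, and therefore $\{\widetilde{X} \ge k\} \subseteq \{\widetilde{Y} \ge k\}$ for every $k$. Taking probabilities and using that $\widetilde{X}, \widetilde{Y}$ have the same marginal laws as $X, Y$ gives $\mathbb{P}[X \ge k] \le \mathbb{P}[Y \ge k]$, which is exactly the desired conclusion.

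The main obstacle is essentially bookkeeping: one must be careful that the coupling construction is well-defined and that the induced laws match. This is purely a measure-theoretic verification, with no combinatorial content — the coupling via a shared uniform variable $U_i$ at each step is the standard trick, and the hypothesis $q_i \le p_i$ is tailor-made to make the pointwise domination $\widetilde{X}_i \le \widetilde{Y}_i$ automatic. An alternative inductive proof on $t$ (conditioning on $X_t$ and using that $P[X_t=1 \mid X_1, \dotsc, X_{t-1}] \le p_t$) would also work, but the coupling argument is shorter and avoids case analysis.
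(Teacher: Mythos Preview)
Your coupling argument is correct and is the standard proof of this stochastic domination result. However, the paper does not actually prove this theorem: it is quoted as a black box with a reference to \cite[Theorem~5.5]{PigaSanhuezaMatamala2021}, so there is no ``paper's own proof'' to compare against. Your construction via shared uniforms $U_i$ is precisely the usual way this is established, and the verification that $(\widetilde{X}_1,\dotsc,\widetilde{X}_t)$ has the correct joint law is the routine induction you indicate.
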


Let $\mathcal{S}$ be a multiset of ordered $(k-1)$-tuples in an $n$-vertex set $V$, possibly with repetitions.
We say that $\mathcal{S}$ is \emph{$\gamma$-sparse} if the multi-$(k-1)$-graph $S$ formed by all the unoriented $(k-1)$-sets from $\mathcal{S}$, counting repetitions, has $\Delta_{j}(S) \leq \gamma n^{k-j}$ for each~$0 \le j \le k-1$.
For instance,
the $j = 1$ case says that no vertex is in more than $\gamma n^{k-1}$ tuples (counting repetitions).
Recall the definition of ends of a trail~$P$ and $D(P)$ in Section~\ref{sec:residualgraph}.

\begin{lemma}[Extending lemma] \label{lemma:extending}
	Let $1/n \ll \gamma \ll \mu \ll \eps, 1/\ell, 1/k$.
	Let $H$ be a $k$-graph on $n$ vertices.
    Let $\mathcal{S} = \{ \mathbf{a}_i, \mathbf{b}_i :  i \in [t] \} $ be a multiset of ordered $(k-1)$-tuples in~$V(H)$ such that
	\begin{enumerate}[label={\rm(\alph*)}]
		\item\label{item:extending-Fsparse} $\mathcal S$ is $\gamma$-sparse and
		\item \label{item:extending-manyextensions} for each $i \in [t]$, there are at least $\eps n^{\ell}$ trails~$P$ in~$H$ on $\ell+2(k-1)$ vertices such that $D(P) = \{ \mathbf{a}_i , \mathbf{b}_i \}$.
	\end{enumerate}
	Then, there exist edge-disjoint trails $P_1, \dotsc, P_t$ in $H$ such that, for each $i \in [t]$,
	\begin{enumerate}[label={\rm(\roman*)}]
		\item $P_i$ has $\ell+2(k-1)$ vertices and $D(P_i) = \{ \mathbf{a}_i , \mathbf{b}_i\}$, 
		\item the vertices of $P_i$ outside $\mathbf{a}_i$ and $\mathbf{b}_i$ are all distinct, and
		\item $\Delta_{k-1}( \bigcup_{ i \in [t] } P_i ) \leq \mu n$.
	\end{enumerate}
\end{lemma}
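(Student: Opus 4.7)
The plan is to construct the trails $P_1, \dotsc, P_t$ one at a time in a greedy fashion. At step $i$, with $J_{i-1} := \bigcup_{j < i} P_j$, I would pick $P_i$ from the pool $\mathcal{P}_i$ of $\ge \eps n^\ell$ trails given by~\ref{item:extending-manyextensions}, maintaining throughout the process the invariants that $J_{i-1}$ is edge-disjoint and that $\Delta_{k-1}(J_{i-1}) \le \mu n$. To ensure a valid $P_i$ exists, I would call a trail \emph{bad} if it either shares an edge with $J_{i-1}$ (\emph{type~1}) or if appending it would push some $(k-1)$-codegree of $J_{i-1}$ strictly above $\mu n$ (\emph{type~2}), and show that at each step fewer than $\eps n^\ell$ trails in $\mathcal{P}_i$ are bad.

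For type~1, I would use that any edge $e$ is contained in $O_{k,\ell}(n^{\ell - k + j_e})$ trails of $\mathcal{P}_i$, where $j_e := |e \cap (\mathbf{a}_i \cup \mathbf{b}_i)|$, since once a position for $e$ in the trail is chosen (only $O(1)$ choices by $j_e$), the endpoint-overlapping vertices of $e$ are determined and the remaining $\ell - k + j_e$ internal positions may be filled freely. Edges with $j_e = 0$ in $J_{i-1}$ number at most $|J_{i-1}| = O(\ell t) = O(\ell \gamma n^k)$ by the sparseness hypothesis~\ref{item:extending-Fsparse}, giving a total of $O(\ell \gamma n^\ell)$ trails. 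For $j \ge 1$, the codegree invariant transfers to $\Delta_j(J_{i-1}) = O_k(\mu n^{k-j})$ by a standard averaging argument, and summing over the $O(k)$ possible $j$-subsets of $\mathbf{a}_i \cup \mathbf{b}_i$ yields $O_{k,\ell}(\mu n^\ell)$ such trails in total. Both counts are at most $\eps n^\ell / 3$ given the hierarchy $\gamma \ll \mu \ll \eps, 1/\ell, 1/k$.

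For type~2, I would call a $(k-1)$-set $S$ \emph{dangerous} if $d_{J_{i-1}}(S) > \mu n - k$, noting that any trail contains $S$ in at most $k$ of its edges, so avoiding dangerous sets preserves the codegree invariant. Summing codegrees, the number of dangerous sets is at most $k|J_{i-1}|/(\mu n - k) = O(\gamma k \ell n^{k-1}/\mu)$. For each such $S$, the trails of $\mathcal{P}_i$ using an edge $\supseteq S$ are at most $O_{k,\ell}(n^{\ell - k + 1})$ (choose a position and ordering for $S$ in the trail, the extra vertex of the covering edge, and the remaining internal positions), giving a total of $O_{k,\ell}(\gamma n^\ell/\mu) \le \eps n^\ell / 3$ by the hierarchy. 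Consequently at each step at least $\eps n^\ell/3 > 0$ good trails remain, so the greedy process can always continue, and after $t$ steps the constructed $P_1, \dotsc, P_t$ satisfy all three conclusions of the lemma.

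The main subtlety will be controlling edges of $J_{i-1}$ that overlap heavily with the endpoints $\mathbf{a}_i \cup \mathbf{b}_i$, since each such edge lies in disproportionately many trails of $\mathcal{P}_i$ (up to $O(n^{\ell - 1})$ when the overlap is maximal); this is precisely where the inductive codegree invariant is essential, since it forces such edges to be scarce enough for the type~1 count to go through.
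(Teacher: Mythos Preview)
Your deterministic greedy has a genuine gap in the type-2 analysis. The claimed bound of $O_{k,\ell}(n^{\ell-k+1})$ on trails of $\mathcal{P}_i$ containing an edge through a dangerous set $S$ is simply false when $S$ coincides (as an unordered set) with $\mathbf{a}_i$ or $\mathbf{b}_i$: every trail in $\mathcal{P}_i$ has its first (respectively last) edge containing $\mathbf{a}_i$ (respectively $\mathbf{b}_i$), so the count is $\Theta(n^{\ell})$. Hence if $\mathbf{a}_i$ is dangerous at step $i$ --- that is, $d_{J_{i-1}}(\mathbf{a}_i) > \mu n - k$ --- then \emph{every} candidate $P_i$ is type-2 bad and the process halts. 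Nothing in your invariant rules this out: your rule only forbids hitting a set once it is \emph{already} dangerous, so internal edges of earlier trails $P_j$ may accumulate on $\mathbf{a}_i$ unchecked until it crosses the threshold, and the at most $\gamma n$ indices $j<i$ with $\mathbf{a}_j$ or $\mathbf{b}_j$ equal to $\mathbf{a}_i$ then force further hits you cannot avoid. You correctly flag the endpoint-overlap issue as the ``main subtlety'' for type~1, but the very same issue is fatal for type~2.

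The paper sidesteps this by running a \emph{random} greedy: $P_i$ is chosen uniformly from the surviving pool. For each fixed $(k-1)$-set $e$ one bounds the conditional probability $\mathbb{P}[X_i(e)=1]$ by $p_i^\ast(e)$, a quantity depending on $r_i(e)=\max\{|e\cap\mathbf{a}_i|,|e\cap\mathbf{b}_i|\}$, with $p_i^\ast(e)=1$ precisely when $e\in\{\mathbf{a}_i,\mathbf{b}_i\}$. The $\gamma$-sparseness of $\mathcal{S}$ bounds, for each fixed $e$, how many indices $i$ have each value of $r_i(e)$ --- in particular only $O(\gamma n)$ indices give $r_i(e)=k-1$ --- so $\sum_i p_i^\ast(e)\le \mu n/30$. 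A stochastic-domination lemma (Theorem~\ref{thm:jain}) plus Chernoff then gives $d_{G_t}(e)\le\mu n$ for all $e$ simultaneously with high probability. The randomness is exactly what lets one control every $(k-1)$-set at once, including the unavoidable endpoints $\mathbf{a}_i,\mathbf{b}_i$; a deterministic greedy with only the codegree invariant cannot do this.
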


\begin{proof}
	The idea is to pick, sequentially, a trail~$P_i$ chosen uniformly at random among all the trails whose ends are $\mathbf{a}_i$ and $\mathbf{b}_i$.
	Since $\mathcal S$ is $\gamma$-sparse and there are plenty of choices for $P_i$ in each step, we expect that in each step the random choices do not affect the codegree of the graph formed by the yet unused edges in~$H$ by much.
	This will ensure that, even after removing the edges used by $P_1, \dotsc, P_{i-1}$, there are still many trails $P_i$ available for the $i$th step.
	If all goes well, then we can continue the process until the end, thus finding the required trails.
	
	We say that a trail~$P$ is \emph{$i$-good} if $P$ is on  $\ell + 2(k-1)$ vertices, $D(P) = \{ \mathbf{a}_i , \mathbf{b}_i \}$ and  the vertices of~$P$ outside $\mathbf{a}_i$ and $\mathbf{b}_i$ are all distinct.
	Let $\mathcal{P}_i(H)$ be the set of all~$i$-good trails in~$H$.
	We begin by noting that $\mathcal{P}_i(H)$ is large.
	Indeed, there are at most~$\ell^2 n^{\ell - 1} \leq \eps n^{\ell}/2$ trails~$P$ on $\ell + 2(k-1)$ vertices with $D(P) = \{ \mathbf{a}_i , \mathbf{b}_i \}$ that is not~$i$-good.
	By~\ref{item:extending-manyextensions}, $|\mathcal{P}_i(H)| \geq \eps n^{\ell}/2$.
	Since $\mu\ll \eps$, we have 
	\begin{align}
		\text{if $G$ is a $k$-graph with $\Delta_{k-1}(G) \leq \mu n$, then $|\mathcal{P}_i(H-G)| \ge \eps n^{\ell}/3$.} \label{claim:extending-sparseremoval}
	\end{align}
	
	We now describe the random process. 
	For each $i \in [t]$, assume we have already chosen edge-disjoint $P_1, P_2, \dotsc, P_{i-1} \subseteq H$, and we describe the choice of~$P_i$.
	Let~$G_{i-1} = \bigcup_{j \in [i-1]} P_j$ correspond to the edges of~$H$ used by the previous choices of~$P_j$, which we need to avoid when choosing~$P_i$
	(note that $G_0$ is empty).
	If $\Delta_{k-1}(G_{i-1}) \leq \mu n$, then \eqref{claim:extending-sparseremoval} implies that $|\mathcal{P}_i(H - G_{i-1})| \ge \eps n^{\ell}/3$ and we take~$P_i\in \mathcal{P}_i(H-G_{i-1})$ uniformly at random.
	Otherwise, if $\Delta_{k-1}(G_{i-1}) > \mu n$, then let $P_i = \emptyset$.
	
	In any case, the process outputs a collection of edge-disjoint subgraphs~$P_1, \dotsc, P_{t}$.
	Our task now is to show that with positive probability, there is a choice of $P_1, \dotsc, P_{t}$ such that $\Delta_{k-1}(G_{t}) \leq \mu n$.
	This will imply also that $P_i \in \mathcal{P}_i$, which is what we needed.
	Formally, for each $i \in [t]$, let $\mathcal S_i$ be the event that $\Delta_{k-1}(G_{i}) \leq \mu n$.
	Thus it is enough to show $\mathbb{P}[\mathcal S_t] > 0$.
	
	Fix $e \in \binom{V(H)}{k-1}$.
	For each $i \in [t]$, let $X_i(e)$ be the random variable that takes the value $1$ precisely if $e$ belongs to an edge of $P_i$, and $0$ otherwise.
	Equivalently, $X_i(e) = 1$ if and only if $\deg_{P_i}(e) \ge 1$.
	Since $\Delta_{k-1}(P_i) \leq 2$ for each $i \in [t]$, we have
	\begin{align}
		\deg_{G_i}(e) \leq 2 \sum_{ j \in [i] } X_j(e)\,. \label{equation:degreeui}
	\end{align}
	For each $i \in [t]$, define 
	    \begin{align*}
	        r_i(e) & = \max \{ |e \cap \mathbf{a}_i|, |e \cap \mathbf{b}_i| \} 
	        \quad \text{ and } \quad
	        p^\ast_i(e)  = \min \left\{ 1, \frac{6 \ell k }{\eps n^{(k-1)-r_i(e) }}\right\},
	    \end{align*}
	where here $\mathbf{a}_i$, $\mathbf{b}_i$ are taken as the underlying $(k-1)$-sets.
	
	\begin{claim}
		For each $e\! \in\! \binom{V(H)}{k-1}$ and $i \in [t]$,
	$$
		 \mathbb{P} [X_i(e)=1 | X_1(e), X_2(e), \dotsc, X_{i-1}(e) ] \leq p^\ast_i(e). 
	$$
	\end{claim}
	
	\begin{proofclaim}[Proof of the claim.]
	    Fix $e \in \binom{V(H)}{k-1}$ and $i \in [t]$.
		Using conditional probabilities, we separate our analysis depending on whether $\mathcal S_{i-1}$ holds or not.
		If $\mathcal S_{i-1}$ fails, then~$P_i = \emptyset$ and so $X_i(e) = 0$ implying that our claim holds. 
		
		Now assume that $\mathcal S_{i-1}$ holds, so $\Delta_{k-1}(G_{i-1}) \leq \mu n$.
		By~\eqref{claim:extending-sparseremoval}, $P_i$ will be chosen uniformly at random from $\mathcal{P}_i(H-G_{i-1})$, which has size at least $\eps n^{\ell}/3$ regardless of the values of~$X_1(e), \dotsc, X_{i-1}(e)$.
		
		If $r_i(e) = k-1$, then $p^\ast_i(e) = 1$ and we are done. 
		We may assume that $r = r_i(e) \in [k-2] \cup \{ 0\}$.
		We now estimate the number of $P \in \mathcal{P}_i(H - G_{i-1})$ with $\deg_{P}(e) \ge 1$.
		If we have~$P= v_1 v_2 \dotsb v_{\ell + 2(k-1)}$ and $\deg_{P}(e) \ge 1$, then $j_0 = \min \{j : v_j \in e\} \in [\ell + k-1]$ and~$|\{ j \in [k] : v_{j_0+j} \notin e\}| = 1$.		
		Recall that, for each $P \in \mathcal{P}_i(H - G_{i-1})$, it holds that~$|V(P) \setminus \{ \mathbf{a}_i , \mathbf{b}_i\}| = \ell$ and it also holds that $| e \setminus \mathbf{a}_i|,| e \setminus \mathbf{b}_i| \ge k-1- r_i(e)$.
		Hence, we deduce that the number of  $P \in \mathcal{P}_i(H - G_{i-1})$ with $\deg_{P}(e) \ge 1$ is certainly at most~$(\ell + k-1)k n^{\ell - (k-1-r_i(e))} \le 2 \ell k n^{\ell - (k-1-r_i(e))}$.
		Thus we have 
		\begin{align*}
    		\mathbb{P} [X_i(e) = 1 | X_1(e),  \dotsc, X_{i-1}(e), \mathcal S_{i-1} ] 
    		&\leq 
    		\frac{  2 \ell k n^{\ell - (k-1-r_i(e))} }{ |\mathcal{P}_i(H-G_{i-1})|}
    		\leq
		    \frac{6 \ell k}{\eps n^{k-1-r_i(e)}} = p^\ast_i(e)\,,
		\end{align*}
		as required.
		This finishes the proof of the claim.
	\end{proofclaim}

	Now, we use that $\mathcal S$ is $\gamma$-sparse to argue that $\sum_{ i \in [t] } p_i^\ast(e)$ is small for each~$e \in \binom{V(H)}{k-1}$.
	Indeed, for each $0 \le r \le k-1$, let $t_r$ be the number of $i \in [t]$ such that $r_i(e) =r$.
	Since $\mathcal S$ is $\gamma$-sparse, we have $t_r \leq 2 \binom{k-1}{r} \gamma n^{k-r}$ for each $0 \le r \le k-1$.
	Recall that we are assuming the hierarchy~$\gamma \ll \mu \ll \eps, 1/\ell, 1/k$.
	Therefore, we have
	\begin{align}
		\sum_{ i \in [t] } p^\ast_i(e)
		= t_{k-1} + \sum_{0 \le r \le k-2} t_r \cdot \frac{6 \ell k }{\eps n^{k-1-r}}
		\leq
		\frac{\mu}{30} n. \label{equation:pibound}
	\end{align}

	We now claim that 
	\begin{align}
		\mathbb{P} \left[ \sum_{ i \in [t] } X_i(e) \geq \frac{\mu}{3} n \right] \leq \exp\left( - \frac{\mu}{3} n \right).
		\label{equation:extending-concentration}
	\end{align}
	Indeed, \eqref{equation:pibound} implies that $7 \sum_{ i \in [t] } p^\ast_i(e) \leq \mu n / 3$,
	so the bound follows from Theorem~\ref{thm:jain} combined with a Chernoff-type bound~\cite[Corollary 2.4]{JansonLuczakRucinski2000}.
	
	For each $e \in \smash{\binom{V(H)}{2}}$, let $X_e := \sum_{\smash{i \in [t]}} X_i(e)$.
	Let $\mathcal{E}$ be the event that $\max_e X_e \leq \mu n / 3$.
	By using a union bound over all the (at most $n^{k-1}$) possible choices of $e$ and using~\eqref{equation:extending-concentration}, we deduce that $\mathcal{E}$ holds with probability at least $1 - o(1)$.
	
	Now we can show that $\mathcal S_t$ holds with positive probability.
	In fact, we shall prove that~$\mathbb{P} [\mathcal S_t | \mathcal{E}] = 1$, which then will imply $\mathbb{P} [\mathcal S_t] \ge \mathbb{P} [\mathcal S_t | \mathcal{E}] \mathbb{P} [\mathcal{E}] \ge 1 - o(1)$.
	So assume~$\mathcal{E}$ holds, that is, $\max_e X_e \leq \mu n / 3$.
	Note that $\mathcal S_0$ holds deterministically, and suppose that~$i \in [t]$ is the minimum such that $\mathcal S_i$ fails to hold.
	Since $\mathcal S_{i-1}$ holds, using~\eqref{equation:degreeui} we deduce
	\begin{align*}
		\Delta_{k-1}( G_{i} )
		& \leq 2 + \Delta_{k-1}(G_{i-1}) = 2 + \max_{e} \deg_{G_{i-1}}(e)
		\leq 2 \left ( 1 + \max_e \sum_{j \in [i-1]} X_i(e) \right) \\
		& \leq 2 \left ( 1 + \max_e X_e \right) \leq 2 \left(1 + \frac{\mu}{3}n \right) \leq \mu n,
	\end{align*}
	where in the penultimate inequality we used $\mathcal{E}$,
	and in the last inequality we used $1/n \ll \mu$.
	Thus $\mathcal S_i$ holds, a contradiction.
\end{proof}

The following corollary of Lemma~\ref{lemma:extending} allows us to find a sparse path-decomposable subgraph whose removal adjusts the degrees modulo $k$.
This was used in proving Corollary~\ref{cor:lowerbound}.

\begin{corollary} \label{corollary:degreeadjuster}
    Let $0< 1/n \ll \mu \ll 1/\ell,1/k, \eps$ with $\ell > k \ge 3$.     
    Let $H$ be a $k$-graph on~$n$ vertices such that $\delta^{(2)}(H) \geq \eps n$.
    Then there exists a $P^{\smash{(k)}}_{\ell}$-decomposable subgraph~$H'$ such that
    \begin{enumerate}[label=\upshape{(\roman*)}]
        \item $|H'| \leq \ell^2 k n$,
        \item $\Delta_{k-1}(H') \leq \mu n$, and
        \item for each $x \in V(H)$, we have $\deg_{H - H'}(x) \equiv 0 \bmod k$.
    \end{enumerate}
\end{corollary}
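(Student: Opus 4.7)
The plan is to construct $H'$ as an edge-disjoint union of tight paths on $\ell$ vertices each (i.e., copies of $P_\ell^{(k)}$) chosen via Lemma~\ref{lemma:extending}. Define $r_v := \deg_H(v) \bmod k$ for each $v \in V(H)$; since $k$ divides $\sum_v \deg_H(v)$, we have $\sum_v r_v \equiv 0 \bmod k$. The key observation is that in a tight path $P = v_1 v_2 \cdots v_\ell$, only the $2(k-1)$ endpoint vertices have nonzero degree modulo~$k$: using the convention from Section~\ref{sec:residualgraph}, if $D(P) = \{\mathbf a, \mathbf b\}$ with $\mathbf a = (v_{k-1}, \ldots, v_1)$ and $\mathbf b = (v_{\ell-k+2}, \ldots, v_\ell)$, then the vertex occupying position~$j$ of $\mathbf a$ or~$\mathbf b$ contributes exactly $k-j$ to $\deg_P$ modulo~$k$, while interior vertices contribute~$0$. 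Hence prescribing the ordered endpoint tuples of a collection of tight paths exactly determines the degree residues of their union.

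I would first build a multiset $\mathcal S = \{(\mathbf a_i, \mathbf b_i) : i \in [t]\}$ of ordered pairs of $(k-1)$-tuples of vertices of~$H$ such that: (a)~for each~$v$, the cumulative contribution $\sum_{i,j} (k-j) \cdot \mathbbm{1}[v \text{ at position } j \text{ of } \mathbf a_i \text{ or } \mathbf b_i] \equiv r_v \bmod k$; (b)~$\mathcal S$ is $\gamma$-sparse for some $\gamma \ll \mu$; and (c)~$t \leq 2kn$. Next, the hypothesis $\delta^{(2)}(H) \geq \eps n$ together with Lemma~\ref{lem:findcycle} ensures that each $(\mathbf a_i, \mathbf b_i)$ has at least $\rho n^{\ell - 2(k-1)}$ extensions to a tight path on $\ell$ vertices in~$H$, for a suitable constant $\rho = \rho(\eps, \ell)$. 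Finally, I would apply Lemma~\ref{lemma:extending} (with its parameter~$\ell$ replaced by $\ell - 2(k-1)$) to extract edge-disjoint tight paths $P_1, \ldots, P_t$ on~$\ell$ vertices each, with $D(P_i) = \{\mathbf a_i, \mathbf b_i\}$ and $\Delta_{k-1}(\bigcup_i P_i) \leq \mu n$. Setting $H' := \bigcup_i P_i$, each $P_i$ is a $P_\ell^{(k)}$, so $H'$ is $P_\ell^{(k)}$-decomposable; the size bound $|H'| = t(\ell - k + 1) \leq \ell^2 k n$, the codegree bound $\Delta_{k-1}(H') \leq \mu n$, and the degree divisibility $\deg_{H - H'}(v) \equiv r_v - r_v \equiv 0 \bmod k$ all follow.

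The main obstacle is constructing $\mathcal S$ satisfying (a)--(c) simultaneously. A natural strategy groups the active vertices $R = \{v : r_v \neq 0\}$ by residue class $V_s = \{v : r_v = s\}$ for $s \in [k-1]$. A single tight path can simultaneously service up to two vertices from each $V_s$ by placing them at position~$k-s$ in its two endpoint tuples. When the parities of $|V_s|$ align, the $V_s$-vertices pair up directly and the remaining ``helper'' positions can be filled using \emph{reverse-paired} helper vertices: a helper at position~$j$ in one tuple also appears at position~$k-j$ in a partner tuple so that its total contribution is $k \equiv 0 \bmod k$. Parity mismatches between residue classes must be resolved using the global constraint $\sum_v r_v \equiv 0 \bmod k$ to guarantee compatible groupings. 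This is essentially finite combinatorics in $\mathbb{Z}/k\mathbb{Z}$, but the case analysis is non-trivial. Provided each vertex appears $O(1)$ times as a main vertex and $O(k)$ times as a helper across the $O(|R|/k)$ groups, we obtain $|\mathcal S| = O(n)$ and $\gamma$-sparsity with $\gamma$ a negative power of~$n$, well within the requirement $\gamma \ll \mu$.
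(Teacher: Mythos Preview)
Your overall framework matches the paper's: prescribe ordered endpoint $(k-1)$-tuples so that the positional degree contributions modulo~$k$ match the residues~$r_v$, then invoke the Extending Lemma (Lemma~\ref{lemma:extending}). However, there are two gaps.

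First, a minor one: Lemma~\ref{lem:findcycle} requires trails with at least $k^2-k$ edges to connect two arbitrary $(k-1)$-tuples, but your tight paths have only $\ell-k+1$ edges, and the statement allows $\ell$ as small as $k+1$. The paper fixes this by using trails of length $\ell_0 \ge k^2-k+2$ chosen so that each trail is still $P_\ell^{(k)}$-decomposable.

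Second, and more substantive: your construction of $\mathcal S$ is the heart of the argument, and you leave it as an acknowledged ``non-trivial'' case analysis. Packing main vertices from different residue classes into the same pair of tuples, balancing leftover parities across classes via the global congruence $\sum_v r_v \equiv 0$, and distributing helpers so as to stay $\gamma$-sparse is workable in principle but genuinely fiddly. The paper sidesteps all of this. It first builds, by a short greedy argument, a multidigraph~$D$ on $V(H)$ with $d^+_D(v), d^-_D(v) \le k$ and $d^+_D(v) - d^-_D(v) + \deg_H(v) \equiv 0 \pmod k$ for every~$v$; this reduces the problem to at most $kn$ ``unit transfers''. Then each arc $u \to v$ is realised by a single trail with end-tuples $u\, x_{k-1} \cdots x_2$ and $x_2 \cdots x_{k-1}\, v$, where the $x_j$ are \emph{random} helper vertices shared between the two ends in reverse order, so that each $x_j$ contributes $(j-1)+(k-j+1)=k\equiv 0$, while $u$ contributes $k-1$ and $v$ contributes~$1$. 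The randomness of the helpers delivers $\gamma$-sparsity for free. This two-step reduction---a greedy digraph, then one trail per arc with reverse-paired random helpers---is the clean idea that replaces your case analysis.
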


\begin{proof}
    We start by finding a ($2$-uniform) multidigraph $D$ on $V = V(H)$ such that $d^+_D(v) - d^-_D(v) + \deg_{H}(v) \equiv  0 \bmod k$ holds for each $v \in V$.
    This can be constructed greedily, starting from an empty digraph $D$.
    As long as there is a pair of vertices $u, v$ and $0 < i \leq j < k$ with $d^+_D(u) - d^-_D(u) + \deg_{H}(u) \equiv i \bmod k$ and $d^+_D(v) - d^-_D(v) + \deg_{H}(v) \equiv j \bmod k$, we pick them by minimising $i$ and maximising $j$,
    and then we add the directed edge~$u \rightarrow v$ to~$D$.
    Since $\sum_{x \in V(H)} \deg_{H}(x) = k |H| \equiv 0 \bmod k$, this process is guaranteed to end.
    By construction, we have $d^+_D(v), d^-_D(v) \leq k$ for each $v \in V$, and $D$ has at most $kn$ arcs.
    
    Let $\ell_0$ be the minimum integer divisible by~$\ell$ such that $\ell_0 \geq k^2 - k + 2$.
    We clearly have the inequality~$\ell_0 \leq \ell^2$.
    Given vertices $u, v \in V$, suppose that $T = T_{u,v} \subseteq H$ is a~$P^{\smash{(k)}}_{\ell}$-decomposable subgraph on $\ell_0$ edges such that $\deg_{T}(u) = k-1$, $\deg_{T}(v) = 1$ and $\deg_{T}(w) \equiv 0 \bmod k$ for all other vertices.
    Suppose we can find an edge-disjoint collection~$\mathcal{T}$ of such subgraphs $T_{u,v}$, one for each edge $u \rightarrow v$ in~$D$, with the additional condition that the union~$H'$ of those subgraphs has codegree at most~$\mu n$.
    Then $H'$ is easily seen to satisfy the required conditions.
    We now describe the construction of such a family.
    
    Each $T_{u,v}$ will be chosen as follows.
    Given $uv \in E(D)$,
    we pick a $(k-2)$-tuple of vertices $\mathbf{x}(u,v) = x_2 \dotsb x_{k-1} \in (V \setminus \{u,v\})^{k-2}$, uniformly at random.
    Then, we consider the $(k-1)$-tuples $\mathbf{v}_{u,v} = u x_{k-1} \dotsb x_{2}$ and $\mathbf{w}_{u,v} = x_2 \dotsb x_{k-1} v$. 
    Note that a trail with ends $\mathbf{v}_{u,v}$ and $\mathbf{w}_{u,v}$ using $\ell_0$ edges and no new repeating vertices forms a $T_{u,v}$ with the required characteristics.
    In particular, such a $T_{u,v}$ has a $P^{(k)}_\ell$-decomposition.
    
    Consider the multiset of ordered $(k-1)$-tuples $\mathcal{Q} = \bigcup_{\smash{uv \in E(D)}} \{ \mathbf{v}_{u,v}, \mathbf{w}_{u,v} \}$.
    Since the bounds $\Delta^+(D) , \Delta^-(D) \leq k$ hold and $\mathbf{x}(u,v)$ was chosen at random for each directed edge~$uv \in E(D)$, we can assume that $\mathcal{Q}$ is $\gamma$-sparse.
    Select a new constant $\rho$ which satisfies the hierarchy~$\mu \ll \rho \ll \eps$.
    By Lemma~\ref{lem:findcycle}, for each $uv \in E(D)$, there exist~$\rho n^{\ell_0 - k + 1}$ trails with $\ell_0$ edges and ends $\mathbf{v}_{u,v}$ and $\mathbf{w}_{u,v}$.
    Then, Lemma~\ref{lemma:extending} (with $\rho$ in place of $\eps$) provides us with an edge-disjoint collection of trails $\{ T_{uv} : uv \in E(D) \}$, one for each $uv \in E(D)$, such that $T_{uv}$ has ends $\mathbf{v}_{u,v}$ and $\mathbf{w}_{u,v}$, no repeated vertices save for those in the ends, and $H' = \bigcup_{uv \in E(D)} P_{uv}$ satisfies $\Delta_{k-1}(H') \leq \mu n$, which is all we needed.
\end{proof}

\subsection{Cover-down lemma: Proof of Lemma \ref{lem:pathtocover}}\label{subsec:coverdownproof}

For this section, we will require a few pieces of new notation. Given a $k$-graph $H$, a vertex set $U\subseteq V(H)$, a~$(k-1)$-tuple~$e\in \smash{\binom{V(H)}{k-1}}$, and a set of~$(k-1)$-tuples $G\subseteq \smash{\binom{V(H)}{k-1}}$, define $N_H(e, U; G) = N(e,U)\cap G$.
Moreover, define $$\delta^{(2)}(H; U, G) =\min\left\{N_H(e_1, U; G)\cap N_H(e_2, U; G) \colon e_1, e_2\in \binom{V(H)}{k-1}\right\}\,.$$

\begin{proof}[Proof of Lemma~\ref{lem:pathtocover}]
	Given~$k\in \mathbb N$ and~$\alpha>0$ take~$\ell_0\in \mathbb N$ larger than the one given by~\hyperref[statement:pathdec-k]{\upshape{($\curvearrowright_{k-1}$)}} and sufficiently large for an application of Lemma~\ref{lem:wellbehaved}. 
	Moreover, for~$\mu>0$ we take auxiliary variables~$\gamma$,~$p_i$ and $\mu_i$ for every~$i\in [k-1]$, under the following hierarchy
    $$0<\gamma \ll \mu_1\ll p_1 \ll \dots \ll \mu_{k-1} \ll p_{k-1} \ll \mu_k\ll \mu,\alpha\,.$$ 
    Finally take~$\gamma_i=\gamma+2p_i\ll \mu_{i+1}$ and~$\alpha_{i}=p_i\alpha^2/2-\sum_{0 \le j \le i-1} \mu_j \gg \mu_{i}$.  
    Let~$n\in\mathbb N$ be sufficiently large and let $H$ be as in the statement of the lemma. 

    \smallskip
 	\noindent \textit{Step 1: Setting the stages.} 
	For every~$0 \le i \le k-1$, let $H_i=\{e\in H\colon |e\cap U| = i\}$ and let~$R_i\subseteq H_i$ be defined by choosing edges independently at random from~$H_i$ with probability~$p_i$.  
	Moreover, let~$R_{\geq i}=\bigcup_{i \le j \le k-1}R_j$.
	Considering~\ref{it:CDdegree}, by standard concentration inequalities we have that with non-zero probability the following inequalities happen simultaneously: for every~$0 \le i \le k-1$,
	\begin{align}
	    \Delta_{k-1}(R_i) &\leq 2p_i n \, ,\label{eq:CDmaxdeg}\\
	    \delta^{(2)}(R_{\geq i}\cup H[U]; U, G_{i-1})&\geq \frac{p_{i}\alpha \vert U \vert}{2} \geq \frac{p_{i}\alpha^2n}{2}\,,\label{eq:CDmindeg}
	\end{align}
	where $G_{i}=\{ e\in \binom{V}{k-1} \colon \vert e\cap U\vert \geq i \}$ (we include the degenerate cases~$G_{-1}\!=\!G_{0}\!=\!\binom{V}{k-1})$.
	From now on for every~$0 \le i \le k-1$ we consider~$R_i$ to be a fixed graph with those properties.
 	
 	Define~$H^\star= H-H[U]-R_{\geq 0}$ and observe that~$\delta^{(2)}(H^\star)\geq \alpha n/2$. 
 	Hence we can apply Lemma~\ref{lem:wellbehaved} to find a $C^{\smash{(k)}}_\ell$-packing~$\mathcal C$ in~$H^{\star}$ such that~$\Delta_{k-1}(H^\star - \bigcup 	\mathcal C) \leq \gamma n$. 
 	We shall find a~$C^{\smash{(k)}}_\ell$-packing that covers the leftover $J = H^\star- \bigcup \mathcal C$ and the graph~$R_{\geq 0}$. 
 	We do this in stages, covering the edges $J_i=(J\cap H_i)\cup R_i$ (and some from~$R_{\geq i})$ in each stage.
 	
 	\smallskip
 	\noindent \textit{Step 2: The first $k-1$ stages.} 
 	To start, let~$\mathcal C_{-1}=J_{-1}=\emptyset$.
 	Let~$0\leq\!i\!<\!k-2$ and denote the edges which were covered in previous stages by~$J_{\leq i-1}=\bigcup_{0 \le j \le i-1} J_j$.
 	Suppose there is a~$C^{\smash{(k)}}_\ell$-packing~$\mathcal C_{i-1}$ such that
 	{\thinmuskip=0.8mu 
    \medmuskip=1mu
    \thickmuskip=2mu
 	\begin{align}\label{eq:CDpacking}
 	     \bigcup \mathcal C_{i-1}\cap H[U] =\emptyset,
 	     \quad\!
 	     J_{\leq i-1} \subseteq \bigcup \mathcal C_{i-1}\,, 
          \quad\!\text{and}\quad\! 
          \Delta_{k-1}\big(\bigcup \mathcal C_{i-1}- J_{\leq i-1}\big)\leq \hspace{-0.1cm}\sum_{0 \le j \le i} \mu_j n\,.
 	\end{align}}
 	Note that~\eqref{eq:CDpacking} holds vacuously for~$i=0$.
 	We shall prove the existence of a packing~$\mathcal C_{i}$ satisfying~\eqref{eq:CDpacking} for~$i$ instead of~$i-1$.
 	
 	Let~$\tilde R_{\geq i+1}$ and $\tilde J_{i}$ be the remaining edges from~$R_{\geq i+1}$ and~$J_{i}$ after deleting~$\bigcup \mathcal C_{i-1}$. 
    More precisely let~$\tilde R_{\geq i+1}=R_{\geq i+1}- \bigcup \mathcal C_{i-1}$ and~$\tilde J_{i}=J_{i} - \bigcup \mathcal C_{i-1}$. 
    Because of~\eqref{eq:CDmindeg} and~\eqref{eq:CDpacking} we have that 
    \begin{align}\label{eq:CDmindegi}
        \delta^{(2)}(\tilde R_{\geq i+1}; U, G_{i}) 
        \geq 
        \Big(\frac{p_{i+1}\alpha^2}{2} - \sum_{0 \le j \le i+1} \mu_j\Big)n = \alpha_{i+1} n 
        \gg 
        \mu_{i+1}n\,.
    \end{align}
 	Moreover, in view of \eqref{eq:CDmaxdeg}, we obtain
 	\begin{align}\label{eq:CDmaxdegi}
 	    \Delta_{k-1}(\tilde J_{i})
 	    \leq
 	    \Delta_{k-1}(J) + \Delta_{k-1}(R_{i}) \leq \gamma n + 2p_{i}\alpha n 
 	    =
 	    \gamma_{i} n\,.
 	\end{align}
 	Enumerate edges of $\tilde  J_i$ into $e_1,\dots, e_t$. 
 	For each $j \in [t]$, we oriented $e_j$ arbitrarily and let $\{\mathbf{a}_j,\mathbf{b}_j\}$ be such that $D(e_j) = \{\mathbf{a}_j^{-1},\mathbf{b}_j^{-1}\}$.
 	Note that $\mathcal{S} = \{\mathbf{a}_j,\mathbf{b}_j : j \in [t] \}$ is~$\gamma_{i}$-sparse. 
 	Moreover,~\eqref{eq:CDmindegi} and~Lemma~\ref{lem:findcycle} implies, for each $j \in [t]$, $\tilde R_{i+1}$ contains at least~$\alpha_{i+1} n^{\ell-k}$ trails~$P$ on $\ell+k-2$ vertices such that $D(P) = \{ \mathbf{a}_j , \mathbf{b}_j \}$.
    We apply Lemma~\ref{lemma:extending} with~$\alpha_{i+1}, \mu_{i+1}, \gamma_{i}, \ell-k, \tilde{R}_{i+1}$ in the rôles of $\alpha, \mu, \gamma, \ell,H$ to obtain edge-disjoint trails $P_1, \dots, P_t$ in $\tilde{R}_{i+1}$ such that, for each $j \in [t]$, 
    \begin{enumerate}[label={\rm(\roman*)}]
 		\item $P_j$ has $\ell+k-2$ vertices and $D(P_j) = \{ \mathbf{a}_j , \mathbf{b}_j\}$,
		\item the vertices of $P_j$ outside $\mathbf{a}_j$ and $\mathbf{b}_j$ are all distinct, and
		\item $\Delta_{k-1}( \bigcup_{ j \in [t] } P_j ) \leq \mu_{i+1} n$.
 	\end{enumerate}
 	Note that $e_i \cup P_i$ is $C_{\ell}^{(k)}$, so $ \tilde  J_i \cup \bigcup_{j \in [t]}P_j$ has a $C_{\ell}^{(k)}$-decomposition~$\mathcal{C}'_i$.
 	It is easy to see that by taking~$\mathcal C_{i}=\mathcal C_{i-1} \cup \mathcal{C}'_i$ we obtain a~$C_{\ell}^{(k)}$-packing satisfying~\eqref{eq:CDpacking} with~$i$ instead of~$i-1$. 
 	
    \smallskip
 	\noindent \textit{Step 3: The last stage.} 
 	For the last stage, a few changes are needed.
 	This is because in the previous stages we used edges from~$H_{i+1}$ to extend paths in~$H_i$, which is no longer possible at this stage.
 	Instead, we rely on the path decompositions ensured by~\hyperref[statement:pathdec-k]{\upshape{($\curvearrowright_{k-1}$)}}.
 	
 	As before, we define~$\tilde J_{k-1}=J_{k-1}-  \bigcup\mathcal C_{k-2}$. 
 	For every vertex~$v\in V(H)\setminus U$, we let~$F(v)=\{e\setminus \{v\} \in \binom{V}{k} \colon v\in e\in \tilde J_{k-1}\}$ be the link graph of~$v$ in the hypergraph~$\tilde J_{k-1}$.
 	Note that~$F(v)$ is completely contained in~$U$.
 	We shall apply \hyperref[statement:pathdec-k]{\upshape{($\curvearrowright_{k-1}$)}} to find a $P^{(k-1)}_k$-decomposition in~$F(v)$.
 	For this, we first prove that~$\vert F(v)\vert = \deg_{\tilde J_{k-1}}(v)$ is divisible by~$k$.
 	Indeed,~\ref{it:CDdiv} says that~$\deg_H (v)$ is divisible by~$k$, and since~$\tilde J_{k-1}=H-H[U]- \bigcup \mathcal C- \bigcup \mathcal C_{k-2}$ we have~$\deg_{\tilde J_{k-1}}(v)$ is divisible by~$k$ as well. 
 	Moreover, because of \eqref{eq:CDmindeg} and~\eqref{eq:CDmindegi} we have that  
 	$$\delta^{(2)}(F(v))\geq \frac{ p_{k-1}\alpha^2}{2}n - \!\!\sum_{0 \le j \le k-1} \mu_{j} n \geq \alpha_{k-1}n\,.$$
 	Hence, \hyperref[statement:pathdec-k]{\upshape{($\curvearrowright_{k-1}$)}} yields a $P_{k}^{\smash{(k-1)}}$-decomposition of~$F(v)$.
 	Notice that each path in this decomposition corresponds to a~$P^{\smash{(k)}}_{k+1}$ in $\tilde J_{k-1}$ when we include the vertex~$v$ in every edge. 
 	Call this~$P^{\smash{(k)}}_{k+1}$-packing~$\mathcal P_v$ and observe that paths from~$\mathcal P_v$ and $\mathcal P_u$ are edge-disjoint for every~$u\neq v$. 
 	This means~$\mathcal P=\bigcup_{\smash{v\in V(H)\setminus U}} \mathcal P_v$ is a~$P^{\smash{(k)}}_{k+1}$-decomposition of~$\tilde J_{k-1}$. 
 
    Now we continue as in the previous stages and observe that 
    \begin{align*}
    \Delta_{k-1}(\tilde J_{k-1}) \leq \Delta_{k-1}(J) + \Delta_{k-1}(R_{k-1}) \leq \gamma n + 2p_{k-1}\alpha n =\gamma_{k} n \,,
    \end{align*}    
    which implies that~$D(\mathcal P)$ (without simplification) is~$\gamma_k$-sparse. 
    Moreover, \eqref{eq:CDpacking} implies
    \begin{align*}
        \delta^{(2)}((H- \mathcal C_{k-2}) [U]) = \delta^{(2)}(H[U]) \geq \alpha \vert U \vert\,.
    \end{align*}
    By Lemma~\ref{lem:findcycle}, for any $P \in \mathcal{P}$ with $D(P) = \{\mathbf{a}^{-1}, \mathbf{b}^{-1}\}$, $(H- \mathcal C_{k-2}) [U]$ contains at least $\alpha n^{\ell-k-1}$ trails~$Q$ on $\ell+k-3$ vertices such that $D(Q) = \{ \mathbf{a} , \mathbf{b} \}$.
    Finally, we apply Lemma \ref{lemma:extending} as in the previous stages to obtain edge-disjoint trails $\{ Q_P : P \in \mathcal{P}\}$ in~$(H- \mathcal C_{k-2}) [U]$ such that, for each $P \in \mathcal{P}$, 
    \begin{enumerate}[label={\rm(\roman*)}]
 		\item $Q_P$ has $\ell+k-3$ vertices and $D(Q_P) = \{ \mathbf{a} , \mathbf{b} \}$ such that $D(P) = \{ \mathbf{a}^{-1} , \mathbf{b}^{-1} \}$,
		\item the vertices of $Q_P$ outside $D(Q_P)$ are all distinct, and
		\item $\Delta_{k-1}( \bigcup_{ P \in \mathcal{P}} Q_P ) \leq \mu_{k} n$.
 	\end{enumerate}
    Each $P \cup Q_P$ forms a $C^{\smash{(k)}}_{\ell}$, so $\tilde{J}_{k-1} \cup \bigcup_{P \in \mathcal{P}} Q_P = \bigcup_{P \in \mathcal{P}} (P \cup Q_P)  $ has a $C^{\smash{(k)}}_\ell$-decomposition~$\smash{\mathcal{C}_{k-1}'}$. 
     Thus, recalling~\eqref{eq:CDpacking}, it is easy to see that the~$C_\ell^{\smash{(k)}}$-packing $\mathcal C^\star=\mathcal C\cup \mathcal C_{k-2} \cup \mathcal{C}_{k-1}'$ satisfies the requirements of the lemma. 
\end{proof}

\section{Eulerian Tours} \label{section:euleriantours}

We first show that a lower bound of (essentially) $n/2$ on the codegree of $k$-graphs is necessary to ensure that every edge is in some tight cycle. 
The bound is asymptotically tight by Lemma~\ref{lem:findcycle} (which can be used to find cycles which contain any given edge).
This also provides the lower bound in Theorem~\ref{cor:main}.

\begin{proposition} \label{prop:cyclecover}
For all $k \ge 3$ and $m \geq 2$, there exists a $k$-graph~$H$ on~$n = 2mk$ vertices with $\delta(H) \ge n/2 - 2k+1 $ such that $\deg(v)$ is divisible by $k$ for all $v \in V(H)$ and there is an edge that is not contained in any tight cycle. 
In particular, we have the bounds $\delta^{(k)}_{\smash{\mathrm{cycle}}},\delta^{(k)}_{\smash{\mathrm{Euler}}} \ge 1/2$.
\end{proposition}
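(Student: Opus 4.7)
The plan is to construct $H$ directly from a balanced bipartition. Partition $V(H) = A \cup B$ with $|A| = |B| = mk = n/2$, fix distinct vertices $a_1, \dotsc, a_{k-1} \in A$ and $b_1 \in B$, and set $e_0 = \{a_1, \dotsc, a_{k-1}, b_1\}$. The baseline graph is
\[
    H_b := H_0^{(k)}(A,B) \cup H_2^{(k)}(A,B) \cup \dotsb \cup H_k^{(k)}(A,B) \cup \{e_0\},
\]
i.e.\ the set of all $k$-subsets $e$ of $V$ with $|e \cap B| \neq 1$, together with the single exceptional edge $e_0$. A routine case analysis by $|S \cap B|$ shows that $\delta_{k-1}(H_b) \geq mk - k + 1 = n/2 - k + 1$, with equality attained only on $(k-1)$-subsets of $A$ (whose only legal extensions lie in $K_A^{(k)}$, plus possibly one extension giving $e_0$); for every other $(k-1)$-subset the codegree is at least $mk - 1$.

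The main technical ingredient is a sliding argument showing that $e_0$ lies in no tight cycle and no Euler tour of~$H_b$. Consider either a tight cycle $v_1 \dotsb v_\ell$ through~$e_0$, or an Euler tour $v_1 \dotsb v_m$ of~$H_b$ (with $\ell := m$ in the latter case), rotated so that $\{v_1, \dotsc, v_k\} = e_0$ with $v_i = a_i$ for $i<k$ and $v_k = b_1$. Setting $x_i := \mathbbm{1}_{v_i \in B}$ and $s_i := \sum_{j=0}^{k-1} x_{i+j}$ with indices modulo~$\ell$, we have $s_i = |e_i \cap B|$ for $e_i := \{v_i, \dotsc, v_{i+k-1}\}$, and the sliding identity $s_{i+1} - s_i = x_{i+k} - x_i$ gives $|s_{i+1} - s_i| \leq 1$. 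In both scenarios the distinctness of edges forces $e_i \neq e_0$ for $i \in \{2, \dotsc, \ell\}$, so $s_i \neq 1$ there. Starting from $s_1 = 1$, the relation $s_2 = 1 + x_{k+1}$ pushes $s_2 = 2$; iterating $|s_{i+1} - s_i| \leq 1$ with $s_{i+1} \neq 1$ propagates $s_i \geq 2$ for every $i \in \{2, \dotsc, \ell\}$. However $s_\ell = x_\ell + x_1 + \dotsb + x_{k-1} = x_\ell \in \{0,1\}$, a contradiction.

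The main obstacle is to achieve divisibility of every degree by~$k$. Let $r(v) := \deg_{H_b}(v) \bmod k \in \{0, \dotsc, k-1\}$: by symmetry $r$ takes a constant value $r_A$ on $A \setminus \{a_i\}$, is $r_A + 1 \bmod k$ on $\{a_1, \dotsc, a_{k-1}\}$, takes a constant value $r_B$ on $B \setminus \{b_1\}$, and is $r_B + 1 \bmod k$ on $\{b_1\}$. The plan is to subtract a subgraph $F \subseteq H_b \setminus \{e_0\}$ with $\deg_F(v) \equiv r(v) \pmod k$ for every~$v$ and $\Delta_{k-1}(F) \leq k$; then $H := H_b - F$ satisfies $\delta_{k-1}(H) \geq n/2 - 2k + 1$ and the sliding argument still shows $e_0 \in H$ is in no tight cycle. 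A parity check reveals $\sum_{v \in A} r(v) \equiv k-1 \pmod k$, so $F$ cannot be confined to $K_A^{(k)} \cup K_B^{(k)}$; the fix is to include two carefully aligned cross edges $e^* = \{a_1, \dotsc, a_{k-2}, u_1, u_2\} \in H_2^{(k)}(A,B)$ and $e^{**} = \{a_{k-1}, b_1, u_3, \dotsc, u_k\} \in H_{k-1}^{(k)}(A,B)$, where $\{u_1, \dotsc, u_k\} \subseteq B \setminus \{b_1\}$ is chosen to be an edge of a perfect matching of $K_B^{(k)}$. Completing $F$ with $r_A$ edge-disjoint perfect matchings of $K_A^{(k)}$ plus $r_B$ edge-disjoint perfect matchings of $K_B^{(k)}$, one of which has the edge $\{u_1, \dotsc, u_k\}$ removed (available by Baranyai's theorem since $k \mid |A| = |B|$), then yields the required $F$ after a direct degree verification. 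The conclusion $\delta^{(k)}_{\mathrm{cycle}}, \delta^{(k)}_{\mathrm{Euler}} \geq 1/2$ then follows, since $H$ admits neither a cycle decomposition nor an Euler tour.
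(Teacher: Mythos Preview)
Your sliding argument has a fatal gap: you assert that the cycle can be ``rotated so that $\{v_1,\dotsc,v_k\}=e_0$ with $v_i=a_i$ for $i<k$ and $v_k=b_1$'', but in a tight cycle the cyclic order of the vertices is fixed up to rotation and reflection --- you cannot choose where $b_1$ sits inside the window $\{v_1,\dotsc,v_k\}$. Without $v_k=b_1$ you lose the step $s_\ell=x_\ell\in\{0,1\}$; if $b_1=v_j$ with $1<j<k$ then $s_2=s_\ell=2$ is perfectly consistent with all your constraints.

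Worse, the gap is not repairable, because your baseline $H_b$ \emph{does} contain tight cycles through~$e_0$. For $k=3$, pick $b_2,b_3\in B\setminus\{b_1\}$ and consider the tight $5$-cycle $a_1\,b_1\,a_2\,b_2\,b_3$: its edges have $B$-counts $1,2,2,2,2$, so all lie in~$H_b$. For $k=4$, the $7$-cycle $a_1\,a_2\,b_1\,a_3\,b_2\,b_3\,b_4$ has $B$-counts $1,2,3,3,3,2,2$ and works the same way. These cycles use only edges from $H_2^{(k)}(A,B)\cup\dotsb\cup H_{k-1}^{(k)}(A,B)$ apart from~$e_0$, so they survive the removal of your correction subgraph~$F$ (whose edges, apart from the two cross edges $e^*,e^{**}$ that can be dodged by relabelling $b_2,b_3,\dotsc$, lie in $K_A^{(k)}\cup K_B^{(k)}$). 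The same counterexamples defeat the Euler-tour version of the argument. The paper's construction avoids exactly this failure by excluding \emph{both} $H_1$ and $H_{k-1}$, then adding back a single edge to each while swapping them for one edge of $K_A^{(k)}$ and one of $K_B^{(k)}$; this symmetric exclusion is what blocks the $s_i=k-1$ values your counterexamples exploit, and it also makes the divisibility correction a one-line swap rather than a Baranyai argument.
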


\begin{proof}
Let~$A$ and~$B$ be disjoint vertex-sets each of size~$mk$. 
Recall that, for~$0\leq i\leq k$, we defined $H_i=H^{(k)}_i(A,B)$ as the $k$-graph with vertex set $A \cup B$ such that $e \in H_i$ if and only if $|e \cap B| = i$.
Consider the $k$-graph
$$H^\star = \!\!\bigcup_{\substack{ i \in ( \{0\} \cup [k]) \setminus \{1,k-1\}}} \!\!H_i^{(k)}(A,B)\,,$$
and observe that~$\delta(H^\star)\geq n/2-k+1$.
Note that each vertex has the same vertex-degree.
By removing at most $k-1$ perfect matchings in each of $H^\star[A] = H_0(A,B)$ and $H^\star[B] = H_k(A,B)$,
we may assume that each vertex has vertex-degree divisible by~$k$.
Additionally, remove edges $a_1\dotsb a_k \in H^\star[A]$ and $b_1 \dotsb b_k \in H^\star[B]$ and add two edges $a_1 \dotsb a_{k-1} b_k$ and $b_1 \dotsb b_{k-1} a_k$.
Call the resulting graph~$H$. 
Note that the bound~$\delta(H) \ge n/2 - 2k+1$ holds,
and for every vertex $v$, $d_H(v)$ is divisible by $k$.

We now claim that the edge $a_1 \dotsb a_{k-1} b_k$ is not contained in any tight cycle. 
Indeed, for $k = 3$, note that $\deg_H(a_1 b_3) = \deg_H(a_2b_3) =1$, so $a_1a_2b_3$ can only be the end of any tight path implying that $a_1a_2b_3$ is not contained in any tight cycle.
Now assume that $k \ge 4$. 
Since $a_1 \dotsb a_{k-1} b_k$ is the only edge in $H \cap H^1(A,B)$ (i.e. with exactly $k-1$ vertices in~$A$) any tight path of length at least~$k+1$ containing $a_1 \dotsb a_{k-1} b_k$ as a second edge must travel from $H^0(A,B)$ to $\bigcup_{i \ge 2} H^i(A,B)$. 
However, there is no other edge in $H \cap H^1(A,B)$ to close such a tight path into a cycle.

Since we have ensured every degree in $H$ is divisible by $k$, this construction shows that $\delta^{(k)}_{\textrm{cycle}},\delta^{(k)}_{\textrm{Euler}} \ge 1/2$.
\end{proof}

We split the other inequalities in Theorem~\ref{cor:main} into several lemmata.

\begin{lemma} \label{lem:euler1}
For $k \ge 3$, $\delta^{(k)}_{\mathrm{Euler}} \leq \delta^{(k)}_{\mathrm{cycle}}$.
\end{lemma}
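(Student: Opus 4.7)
The plan is to build an Euler tour from a cycle decomposition by pairwise merging cycles. Set $d = \delta^{(k)}_{\mathrm{cycle}}$, fix $\eps > 0$, and let $H$ be a $k$-graph on $n$ vertices (with $n$ sufficiently large) such that $\delta_{k-1}(H) \geq (d + \eps)n$ and every vertex-degree is divisible by $k$. I need to produce a single closed tight walk covering every edge of $H$.

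First, I would reserve a sparse subgraph $R \subseteq H$ obtained as an edge-disjoint union of short tight cycles. Because each tight cycle contributes exactly $k$ to every vertex-degree it touches, $R$ can be arranged so that $\deg_R(v)$ is a multiple of $k$ for every $v \in V(H)$; consequently $H - R$ retains the divisibility of degrees. Using Lemma~\ref{lem:findcycle} to assemble the cycles of $R$ greedily, one ensures $\Delta_{k-1}(R) \leq \mu n$ for a tiny $\mu$, so that $\delta_{k-1}(H - R) \geq (d + \eps/2)n$. Applying the definition of $d = \delta^{(k)}_{\mathrm{cycle}}$ to $H - R$ then yields a tight cycle decomposition $\mathcal C = \{C_1, \dots, C_t\}$.

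The second phase combines $\mathcal C$ together with the cycles of $R$ into a single closed tight walk covering every edge of $H$. The basic operation is the following: if two edge-disjoint closed tight walks share a common consecutive $(k-1)$-tuple at some of their respective positions, they splice into a single closed tight walk by swapping the continuation at that tuple. If $C_i$ and the current running walk $W$ do not already share such a tuple, one uses an unused short cycle of $R$ as a bridge, combined with small rotations that realign endpoints (this is in the spirit of the swapper gadgets of Section~\ref{section:gadgets}). Iteratively absorbing $C_2, C_3, \dots, C_t$ into a running closed walk $W$, initialised to $C_1$, and consuming a fresh piece of $R$ at each step, eventually yields a single closed tight walk that uses every edge of $H$ exactly once, i.e.\ an Euler tour.

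The main obstacle is the simultaneous choice of $R$ and its matching to the pairs of cycles produced by $\mathcal C$: since $R$ is fixed before $\mathcal C$ is revealed, we cannot pre-assign bridges to specific pairs. I would circumvent this by making $R$ rich enough that every pair of $(k-1)$-tuples admits a bridge in $R$ (using the extending lemma, Lemma~\ref{lemma:extending}, to guarantee a robust supply of trails), and then realigning bridges on the fly via local swap operations adapted from Section~\ref{section:transformer}. Controlling the edge budget---so that each edge of $R$ is used in exactly one bridge, no more and no less---is the delicate accounting and constitutes the technical heart of the argument.
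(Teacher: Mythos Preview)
Your outline shares the paper's high-level strategy --- set aside a sparse reservoir, decompose the rest into cycles, then merge --- but the merging step as you describe it is not actually carried out, and the obstacle you flag is real and unresolved. You want $R$ to be sparse (so that $H-R$ still lies above the cycle threshold) yet simultaneously to contain, edge-disjointly, a bridge between whatever $(k-1)$-tuple the running walk $W$ currently exposes and whatever tuple the next cycle $C_i$ exposes. Since $R$ is fixed before $\mathcal C$ is produced, you have no control over which pairs of tuples will need bridging, and ``making $R$ rich enough that every pair admits a bridge'' while also insisting ``each edge of $R$ is used in exactly one bridge, no more and no less'' is not something the extending lemma or the swapper gadgets deliver: those tools require the list of tuple-pairs as input. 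Your final paragraph essentially concedes that this accounting is still open.

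The paper sidesteps the matching problem entirely with one clean idea: rather than building bridges between unknown pairs, it first lays down a single tour $\mathcal P$ that visits \emph{every} ordered $(k-1)$-tuple of $V(H)$. Concretely, one enumerates all ordered $(k-1)$-tuples $\sigma_1,\dots,\sigma_t$, sets $\mathbf a_i=\sigma_i$, $\mathbf b_i=\sigma_{i+1}^{-1}$, and applies Lemma~\ref{lemma:extending} to obtain edge-disjoint trails $P_i$ with $D(P_i)=\{\mathbf a_i,\mathbf b_i\}$; their concatenation is a sparse tour $\mathcal P$ with $\Delta_{k-1}(\mathcal P)\le\mu n$. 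Then $H-\mathcal P$ still has codegree above $\delta^{(k)}_{\mathrm{cycle}}$, so it decomposes into tight cycles, and each cycle automatically shares an oriented $(k-1)$-tuple with $\mathcal P$ and can be spliced in with no further bridging. This universality of $\mathcal P$ is precisely the missing idea in your proposal.
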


\begin{proof}
Let $\ell = k^2$ and $k \ge 3$. 
Let $0 < 1/n \ll \gamma \ll \mu \ll \eps $. 
Let $H$ be a $k$-graph on $n$ vertices with $\delta(H) \ge \left( \delta^{(k)}_{\smash{\mathrm{cycle}}} + \eps \right) n $ such that $\deg_H(v)$ is divisible by~$k$ for all~$v \in V(H)$.
Note that $\delta(H) \geq (1/2 + \eps)n$ by Proposition~\ref{prop:cyclecover}.
Let $\sigma_1, \dots, \sigma_{t}$ be an enumeration of all ordered $(k-1)$-tuples of~$V(H)$, so $t = n!/(n-k+1)!$. 
For each $i \in [t]$, let~$\mathbf{a}_i = \sigma_{i}$ and $\mathbf{b}_i = \sigma_{i+1}^{-1}$, with indices taken modulo~$t$.
Let $\mathcal{S} = \{ \mathbf{a}_i, \mathbf{b}_i : i \in [t]\} $ be the multisets. 
Note that $\mathcal{S}$ is $\gamma$-sparse. 
By Lemma~\ref{lem:findcycle}, for all $i \in [t]$, $H$ contains at least $\eps n^{\ell}$ trails~$P$ on $\ell+2(k-1)$ vertices such that $D(P) = \{ \mathbf{a}_i , \mathbf{b}_i \}$.
Apply Lemma~\ref{lemma:extending} to obtain edge-disjoint trails $\{ P_i : i \in [t] \}$ in~$H$ such that, for each $i \in [t]$, 
    \begin{enumerate}[label={\rm(\roman*)}]
 		\item $P_i$ has $\ell+2 (k-1)$ vertices and $D(P_i) = \{ \mathbf{a}_i , \mathbf{b}_i \}$;
		\item the vertices of $P_i$ outside $D(P_i)$ are all distinct and
		\item $\Delta_{k-1}( \bigcup_{ i \in [t] } P_i ) \leq \mu n$.
 	\end{enumerate}
 Let $\mathcal{P} = \bigcup_{i \in [t]} P_i$, and note that (after joining trails) we obtain a tour in~$H$. 
 Consider the~$k$-graph~$H' = H - \mathcal{P}$.
 Note that  $\deg_{H'}(v)$ is divisible by~$k$ for all~$v \in V(H')$ and~$ \delta(H') \ge \delta (H) - \mu n \ge (  \delta^{(k)}_{\smash{\mathrm{cycle}}} + \eps/2 ) n $.
 Thus there is a cycle-decomposition~$\mathcal{C}$ of~$H'$.
 By attaching each cycle to the tour~$\mathcal{P}$, we obtain an Eulerian tour in~$H$.
 Hence we obtain~$\delta^{(k)}_{\smash{\mathrm{Euler}}} \le \delta^{(k)}_{\smash{\mathrm{cycle}}}$, as desired.
\end{proof}

\begin{lemma}\label{lem:euler2}
For $k \ge 3$, $\delta^{(k)}_{\smash{\mathrm{cycle}}} \leq \delta^{(k)}_{\smash{\mathrm{Euler}}}$.
\end{lemma}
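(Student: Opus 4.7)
The plan is to mirror the proof of Lemma~\ref{lem:euler1} with the roles of tight cycles and Euler tours interchanged. Let $d = \delta^{(k)}_{\smash{\mathrm{Euler}}}$, and take~$H$ a $k$-graph on $n$ vertices with $\delta(H) \geq (d + \eps)n$ and every vertex degree divisible by~$k$; the goal is to produce a cycle decomposition of~$H$.

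First, I would reserve inside~$H$ a sparse $C_\ell^{\smash{(k)}}$-decomposable subgraph~$R$ (for some large fixed~$\ell$), built greedily through repeated application of Lemma~\ref{lem:findcycle} together with Lemma~\ref{lemma:extending}, so that $\Delta_{k-1}(R) \leq \gamma n$ for a small constant~$\gamma$. Then $H' := H - R$ still satisfies $\delta(H') \geq (d + \eps/2)n$ and has all vertex degrees divisible by~$k$, since every tight cycle in~$R$ contributes a multiple of~$k$ to each vertex's degree. The purpose of~$R$ is to act as a reservoir of short bridge paths for the next stage.

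Second, apply the Euler tour hypothesis to~$H'$ to obtain an Euler tour~$E$. Viewing $E$ as a closed Eulerian walk in the auxiliary digraph whose vertices are ordered $(k-1)$-tuples in~$V(H')$ and whose arcs encode the tight edges of~$H'$, I decompose~$E$ via a standard Hierholzer-type argument into simple circuits $T_1, \ldots, T_s$, each visiting no $(k-1)$-tuple more than once. Each~$T_i$ is thus a closed tight trail (a tour in the sense of Section~\ref{section:gadgets}), but may still have repetitions at the level of individual vertices, since distinct $(k-1)$-tuples can share vertices.

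Third, convert each~$T_i$ into a genuine tight cycle by rerouting around its vertex repetitions through fresh vertices, using bridge paths from the reservoir~$R$ and the extension technology of Lemma~\ref{lemma:extending} to keep all reroutings mutually edge-disjoint. Combining the resulting tight cycles (one per $T_i$) with the leftover edges of~$R$ (which by construction decompose into tight cycles) assembles a cycle decomposition of~$H$. The main obstacle is this last step: each $T_i$ may contain linearly many vertex repetitions, and each rerouting consumes both bridge edges from~$R$ and fresh vertices unused by other reroutings. The delicate balance is to keep~$R$ small enough to preserve the Euler tour hypothesis on~$H'$, while still rich enough to supply all required bridges; this relies crucially on the codegree hypothesis of~$H$ to ensure sufficient flexibility, in the same spirit as the reservoir and extension arguments used throughout Sections~\ref{section:coverdown} and~\ref{section:euleriantours}.
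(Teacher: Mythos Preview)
Your Step 4 does not work as stated. A simple circuit in the auxiliary digraph on ordered $(k-1)$-tuples can have length up to $\Theta(n^{k-1})$, so a single tour $T_i$ may carry $\Theta(n^{k-1})$ edges of~$H'$. But a tight cycle in an $n$-vertex hypergraph has at most $n$ edges, so no tight cycle can contain the edges of such a $T_i$ --- and ``rerouting through fresh vertices'' cannot help, since there are only $n$ vertices available. If you instead intend to break each $T_i$ into several shorter cycles, you must say how: unlike the case $k=2$, a tight tour cannot be cut at a repeated \emph{vertex} into two tight tours, because tight walks glue along $(k-1)$-tuples, not single vertices. Even granting some rerouting primitive, the total number of vertex repetitions across all $T_i$ can be of order $|E(H')|=\Theta(n^k)$ when the $T_i$ are long, and each rerouting would consume edges of~$R$; a reservoir with $\Delta_{k-1}(R)\le\gamma n$ has only $O(\gamma n^k)$ edges, so $\gamma$ cannot simultaneously be small enough to preserve the Euler hypothesis on $H'$ and large enough to supply the bridges.

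The paper proceeds quite differently: it runs the iterative absorption framework (Lemmata~\ref{lemma:vortex} and~\ref{lem:coverdownlemma}) to reduce to a leftover $F\subseteq H[U_t]$ on a vortex set $U_t$ of \emph{bounded} size, arranged so that $\delta(F)\ge(\delta^{(k)}_{\mathrm{Euler}}+\eps)|U_t|$. The Euler tour hypothesis is applied only to this constant-sized~$F$; its Euler tour exhibits $F$ as the edge-bijective homomorphic image of a tight cycle of length~$|E(F)|$, and the transformer construction from the proof of Lemma~\ref{lem:transformer} (needing only $\delta^{(2)}(H)\ge\eps n$) then yields a cycle-decomposable absorber for~$F$. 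Restricting the Euler-tour step to a bounded piece is precisely what sidesteps the $n$-versus-$n^{k-1}$ mismatch above.
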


\begin{proof}[Proof (sketch)]
    Let $\delta =  \delta^{(k)}_{\smash{\mathrm{Euler}}}$, by Proposition~\ref{prop:cyclecover} we have $\delta \geq 1/2$.
    Given $\eps > 0$, let~$n$ be sufficiently large and let $H$ be a $k$-graph on $n$ vertices with $\delta(H) \geq (\delta + 2 \eps)n$ with all vertex-degree divisible by $k$.
    It is enough to show that $H$ is decomposable into cycles.
    
    The idea is to use the iterative absorption framework.
    Indeed, since $\delta \geq 1/2$, we have $\delta^{(2)}(H) \geq 4 \eps n$.
    Thus there exists $\ell$ large enough (depending on $\eps$ only) such that the Vortex lemma (Lemma~\ref{lemma:vortex}) and the Cover-down lemma (Lemma~\ref{lem:coverdownlemma}) work in this setting.
    Thus it is possible to find a vortex $U_0 \supseteq U_1 \supseteq \dotsb \supseteq U_t$ to find a $C^{\smash{(k)}}_\ell$-packing which cover all edges except but those located in $U_t$.
    In fact, we can assume that the leftover $F \subseteq H[U_t]$ satisfies $\delta(F) \geq (\delta + \eps)|U_t|$ (see the proof of Theorem~\ref{theorem:main} in Section~\ref{section:mainproof} for detailed calculations to make these two steps work).
    The only missing step is the construction of an absorber for such a constant-sized leftover.
    
    The key observation here is that since the leftover $F$ will satisfy $\delta(F) \geq (\delta + \eps)|U_t|$, we can assume that $F$ admits an Euler tour.
    Since an Euler tour admits an edge-bijective homomorphism from a cycle, we can easily build a cycle-decomposable transformer between such a leftover and a cycle, and this step requires only $\delta^{(2)}(H) \geq \eps n$ (this is exactly what is done in the proof of Lemma~\ref{lem:transformer}).
\end{proof}

\begin{lemma} \label{lemma:cycledecvsinf}
    For $k \geq 3$, $\delta^{(k)}_{\smash{\mathrm{cycle}}} \le \inf_{\ell > k} \{\delta_{\smash{C_\ell^{(k)}}}\}$.
\end{lemma}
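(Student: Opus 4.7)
The plan is to reduce cycle-decomposition to $C_\ell^{(k)}$-decomposition for a suitably chosen $\ell$. The only subtlety is that the cycle-decomposition hypothesis merely requires $\deg_H(v) \equiv 0 \bmod k$ for all $v$, while $C_\ell^{(k)}$-divisibility additionally demands $\ell \mid |E(H)|$. This gap can be closed by removing one carefully chosen tight cycle before invoking the $C_\ell^{(k)}$-decomposition threshold.

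Fix $\eps > 0$ and let $d = \inf_{\ell' > k}\delta_{C_{\ell'}^{(k)}}$. Choose $\ell > k$ so that $\delta_{C_\ell^{(k)}} \le d + \eps/2$. Let $n$ be sufficiently large and $H$ be a $k$-graph on $n$ vertices with $\delta_{k-1}(H) \geq (d+\eps)n$ and every vertex-degree divisible by $k$. Set $r = |E(H)| \bmod \ell$; if $r = 0$ then $H$ is already $C_\ell^{(k)}$-divisible and the definition of $\delta_{C_\ell^{(k)}}$ immediately yields a $C_\ell^{(k)}$-decomposition of $H$, which is a cycle-decomposition.

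If $r > 0$, I would first produce a single tight cycle $C \subseteq H$ whose length absorbs the residue. Let $\ell_0 = r + M\ell$ with $M$ the smallest positive integer making $\ell_0 \ge k^2 - k + 1$; note $\ell_0 \le k^2 + \ell$. Since $\delta_{k-1}(H) \ge (d+\eps)n$ implies $\delta^{(2)}(H) \ge \eps n$, I apply Lemma~\ref{lem:findcycle} with $\mathbf{x} = \mathbf{y}$ taken to be an arbitrary $(k-1)$-tuple of vertices: this yields a trail on $\ell_0$ edges with ends matching, i.e.\ a tight cycle $C = C_{\ell_0}^{(k)}$ in $H$.

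Finally, let $H' = H - C$. I would check that $H'$ satisfies the hypotheses needed to invoke $\delta_{C_\ell^{(k)}}$:
\begin{itemize}
    \item $|E(H')| = |E(H)| - \ell_0 \equiv r - r \equiv 0 \bmod \ell$, since $\ell_0 \equiv r \bmod \ell$;
    \item every vertex of a tight $k$-cycle has degree exactly $k$ within that cycle, so removing $C$ preserves divisibility of all vertex-degrees by $k$;
    \item a $(k-1)$-tuple lies in at most two edges of any tight cycle, hence $\Delta_{k-1}(C) \le 2$ and $\delta_{k-1}(H') \ge \delta_{k-1}(H) - 2 \ge (\delta_{C_\ell^{(k)}} + \eps/4)n$ for $n$ large.
\end{itemize}
Thus $H'$ is $C_\ell^{(k)}$-divisible with sufficiently high codegree, so by definition of $\delta_{C_\ell^{(k)}}$ it admits a $C_\ell^{(k)}$-decomposition $\mathcal{D}$. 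Then $\mathcal{D} \cup \{C\}$ is a decomposition of $H$ into tight cycles. Since $\eps$ is arbitrary, $\delta^{(k)}_{\mathrm{cycle}} \le d$. There is no real obstacle here — the only thing to watch is that $\ell_0$ is taken large enough ($\ge k^2 - k + 1$) so that Lemma~\ref{lem:findcycle} applies to produce the single correcting cycle.
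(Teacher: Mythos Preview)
Your proof is correct and takes essentially the same approach as the paper: remove a single tight cycle to fix the residue of $|E(H)|$ modulo~$\ell$, then apply the $C_\ell^{(k)}$-decomposition threshold to the remainder. The one point the paper makes explicit and you leave implicit is that $d \geq 1/2$ (via Proposition~\ref{prop:cyclecover}), which is what justifies the inference $\delta_{k-1}(H)\ge (d+\eps)n \Rightarrow \delta^{(2)}(H)\ge \eps n$ needed to invoke Lemma~\ref{lem:findcycle}.
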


\begin{proof}
    Let $\delta = \inf_{\ell > k} \{\delta_{\smash{C_\ell^{(k)}}}\}$.
    Note that $\delta \geq 1/2$ by Proposition~\ref{prop:cyclecover}.
    
    Let $1/n \ll \eps$ and let $H$ be a $k$-graph on $n$ vertices with $\delta(H) \geq (\delta + 3 \eps)n$ and every degree divisible by $k$.
    By the definition of infimum, there exists $\ell$ (depending on $\eps$ only) such that $\delta_{\smash{C_\ell^{(k)}}} \leq \delta + \eps$.
    Since $\delta(H) \geq (1/2 + 2 \eps)n$, we can use Lemma~\ref{lem:findcycle} to find a cycle $C$ whose removal leaves a number of edges divisible by $\ell$.
    Thus $\delta(H - C) \geq (\delta + 2 \eps)n \geq (\delta_{\smash{C_\ell^{(k)}}} + \eps)n$, and therefore $H - C$ admits a $C_\ell^{\smash{(k)}}$-decomposition.
    Together with $C$, this is a cycle decomposition of $H$.
\end{proof}

Theorem~\ref{cor:main} follows immediately from Lemma~\ref{lem:euler1}, Lemma~\ref{lemma:cycledecvsinf}, Proposition~\ref{prop:cyclecover} and Theorem~\ref{theorem:main}.

\section{Concluding remarks}\label{section:remarks}

Theorem~\ref{cor:main} and Theorem~\ref{theorem:main} show that, for all $k$ and sufficiently large $\ell$, the inequalities $ 1/2 \le \delta^{(k)}_{\smash{\textrm{Euler}}} = \delta^{(k)}_{\smash{\mathrm{cycle}}} \le \delta_{\smash{C^{(k)}_{\ell}}} \le 2/3$ are valid.
For $k =3$, the second and third authors~\cite{PigaSanhuezaMatamala2021} gave an example showing that $\delta^{(3)}_{\smash{\textrm{Euler}}} \ge 2/3$, and therefore, $\delta^{(3)}_{\smash{\textrm{Euler}}} = \delta^{(3)}_{\smash{\mathrm{cycle}}} = \delta_{\smash{C^{(3)}_{\ell}}} = 2/3$ for large~$\ell$.
However, we were unable to generalise the examples presented there for~$k \ge 4$.
Our best example (Proposition~\ref{prop:cyclecover}) gives us~$\delta_{\smash{\textrm{cycle}}}^{(k)} \geq 1/2$, so we suggest the following question.
\begin{question}
Does there exist $k \ge 4$ such that  $\delta^{(k)}_{\smash{\mathrm{cycle}}} >1/2$? 
\end{question}

We gave a new lower bound for the fractional $C_\ell^{\smash{(k)}}$-decomposition threshold $\delta^*_{\smash{C^{(k)}_{\ell}}}$ in Proposition~\ref{proposition:newlowerbound}.
Moreover, when $k / \gcd(\ell,k)$ is even or $\gcd(\ell,k) = 1$, we are able to calculate the value given by our bound in a explicit form (see Corollary~\ref{cor:newlowerbound}).
Is the construction given by Proposition~\ref{proposition:newlowerbound} best-possible? 
We would like to propose the following weaker question. 

\begin{question}
Given $k \ge 2$, does there exist $\ell_0$ such that, for all $\ell > \ell_0$ with $\ell \not \equiv 0 \mod{k}$, $\delta^*_{\smash{C^{(k)}_{\ell}}} \le \frac{1}{2} + \frac{1}{2(\ell - 1)}$?
\end{question}

When $k=2$, we believe that $\ell_0$ should be $1$, which also implies the Nash-Williams conjecture~\cite{Nash-Williams} on $\delta_{K_3}$ (c.f. \cite[Theorem~1.4]{BKLO2016}).

\subsection*{Acknowledgement}
We thank the referees for their detailed and helpful remarks.

\printbibliography

\end{document}